\documentclass{amsart}

\usepackage{amsmath,amssymb,enumerate}
\usepackage{epsfig}

\newtheorem{theorem}{Theorem}

\newtheorem{conjecture}{Conjecture}
\newtheorem{example}{Example}
\newtheorem{proposition}{Proposition}
\newtheorem{lemma}[proposition]{Lemma}
\newtheorem{corollary}[proposition]{Corollary}
\newtheorem{fact}[proposition]{Fact}
\newtheorem{facts}[proposition]{Facts}
\newtheorem{definition}[proposition]{Definition}
\newtheorem{remark}[proposition]{Remark}
\newtheorem{claim}[proposition]{Claim}

\numberwithin{proposition}{section}
\numberwithin{equation}{section}

\newenvironment{demo}{\medskip\noindent{\sc
Proof:}}{\hfill$\square$\medskip}
\newenvironment{demof}[1]{\medskip\noindent{\sc
Proof of #1:}}{\hfill$\square$\medskip}

\newcommand\step[2]{\medskip\noindent{\bf Step #1:} {\it
#2}\medskip}

\newcommand{\new}[1]{{\bf #1}}
\newcommand{\occult}[1]{}

\newcommand\wrt{w.r.t.\ }

\newcommand\bP{P}
\newcommand\diam{{\operatorname{diam}}}
\newcommand\eps{\epsilon}

\newcommand\hT{{\hat{T}}}

\newcommand\hP{{\hat{P}}}
\newcommand\hmu{{\hat{\mu}}}
\newcommand\hS{{\hat{\Sigma}}}

\newcommand\Kappa{{\kappa_{10}}}

\newcommand\leftb{[\hskip-0.14em[}

\newcommand\mult{{\operatorname{mult}}}
\newcommand\NN{{\mathbb N}}
\newcommand\nos{{\flat}}
\newcommand\INT{\operatorname{int}}
\newcommand\perext{{+}}

\newcommand\Proberg{{\mathbb P}_{\operatorname{erg}}\!\!\!}

\newcommand\rightb{]\hskip-0.15em]}
\newcommand\RR{{\mathbb R}}
\newcommand\self{\hookleftarrow}

\newcommand\SxM{\Sigma\ltimes M}
\renewcommand\top{{\operatorname{top}}}
\newcommand\TT{{\mathbb T}}

\newcommand\ZZ{{\mathbb Z}}

\begin{document}

\title[Piecewise Affine Surface Homeomorphisms]{Maximal Entropy Measures for Piecewise Affine Surface
Homeomorphisms}

\date{3/14/2007}

\begin{abstract}
We study the dynamics of piecewise affine surface homeomorphisms
from the point of view of their entropy. Under the assumption of
positive topological entropy, we establish the existence of
finitely many ergodic and invariant probability measures
maximizing entropy and prove a multiplicative lower bound for the
number of periodic points. This is intended as a step towards the
understanding of surface diffeomorphisms. We proceed by building a
jump transformation, using not first returns but carefully
selected ``good'' returns to dispense with Markov partitions. We
control these good returns through some entropy and ergodic
arguments.
\end{abstract}

\author[J. Buzzi]{J\'er\^ome Buzzi}

\address{Centre de Math\'ematiques Laurent Schwartz (UMR 7640)\\
C.N.R.S. \& Ecole polytechnique\\91128 Palaiseau Cedex\\France}

\address{Current Address: Laboratoire de Math\'ematique (UMR 8628)\\C.N.R.S. \& Universit\'e Paris-Sud\\91405 Orsay Cedex\\France}

\email{jerome.buzzi@math.u-psud.fr}

\urladdr{www.jeromebuzzi.com}

\keywords{Dynamical systems; ergodic theory; topological entropy;
variational principle; measures with maximal entropy; Markov
structure; Markov tower; symbolic dynamics; coding.}


\maketitle


\section{Introduction}

Introduced by Anosov and Smale in the 1960's, uniform hyperbolicity is at
the core of dynamical system theory. The corresponding systems
are well-understood since in particular the works of
Sinai, Bowen and Ruelle (see, e.g., \cite{KatokHasselblatt}) and
it is now a central challenge to try
to extend our understanding beyond these systems \cite{BDV}.
We propose that robust entropy conditions provide a way to
do this for new open sets of dynamical systems, by implying a non-uniform but global hyperbolic
structure, especially with
respect to measures maximizing entropy (see section  \ref{sec-defs}
for definitions).

Such invariant probability measures can be thought as describing the
``complexity'' of the dynamics. These measures exist as soon as the dynamics is
compact and $C^\infty$ \cite{Newhouse} or somewhat hyperbolic \cite{BuzziRuette},
though they are known to fail to exist in finite smoothness for interval maps 
\cite{BuzziSIM,RuetteExamples} and diffeomorphisms of four dimensional tori \cite{MisiurewiczExamples}.
Uniqueness problems are usually much more delicate and can be solved
only after a global analysis of the dynamics which we propose to do
under entropy conditions.

\emph{Entropy-expansion} is such a condition. It
requires the topological entropy (see section \ref{sec-defs}) to be strictly larger
than the supremum of the topological entropies of all smooth hypersurfaces. It is robust in the sense
that it is open in the $C^\infty$ topology. Entropy-expanding
$C^\infty$ maps $T:M\to M$ have a finite number of ergodic and invariant probability measures
maximizing the entropy. Their periodic points
satisfy a \new{multiplicative lower bound}:
 \begin{equation}\label{eq:mult-bound}
   \liminf_{n\to\infty,p|n} e^{-nh_\top(T)} \#\{x\in M: T^nx=x\}
    > 0
 \end{equation}
for some integer $p\geq1$ (a period)
(see \cite{BuzziEE,BuzziPQFT} for precise definitions and statements
including other results).

Entropy-expansion is satisfied by plane maps of the type $(x,y)\mapsto
(1.9-x^2+\eps y,1.8-y^2+\eps x)$ for small $\eps$ \cite{BuzziSMF}. On the
interval, the condition reduces to nonzero topological entropy.
In fact, entropy-expansion can be understood as
a generalization of some aspects of one-dimensional dynamics.
Indeed, the previous results were first proved by Hofbauer
\cite{Hofbauer,HofbauerPer} for piecewise monotone maps on the
interval and our approach has built on his techniques.

\medbreak

The techniques used in the above mentioned papers do not apply to
diffeomorphisms (e.g., a diffeomorphism is never entropy-expanding).
However, many properties of interval maps generalize to surface
diffeomorphisms so the following is generally expected:

\begin{conjecture}
  Consider a $C^{1+\eps}$ diffeomorphism ($\eps>0$) of a compact surface
 with nonzero topological entropy.

  The collection of ergodic and invariant probability measures with maximal
  entropy is
  {\bf countable} (possibly finite or empty) and
  the periodic points satisfy a multiplicative lower
  bound if there exists at least one measure with maximum
  entropy.
\end{conjecture}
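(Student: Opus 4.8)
The plan is to transpose the strategy of the piecewise affine case to the smooth setting: replace the (now unavailable) Markov partitions by a Markov tower built from carefully selected \emph{good} returns, but over the non-uniformly hyperbolic structure supplied by Pesin theory instead of over an explicit piecewise linear geometry. There are three stages: (i) show that every measure relevant to the problem is hyperbolic; (ii) build a coding of $T$ by a countable Markov shift that captures all of the entropy and to which the ergodic measures of maximal entropy lift; (iii) read off both conclusions from the theory of countable Markov shifts. For (i): if $\mu$ is ergodic with $h(\mu)>0$ --- in particular any ergodic measure of maximal entropy, since $h_\top(T)>0$ --- then, as $\dim M=2$, Ruelle's inequality applied to $T$ yields a strictly positive Lyapunov exponent and applied to $T^{-1}$ forces the remaining exponent to be strictly negative, so $\mu$ is hyperbolic. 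This is exactly where the hypothesis $C^{1+\eps}$ enters: on Pesin blocks one then has local stable and unstable manifolds of definite size with $\eps$-H\"older distortion bounds, which fails for merely $C^1$ maps. By the variational principle, together with the fact that entropy is affine over ergodic decompositions, $\sup\{h(\mu):\mu\text{ hyperbolic ergodic}\}=h_\top(T)$.

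For (ii) --- the core of the argument --- fix a Pesin block of definite hyperbolicity $\chi>0$ and definite mass; it carries a product-like structure of local stable and unstable curves. Following the paper, one should \emph{not} code by first returns to the block, which would in general lose entropy and break the Markov property, but by \emph{good} returns: times at which the tracked unstable curve has been expanded to a definite size, its distortion is controlled, and the local geometry is uniformly transverse. The complementary ``bad'' behaviour --- orbits whose good returns are too sparse --- must be shown to carry arbitrarily little entropy; this is the role of the entropy and ergodic arguments the paper develops in the affine case, bounding the entropy of the set of points whose $n$-th good return occurs too late, and it is here that one exploits the recurrence statistics (Birkhoff sums, Kac's lemma) of the measures in play. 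Assembling such blocks over a sequence $\chi\downarrow0$ produces a locally finite Markov partition, hence a factor map $\pih:\Sh\to M$ from a countable Markov shift $\Sh$, finite-to-one with bounded multiplicity on its recurrent part, defined on an invariant set of full measure for every hyperbolic $\mu$, with $\sup_\nu h(\nu)=h_\top(T)$ (the supremum over $\sigma$-invariant $\nu$ on $\Sh$), and such that every ergodic measure of maximal entropy of $T$ lifts to one carried by a topologically transitive component of $\Sh$.

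For (iii): a topologically transitive countable Markov shift carries at most one measure of maximal entropy (Gurevich), and $\Sh$ has at most countably many transitive components, so $T$ has at most countably many ergodic measures of maximal entropy --- the countability assertion. If at least one such measure exists, it is carried by a positive recurrent transitive component $\Sh_0$ whose Gurevich entropy equals $h_\top(T)$, and Gurevich's periodic-orbit counting theorem gives $\liminf_{n\to\infty,\,p\mid n}e^{-nh_\top(T)}\#\{x\in\Sh_0:\sigma^n x=x\}>0$ for $p$ the period of $\Sh_0$; pushing forward by $\pih$, which sends periodic points to periodic points with bounded multiplicity, the subexponential loss is absorbed into the $\liminf$ and \eqref{eq:mult-bound} follows for $T$.

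The main obstacle is stage (ii), and within it the entropy-negligibility of the bad set in a genuinely non-uniformly hyperbolic $C^{1+\eps}$ setting. In the piecewise affine world distortion is trivial, the pieces and their images are explicitly controlled, and the good returns can essentially be chosen by hand; here the selection must be run simultaneously over Pesin blocks of every hyperbolicity level, with only H\"older distortion control and with hyperbolicity rates that vary along orbits, so the bookkeeping that discards the bad orbits --- and the verification that the resulting coding still captures the full entropy and is finite-to-one on a large set --- is considerably more delicate. This is essentially the problem of constructing countable Markov codings for $C^{1+\eps}$ surface diffeomorphisms, and the present paper's ``good return'' viewpoint, rather than first returns or global Markov partitions, seems the right route to it.
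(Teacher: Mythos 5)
The statement you are addressing is stated in the paper as a \emph{conjecture}, and the paper offers no proof of it; it only proves the analogous result in the much more rigid piecewise affine setting. Your proposal is a sensible research program --- essentially the ``countable Markov coding over Pesin blocks via good returns'' route --- but it is not a proof, and you concede as much yourself: stage (ii), which is the entire mathematical content, is asserted rather than executed. Stages (i) and (iii) are routine (Ruelle's inequality on a surface, Gurevi\v{c} uniqueness on transitive components, Vere-Jones/Gurevi\v{c} loop counting), and they do mirror what the paper does in the affine case; but they are conditional on the existence of a countable Markov coding that (a) captures the full topological entropy, (b) is finite-to-one with bounded multiplicity on its recurrent part so that periodic-orbit counts and maximal entropy measures transfer, and (c) is large enough that \emph{every} ergodic measure of maximal entropy lifts. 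None of (a)--(c) is established by your sketch.

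The concrete missing step is exactly the one the paper itself identifies as the obstruction to extending its method to diffeomorphisms: a link between short stable/unstable manifolds (small Lyapunov charts) and an entropy deficit. In the affine case this link is supplied by Claim \ref{claim-len} and its relatives, whose proof rests on counting cylinders of the finite partition $P^n$ together with the vanishing of the multiplicity entropy $h_\mult(T,P)$ --- objects that simply do not exist for a smooth diffeomorphism, where there is no canonical finite partition and distortion is only H\"older-controlled along orbits with varying hyperbolicity rates. Your phrase ``the complementary bad behaviour must be shown to carry arbitrarily little entropy'' names the problem without solving it; the paper's semi-uniform estimates (Proposition \ref{prop-size-W}) and the subsequent admissible-strip bookkeeping (Propositions \ref{prop-adm-strip} and \ref{prop-ent}) are the affine substitute for this and have no automatic smooth analogue. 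Likewise, bounded multiplicity of the coding on periodic orbits is proved in the paper (Lemma \ref{lem-mult2}) by a piecewise-affine incidence argument that does not transfer. Until these points are supplied, what you have is a plausible outline of a (hard) theorem, not a proof of the conjecture.
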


\begin{conjecture}
   Consider a $C^{\infty}$ diffeomorphism of a compact surface
 with nonzero topological entropy.

   The collection of ergodic and invariant probability measures with maximum
  entropy is
  {\bf finite} and the periodic points satisfy a multiplicative lower
  bound.
\end{conjecture}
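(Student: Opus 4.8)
\medskip\noindent\emph{Sketch of a possible approach.} Although this is stated as a conjecture, one can outline a program transporting the strategy of the present paper from the piecewise affine setting to the smooth one. Since $h_\top(T)>0$, any measure $\mu$ maximizing the entropy has $h(T,\mu)>0$; hence, by the Ruelle inequality together with Pesin theory, $\mu$ is hyperbolic, with one Lyapunov exponent $\geq\lambda$ and one $\leq-\lambda$ (for some $\lambda>0$) on a set of definite $\mu$-mass. By Katok's theory $T$ then carries horseshoes of entropy arbitrarily close to $h_\top(T)$, and in the $C^\infty$ category Newhouse's theorem---via Yomdin's bound on the growth of volume under iteration---guarantees that at least one maximizing measure exists. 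Thus the collection to be described is a nonempty family of hyperbolic ergodic measures, and the task is to impose on it the countable-to-finite rigidity that the piecewise affine structure provides here.

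\medskip The core construction would be a symbolic model playing the role of the present paper's jump transformation. Along Lyapunov-regular orbits one builds Pesin/Lyapunov charts and, by shadowing pseudo-orbits of charts, a locally finite countable oriented graph $\Sigma$---a topological Markov shift---together with a H\"older boundary map $\pi\colon\Sigma\to M$ that conjugates the shift to $T$ on $\pi(\Sigma)$, is finite-to-one, and is such that $\pi(\Sigma)$ carries full measure for \emph{every} hyperbolic ergodic measure, in particular every maximizing one. The coding should be entropy-preserving, $h(\sigma,\hat\mu)=h(T,\pi_*\hat\mu)$, so that the Gurevich entropy of $\Sigma$ equals $h_\top(T)$ and is attained on at least one irreducible component.

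\medskip On each irreducible countable Markov component one then applies the thermodynamic formalism of Gurevich, Savchenko and Buzzi--Sarig: there is at most one measure of maximal entropy; it exists precisely when the component is positive recurrent; and in that case its periodic orbits obey a multiplicative lower bound with a period equal to the greatest common divisor of the lengths of its loops. Pushing forward through the finite-to-one map $\pi$, each component that carries a measure of entropy $h_\top(T)$ contributes exactly one ergodic MME of $T$ together with a matching asymptotic count of genuine periodic orbits of $T$. It remains to bound the number of such components. Two ergodic MMEs supported on distinct components are mutually singular and have disjoint homoclinic classes; combining the definite Pesin-block mass each of them carries with a compactness argument over the finitely many near-maximal horseshoes produced in the first step should force only finitely many components to be positive recurrent with full Gurevich entropy. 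Taking $p$ to be the least common multiple of the resulting finitely many periods then yields $\liminf_{n\to\infty,\,p\mid n} e^{-nh_\top(T)}\,\#\{x\in M:T^nx=x\}>0$.

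\medskip The difficulty is concentrated in two places. The symbolic construction itself---shadowing in Lyapunov charts with controlled distortion, an improvement step making the partition locally finite, and an inverse theorem recovering a full-measure subset of every hyperbolic measure---is the technical heart, and it is here that $C^\infty$ is essential: Yomdin-type volume-growth estimates are what prevent entropy from leaking to infinity along the countable alphabet, and in finite smoothness this leakage genuinely occurs (as Buzzi's interval examples show), which is exactly why the companion conjecture for $C^{1+\eps}$ claims only countably many maximizing measures. The second obstacle is the finiteness count: a priori infinitely many irreducible components could each carry a measure of the common value $h_\top(T)$, and excluding this seems to require a global input---an entropy-density or homoclinic-class argument---beyond the purely symbolic recurrence estimates that already suffice on the interval and in the piecewise affine case treated above.
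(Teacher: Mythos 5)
You are addressing Conjecture 2 of the paper, and the paper does \emph{not} prove it: it proves only the analogue for piecewise affine homeomorphisms (Theorem \ref{thm-max-ent}) and explicitly leaves the smooth case open, identifying the missing ingredient as ``a link between short stable/unstable manifolds and small entropy'' for diffeomorphisms. Your text is, by your own account, a program rather than a proof, and the two places where you concentrate ``the difficulty'' are not technical loose ends but the entire content of the conjecture. First, the countable-state Markov coding you postulate --- a locally finite graph with a finite-to-one, entropy-preserving boundary map whose image has full measure for \emph{every} hyperbolic ergodic measure --- is exactly the object whose construction is the problem. In the piecewise affine case this coding is extracted from the singularity partition $P$: the semi-uniform lower bounds on the lengths and angles of $W^s(x)$ and $W^u(x)$ (Proposition \ref{prop-size-W}, via Claims \ref{claim-angle} and \ref{claim-len}) are obtained by playing conditional entropy against the multiplicity entropy of $P$, and these bounds are what make the Markov rectangles, the admissible strips and the jump transformation possible. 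A $C^\infty$ diffeomorphism has no distinguished partition, and shadowing in Lyapunov charts in the style of Katok produces horseshoes of nearly maximal entropy (hence the logarithmic bound on periodic points) but not an \emph{invariant} decomposition of typical orbits --- the paper's footnote stresses that finding invariant decompositions is precisely the obstruction. Second, even granting the coding, your finiteness step is an assertion (``should force only finitely many components''), not an argument: nothing you write excludes infinitely many irreducible components each positive recurrent at entropy $h_\top(T)$, whereas in the paper finiteness comes from the concrete fact that every irreducible component of $\mathcal G$ contains a vertex $(w_0,0)$ indexed by one of the finitely many rectangles of the Markov array. The periodic-orbit count has a parallel gap: transferring the Vere-Jones estimate to genuine periodic points of $T$ requires controlling how many symbolic periodic sequences project to one orbit, which the paper does with a multiplicity argument (Lemma \ref{lem-mult2}) specific to piecewise affine maps.

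As a program, what you describe is the natural transport of the paper's architecture (Markov structure, Gurevi\v{c} on each irreducible component, Vere-Jones for loops) to the smooth setting, and your diagnosis of where $C^\infty$ versus $C^{1+\eps}$ matters is sound. But in its present form it establishes nothing beyond what Newhouse and Katok already give, and it cannot stand as a proof of the conjecture.
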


By a result of S. Newhouse \cite{Newhouse}, all
$C^\infty$ maps of compact manifolds have at least one
measure of maximum entropy. Also a classical theorem of A. Katok
\cite{KatokIHES} states that, if $T$ is  a $C^{1+\eps}$, $\eps>0$,
diffeomorphism of a compact surface $M$, then the number of periodic
points satisfies a \new{logarithmic lower bound}:
 $$
    \limsup_{n\to\infty} \frac1n
       \#\log\{x\in M:T^nx=x\}
           \geq h_\top(T),
 $$
i.e., a weak version of (\ref{eq:mult-bound}).

\medbreak

This paper presents the proof of the analogue of Conjecture 2 in
the easier case of piecewise affine homeomorphisms. This replaces
distortion of smooth diffeomorphisms by the singularities of the piecewise
affine maps. However this preserves substantial difficulties.
Indeed, there exist piecewise affine
maps on surfaces without a maximum measure (see Appendix \ref{sec-examples},
though the examples I know are discontinuous --or continuous but piecewise
quadratic) or with infinitely many maximum measures (see also Appendix \ref{sec-examples}). Thus this setting, beyond its own interest as a simple and rather
natural class of dynamics, is challenging enough to allow the development
of new tools which we hope will apply to diffeomorphisms.

\subsection{Definitions and Statements}\label{sec-defs}

Let $M$ be a compact two-dimensional manifold possibly with
boundary, {\bf affine} in the following sense. There exists a
distinguished atlas of charts:
 \begin{itemize}
   \item identifying the neighborhood of any point of $M$
with an open subset of $\{(x,y)\in\RR^2:x\geq 0 \;\&\; y\geq0\}$;
  \item inducing affine changes of coordinates.
 \end{itemize}
These charts are called the {\sl affine charts}. The phenomena we
are considering are independent of the global topology, so we could
in fact restrict ourselves to the special cases $M=\TT^2$ or
$M=[0,1]^2$.

A continuous map $T:M\to M$ is said to be \new{piecewise affine}
if there exists a finite partition $P$ of $M$ such that for every
$A\in P$, $A$ and $T(A)$ are contained in affine charts which map
them to polygons of $\RR^2$ with non-empty interiors and $T:A\to
T(A)$ is affine \wrt these affine charts. It is convenient to
replace the partition $P$ by the collection $\tilde P$ of the
interiors of the elements of $P$. Such a partition $\tilde P$ (a
partition up to the boundaries of its elements) is called an
\new{admissible partition} with respect to $T$. We drop the tilde
in the sequel.

Let us recall some facts about entropy (we refer to \cite{WaltersIntro,KellerEntropy} for further
information). The entropy of a non-necessarily invariant subset
$K\subset M$ is a measure of the ``number of orbits'' starting
from $K$. Recall that the \new{$\eps,n$-ball} at $x\in M$ is:
$\{y\in M:\forall k=0,1,\dots, n-1$ $d(T^ky,T^kx)<\eps\}$. The
entropy of $K$ is, according to the Bowen-Dinaburg formula \cite{BowenEntropy}:
 \begin{equation}\label{eq:def-htop}
    h(T,K):=\lim_{\eps\to0} h(T,K,\eps) \text{ with }
    h(T,K,\eps) := \limsup_{n\to\infty} \frac1n\log r(\eps,n,K)
 \end{equation}
where $r(\eps,n,K)$ is the minimal number of $\eps,n$-balls with
union containing $K$. The \new{topological entropy} is
$h_\top(T):=h(T,M)$.

The entropy of an ergodic and invariant probability measure $\mu$
can be defined similarly, according to \cite{KatokIHES}:
 $$
    h(T,\mu):=\lim_{\eps\to0} h(T,\mu,\eps) \text{ with }
    h(T,\mu,\eps):= \limsup_{n\to\infty} \frac1n\log r(\eps,n,\mu)
 $$
where $r(\eps,n,\mu)$ is the minimal number of $\eps,n$-balls
whose union has a $\mu$-measure at least $\lambda$, for a constant
$\lambda\in(0,1)$ ($h(T,\mu)$ is independent of $\lambda$).

The \new{variational principle}  states that for $T:M\to M$ (in
fact for any continuous self-map of a compact metric space \cite{WaltersIntro}):
 \begin{equation}\label{eqVarPrin}
    h_\top(T) = \sup_\mu h(T,\mu)
 \end{equation}
where $\mu$ ranges over the $T$-invariant and ergodic probability
measures.

The following combinatorial expression for topological entropy
follows from observations of S. Newhouse and will be the starting
point of our investigations. 

\begin{proposition}\label{prop-MS} Let $T$ be a piecewise affine
homeomorphism of a compact surface. The topological entropy of $T$
is given by:
 \begin{equation}\label{eq-MS}
   h_\top(T) = \limsup_{n\to\infty} \frac1n\log\#\{A_0\cap f^{-1}A_1\cap\dots
     \cap f^{-n+1}A_{n-1}\ne\emptyset:A_i\in P\}.
 \end{equation}
\end{proposition}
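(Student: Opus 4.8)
The plan is to establish the two inequalities $h_\top(T)\le\ell$ and $h_\top(T)\ge\ell$, where $a_n$ denotes the number of nonempty $n$-cylinders $A_0\cap T^{-1}A_1\cap\dots\cap T^{-n+1}A_{n-1}$ ($A_i\in P$) and $\ell:=\limsup_n\frac1n\log a_n$. First I would record two preliminaries. An $(m+n)$-cylinder is the intersection of an $m$-cylinder with the $T^{-m}$-preimage of an $n$-cylinder, and this decomposition is injective, so $a_{m+n}\le a_m a_n$; hence $(\log a_n)_n$ is subadditive and $\ell=\lim_n\frac1n\log a_n=\inf_n\frac1n\log a_n\ge0$ is a genuine limit. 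Moreover the nonempty $n$-cylinders are open (finite intersections of images of the open sets of $P$ under the homeomorphisms $T^{-j}$) and their union is $M$ minus the finite union of rectifiable curves $\bigcup_{j<n}T^{-j}S$, where $S=\bigcup_{A\in P}\partial A$.

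For the upper bound, the key point --- this is the ``observation of Newhouse'' --- is that piecewise affinity makes the iterated dynamics geometrically tame uniformly in $n$. Fix a nonempty $n$-cylinder $C$. On $C$ every $T^j$, $j\le n$, is affine, and $T^j(C)\subseteq A_j$, a polygon of diameter $\le D_0:=\max_{A\in P}\diam(A)$. Hence the orbit map $x\mapsto(x,Tx,\dots,T^{n-1}x)$ embeds $C$ affinely onto a piece of a $2$-dimensional affine subspace of $\prod_{j<n}A_j\subseteq(\RR^2)^n$, contained in a box of side $D_0$. Superimposing the grid of side $\eps$ on this box, its coordinate hyperplanes cut the $2$-plane along $O(n/\eps)$ lines, dividing it into $O(n^2/\eps^2)$ regions, each inside one cell; pulling back, $C$ is covered by $O(n^2/\eps^2)$ sets on each of which $T^0,\dots,T^{n-1}$ each vary by at most $\eps$ in their charts, hence by $O(n^2/\eps^2)$ $(c_0\eps,n)$-balls for a metric-comparison constant $c_0$. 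A parallel, easier count (only $1$-dimensional pieces are involved) disposes of the leftover curves $\bigcup_{j<n}T^{-j}S$. Therefore $r(c_0\eps,n,M)\le a_n\cdot O(n^2/\eps^2)$, so $h(T,M,c_0\eps)\le\ell$ for every $\eps>0$, and letting $\eps\to0$ gives $h_\top(T)\le\ell$.

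For the lower bound I would argue through the variational principle \eqref{eqVarPrin}. Choose one point in each nonempty $n$-cylinder, getting $E_n\subseteq M$ with $\#E_n=a_n$ meeting every atom of $Q_n:=\bigvee_{j<n}T^{-j}P$ in exactly one point, and set $\nu_n:=\frac1{a_n}\sum_{x\in E_n}\delta_x$, so that $H_{\nu_n}(Q_n)=\log a_n$. Passing to a weak-$\ast$ limit point $\mu$ of $\frac1n\sum_{k=0}^{n-1}(T^k)_*\nu_n$ and running the telescoping estimate from the standard proof of the variational principle yields $h_\mu(T,P)\ge\limsup_n\frac1n\log a_n=\ell$, whence $h_\top(T)\ge h_\mu(T)\ge\ell$. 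The one technical point is that this needs $\mu(\partial A)=0$ for $A\in P$; since the $\partial A$ are segments this is arranged by a routine further perturbation/limiting argument (alternatively, one works with the subshift of realized $P$-itineraries, whose complexity function is exactly $a_n$, and transfers a measure of maximal entropy through the coding $\pi$ with $\pi\circ\sigma=T\circ\pi$).

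I expect the crux to be the uniform-in-$n$ geometric estimate in the upper bound: that the iterated orbit slices, viewed inside the product of the partition pieces, meet only subexponentially --- in fact polynomially --- many $\eps$-cells. This is exactly where piecewise affinity is essential; it is the affine (degree-one, ``semialgebraic'') structure that makes the counting elementary rather than requiring Yomdin-type algebraic input, and a general bi-Lipschitz surface homeomorphism respecting a finite partition need not satisfy \eqref{eq-MS}. The secondary nuisance, handled by standard but fiddly arguments, is bookkeeping around the boundary web $S$: since the elements of $P$ form a partition only up to boundaries, one must separately control the curves $\bigcup_{j<n}T^{-j}S$ in the upper bound and ensure the limit measure does not concentrate on $S$ in the lower bound.
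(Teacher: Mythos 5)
Your upper bound is correct and is essentially the paper's own argument in disguise: the grid count on the affine orbit slice inside $\prod_{j<n}A_j$ is precisely Lemma \ref{lem-compos-lin} (stated there for compositions of linear maps, with proof left to the reader), and it shows each nonempty $n$-cylinder is covered by polynomially many $(c_0\eps,n)$-balls, whence $h_\top(T)\le\ell$. One correction of attribution, which turns out to matter: the ``observation of Newhouse'' in this paper is not the tameness of the orbit slices but Lemma \ref{lem-mult-ent}, the vanishing of the multiplicity entropy $h_\mult(T,P)$ --- a bound on how many $n$-cylinders can accumulate at a single point. That is exactly the ingredient your lower bound is missing.

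The gap is in the lower bound, and it is not the routine nuisance you make it out to be. Your Misiurewicz-type empirical-measure argument (and equally your alternative of pushing a maximal measure of the subshift down through the coding) uses neither the hypothesis that $T$ is a homeomorphism nor that $\dim M=2$. But Example \ref{ex-pw-d2-mult} in Appendix \ref{sec-examples} is a continuous piecewise affine surface \emph{map} for which all $2^n$ cylinders are nonempty, so the right-hand side of \eqref{eq-MS} equals $\log 2$, while $h_\top(T)=0$: there the averaged measures $\frac1n\sum_{k<n}(T^k)_*\nu_n$ converge to the Dirac mass at the vertex $O$, which lies in $\overline{A}$ for every $A\in P$, and all entropy is lost in the limit. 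So the failure of upper semicontinuity of $\nu\mapsto H_\nu(Q_q)$ at $\partial P$ is the actual failure mode of the formula, not a technicality; you cannot perturb $P$ (it is the partition defining $a_n$), and the fact that positive-entropy measures do not charge $\partial P$ is Proposition \ref{prop-non-sing}, whose proof comes \emph{after} and depends on the entropy formula. The paper avoids this circularity by working in the fibered product $\SxM$, where the partition is symbolic and has no boundary, and by controlling the fibers of the two projections: the fibers of $\pi_1$ by your grid argument (Lemma \ref{lem-compos-lin}), and the fibers of $\pi_2$ by the zero multiplicity entropy of piecewise affine surface \emph{homeomorphisms} (Lemma \ref{lem-mult-ent}) --- this is where invertibility and two-dimensionality enter. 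Bowen's theorem on extensions with zero fiber entropy then transports a maximal measure of the subshift to $M$ with no entropy loss. To repair your proof you must supply this second fiber estimate and route the lower bound through it (or through an equivalent device).
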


\begin{remark}
The above entropy formula was also obtained by {\sc D. Sands} and {\sc Y. Ishii} \cite{SandsIshii} by
different methods.

Misiurewicz and Szlenk \cite{MS} established the
same formula for piecewise monotone maps of the intervals.
\end{remark}

The proof is given in Sec. \ref{sec-sd}.

\medbreak

The variational principle (\ref{eqVarPrin}) brings to the fore the
ergodic and invariant probability measures $\mu$ such that
$h(T,\mu)=\sup_\nu h(T,\nu)=h_\top(f)$. We call them
\new{maximum measures}. A corollary of the proof of the
previous Proposition is the following existence result (compare with
the examples in appendix \ref{sec-examples}).

\begin{corollary}
A  piecewise affine homeomorphism of a compact surface has at
least one maximum measure.
\end{corollary}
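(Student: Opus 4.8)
The plan is to manufacture a maximum measure directly from the cylinders counted in Proposition~\ref{prop-MS}, following the classical construction of limits of equidistributions on exponentially many separated orbits (the ``easy'' half of the variational principle). From the proof of Proposition~\ref{prop-MS} I would take, for each $n$, a finite set $E_n\subset M$ obtained by picking one point well inside each full-dimensional nonempty $n$-cylinder, chosen so that $E_n$ is $(\eps_0,n)$-separated (no two of its points stay $\eps_0$-close during the first $n$ iterates) for some single $\eps_0>0$ independent of $n$, and so that $\limsup_n\frac1n\log\#E_n=h_\top(T)$. That the full-dimensional cylinders already realize the entropy and that a scale $\eps_0$ uniform in $n$ can be arranged is exactly the geometric content behind Proposition~\ref{prop-MS}---this is where the piecewise affine structure is used---so I would simply invoke it.

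Next, set $\mu_n:=\frac1{\#E_n}\sum_{x\in E_n}\frac1n\sum_{k=0}^{n-1}\delta_{T^kx}$ and pass to a subsequence $(n_j)$ along which $\mu_{n_j}$ converges weak-$*$ to a probability $\mu$ and $\frac1{n_j}\log\#E_{n_j}\to h_\top(T)$; a standard telescoping estimate shows $\mu$ is $T$-invariant. On the compact surface $M$, fix a finite partition $Q$ with $\diam Q<\eps_0$ and $\mu(\partial Q)=0$---for instance a fine polygonal grid, translated so that $\mu$ charges none of its one-dimensional edges.

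Now I would run the usual argument. Because $\diam Q<\eps_0$, any two points in the same atom of $\bigvee_{k=0}^{n-1}T^{-k}Q$ stay within $\eps_0$ of each other for the first $n$ iterates, so such an atom contains at most one point of $E_n$; hence the uniform probability on $E_n$ has entropy $\log\#E_n$ for this partition. The standard convexity-plus-telescoping lemma (Misiurewicz), with $\mu(\partial Q)=0$ used to pass to the limit along $(n_j)$, then gives $h_\mu(T,Q)\ge\limsup_j\frac1{n_j}\log\#E_{n_j}=h_\top(T)$, where $h_\mu(T,Q)$ denotes the Kolmogorov--Sinai entropy of $Q$. Thus $h_\mu(T)\ge h_\mu(T,Q)\ge h_\top(T)$, while (\ref{eqVarPrin}) bounds the entropy of every ergodic measure from above by $h_\top(T)$; since entropy is affine in the measure, some ergodic component $\mu'$ of $\mu$ satisfies $h(T,\mu')=h_\top(T)$, i.e.\ $\mu'$ is a maximum measure.

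The step I expect to be the crux is the passage from ``many combinatorially distinct orbits'' to ``large metric entropy of the limit $\mu$'': it goes through only because the orbits in $E_n$ are separated at the \emph{fixed} scale $\eps_0$, which is what lets them be tested against a single $\mu$-boundaryless partition $Q$. Granting the conclusion of Proposition~\ref{prop-MS}, all the remaining ingredients are soft and classical; the genuine work---controlling the geometry of the cylinders so that one separation scale suffices---already sits inside that proof, which is why the statement is phrased as a corollary of it.
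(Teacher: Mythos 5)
Your soft steps are all fine: the empirical measures $\mu_n$, the weak-$*$ limit along a subsequence, the choice of a $\mu$-boundaryless partition $Q$ finer than the separation scale, the Misiurewicz convexity-plus-telescoping bound $h_\mu(T,Q)\ge\limsup_j\frac1{n_j}\log\#E_{n_j}$, and the passage to an ergodic component via affinity of entropy. The gap is exactly at the step you flag as the crux, and it is not covered by Proposition~\ref{prop-MS}. That proposition counts nonempty $n$-cylinders; it does not produce $(\eps_0,n)$-separated sets of comparable cardinality for a \emph{fixed} $\eps_0$. Picking one point per full-dimensional $n$-cylinder does not give such a set: there are of order $e^{h_\top(T)n}$ cylinders packed into a compact surface, so typical cylinders are exponentially thin, and two points lying in adjacent thin cylinders can remain $\eps_0$-close for all $n$ iterates even though their itineraries differ (at the time $k$ where the itineraries disagree, the two images merely sit on opposite sides of an edge of $P$). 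What the paper actually controls is the \emph{pointwise} multiplicity ($h_\mult(T,P)=0$, Lemma~\ref{lem-mult-ent}) and, in Claim~\ref{claim-len}, the number of cylinders meeting a ball of radius $r(\eps,n)$ that \emph{shrinks with} $n$ --- not the number of $n$-cylinders meeting a tube of fixed radius $\eps_0$ around an orbit. The statement you need is equivalent to $h(T,M,\eps_0)=h_\top(T)$ for some single $\eps_0>0$, i.e.\ a form of expansiveness at a definite scale, which the paper neither asserts nor needs, and which does not follow from the cylinder-count formula.

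The paper's own proof circumvents this entirely by moving to the symbolic dynamics, where your separation claim is true by fiat: distinct cylinders of $\Sigma$ are $1$-separated in the shift metric, so the compact subshift $\Sigma$ is expansive and has a maximum measure by the standard result. The substance is then the transfer back to $M$: Lemma~\ref{lem-ext-ent} shows that both projections of the common extension $\SxM$ are entropy-preserving and surjective on invariant measures, using Bowen's fiber-entropy theorem together with Lemmas~\ref{lem-mult-ent} and~\ref{lem-compos-lin} to see that the fibers carry zero entropy; a maximum measure of $\Sigma$ therefore lifts and projects to a maximum measure of $T$ with $h_\top(\Sigma)=h_\top(T)$. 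If you want to keep your architecture, the honest repair is to run your separated-set argument inside $\Sigma$ (where it is the classical proof for expansive systems) and then invoke Lemma~\ref{lem-ext-ent} to carry the resulting measure to $M$ --- but that transfer is precisely the nontrivial content you have assumed away by asserting fixed-scale separation in the surface metric.
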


Our main result, finally established in Sec. \ref{sec-main-thm}, is:

\begin{theorem}\label{thm-max-ent}
Let $T:M\to M$ be a piecewise affine homeomorphism of a compact
affine surface. Assume that $h_\top(T)>0$. Then there are finitely
many ergodic, invariant probability measures maximizing the
entropy (or maximum measures).
\end{theorem}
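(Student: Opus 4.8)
The plan is to reduce the counting of ergodic maximum measures to a finiteness statement about the irreducible components of a countable-state topological Markov shift built by inducing $T$ on carefully chosen ``good'' returns; the one genuinely hard input will be a strict ``entropy at infinity'' inequality for that shift.

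\textbf{Step 1: uniform hyperbolicity of maximum measures.} First I would show that every ergodic $\mu$ with $h(T,\mu)>0$ is hyperbolic with exponents bounded away from $0$ by $h_\top(T)$. Off the singular set — a finite union of line segments, which one first has to check is $\mu$-negligible or handle directly — the piecewise affine map has a measurable integrable derivative cocycle, so Oseledets gives exponents $\lambda^-(\mu)\le\lambda^+(\mu)$. Ruelle's inequality applied to $T$ and to $T^{-1}$ (again a piecewise affine homeomorphism, with $h(T^{-1},\mu)=h(T,\mu)$) yields $h(T,\mu)\le\max(\lambda^+(\mu),0)+\max(\lambda^-(\mu),0)$ and $h(T,\mu)\le\max(-\lambda^-(\mu),0)+\max(-\lambda^+(\mu),0)$; for $h(T,\mu)>0$ these force $\lambda^-(\mu)<0<\lambda^+(\mu)$ together with $\lambda^+(\mu)\ge h(T,\mu)$ and $-\lambda^-(\mu)\ge h(T,\mu)$. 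Hence every maximum measure (one exists, by the existence corollary above) is uniformly hyperbolic. Since $T$ is affine there is no distortion and the local stable/unstable sets of generic points are genuine line segments; the uniform exponents give, by a Pesin--Katok argument, stable and unstable segments of a definite length along a definite fraction of every such orbit.

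\textbf{Step 2: a Markov structure from good returns.} Using that uniformity I would fix, once and for all, a finite family $\mathcal R$ of ``rectangles'' — products of a stable and an unstable segment in affine charts — of a definite size, and put $Y=\bigcup\mathcal R$. Instead of first returns to $Y$, call a time $n\ge1$ a \emph{good return} of $x\in Y$ when $T^nx\in Y$ and the segment $x,\dots,T^nx$ is a Markov return in the hyperbolic-coding sense (the unstable fibre stretched fully across the target rectangle, the stable fibre contracted inside it), with the invariant segments uncut and a bounded number of $P$-pieces crossed. Let $\rho$ be the first good return time and $\hat Y$ the set of points with infinitely many good returns. Because returns are Markov and $T$ is distortion-free, the induced map $\hat T=T^{\rho}$ on $\hat Y$ is conjugate to a countable-state topological Markov shift $\hat\Sigma$, carrying a coding map $\hat\pi:\hat\Sigma\to M$ that is finite-to-one — here the convexity of $P$-cylinders, so that a cylinder coincides with its saturation along stable and unstable directions, is what I would exploit — and that respects entropy on invariant measures; the $T$-length of an excursion between consecutive good returns is a locally constant height function $\rho$ on $\hat\Sigma$.

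\textbf{Step 3: lifting maximum measures, and Abramov.} Next I would show every ergodic maximum measure $\mu$ is seen by this structure: Step 1 and the choice of $Y$ give $\mu(Y)>0$, Poincar\'e recurrence gives infinitely many returns to $Y$, and an ergodic/entropy argument — if good returns had zero frequency, the orbit would spend a positive proportion of time in a region whose symbolic complexity, via Proposition \ref{prop-MS}, is strictly below $h_\top(T)$, contradicting maximality — gives $\mu(\hat Y)>0$ and $\int\rho\,d\hat\mu<\infty$ for the lift $\hat\mu$. Abramov's formula then reads $h(\hat T,\hat\mu)=h(T,\mu)\int\rho\,d\hat\mu$, and a short variational computation identifies the maximum measures of $T$ with the $\hat T$-invariant probabilities $\hat\mu$ realising $\sup_{\hat\nu}\bigl(h(\hat T,\hat\nu)-h_\top(T)\int\rho\,d\hat\nu\bigr)$, a supremum equal to $0$. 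So the task becomes: count the positive-recurrent equilibrium measures, at Gurevich pressure $0$, of the potential $-h_\top(T)\rho$ on the countable Markov shift $\hat\Sigma$, and by Gurevich--Sarig theory these live on pairwise disjoint irreducible components, one per component.

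\textbf{Step 4: finiteness — the main obstacle.} The decisive remaining step is to show that the ``pressure at infinity'' of $-h_\top(T)\rho$ on $\hat\Sigma$ is strictly negative, equivalently that for some $\delta>0$
\[
 \#\{\text{excursions with }\rho=k\}\ \le\ C\,e^{(h_\top(T)-\delta)k}\qquad(k\ge1).
\]
This says exactly that \emph{long} excursions — which by construction fail to be good returns for a long time and so must track the ``dangerous'' combinatorics (orbits near singularities, cutting of invariant segments) — are exponentially scarcer than the total complexity $e^{h_\top(T)k}$ allows; this is where the good-return selection earns its keep and where dimension two, affineness and $h_\top(T)>0$ are genuinely used, and I expect it to be by far the hardest part (the bad-excursion count does not obviously have entropy below $h_\top(T)$, so a real combinatorial/ergodic estimate is needed rather than a soft ``one-dimensional sets carry no entropy''). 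Granting it, every irreducible component of $\hat\Sigma$ with Gurevich pressure $0$ for $-h_\top(T)\rho$ and a positive-recurrent equilibrium measure must meet a fixed finite set of states; distinct ergodic maximum measures of $T$ yield measures on distinct such components, so there are finitely many, which proves the theorem.
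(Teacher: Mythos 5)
Your overall strategy (hyperbolicity of large-entropy measures via Ruelle's inequality applied to $T$ and $T^{-1}$, a finite family of rectangles, Markov returns, an induced countable-state Markov shift, Gurevi\v{c}'s theorem) is the same as the paper's, but the two decisive steps are missing, and one of them is glossed over in a way that hides the main difficulty. In Step 2 you assert that ``because returns are Markov and $T$ is distortion-free'' the induced map $T^\rho$ is conjugate to a countable-state Markov shift with a finite-to-one, entropy-respecting coding. This is exactly the point the paper identifies as the fundamental obstruction: hyperbolic (Markov) returns are not unique --- a single orbit sits inside infinitely many nested hyperbolic strips, and different anchor times produce incompatible decompositions into such strips, so ``first good return'' does not yield a shift-equivariant, uniquely decodable cutting of two-sided orbits. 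The paper must introduce the subclass of \emph{admissible} strips, whose defining condition forces uniqueness of the decomposition of forward itineraries (Lemma \ref{lem-adm-uniq}); it then takes the return time to be the \emph{largest} admissible prefix, shows this equals the good return time (Claim \ref{claim-Sigmak}), and verifies in Appendix \ref{appendix-jump} that tower measures are finite extensions of $T$-measures. Without an ingredient playing this role, your identification of the maximum measures of $T$ with equilibrium states of $-h_\top(T)\rho$ on $\hat\Sigma$ does not go through.

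Second, the estimate you isolate in Step 4 --- correctly, as the hardest part --- is not only unproved but stated in a form stronger than what the paper establishes. The paper does not prove a uniform exponential bound $\#\{\rho=k\}\le Ce^{(h_\top(T)-\delta)k}$ on the number of excursions (a strictly negative ``entropy at infinity''); it proves instead (Proposition \ref{prop-tau}) that every ergodic measure of entropy close to $h_\top(T)$ has bounded \emph{average} return time, by showing that an orbit segment with no good return is confined to a low-entropy language $C(\ell)$ and invoking a Rudolph-type covering argument (Proposition \ref{prop-entropy-bound}); this bound is explicitly not semi-uniform, as it depends on Birkhoff convergence rates for the given measure. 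Even this weaker statement consumes all of Sections \ref{sec-markov-struct}--\ref{sec-good-returns}: rectangles whose ``holes'' have small relative measure, the $(\mathcal R,L)$-periodic extension needed to separate in time the crossings of the new rectangle boundaries, and the determination of long non-returning itineraries up to multiplicity $4\cdot 2^r$ (Proposition \ref{prop-adm-strip}). Two smaller caveats: the finiteness of the irreducible components carrying maximal entropy comes from the fact that every loop of the graph passes through a vertex $(w,0)$ attached to one of the finitely many rectangles, not from a generic ``finite set of states'' claim; and the lower bounds of your Step 1 are needed semi-uniformly over all measures with entropy near $h_\top(T)$, not only for maximum measures, since the conclusion is reached by an entropy comparison.
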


We also obtain as by-products (see Sec. \ref{sec-nbr-per} and
\ref{sec-horseshoes}):

\begin{proposition}\label{prop-periodic}
Let  $T:M\to M$ be a piecewise affine homeomorphism of a compact
affine surface with nonzero topological entropy. 
The periodic points satisfies a {multiplicative lower bound}.
\end{proposition}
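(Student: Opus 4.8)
The plan is to read the periodic‑point bound off the symbolic model constructed for Theorem~\ref{thm-max-ent}. That construction produces a countable Markov shift $(\hat\Sigma,\hat\sigma)$, an integer good‑return time $R$ which may be taken constant on $1$‑cylinders, and a coding map $\pi:\hat\Sigma\to M$ with $\pi(\hat\sigma x)=T^{R(x)}(\pi x)$, of bounded multiplicity, through which every maximum measure of $T$ lifts (after inducing on the domain of the jump transformation and normalizing) to an ergodic $\hat\sigma$‑invariant $\hat\mu$ with $h(\hat\sigma,\hat\mu)=h_\top(T)\int R\,d\hat\mu$ by Abramov's formula. Since $R$ is locally constant, the discrete suspension $\hat\Sigma^{R}=\{(x,j):0\le j<R(x)\}$, with the map $S:(x,j)\mapsto(x,j+1)$ for $j+1<R(x)$ and $(x,R(x)-1)\mapsto(\hat\sigma x,0)$, is again a countable Markov shift $\tilde\Sigma$; and $\Pi(x,j):=T^{j}(\pi x)$ intertwines $S$ and $T$, $\Pi\circ S=T\circ\Pi$, with bounded multiplicity, and pushes $\hat\mu$ forward to a maximum measure of $T$. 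It therefore suffices to establish a multiplicative lower bound for $\tilde\Sigma$ and to transport it through $\Pi$.

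For the symbolic bound: because $h_\top(T)>0$, there is a maximum measure $\mu$ (by the Corollary), and it lifts to an ergodic $\tilde\mu$ on $\tilde\Sigma$; being ergodic it is carried by one topologically transitive component $\tilde\Sigma_{0}$, and $h(\tilde\sigma,\tilde\mu)=h(T,\mu)=h_\top(T)$ since $\Pi$ is bounded‑to‑one. The counting estimates underlying Proposition~\ref{prop-MS} bound the Gurevich entropy of $\tilde\Sigma$ by $h_\top(T)$, so $\tilde\Sigma_{0}$ has Gurevich entropy exactly $h:=h_\top(T)$ and carries a measure of maximal entropy; by Gurevich's theorem this forces $\tilde\Sigma_{0}$ to be positive recurrent. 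The Vere–Jones/Gurevich periodic‑orbit theorem for positive recurrent topologically transitive countable Markov shifts then gives, with $p$ the period of $\tilde\Sigma_{0}$,
\[
  \liminf_{n\to\infty,\;p\mid n}\; e^{-nh}\,\#\{x\in\tilde\Sigma_{0}:\tilde\sigma^{n}x=x\}\;>\;0 .
\]

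It remains to transfer this to $M$. Under the conjugacy $\tilde\Sigma\cong\hat\Sigma^{R}$ periodic points correspond bijectively and with the same period, and $\Pi$ sends a point of $S$‑period $n$ to a point fixed by $T^{n}$; using that $\Pi$ has some finite maximal multiplicity $\mathcal M$ one gets $\#\{y\in M:T^{n}y=y\}\ge \mathcal M^{-1}\,\#\{x\in\tilde\Sigma_{0}:\tilde\sigma^{n}x=x\}$ for all $n$, which combined with the display above yields the multiplicative lower bound of Proposition~\ref{prop-periodic} with this $p$. I expect the main obstacle to be precisely this last transfer: one must know that the good‑return coding — and hence its suspension — has genuinely \emph{bounded} multiplicity on periodic points, not merely essential injectivity, so that the measure‑zero set of periodic points is neither collapsed nor inflated by an exponential factor; this, together with the local constancy of $R$, is what has to be extracted from the structure built for Theorem~\ref{thm-max-ent}. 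By contrast the recurrence input — the dynamical heart of such periodic‑point estimates — comes for free here from the mere existence of a maximum measure.
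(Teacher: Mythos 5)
Your symbolic half is essentially the paper's: the existence of a maximum measure forces the relevant irreducible component of the countable Markov shift to be positive recurrent (Gurevi\v{c}), and Vere--Jones then gives $\sim p\,e^{n h_\top(T)}$ loops of length $n$ for $p\mid n$. The gap is exactly where you flag it, and it cannot be deferred: the claim that the coding has a \emph{finite maximal multiplicity} $\mathcal M$ on periodic points is neither proved in the paper nor true in any obvious way. The itinerary coding is injective only off an entropy-negligible set, and periodic orbits --- precisely the ones that may shadow the singularity locus --- can carry several distinct itineraries; a priori a single $T^n$-fixed point could absorb a number of symbolic $n$-periodic orbits growing with $n$, which would destroy the multiplicative bound. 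So the inequality $\#\{y:T^ny=y\}\ge\mathcal M^{-1}\#\{x:\tilde\sigma^nx=x\}$ is unsupported, and the argument is incomplete at its decisive step.

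What the paper does instead (Lemma \ref{lem-mult2} and the counting that follows) is to \emph{quantify} the multiplicity rather than bound it: for a piecewise affine surface homeomorphism one has $\mult(x,P^{k+1})\le\mult(x,P^k)+2$, and the multiplicity can only increase at times when the orbit lies on an edge of $P$, with a transversality condition between such times. Hence a $T^n$-fixed point with $m$ distinct symbolic preimages of period $n$ must visit edges at roughly $m/2$ distinct times in one period; by pigeonhole two consecutive such times are within $2n/(m-1)$ of each other, which pins the point down as a vertex of $P^{2n/(m-1)+1}$, so there are at most about $e^{(2/m)(h_\top(T)+\eps)n}$ such points (plus a subexponential contribution from orbits through vertices, handled via $h_\mult(T,P)=0$). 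If the multiplicity were unbounded along the relevant sequence of periods, this count would be subexponential and contradict the Vere--Jones lower bound on the symbolic side. This combinatorial control of the fibers of the coding over periodic points --- which uses the two-dimensional piecewise affine structure in an essential way --- is the actual content of the proposition and is the piece missing from your proposal; the rest of your outline matches the paper.
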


\begin{proposition}\label{prop:lsc-entropy}
Let  $T:M\to M$ be a piecewise affine homeomorphism of a compact
affine surface. Let $\mathcal S$ be the singularity locus of $M$,
that is, the set of points $x$ which have no neighborhood on which 
the restriction of $T$ is affine. 

For any $\eps>0$, there is a compact invariant set $K\subset M\setminus\mathcal S$
such that
 $$
   h_\top(f|K)>h_\top(f)-\eps.
 $$
Moreover $f:K\to K$ is topologically conjugate to a subshift of finite
type (see \cite{LM}).
\end{proposition}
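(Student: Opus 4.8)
The plan is to deduce Proposition \ref{prop:lsc-entropy} from the symbolic machinery built for Theorem \ref{thm-max-ent}, rather than to attack it directly. First I would fix a maximum measure $\mu$ (which exists by the Corollary) and, more generally, invoke the jump/tower construction announced in the abstract: the ``good returns'' give a countable-state Markov shift $\hat\Sigma$ together with a coding $\hat\pi:\hat\Sigma\to M$ semi-conjugating the shift to $T$, such that $\mu$ (or any ergodic measure close to maximal entropy) lifts to $\hat\Sigma$ and the lift sees a subsystem of entropy arbitrarily close to $h_\top(T)$. The key point I would extract from that construction is that the coding respects the admissible partition $P$: each state of $\hat\Sigma$ records a finite itinerary with respect to $P$, so the $\hat\pi$-image of any cylinder lies in the interior of an element of $P$ and hence stays away from the singularity locus $\mathcal S$ (which is contained in the boundaries $\partial P \cup T^{-1}\partial P \cup \dots$).

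Next I would carry out a standard finite-state approximation inside the countable Markov shift. Since $h_\top(T) = \sup h(T,\mu)$ and the tower captures entropy $>h_\top(T)-\eps/2$, and since the Gurevich entropy of a countable Markov shift is the supremum of the entropies of its finite subgraphs, there is a finite strongly connected subgraph $G$ of $\hat\Sigma$ whose topological entropy exceeds $h_\top(T)-\eps$. Let $\hat\Sigma_G \subset \hat\Sigma$ be the associated subshift of finite type. Then I would set
 $$
   K := \bigcap_{n\in\ZZ} T^n\bigl(\overline{\hat\pi(\hat\Sigma_G)}\bigr),
 $$
the largest $T$-invariant compact subset of the closure of the image, or more carefully, take $K$ to be the orbit-closure of $\hat\pi(\hat\Sigma_G)$. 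Because $G$ is finite, only finitely many $P$-itineraries occur, so $\hat\pi(\hat\Sigma_G)$ has positive distance to $\mathcal S$; I would need to check that this distance estimate survives taking the closure and the intersection over all $T^n$, which it does since the forbidden set $\mathcal S \subseteq \bigcup_{k\ge 0}T^{-k}(\partial P)$ is closed and the itineraries are uniformly bounded away from it. Finally, the restriction $\hat\pi:\hat\Sigma_G\to K$ is a continuous surjection intertwining the shift with $T|K$; a routine argument (the coding is finite-to-one, or one passes to the SFT of overlapping blocks) upgrades this to a topological conjugacy between $T|K$ and a subshift of finite type, and transfers the entropy bound $h_\top(T|K)=h_\top(\hat\Sigma_G)>h_\top(T)-\eps$.

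The main obstacle is the first step: ensuring that the tower/coding constructed for Theorem \ref{thm-max-ent} genuinely lives over $M\setminus\mathcal S$ and carries full (up to $\eps$) entropy on a \emph{finite} subsystem. The subtlety is that near-maximal-entropy measures could in principle charge orbits that accumulate on $\mathcal S$, so one must use the entropy and ergodic arguments of the good-return selection to show that, after discarding a small-entropy part, the remaining dynamics is coded by cylinders bounded away from the singularities. Once the coding and its compatibility with $P$ are in hand, the passage from countable Markov shift to finite SFT and then to a conjugacy on an invariant compact subset of $M\setminus\mathcal S$ is the kind of standard symbolic-dynamics argument I would only sketch, citing \cite{LM} for the SFT facts and the Gurevich-entropy approximation.
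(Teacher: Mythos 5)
Your overall route is the same as the paper's: take the countable oriented graph $\mathcal G$ produced in the proof of Theorem \ref{thm-max-ent}, use Gurevi\v{c}'s approximation of the entropy of a countable Markov shift by finite subgraphs to extract a finite subgraph of entropy $>h_\top(T)-\eps$, and project the resulting subshift of finite type into $M$ to obtain $K$. That part is fine.

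The genuine gap is in the step you yourself flag as needing a check: why $K$ avoids the singularity locus. Your argument is that a finite subgraph involves only finitely many $P$-itineraries, hence the image is ``uniformly bounded away'' from $\partial P$. This does not follow. The coding sends a bi-infinite symbolic sequence $A$ to $\bigcap_{j\geq0}\overline{T^j[A_{-j}\dots A_j]}$, an intersection of \emph{closures} of cylinders; even over a finite alphabet such intersections can perfectly well land on $\partial P$ (the cylinders shrink onto their own boundaries), and the distance from the $n$-cylinders to $\partial P$ typically tends to $0$ as $n\to\infty$. Saying ``the itineraries are uniformly bounded away from $\mathcal S$'' is exactly the statement to be proved, so the argument is circular. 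The paper closes this gap dynamically, not combinatorially: every point of $K$ has infinitely many visits to the controlled set $\kappa(\mathcal R)$ in both the future and the past (because the graph $\mathcal G$ is built from admissible strips beginning and ending in rectangles of the Markov array), whereas a point $x$ on a singularity line would force $Tx\in\partial W^u(Tx)$ or $T^{-1}x\in\partial W^s(T^{-1}x)$, which precludes any further visit to the controlled set in one time direction. You need some version of this invariant-manifold argument; finiteness of the subgraph alone is not enough.

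A secondary issue: the conjugacy with an SFT also hinges on $K\cap\partial P=\emptyset$, since that is what makes the coding injective (each point of $K$ then has a unique itinerary). Your fallback options --- ``the coding is finite-to-one'' or ``pass to the SFT of overlapping blocks'' --- would at best exhibit $T|K$ as a \emph{factor} of an SFT (i.e., sofic), not as topologically conjugate to one, so they do not rescue the final claim. Once the disjointness from $\partial P$ is established by the argument above, injectivity and hence the conjugacy are immediate, as in the paper.
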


\subsection{Outline of the Proof}
We use an alternative approach to the explicit construction of Markov partitions.
We ask less of geometry and use more combinatorics, ergodic theory and entropy estimates 
to accommodate the resulting non-uniqueness of representation. More precisely,
we build small rectangles admitting many returns with "good properties" which allows the
construction of a jump transformation and establish \emph{semi-uniform estimates}, 
that is, uniform estimates holding on subsets of measures that are lower bounded 
\wrt any large entropy measure. The finite number of maximum measures
for the jump transformation follows from results of B.M. Gurevi\v{c} on countable state Markov
shifts. However, the jump transformation is not a first return map to an {\it a priori} 
defined good set. Hence, a careful study of the relation between
the jump transformation and the original dynamics is needed to conclude that the
maximum measures of both systems can be identified. In fact, we analyze
more generally \emph{large entropy measures}, i.e., invariant and ergodic
probability measures with entropy close enough to the supremum.

\medbreak

Let us outline the proof. We start in section \ref{sec-pointwise} by
introducing the natural \emph{symbolic dynamics} of the piecewise affine map
using the partition defined by the singularities of $T$. We first
show that this symbolic dynamics has the same entropy as $T$. This is both a significant
result and a fundamental step in our approach. We then establish that the \emph{(local) stable
$W^s(x)$ and unstable $W^u(x)$ manifolds} of points $x\in M$,
i.e., the sets of points with the same past or future
\wrt the partition $P$, are line segments outside of an entropy-negligible subset.
These line segments can be arbitrarily short and their length may vary discontinuously.
However, we prove semi-uniform lower bounds for their lengths and angles,
using the conditional entropy \wrt the past or the future.
A corollary of these bounds is that the boundary of the partition is negligible
\wrt all ergodic invariant probability measures with nonzero entropy.
\medbreak

At this point, one would like to conclude by an argument of
the following type. If there was a large number of maximum
measures, then one could find two of them, both giving positive measure
to a set $S$ of points with local stable and unstable manifolds
much larger than the diameter of $S$. Hence, one could "jump"
back and forth between typical orbits of each of the two measures.
But one could expect such mixing to allow the construction of
a measure with greater entropy, a contradiction. However,
establishing the increase in entropy seems to require too fine estimates
(of a multiplicative, rather than logarithmic type). We are thus
lead to build a jump transformation with Markov properties which
will reduce the problem to loop counting on a graph, for which these
fine estimates exist (and, indeed, the uniqueness of the maximum
measure has been established in this setting by B.M.~Gurevi\v{c}, see below).

\medbreak

Section \ref{sec-markov-struct} is devoted to building a
Markov structure representing the large entropy dynamics. We first
build arrays of \emph{Markov rectangles} which contain a
significant proportion of the dynamics. These are approximate rectangles
in the sense they contain open subsets of points with local
manifolds that don't cross ("holes" in the local product structure). However we can ensure that the 
relative measure of such points in very small. 
Our techniques require however replacing $T$ by some high power 
$T^L$. 

We then define hyperbolic strips following the geometric picture
of Markov rectangles usual in uniformly hyperbolic dynamics. We
provide tools to build many such strips around typical orbits of
large entropy measures, using visits to the Markov rectangles while
the stable and unstable manifolds are both "rather large".

These hyperbolic strips are Markov in the
sense that they can be freely concatenated as soon as they end and
begin in the same rectangle.
However, to get a useful Markov representation from
this, one needs an invariant
way of "cutting" typical orbits into concatenations of such hyperbolic strips. 
A fundamental difficulty arises here: 
there exist incompatible decompositions, i.e., which do not admit
a common refinement. There does not seem to be an {\it a priori}
natural set the visits to which could be used to defined invariantly
the above "cutting".\footnote{Notice that shadowing lemmas
{\it \`a la} Katok \cite{KatokIHES} give comparable results for surface diffeomorphisms. The
problem is to find invariant decompositions.}

We conclude section \ref{sec-markov-struct} by selecting among hyperbolic strips a
subset of \emph{admissible} ones to get  uniqueness 
in the decomposition of \emph{forward} orbits (this weak uniqueness will
require a more detailed ergodic analysis in section
\ref{sec-lifting}). We obtain a notion of \emph{good return
times} and a Markov structure.

\medbreak

The more technical section \ref{sec-good-returns} relates the good return times to geometric and combinatorial
properties involving the visits to the Markov rectangles and their holes. 
It is shown that large entropy measures cannot have too large average good return time.

\medbreak

Finally, section \ref{sec-lifting} proves the main results. We lift large
entropy measures of $T$ to the jump transformations as finite extensions. 
Using that the latter is isomorphic to a countable state Markov shift, 
a result of Gurevic \cite{Gurevic} allows to conclude the proof of the Theorem.

Proposition \ref{prop-periodic} rests on a classical estimate
of Vere-Jones \cite{VJ} on the number of loops of countable oriented
graphs together with a combinatorial argument to
transfer this estimate to periodic points of $T$.

Proposition \ref{prop:lsc-entropy}, the possibility of approximating in entropy of the whole map
by a compact set away from the singularity set, follows from a similar
property of countable state Markov shifts: they are approximated in entropy
by finite state Markov shifts according to Gurevi\v{c}. 

\medbreak

There are three appendices: (A) recalls some facts about
measure-theoretic entropy, (B) proves a lifting theorem for the
tower defined by a return time and (C) gives some examples of
piecewise affine maps.

\subsection{Some Comments}

The results presented here allow the analysis of the maximal
entropy measures of a simple and natural class of dynamics
by representing them by countable state Markov shifts.
Along the same lines, one can probably make the representation more precise 
to get further results, for instance:
 \begin{itemize}
  \item classification by the topological entropy and periods up to isomorphisms modulo entropy-negligible subsets \cite{BBG};
  \item precise counting of isolated periodic points, e.g., existence of a 
meromorphic extension of the Artin-Mazur zeta function like in \cite{BuzziPQFT};
  \item uniqueness of the maximal entropy measure under a transitivity assumption
and/or a bound on the number of maximum measures in terms of the cardinality of the
partition $P$.
 \end{itemize}
In a slightly different direction, one would like to understand the
nature of the symbolic dynamics of piecewise affine surface homeomorphisms (see Sec. \ref{sec-sd}). Is there
a tractable class of subshifts containing these symbolic dynamics, i.e., a class that would
do for piecewise affine surface homeomorphisms what subshifts of quasi-finite type
\cite{BuzziQFT} do for piecewise monotone interval maps?

It would also be natural to apply the techniques of this paper to more
general dynamics. First, piecewise homographic surface homeomorphisms 
can be analyzed in the same way. Then one could try to analyze other general
classes of piecewise affine maps, especially in higher dimensions 
(e.g., uniformly expanding maps or entropy-expanding maps or 
entropy-hyperbolic homeomorphisms \cite{BuzziICMP}). Most questions are still open
despite some partial results  (see, e.g.,
\cite{BuzziPIM,TsujiiPWA,KR}) and we should stress that new
problems immediately appear. From the point of view of entropy
alone:
 \begin{itemize}
   \item There exist  piecewise affine continuous \emph{maps} on surfaces
and piecewise affine homeomorphisms in \emph{dimension 3} for
which the right hand side of (\ref{eq-MS}) is strictly larger than
the entropy (see Examples \ref{ex-pw-d2-mult}, \ref{ex-pw-d3-mult}
in the Appendix \ref{sec-examples});
  \item Example \ref{ex-pw-d2-max} in Appendix \ref{sec-examples} is a
piecewise affine \emph{discontinuous map} on a surface with no
maximum measure (one can give a continuous, piecewise quadratic
version of it, see example \ref{ex-C0-q-d2-max}). However, I don't
know examples of continuous piecewise affine maps without maximum
measures.
 \end{itemize}

For diffeomorphisms, the main difficulty with our approach is to
find a link between short stable/unstable manifolds and small
entropy, e.g., one would need to relate small Lyapunov charts to
entropy bounds for a smooth diffeomorphism. An analysis of
$C^1$ surface diffeomorphisms with nonzero topological entropy
with dominated splitting is in preparation.

\subsection{Acknowledgements}
I would like to thank the anonymous referees for many remarks which have significantly
improved the exposition.

\section{Pointwise Estimates}\label{sec-pointwise}

\subsection{Symbolic Dynamics}\label{sec-sd}

We define a symbolic dynamics for the map $T$ 
using some admissible partition $P$, that is, a finite collection of disjoint
open polygons with dense union. A key step is showing that $\partial P$ has zero
measure \wrt any $\mu\in\Proberg^0(T)$, where $\Proberg^h(T)$
denotes the set of ergodic, invariant probability measures of $T$
with $h(T,\mu)>h$.

\begin{definition}\label{def:ds}
Let $P$ be an admissible partition.
$x\in M$ is \new{nice} if for every $n\in\ZZ$, $T^n x$ belongs to
an element $A_n$ of the admissible partition $P$. The sequence
$A\in P^\ZZ$ thus defined is the \new{$P$-itinerary} of $x$.

The \new{symbolic dynamics} of $T,P$ is:
 $$
   \Sigma:=\overline{\{A\in P^\ZZ: \exists x\in M\;\forall n\in\ZZ\; T^nx\in
   A_n\}}
 $$
endowed with the shift map: $\sigma(A)=(A_{n+1})_{n\in\ZZ}$.
\end{definition}

A standard result (see, e.g.,
\cite{WaltersIntro}) states that since $\Sigma$ is a subshift, it admits at least one
maximal entropy measure. Hence, a ``close enough'' relation between
the invariant measures of $\Sigma$ and $T$ will imply existence of
a maximum measure also for $T$. By the variational principle, we
shall also get that $T$ and $\Sigma$ have the same topological
entropy. The Misiurewicz-Szlenk formula for $T$ will then follow.
Indeed, for a subshift like $\Sigma$, the topological entropy 
is computed by counting the \emph{cylinders}, 
$[A_0\dots A_{n-1}]:=\{x\in\Sigma:x_0\dots x_{n-1}=A_0\dots A_{n-1}\}$, that is:
 $$
    h_\top(\Sigma)=\lim_{n\to\infty} \frac1n\log
       \#\{[A_0\dots A_{n-1}]\ne\emptyset:A\in P^n\}.
 $$

Neither $T$ nor its symbolic dynamics is an extension of the other in
general, hence it is convenient to introduce the following common
extension:
 $$
    \SxM := \overline{ \{(A,x)\in P^\ZZ\times x:
    \forall n\in\ZZ\; T^nx\in A_n \} }
    \text{ with }
    \hT(A,x)=(\sigma A,Tx).
 $$

The close relation between the measures of $T$ and $\Sigma$
alluded to above is:

\begin{lemma}\label{lem-ext-ent}
Both maps $\pi_1:\SxM\to\Sigma$ and $\pi_2:\SxM\to M$ are entropy
preserving: for every invariant probability measure $\mu$ on
$\SxM$, $h(\sigma,\pi_1\mu)= h(T,\pi_2\mu) =h(\hT,\mu)$. Moreover,
$\pi_1$ and $\pi_2$ induce onto maps between the sets of (ergodic)
invariant probability measures.

In particular, the topological entropies of the three systems are equal 
by the variational principle recalled in eq. (\ref{eqVarPrin}).
\end{lemma}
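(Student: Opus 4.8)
Let me sketch the structure. We have three systems: $(\Sigma,\sigma)$, $(M,T)$, and the common extension $(\SxM,\hat T)$, with the two coordinate projections $\pi_1,\pi_2$. Both $\pi_1$ and $\pi_2$ are factor maps (continuous, onto, equivariant). So by the general theory of entropy of factor maps, for any $\hat T$-invariant probability measure $\mu$ on $\SxM$ we automatically get $h(\hat T,\mu)\geq h(\sigma,\pi_1\mu)$ and $h(\hat T,\mu)\geq h(T,\pi_2\mu)$. Hence the whole content of the lemma is the two \emph{reverse} inequalities — that each projection does not lose entropy — together with the surjectivity of the induced maps on invariant measures. The surjectivity part is the easy half: $\SxM$ is a nonempty compact $\hat T$-invariant set (it contains $(A,x)$ whenever $x$ is nice with itinerary $A$, and such $x$ exist), and $\pi_1\SxM=\Sigma$, $\pi_2\SxM=M$ by construction. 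Given any invariant $\nu$ on $\Sigma$ (resp. on $M$), one lifts it to $\SxM$ by choosing an invariant measure on the compact nonempty fiber system $\pi_1^{-1}(\operatorname{supp}\nu)$ projecting to $\nu$ — concretely, push $\nu$ up via a measurable section, then take a weak-$*$ limit of Cesàro averages under $\hat T$ (this stays a lift because $\pi_1$ is equivariant), and pass to an ergodic component if ergodicity is wanted. Whence the induced maps are onto.

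\medskip

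The heart of the matter is the two entropy-preservation claims, and the plan is to treat them by showing each projection has \emph{zero conditional entropy}, via a generator-plus-finite-fiber argument. For $\pi_1:\SxM\to\Sigma$: I would exhibit a finite generating partition of $\SxM$ that is the $\pi_1$-pullback of a generator of $\Sigma$ together with finitely many extra sets, then invoke the Abramov–Rokhlin / Ledrappier–Walters formula $h(\hat T,\mu)=h(\sigma,\pi_1\mu)+h(\hat T,\mu\mid\pi_1)$ and argue the conditional term vanishes. The key geometric input is that for a fixed itinerary $A\in\Sigma$, the set of $x\in M$ with $T^nx\in \overline{A_n}$ for all $n$ — the fiber $\pi_1^{-1}(A)$ — is the intersection over $n$ of the closed polygons $T^{-n}\overline{A_n}$; since $T$ is a piecewise affine homeomorphism of a surface, this is an intersection of convex (in affine charts) sets whose boundary is controlled, and the relevant observation is that once we also pin down the $P$-itinerary the fiber is contained in a single local stable $\cap$ local unstable set, hence has empty interior; zero topological entropy of the fiber dynamics then forces $h(\hat T,\mu\mid\pi_1)=0$ by Bowen's inequality $h(\hat T,\mu)\leq h(\sigma,\pi_1\mu)+\sup_A h(\hat T,\pi_1^{-1}(A))$. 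The symmetric argument handles $\pi_2:\SxM\to M$: here the fiber over $x\in M$ is $\{A\in P^\ZZ:\forall n\, T^nx\in\overline{A_n}\}$, which is a single point whenever $x$ never hits $\partial P$ under any iterate, and in general is finite (bounded by the number of partition elements meeting a point, to the power of the number of "bad" times) — so the fiber entropy is literally zero and $h(T,\pi_2\mu)=h(\hat T,\mu)$ outright.

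\medskip

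I expect the genuine obstacle to be the $\pi_2$ (or rather the $\pi_1$) side on the set of points whose orbit is \emph{not} nice — orbits that land on $\partial P$ infinitely often, where the fiber of $\pi_2$ can a priori be infinite and uncontrolled, since $\Sigma$ was defined as a \emph{closure} and picks up limiting itineraries. The clean way around this is not to bound fibers pointwise but to bound them in entropy: show that the "bad set" $B=\{x:T^nx\in\partial P\text{ for infinitely many }n\}$ — equivalently its preimage in $\SxM$ — carries zero entropy for every invariant $\mu$, because $\partial P$ is a finite union of line segments and a Fubini/Poincaré-recurrence count shows the return dynamics to $\partial P$ has subexponential complexity; then on the complement the fibers are finite. (The stronger statement that $\mu(\partial P)=0$ for $\mu\in\Proberg^0$ is announced as a separate corollary and is presumably proved later via the stable/unstable-length bounds, so I would \emph{not} rely on it here; the weak entropy estimate above suffices for the lemma.) Once both conditional entropies are shown to be zero, the chain $h(\sigma,\pi_1\mu)=h(\hat T,\mu)=h(T,\pi_2\mu)$ is immediate, surjectivity on measures gives that every invariant measure of $\Sigma$ and of $M$ is realized, and taking suprema with the variational principle (\ref{eqVarPrin}) yields $h_\top(\sigma)=h_\top(\hat T)=h_\top(T)$, completing the proof.
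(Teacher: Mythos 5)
Your skeleton coincides with the paper's: both projections are factor maps between compact systems, so the inequality $h(\hT,\mu)\geq h(\cdot,\pi_i\mu)$ is automatic, Bowen's bound $h(\hT,\mu)\leq h(\cdot,\pi_i\mu)+\sup_y h(\hT,\pi_i^{-1}\{y\})$ reduces the reverse inequality to showing each fiber has zero Bowen entropy, and surjectivity on invariant measures is the standard Ces\`aro-limit lifting. The gaps are in the two fiber-entropy claims, and in each case the missing ingredient is exactly one of the two lemmas the paper isolates for this purpose. For $\pi_1$, the inference ``the fiber has empty interior, hence zero topological entropy'' is a non sequitur (planar sets with empty interior routinely carry positive entropy), and the empty-interior claim itself is not available here: ruling out two-dimensional sets $\bigcap_n\overline{T^{-n}A_n}$ is done only in Claim \ref{claim-inv-mfd}, whose proof already uses Lemma \ref{lem-ext-ent}, so you would be arguing in a circle. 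What actually kills the fiber entropy is that every point of $\pi_1^{-1}(A)$ is iterated by the \emph{same} sequence of affine maps, with all images of bounded diameter, so the cardinality of an $(\eps,n)$-separated subset of the fiber is controlled by pure linear algebra: this is Lemma \ref{lem-compos-lin}, which your proof never invokes and whose content is not replaced by anything.

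For $\pi_2$, your count ``$(\#P)^{\#\{\text{bad times}\}}$'' is exponential as soon as the orbit of $x$ meets $\partial P$ with positive frequency, and the proposed repair --- that a ``Fubini/Poincar\'e-recurrence count'' gives subexponential complexity of the bad set --- is an assertion, not an argument: $\partial P$ may contain invariant segments on which $T$ restricts to a positive-entropy piecewise affine interval map, so no soft recurrence argument forbids exponential fiber complexity there. The fact you need is Lemma \ref{lem-mult-ent}, namely $h_\mult(T)=0$: the number $\mult(P^n,x)$ of cells of $P^n$ whose closure contains $x$ is subexponential \emph{uniformly in $x$}, which bounds the number of length-$n$ words compatible with any point, bad orbits included, and feeds directly into Bowen's inequality without any good/bad dichotomy. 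This is Newhouse's observation and is genuinely specific to invertible piecewise affine surface maps --- it fails for the non-injective surface map of Example \ref{ex-pw-d2-mult} and in dimension $3$ --- so it cannot be treated as a formality; as written, your proof of the $\pi_2$ half does not go through.
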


The proof of the above Lemma rests on two geometric/combinatorial
properties. The first is the following observation by S. Newhouse,
very specific of our setting (it is false in higher dimensions or
without the invertibility assumption, see the Appendix):

\begin{lemma}\label{lem-mult-ent}
The \new{multiplicity entropy} \cite{BuzziAffine}:
 $$
    h_\mult(T) := \limsup_{n\to\infty} \frac1n\log\max_{x\in
    M}\mult(P^n,x) \text{ with }\mult(Q,x):=
    \#\{A\in Q:x\in\overline{Q}\}
 $$
is zero for any piecewise affine homeomorphism of a surface.
\end{lemma}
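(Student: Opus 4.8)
The plan is to bound $\mult(P^n,x)$, the number of atoms of the $n$-th refinement $P^n=\bigvee_{k=0}^{n-1}T^{-k}P$ whose closure contains a fixed point $x$, by a quantity that grows subexponentially in $n$. The basic geometric fact to exploit is that every atom of $P$ is an open polygon with nonempty interior, and $T$ maps atoms of $P$ affinely onto polygons; hence each set $T^{-k}A$ (for $A\in P$) is, inside the relevant affine chart, bounded by finitely many line segments, and the total number of bounding lines used to cut out all atoms of $P^n$ near $x$ is at most linear in $n$ --- say at most $Cn$ for a constant $C$ depending only on $P$ (the number of edges of the polygons) and on $T$. The atoms of $P^n$ whose closure contains $x$ are precisely the connected components of the complement of these $\le Cn$ lines that accumulate at $x$, i.e. that have $x$ on their boundary.

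So the heart of the argument is the following planar combinatorial estimate: given $m$ lines in $\RR^2$ (here $m\le Cn$) and a point $x$, the number of connected components of $\RR^2\setminus(\text{union of the lines})$ having $x$ in their closure is at most $2m$ (in fact exactly twice the number of distinct directions among the lines through $x$, plus a bounded correction for lines missing $x$). Indeed, only the lines that pass through $x$ contribute faces touching $x$; a line not through $x$ locally separates a neighborhood of $x$ into two pieces but does not increase the number of faces meeting $x$ itself once we look in a small enough disc around $x$. Cutting a small disc around $x$ by the $\le m$ concurrent lines through $x$ produces at most $2m$ sectors. Therefore $\mult(P^n,x)\le 2Cn$, which is linear --- hence $\frac1n\log\mult(P^n,x)\to 0$ uniformly in $x$, giving $h_\mult(T)=0$.

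The main obstacle, and the only place where invertibility and the surface hypothesis enter, is making rigorous the claim that the atoms of $P^n$ near $x$ are genuinely cut out by $\le Cn$ line segments. The subtlety is that $T^{-k}$ of a line inside one chart becomes, after passing through several affine charts over $k$ steps, a union of finitely many segments --- but, crucially, because $T$ is a homeomorphism, the preimage $T^{-k}(\partial P)$ is a \emph{one-dimensional} piecewise-linear set whose number of segments grows at most linearly in $k$ (each application of $T^{-1}$ can only subdivide existing segments along the finitely many chart/partition boundaries, multiplying the count by a bounded factor additively, not exponentially, in the relevant local picture). This is exactly where non-invertible maps fail --- a non-injective affine map can fold, causing $T^{-1}$ of a segment to have branching and the local picture near $x$ to involve exponentially many segments --- and where dimension $\ge 3$ fails, since codimension-one walls in $\RR^d$ with $d\ge3$ can bound exponentially many faces at a point. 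I would therefore structure the proof as: (i) reduce to a local statement in affine charts; (ii) prove by induction on $n$ that $T^{-k}(\partial P)$ restricted to a fixed compact piece is a union of at most $C'k$ line segments, using injectivity of $T$ to rule out folding; (iii) apply the planar $2m$-sectors count; (iv) conclude $h_\mult(T)=0$.
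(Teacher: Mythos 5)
Your overall strategy --- count the sectors cut out at $x$ by $\bigcup_{k=0}^{n-1}T^{-k}(\partial P)$, observe that only the branches actually reaching $x$ matter, and invoke invertibility to rule out branching --- is the right one; the paper leaves this proof to the reader, but the same local increment argument appears in its proof of Lemma \ref{lem-mult2}. However, the quantitative claim you propose to prove in step (ii), and which you yourself flag as the heart of the matter, is false. When $h_\top(T)>0$, the set $T^{-k}(\partial P)$ restricted to a fixed compact piece is \emph{not} a union of $O(k)$ line segments: $T^{-k}$ is affine only on the atoms of $T^k(P^k)$, of which there are exponentially many, and the preimage of an edge $e$ gets bent at each crossing of $e$ with an image of $\partial P$ along the backward orbit. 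Each application of $T^{-1}$ can thus subdivide \emph{every} existing segment into a bounded number $C>1$ of pieces, and ``multiplying the count by a bounded factor'' at each step is precisely exponential growth, not linear. Concretely, $\bigcup_{k<n}T^{-k}(\partial P)$ contains the boundaries of the roughly $e^{nh_\top(T)}$ pairwise disjoint open atoms of $P^n$ (Proposition \ref{prop-MS}), while $m$ segments cut a surface into only $O(m^2)$ faces; so the number of segments in this union is at least of order $e^{nh_\top(T)/2}$. Injectivity rules out \emph{branching} of preimages, not \emph{bending}, and it is bending that drives the segment count.

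The fix is to replace the global segment count by a count of segment-germs \emph{at the point $x$ itself}, which is all that your planar estimate in step (iii) actually consumes. Since $T^k$ is a globally defined homeomorphism, the arc-germs of $T^{-k}(\partial P)$ at $x$ are in bijection with the edge-germs of the fixed finite polygonal complex $\partial P$ at the single point $T^kx$, and that number is bounded by a constant $V=V(P)$ (zero off $\partial P$, two at an interior point of an edge, the valence at a vertex). Hence at most $V$ new arc-germs appear at $x$ at each time step, the total number of arc-germs of $\bigcup_{k<n}T^{-k}(\partial P)$ at $x$ is at most $Vn$, and the sector count gives $\mult(P^n,x)\le Vn+1$ uniformly in $x$, whence $h_\mult(T)=0$. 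This localization is exactly where the hypotheses enter: for a non-injective map a single edge-germ at $T^kx$ lifts to one germ per inverse branch and the count can double at every step (Example \ref{ex-pw-d2-mult}), and in dimension three the relevant germs are two-dimensional so the chamber count on the link of $x$ is no longer linear in the number of germs (Example \ref{ex-pw-d3-mult}). With this replacement your argument closes; as written, step (ii) cannot be carried out.
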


The second is a property of linear maps:

\begin{lemma}\label{lem-compos-lin}
Let $d\geq1$. For each $n\geq0$, let $T_n:\RR^d\to\RR^d$ be a
linear map. Then
 \begin{multline*}
   \lim_{\eps\to0} \limsup_{n\to\infty} \frac1n\log \max\{ \#S:
   \forall 0\leq k<n\; \diam(T_{k-1}\dots T_1T_0S)\leq 1 \text{ and }\\
   \forall x\ne y\in S\; \exists 0\leq k<n\; \|T_{k-1}\dots
   T_1T_0(x-y)\|>\eps\} = 0.
 \end{multline*}
\end{lemma}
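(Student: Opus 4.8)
The plan is to reduce the statement to an elementary packing estimate in a finite-dimensional normed space; I expect in fact to get the much stronger bound
\[
  \#S \le \left(1 + \frac{4}{\eps}\right)^{d}
\]
\emph{uniformly in $n$} and in the sequence $(T_n)_{n\ge 0}$, after which the $\limsup$ over $n$ (and hence the iterated limit) is trivially $0$. Write $S_k := T_{k-1}\cdots T_1 T_0$ for the partial compositions, with the convention $S_0 = \id$, so that the two conditions on $S$ read: $\|S_k(x-y)\|\le 1$ for all $x,y\in S$ and all $0\le k<n$; and for every $x\ne y$ in $S$ there is some $0\le k<n$ with $\|S_k(x-y)\|>\eps$. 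Both are translation invariant, so I would fix $x_0\in S$ and pass to $S':=S-x_0\ni 0$, which lies in the symmetric convex set $E:=\{v\in\RR^d:\|S_k v\|\le1\ \text{for }0\le k<n\}$ and is $\eps$-separated for the metric $\rho(v,w):=\max_{0\le k<n}\|S_k(v-w)\|$ (a genuine metric, since the $k=0$ term is $\|v-w\|$).

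The key step is a linear embedding into a bounded region of $(\RR^d)^n$. Define $\Phi:\RR^d\to(\RR^d)^n$ by $\Phi(v)=(S_0v,S_1v,\dots,S_{n-1}v)$ and give the target the norm $\|(w_0,\dots,w_{n-1})\|_\infty=\max_k\|w_k\|$. Then $\Phi$ is linear with $\|\Phi(v)-\Phi(w)\|_\infty=\rho(v,w)$ for all $v,w$, and its image $V:=\Phi(\RR^d)$ has dimension at most $d$; moreover $\Phi(E)\subseteq\{(w_k)_k:\|w_k\|\le1\ \forall k\}$, a set of $\|\cdot\|_\infty$-diameter at most $2$. Hence $\Phi(S')$ is an $\eps$-separated subset of diameter $\le 2$ inside the (at most) $d$-dimensional normed space $(V,\|\cdot\|_\infty)$, and the usual volume count — the $\eps/2$-balls centred at the points of $\Phi(S')$ are pairwise disjoint and all lie in a ball of radius $2+\eps/2$, so comparing Lebesgue measures on $V$ — gives $\#S=\#S'=\#\Phi(S')\le(1+4/\eps)^d$. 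Consequently $\frac1n\log\#S\le\frac dn\log(1+4/\eps)\to 0$ as $n\to\infty$ for each fixed $\eps>0$, so the inner $\limsup$ already vanishes, and a fortiori so does the whole expression.

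The only real ``obstacle'' is conceptual rather than technical: one must resist trying to diagonalise all the $S_k$ simultaneously (impossible in general) or to project out a preferred direction (which does not commute with the $T_j$), and instead transport the entire finite orbit at once via $\Phi$, letting linearity do the work. The points deserving a line of care are: that the diameter hypothesis is used at \emph{every} intermediate time $k$, which is exactly what confines $\Phi(E)$ to a bounded box; that $\Phi$ is injective on the separated set (immediate here since the $k=0$ term is the identity, and in any case forced by the separation condition, which says a nonzero difference is not annihilated by all the $S_k$); and the harmless remark that $\dim V$ may drop below $d$ when some $T_j$ is singular, which only sharpens the bound. An alternative route by induction on $d$ — peeling off the top singular direction of the most expanding $S_k$ and bounding the fibres of the orthogonal projection — also works, but it is messier and does not make transparent, as the argument above does, that the outer limit $\eps\to0$ in the statement is not actually needed for this lemma and is presumably written that way only to match the form in which the lemma is later applied.
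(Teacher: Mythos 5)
Your proof is correct. The paper in fact gives no proof of this lemma --- it explicitly leaves Lemmas \ref{lem-mult-ent} and \ref{lem-compos-lin} ``to the reader'' --- so there is nothing to compare against, but your argument is exactly the kind of elementary packing estimate the authors presumably had in mind. The key points all check out: writing $S_k=T_{k-1}\cdots T_0$ (with $S_0=\id$), the hypotheses say precisely that the translate $S-x_0$ is an $\eps$-separated subset, for the seminorm $v\mapsto\max_{0\le k<n}\|S_kv\|$, of a set on which that seminorm is bounded by $1$; pushing forward by $\Phi(v)=(S_0v,\dots,S_{n-1}v)$ turns this into an $\eps$-separated set of diameter at most $2$ in a normed space of dimension at most $d$ (injectivity on $S-x_0$ being forced by the separation condition even if one does not read the $k=0$ term as the identity), and the volume comparison on the image subspace gives $\#S\le(1+4/\eps)^d$. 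Since this bound is uniform in $n$ and in the sequence $(T_n)$, the inner $\limsup$ is already $0$ for every fixed $\eps>0$, which is indeed stronger than the iterated limit asserted in the statement; your closing remark that the outer limit $\eps\to0$ is only there to match the form in which the lemma is applied is a fair observation.
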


We leave the easy proofs of Lemmas \ref{lem-mult-ent} and
\ref{lem-compos-lin} to the reader.

\begin{demof}{Lemma \ref{lem-ext-ent}}
Lemma \ref{lem-mult-ent}, resp. Lemma \ref{lem-compos-lin},
implies that for all $x\in M$, resp. $\Sigma$, for $i=2$, resp.
$i=1$,
 $$
    h(\hat T,\pi_i^{-1}\{x\})=0
 $$
(this is the entropy of a subset as recalled in (\ref{eq:def-htop})).
Now, $\pi_1:\SxM\to\Sigma$ and $\pi_2:\SxM\to M$ are both compact
topological extensions. Hence, one can apply Bowen's result
\cite{BowenEntropy}:
 $$
 h(\hat T,\hat\mu)= h(\sigma,\pi_1\mu)=h(T,\pi_2\mu)
 $$
for all invariant probability measures $\hat\mu$ of $\SxM$.
\end{demof}

\subsection{Invariant Manifolds and  Lyapunov
Exponents}\label{sec-inv-mfd}

For a fixed partition $P$, we have the following:

\begin{definition}\label{def-mfd-exp}
The \new{stable manifold} at $A\in\Sigma$ is the following set
(convex in the affine charts):
 $$
    W^s(A) := \bigcap_{n\geq1} \overline{T^{-n}A_n}
 $$

The \new{unstable manifolds} $W^u(A)$ are defined by
replacing $n\geq1$ by $n\leq-1$ in the above equation.

The \new{Lyapunov
exponents} along the stable or unstable direction at $A\in\Sigma$ are:
 $$
    \lambda^u(A):= \lim_{n\to\pm\infty}
    \frac1n\log\|{(T_A^n)'}^{\pm1}\|^{\pm1}
    \text{ and }
    \lambda^s(A):=\lim_{n\to\pm\infty}
    \frac1n\log\|{(T_A^n)'}^{\mp1}\|^{\mp1}
 $$
where $T_A^n$ is the affine composition $(T|A_{n-1})\circ
\dots\circ(T|A_0)$ (if $n\geq0$) or
$\left[(T|{A_{-1}})\circ\dots\circ(T|A_{n})\right]^{-1}$ (if
$n<0$).

Any nice $x\in M$ defines a unique itinerary $A$ and we
write $W^s(x)$ for $W^s(A)$, $\lambda^u_+(x)$ for
$\lambda^u_+(A)$ and so on.
\end{definition}

 The first goal of this section is the following
``non-singularity'' result:

\begin{proposition}\label{prop-non-sing}
Let $\mu\in\Proberg^0(T)$. The following holds:
 \begin{itemize}
  \item $\mu(\partial P)=0$ (in particular, $\mu$-a.e. $x\in M$ is nice);
  \item The Lyapunov exponents exist and satisfy $\lambda^s(x)\leq -h(T,\mu)<0<h(T,\mu)\leq\lambda^u(x)$ for $\mu$-a.e. $x\in M$;
  \item $W^s(x)$ and $W^u(x)$ are line segments containing $x$ in
  their relative interiors $\INT W^s(x)$ and $\INT W^u(x)$ for $\mu$-a.e. $x\in M$.
 \end{itemize}
\end{proposition}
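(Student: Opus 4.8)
The plan is to establish the three assertions in sequence, using the entropy-preservation of the extension $\SxM$ from Lemma \ref{lem-ext-ent} to transfer the problem to the symbolic dynamics, where the Lyapunov exponents $\lambda^s,\lambda^u$ are genuine Birkhoff averages of the log-norms of linear cocycles and hence behave well. First I would lift $\mu\in\Proberg^0(T)$ to an ergodic $\hat\mu$ on $\SxM$ with $h(\hT,\hat\mu)=h(T,\mu)>0$ (possible by Lemma \ref{lem-ext-ent}), and let $\nu=\pi_1\hat\mu$ on $\Sigma$, which also has entropy $h(T,\mu)$. Since on $\Sigma$ the maps $T_A^n$ are honest compositions of the finitely many affine pieces of $T$, the cocycle $A\mapsto \log\|(T|A_0)'\|$ is bounded, so by Kingman's subadditive ergodic theorem the four one-sided limits defining $\lambda^u(A)$ and $\lambda^s(A)$ exist $\nu$-a.e. and are constant; by Oseledets (in dimension $2$) the two-sided limits exist and agree, giving exponents $\lambda^u\geq\lambda^s$ with $\lambda^u+\lambda^s$ equal to the a.e.\ value of $\log|\det|$ along orbits. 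Pushing forward under $\pi_2$ recovers the exponents $\mu$-a.e.\ on $M$, and existence of the Lyapunov exponents follows.

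\textbf{The entropy inequality for the exponents.} The key quantitative input is the Margulis--Ruelle / Ledrappier--Young style bound: for the symbolic system, $h(\sigma,\nu)\leq \lambda^u$ (the entropy is bounded by the sum of positive Lyapunov exponents), and dually, applying the same to $\sigma^{-1}$, $h(\sigma,\nu)\leq -\lambda^s$. In the affine/symbolic setting this should be provable directly and elementarily: a cylinder $[A_0\dots A_{n-1}]$ maps under $T_A^n$ to a set of diameter at most (roughly) $e^{n(\lambda^u+o(n))}$ times its own diameter in the unstable direction and contracted by $e^{n(\lambda^s+o(n))}$ in the stable direction, so by the Shannon--McMillan--Breiman theorem, $\nu([A_0\dots A_{n-1}])\approx e^{-nh(\sigma,\nu)}$, and a volume/covering argument comparing the number of cylinders meeting a fixed reference set to $e^{n\lambda^u}$ forces $h(\sigma,\nu)\leq \lambda^u$; the symmetric argument using the past gives $h(\sigma,\nu)\leq -\lambda^s$. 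Combining, $\lambda^s\leq -h(T,\mu)<0<h(T,\mu)\leq\lambda^u$, which is the second bullet. In particular the exponents are nonzero of opposite sign, so the system is (non-uniformly) hyperbolic $\mu$-a.e.

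\textbf{From nonzero exponents to line-segment invariant manifolds.} Now $W^s(A)=\bigcap_{n\geq1}\overline{T^{-n}A_n}$ is a decreasing intersection of convex polygons (convex in the affine charts), hence convex, so it is a point, a segment, or a $2$-dimensional convex set. It cannot be $2$-dimensional on a positive-measure set: the image $T_A^n(W^s(A))$ has diameter $\leq \diam M$ for all $n$ while $T_A^n$ expands the unstable direction by $e^{n\lambda^u+o(n)}\to\infty$, so any nondegenerate-in-the-unstable-direction set is eventually too big — thus $W^s(A)$ is contained in a line (the stable Oseledets direction) a.e., and symmetrically for $W^u(A)$. To see $W^s(A)$ is a genuine segment, not a single point, and that $x\in\INT W^s(x)$: consider $x=\pi_2(A,x)$; the point $x$ lies in the open polygon $A_0$, and $T$ restricted to $A_0$ is affine, with $TA_0$ an open polygon containing $Tx$, etc.; since the contraction rate $e^{n\lambda^s+o(n)}$ is exponential and $x$ sits at positive distance (depending on $A$, but positive a.e.) from $\partial A_n$ after rescaling, a Borel--Cantelli / Poincaré-recurrence argument along the orbit shows the nested preimages $\overline{T^{-n}A_n}$ still contain a fixed small segment around $x$ in the stable direction — giving $x\in\INT W^s(x)$, $\mu$-a.e.; symmetrically for $W^u$.

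\textbf{The first bullet, $\mu(\partial P)=0$.} This I would deduce \emph{after} the manifold structure, essentially as a corollary: if $\mu(\partial P)>0$ then by Poincaré recurrence a positive-measure set of points returns to $\partial P$ infinitely often in both time directions, which is incompatible with $x\in\INT W^s(x)\cap\INT W^u(x)$ and the fact that the boundary $\partial P$ is a finite union of segments transverse-or-equal to the stable/unstable directions — the local product structure around such an $x$ forces points near $x$ on $W^u(x)$ to avoid $\partial P$ for all future times and points on $W^s(x)$ to avoid it for all past times, contradicting the positive-frequency of visits to $\partial P$ forced by ergodicity. Alternatively, and perhaps more cleanly, one observes that $\partial P$ carries zero topological entropy being a finite union of line segments on which $T$ acts piecewise affinely (a one-dimensional piecewise-affine system has entropy $0$), so any invariant measure supported on $\bigcap_n T^{-n}(\partial P)$-type sets has zero entropy — but one must be careful because $\partial P$ is not invariant, so the correct statement is that $\{x: T^nx\in\partial P \text{ for infinitely many } n\}$, being contained in a countable union of entropy-zero pieces, is $\mu$-null for $\mu$ with positive entropy; hence $\mu$-a.e.\ $x$ visits $\partial P$ only finitely often, and then invariance ($\mu=\mu\circ T^{-1}$) upgrades this to $\mu(\partial P)=0$ and $\mu$-a.e.\ $x$ nice.

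\textbf{Main obstacle.} I expect the delicate step to be the second bullet — proving the two-sided entropy bound $h(T,\mu)\leq\min(\lambda^u,-\lambda^s)$ with the \emph{non-strict} inequality replaced by what the proposition claims, and doing so without smoothness (no bounded distortion, singularities present). The Ruelle inequality is classically stated for $C^1$ maps; here one wants an analogue for piecewise affine homeomorphisms where the subtlety is controlling orbits that pass close to $\partial P$. The honest route is probably the covering argument on $\Sigma$ sketched above, using Lemma \ref{lem-mult-ent} (bounded multiplicity) to ensure that cylinder sets do not overlap pathologically, combined with SMB; getting the constants to line up so that $h(T,\mu)\leq\lambda^u$ \emph{exactly} (rather than up to an additive loss) is where care is needed. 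The third bullet's claim that $x$ lies in the \emph{relative interior} — not merely in — $W^s(x)$ and $W^u(x)$ is the other place where a quantitative recurrence estimate (semi-uniform in the sense advertised in the outline) is really being used, and I would expect that to require the conditional-entropy-along-the-past/future bounds hinted at in the introduction rather than a soft argument.
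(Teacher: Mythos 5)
Your route inverts the paper's: you want to establish the Lyapunov exponent bounds first (via a singular Margulis--Ruelle inequality proved symbolically), then deduce the segment structure of $W^{s},W^{u}$ from nonzero exponents, and finally get $\mu(\partial P)=0$ as a corollary. The paper does the opposite: it first proves that $W^{u}(A)$ is a nondegenerate segment with $x$ in its relative interior by a purely entropic argument --- if $W^{u}(A)$ were a point, or if $x$ were an endpoint of it, then the past $\hP^-$ would determine the future itinerary up to the multiplicity $\mult(P^n,x)$, so $h(\hT,\hmu)=H_{\hmu}(\hP|\hP^-)\leq h_\mult(T,P)=0$ by Lemma \ref{lem-mult-ent}, a contradiction (the $2$-dimensional case is excluded by disjointness of interiors plus Poincar\'e recurrence). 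Only then does it get $\lambda^u>0$ cheaply, via the adapted norm $\|v\|'_A=\|v\|_A/|W^u(A)|$, which presupposes that $W^u(A)$ is a nondegenerate segment; $\mu(\partial P)=0$ follows from transversality of the two segments; and the quantitative bounds $\lambda^s\leq -h\leq h\leq\lambda^u$ come from Ruelle--Margulis, which is only invoked \emph{after} $\mu(\partial P)=0$ is known. You never use $h_\mult=0$ for the manifold structure, which is the paper's central mechanism here.

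The genuine gap is a circularity between your first and third bullets. Your proof that $x\in\INT W^s(x)$ (and that $W^s(x)$ is not a single point) rests on a Borel--Cantelli/recurrence estimate giving a subexponential lower bound on $d(T^nx,\partial A_n)$; such a bound requires $\mu(B(\partial P,\eps))\to0$ as $\eps\to0$, i.e.\ exactly $\mu(\partial P)=0$ --- if $\mu(\partial P)>0$ then a.e.\ orbit lands on $\partial P$ with positive frequency and the nested preimages can collapse to the point $\{x\}$. But you propose to prove $\mu(\partial P)=0$ \emph{afterwards, from} the relative-interior property. Your fallback argument for $\mu(\partial P)=0$ does not close the loop either: the set of points visiting $\partial P$ infinitely often is a countable, non-invariant union of segments, and the induced (first-return) map on $\partial P$ is a countable-piece one-dimensional map which can a priori carry positive entropy, so ``one-dimensional pieces have zero entropy'' is not a valid shortcut. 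To repair the argument you would need either the paper's conditional-entropy/multiplicity mechanism to handle the endpoint and single-point cases without any control on $d(T^nx,\partial P)$, or an independent proof of $\mu(\partial P)=0$ that precedes the manifold construction. (Your symbolic Ruelle inequality for the second bullet is plausible but is itself substantial work in the presence of singularities; the paper deliberately avoids needing it before $\mu(\partial P)=0$ is established.)
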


\begin{proof} 
Let $\mu\in\Proberg^0(T)$. As we have not yet proved that a.e. $x\in
M$ is nice, we have to work in the extension $\SxM$ to be able to
speak of itineraries, invariant manifolds and so on. By
compactness, there exists an invariant and ergodic probability
measure $\hmu$ of $\hT:\SxM\self$ such that $\pi_2\hmu=\mu$. We
have $h(\hT,\hmu)>0$ by Lemma \ref{lem-ext-ent}.

We first consider the invariant manifolds.

\begin{claim}\label{claim-inv-mfd}
$\hmu$-a.e. $(A,x)\in\SxM$, (i) $W^u(A)$ is a line segment; (ii)
$x$ is not an endpoint of this segment.
\end{claim}

\begin{demof}{the claim}
To begin with, observe that $W^u(\sigma A)\subset T(W^u(A))$ so
that $\dim(W^u(\sigma A))\leq \dim(W^u(A))$. As $\hmu$ is
invariant and ergodic, $\dim(W^u(A))=d$ $\hmu$-a.e. for some $d\in\{0,1,2\}$. Claim (i) above is that $d=1$.

Let $\hP$ be the natural partition of $\SxM$ (coming from the
canonical partition of $\Sigma$). By Lemma
\ref{lem-ext-ent}, $(\hT,\hmu)$ has the same entropy as the corresponding
symbolic system. Thus, $h(\hT,\hmu)=h(\hT,\hmu,\hP)$. Using conditional
entropy (see Appendix \ref{sec-ent-bound}) we can compute $h(\hT,\hmu,\hP)$ as $H_\hmu(\hP|\hP^-)$
where $\hP^-:=\bigvee_{n\geq1} T^{n}\hP$. Observe that $A\mapsto
W^u(A)$ is $\hP^-$-measurable.

We exclude the cases $d=0,2$ by contradiction. Assume first $d=0$,
i.e., $W^u(A)$ is a single point $x\in M$ for $\hmu$-a.e.
$(A,x)\in\SxM$. This implies that:
 $$
    h(\hT,\hmu)=H_\hmu(\hP|\hP^-) = \lim_{n\to\infty} \frac1n
    H_\hmu(\hP^n|\hP^-) \leq h_\mult(T,P) = 0,
 $$
a contradiction, excluding the case $d=0$.

Now assume $d=2$, so $\hmu$-a.e. $\overline{W^u(A)}$ is the closure
of the interior of $W^u(A)$. By construction, two distinct unstable
manifolds have disjoint interiors. Therefore, there can be only
countably many of them, after discarding a set of zero $\hmu$-measure.
In particular, $W^u(A)=W^u(A^{0})$ on a set of positive measure for
some $A^0$. By Poincar\'e recurrence, there exists an integer
$n>0$ such that $T^n(W^u(A^{0}))=W^u(A^{0})$. This implies
that $\pi_1\hmu$ is periodic, hence
$0=h(\sigma,\pi_1\hmu)=h(\hT,\hmu)$. The contradiction proves
(i).

We turn to (ii). If $x\in\partial W^u(A)$, then $T(x)\in\partial
W^u(\sigma(A))$. Thus if (ii) is false, then $x\in\partial W^u(A)$
$\mu$-a.e. But this implies that, for any $\eps>0$, any large $n$,
 $$
 nh(\hat T,\hmu) = H_\hmu(\hP^n|\hP^-)\leq \log 2 +
    \log\max_{x\in M}\#\{A\in P^n:\overline{A}\ni x\}
    \leq \log 2 + (h_\mult(T,P)+\eps)n.
 $$
As $h_\mult(T,P)=0$, it would follow that $h(\hT,\hmu)=0$, a
contradiction.
\end{demof}

We now turn to the exponents. First they do exist by
the classical Oseledets Theorem (see, e.g., \cite{KatokHasselblatt}).

\begin{claim}\label{claim-expo}
For $\pi_1\hmu$-a.e. $A\in\Sigma$, the Lyapunov exponents satisfy:
$\lambda^s(A)<0<\lambda^u(A)$.
\end{claim}

\begin{remark}
The above result will be a consequence of the Ruelle-Margulis
inequality \cite[p. 669]{KatokHasselblatt} \emph{once} we shall have
proved that $\mu(\partial P)=0$.
\end{remark}

\begin{demo}
We establish the existence of a positive Lyapunov exponent
$\mu$-a.e. The existence of a negative exponent will follow by
considering $T^{-1}$. Let $\|\cdot\|_A$ be some measurable family
of norms. Consider the family $\|\cdot\|'_A,A\in\Sigma$ defined by:
 $$
 \|v\|'_A:=\|v\|_A/|W^u(A)|_A \qquad \text{ for }v\parallel W^u(A)
 $$
$|\cdot|_A$ being the length \wrt to $\|\cdot\|_A$ (using the affine
structure). As
$T(W^u(A))\supset W^u(\sigma A)$, we have that
$\|T'|E^u(A)\|'_A\geq 1$ (where $E^u(A)$ is the unstable direction
at $A$ --the \emph{invariant} family of directions defined by
$W^u(A)$) for $\mu$-a.e. $A\in\Sigma$. $T(W^u(A))= W^u(\sigma A)$
$\mu$-a.e. would imply $h(\hat T,\hmu)=H_\hmu(\hat P|\hat P^-)=0$.
Hence, $\|T|E^u(A)\|'_A> 1$ on a set of positive measure and:
 $$
  \lambda^u(A)= \int
    \log\|T'|E^u(B)\|'_B \, d\hmu(B)>0
 $$
for $\mu$-a.e. $A\in\Sigma$.
\end{demo}

We finish the proof of Proposition \ref{prop-non-sing}.

Let $\mu\in\Proberg^0(T)$. Let $\hmu$ be a lift of $\mu$ to
$\SxM$. By Lemma \ref{lem-ext-ent}, $h(\hT,\hmu)=h(T,\mu)>0$.

Claims \ref{claim-inv-mfd} and \ref{claim-expo} prove all the
claims of the Proposition except $\mu(\partial P)=0$.

Now, $W^u(A)$ and $W^s(A)$ are line segments $\mu$-a.e. by Claim
\ref{claim-inv-mfd}. Their  directions carry distinct Lyapunov
exponents by Claim \ref{claim-expo}, hence they must make $\mu$-a.e. a
non-zero angle. If $x\in\partial P$, then $Tx$ or $T^{-1}x$ would be the end
point of at least one of these line segments, a contradiction.
Hence $\mu(\partial P)=0$.
\end{proof}

That $\mu(\partial P)=0$ for all ergodic invariant probability
measures with nonzero entropy has the following immediate but
important consequence:

\begin{corollary}\label{coro-conj-ds}
The partially defined map $\pi:\Sigma'\to M$
$$\{\pi(x)\}:=\bigcap_{n\geq0} \overline{T^k[A_{-k}\dots A_k]}$$
with $\Sigma'$ the subset of $\Sigma$ where the above intersection
is indeed a single point, defines an entropy-preserving
bijection between the sets of ergodic, invariant probability
measures of $T$ and of $\Sigma$ with nonzero entropy.
\end{corollary}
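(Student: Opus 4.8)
The plan is to realise the asserted correspondence through the common extension $\SxM$ of Lemma \ref{lem-ext-ent}, using that on a set of full measure for \emph{any} measure of nonzero entropy both projections $\pi_1,\pi_2$ become injective; the only substantive input is Proposition \ref{prop-non-sing}, everything else being bookkeeping with lifts. \emph{First, the $\pi_2$ side.} Let $\mu\in\Proberg^0(T)$. By Proposition \ref{prop-non-sing}, $\mu(\partial P)=0$, so $\mu$-a.e.\ $x$ is nice and hence carries a unique $P$-itinerary; equivalently $\pi_2^{-1}(x)$ is a single point for $\mu$-a.e.\ $x$. Together with Lemma \ref{lem-ext-ent}, which supplies some ergodic lift, this shows $\mu$ has a \emph{unique} lift $\hmu$ to $\SxM$, that $\hmu$ is ergodic, and that $h(\hT,\hmu)=h(T,\mu)>0$.

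\emph{Next, the $\pi_1$ side.} Let $\nu\in\Proberg^0(\Sigma)$ and, by Lemma \ref{lem-ext-ent}, pick an ergodic lift $\hmu$ with $\pi_1\hmu=\nu$; then $h(\hT,\hmu)=h(\sigma,\nu)>0$ and $\mu:=\pi_2\hmu\in\Proberg^0(T)$. Applying Proposition \ref{prop-non-sing} to $\mu$: for $\hmu$-a.e.\ $(A,x)$ the manifolds $W^s(A)$ and $W^u(A)$ are line segments crossing transversally at $x$ in their relative interiors. Since $W^s(A),W^u(A)$ depend only on $A$, for $\nu$-a.e.\ $A$ the fibre $\pi_1^{-1}(A)=\{A\}\times\bigl(W^s(A)\cap W^u(A)\cap\overline{A_0}\bigr)=\{A\}\times\bigcap_{n\in\ZZ}\overline{T^{-n}A_n}$ is the single point $(A,x)$. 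Hence $\nu$ too has a unique lift, namely $\hmu$, which is ergodic. It is here that the genuine content of Proposition \ref{prop-non-sing} is used: one needs the invariant manifolds to be honest transversal segments $\mu$-a.e., not merely $\mu(\partial P)=0$.

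\emph{The bijection, and its identification with $\pi$.} By the two previous paragraphs the assignments $\mu\mapsto\pi_1\hmu$ and $\nu\mapsto\pi_2\hmu$, with $\hmu$ the unique lift in each case, are well-defined between $\Proberg^0(T)$ and $\Proberg^0(\Sigma)$; since the lift is unique on both sides they are mutually inverse, and they preserve entropy by Lemma \ref{lem-ext-ent}. To see that $\nu\mapsto\pi_2\hmu$ is the pushforward under the map $\pi$ of the statement, fix $\nu\in\Proberg^0(\Sigma)$ with lift $\hmu$. For $\nu$-a.e.\ $A$ the unique point of $\pi_1^{-1}(A)$ is $(A,x_A)$ with $\{x_A\}=\bigcap_{n\in\ZZ}\overline{T^{-n}A_n}$, and $\pi_2(A,x_A)=x_A$. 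Moreover $\mu=\pi_2\hmu$ gives full mass to nice points lying outside the $\mu$-null set $\bigcup_{n}T^{-n}(\partial P)$, so for $\nu$-a.e.\ $A$ the point $x_A$ is nice with $P$-itinerary exactly $A$, i.e.\ $T^nx_A\in A_n$ for all $n$; thus $x_A\in\bigcap_n T^{-n}A_n\subset\bigcap_{k\ge0}\overline{T^k[A_{-k}\dots A_k]}\subset\bigcap_{n\in\ZZ}\overline{T^{-n}A_n}=\{x_A\}$, so $A\in\Sigma'$ and $\pi(A)=x_A=\pi_2(A,x_A)$. Hence $\pi_*\nu=\pi_2\hmu$, as required, and $\pi\circ\sigma=T\circ\pi$ on $\Sigma'$ is immediate from the defining intersection.

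\emph{Main obstacle.} All the real difficulty sits in Proposition \ref{prop-non-sing}, which we may assume; within the present argument the only delicate point is the a.e.\ injectivity of $\pi_1$, which rests on the invariant manifolds being genuine transversal segments $\mu$-a.e.\ — the strongest conclusion of that proposition — while the identification of the abstract $\SxM$-bijection with the explicit intersection formula defining $\pi$, and the verification that lifts are unique on both sides, are routine.
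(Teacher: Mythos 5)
Your proof is correct and follows the route the paper intends: the corollary is stated there as an immediate consequence of Proposition \ref{prop-non-sing}, and your argument is exactly the fleshing-out of that remark via the common extension $\SxM$ of Lemma \ref{lem-ext-ent} (a.e.\ single-point fibres for $\pi_2$ from $\mu(\partial P)=0$, a.e.\ single-point fibres for $\pi_1$ from the transversal-segment structure of $W^s$ and $W^u$, uniqueness of lifts on both sides, and the identification of the resulting bijection with $\pi_*$). You are also right to flag that the $\pi_1$ side uses the full strength of Proposition \ref{prop-non-sing} and not merely $\mu(\partial P)=0$, a point the paper's one-line justification glosses over.
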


\subsection{Semi-Uniform Estimates}

We obtain now more quantitative estimates, which we call {\bf
semi-uniform} in the sense that they are uniform on a set of
uniformly lower-bounded mass for all large entropy measures. To
state these results, we need the following ``distortion'' estimate.
By compactness of $M$ and invertibility of $T$,
 $$
  d(T):= \sup \left\{ \log\frac{\|T'(x).u\|}{\|T'(x).v\|}:x\in M,\;
   u,v\in\RR^2\setminus\{0\},\; \|u\|=\|v\|\right\}
     < \infty.
 $$

\begin{proposition}\label{prop-size-W}
For any $\mu_0<\frac{h_\top(T)}{d(T)}$, there exist
$h_0<h_\top(T)$, $\theta_0>0$ and $\ell_0>0$ such that for any
$\mu\in\Proberg^{h_0}(T)$, the following properties occur jointly
on a set of measure at least $\mu_0$:
 \begin{gather}\label{eq-lwr-len}
    \rho(x):=\min_{\sigma=s,u}d(x,\partial
   W^\sigma(x))\geq\ell_0
 \\
\label{eq-lwr-angle}
   \alpha(x):=\angle(W^s(x),W^u(x))>\theta_0
 \end{gather}
Here $\angle(W^s(x),W^u(x))$ is the angle between the two lines
defined by $W^s(x)$ and $W^u(x)$. We declare
$\alpha(x)=\rho(x)=0$, if $W^s(x)$ or $W^u(x)$ fail to be line
segments.
\end{proposition}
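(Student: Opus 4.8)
The plan is to derive the semi-uniform bounds from the already-established pointwise results of Proposition \ref{prop-non-sing} together with the entropy formula. The key point is that although for each fixed $\mu\in\Proberg^0(T)$ the functions $\rho$ and $\alpha$ are positive $\mu$-a.e., there is no uniform lower bound valid for all such measures. The mechanism producing a uniform bound must therefore be entropy: a measure with entropy close to $h_\top(T)$ has ``little room'' to concentrate on points where the invariant manifolds are short, because having $W^u(x)$ short for a long stretch of the forward (or backward) orbit forces a loss of exponential growth that must be compensated for elsewhere, and this cannot happen on a set of large measure.

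First I would quantify the length of $W^u(A)$ along an orbit. Since $T(W^u(A))\supset W^u(\sigma A)$, the length $|W^u(\sigma^n A)|$ is controlled from below by $|W^u(A)|$ contracted/expanded along the unstable direction, and the distortion constant $d(T)$ bounds how far the unstable direction's expansion can deviate from the ``best'' direction. The diameter of $M$ caps $|W^u(\sigma^n A)|$ from above. Running these inequalities along an orbit shows that if $\rho(x)<\ell$ on a set of measure $>1-\mu_0$, then the unstable direction fails to expand by the maximal rate $\lambda^u$ on a correspondingly large portion of the space, with the deficit proportional to $\log(1/\ell)$. Combined with the Ruelle–Margulis-type inequality (available by the Remark after Claim \ref{claim-expo}, since $\mu(\partial P)=0$) and the relation between $h(T,\mu)$ and the unstable exponent, this yields: if $\mu\in\Proberg^{h_0}(T)$ with $h_0$ close enough to $h_\top(T)$, then $\mu(\{\rho\geq\ell_0\})\geq\mu_0$ for suitable $\ell_0$. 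The constraint $\mu_0<h_\top(T)/d(T)$ is precisely what makes the bookkeeping close: the measure of the ``bad'' set times $d(T)$ must stay below the entropy. The angle estimate \eqref{eq-lwr-angle} then follows the same way, or more directly: the stable and unstable exponents differ by at least $2h_\top(T)+o(1)$ for large-entropy measures, and a small angle between $W^s(x)$ and $W^u(x)$ would, after one iterate, force the images of the two manifolds to nearly align, which is incompatible with the separation of the two Lyapunov exponents; quantitatively one again turns a small angle along the orbit into an entropy deficit. Finally one intersects the two sets $\{\rho\geq\ell_0\}$ and $\{\alpha>\theta_0\}$ — each of measure close to $1$ when $\mu_0$ is close to its allowed maximum, but the statement as given only requires each to have measure $\geq\mu_0$, so one should choose the thresholds so that each bad set has measure $<(1-\mu_0)/2$, or simply prove the stronger ``measure $\geq 1-\delta$'' version and note it implies the claim.

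The main obstacle is making the entropy-deficit argument genuinely \emph{semi-uniform}, i.e.\ extracting a single $\ell_0,\theta_0,h_0$ that work simultaneously for \emph{all} $\mu\in\Proberg^{h_0}(T)$, rather than constants depending on $\mu$. This requires phrasing everything through inequalities between integrals of functions that are defined on the common extension $\SxM$ and are controlled uniformly in $\mu$: the function $\log\|T'|E^u\|'$ from the proof of Claim \ref{claim-expo}, whose integral is $\lambda^u$, and a ``defect'' function measuring the gap between $\log\|T'|E^u\|$ and its supremum over directions, bounded by $d(T)$. One then lifts $\mu$ to $\hmu$, applies the ergodic theorem to get that the orbit spends a definite fraction of time with short unstable manifold, integrates the resulting exponential-growth inequality, and reads off the entropy bound from the variational principle (Lemma \ref{lem-ext-ent}). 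The subtlety is that the relation ``short $W^u$ now'' $\Rightarrow$ ``small expansion now'' is only valid up to the distortion $d(T)$ and up to the cap $\diam M$, so the argument must tolerate a bounded additive error per unit time and still produce a strictly positive entropy gap; this is exactly where the hypothesis $\mu_0<h_\top(T)/d(T)$ enters and must be used with care.
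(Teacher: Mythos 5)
Your treatment of the angle bound \eqref{eq-lwr-angle} is essentially the paper's: the Ruelle--Margulis inequality gives $2h(T,\mu)\leq\int\log\bigl(\|T'|E^u\|/\|T'|E^s\|\bigr)\,d\mu$, the integrand is bounded by $d(T)$ everywhere and by a small constant $h_1$ wherever the angle is below a threshold $\theta_1$ (by continuity over the finitely many linear branches), and the resulting inequality $2h(T,\mu)\leq m\cdot d(T)+(1-m)h_1$ forces $m>h_1/d(T)$. Two corrections of attribution, though: the constraint $\mu_0<h_\top(T)/d(T)$ comes from \emph{this} estimate, not from the length bookkeeping (the length bound holds on measure arbitrarily close to $1$), and the Remark following the Proposition states that the angle bound is \emph{not} expected to hold on measure close to $1$, so your fallback of ``proving the stronger measure $\geq 1-\delta$ version'' is not available for \eqref{eq-lwr-angle}.

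The genuine gap is in the length bound \eqref{eq-lwr-len}. Your mechanism --- short unstable manifolds force a deficit in the unstable expansion, hence in $\lambda^u$, hence in $h(T,\mu)\leq\lambda^u$ --- fails, because the lengths $|W^u|$ constrain the expansion only from \emph{below}: the inclusion $T(W^u(A))\supset W^u(\sigma A)$ gives $\|T'|E^u(A)\|\geq |W^u(\sigma A)|/|W^u(A)|$, and along an orbit the length terms telescope into a coboundary whose Birkhoff average vanishes, so together with the cap $|W^u|\leq\diam M$ one only recovers $\lambda^u\geq 0$. Since $T(W^u(A))$ may strictly contain $W^u(\sigma A)$ (the image is cut by $\partial P$) with no control on the excess, no \emph{upper} bound on $\lambda^u$ can be extracted from shortness of the manifolds, and the ``loss of exponential growth'' you invoke is simply not there. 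The paper's Claim \ref{claim-len} argues combinatorially instead: the conditional entropy $H_\mu(P^n|P^-)$ is realized along unstable manifolds (which are $P^-$-measurable); on the set $X_0$ where $d(x,\partial W^u(x))\leq\Lambda^{-n}r$ the first $n$ images of $W^u(x)\cap X_0$ have diameter at most $r$, hence meet at most $e^{(h_\mult(T,P)+\eps)n}=e^{\eps n}$ elements of $P^n$ by Lemma \ref{lem-mult-ent}, while on the complement one uses the crude count $\log\#P^n\leq (h_\top(T)+\eps)n$. The convexity inequality for conditional entropy then yields $h(T,\mu)\leq (1-\mu(X_0))h_\top(T)+2\eps+o(1)$, which forces $\mu(X_0)$ to be small when $h(T,\mu)$ is close to $h_\top(T)$. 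The essential inputs your proposal is missing are $h_\mult(T,P)=0$ and the cylinder count coming from the Misiurewicz--Szlenk formula; without them the length estimate does not follow.
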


\begin{remark}
  In fact we obtain a number $\mu_0<1$ arbitrarily close to $1$ satisfying
  (\ref{eq-lwr-len}). However this is not the case for
  (\ref{eq-lwr-angle}). We believe that this cannot be done. 
  Indeed, one can easily build a smooth surface
  diffeomorphism with nonzero entropy such that for some
  $\mu_*>0$ and $h_*>0$, there are invariant probability measures with
  entropy at least $h_*$ such that the stable and unstable directions make an
  arbitrarily small angle on a set of measure at least $\mu_*$. We
  do not know if these measures can be taken to have entropy
  arbitrarily close to the topological entropy or if piecewise
  affine examples exist.
\end{remark}

We first prove the lower bound on angles by comparing the
distortion with the entropy.

\begin{claim}\label{claim-angle}
For any $0<h_1<h_\top(T)$, there exists $\theta_1>0$ such that the
set where $\alpha(x)>\theta_1$ has measure at least $h_1/d(T)$ for
all measures $\mu\in\Proberg^{h_1}$.
\end{claim}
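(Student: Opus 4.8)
The plan is to exploit the elementary geometric fact that a linear map $T'(x)$ distorts angles by a bounded amount controlled by $d(T)$: if two lines through the origin make an angle $\geq\theta$, then their images under $T'(x)$ make an angle $\geq c\,\theta\,e^{-d(T)}$ for an absolute constant $c$ (and conversely, a small angle can only have been produced from a small angle). So the set where $\alpha(Tx)$ is small is, up to a bounded multiplicative factor in the angle, the $T$-preimage of the set where $\alpha(x)$ is small. I would set this up quantitatively: for $\theta>0$ let $B_\theta=\{x:0<\alpha(x)\leq\theta\}$ (together with the null set where the manifolds fail to be segments, which by Proposition \ref{prop-non-sing} has $\mu$-measure zero for $\mu\in\Proberg^0$). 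The distortion bound gives an inclusion of the form $T^{-1}B_\theta\subset B_{\kappa\theta}$ with $\kappa=\kappa(d(T))$, hence $\mu(B_\theta)=\mu(T^{-1}B_\theta)\leq\mu(B_{\kappa\theta})$, and iterating, $\mu(B_\theta)\leq\mu(B_{\kappa^n\theta})$ for all $n$; since the sets $B_{\kappa^n\theta}$ are nested decreasing to the null set $\{x:\alpha(x)=0 \text{ or manifolds not segments}\}$, we would deduce... no, that only gives $\mu(B_\theta)=0$ for every fixed $\theta$, which is too strong and in fact false after one takes the supremum over all measures. So the monotone argument must be run the other way.

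Instead I would argue via entropy, as the claim's placement ("comparing the distortion with the entropy") and the target value $h_1/d(T)$ both suggest. Fix $\mu\in\Proberg^{h_1}$ and lift it to $\hmu$ on $\SxM$; abusing notation, write $\alpha$ for the corresponding function on $\Sh$ (well-defined $\hmu$-a.e. by Proposition \ref{prop-non-sing}, with $W^s,W^u$ transverse segments a.e.). The key quantity is $\varphi(A):=\log\angle(W^s(\sigma A),W^u(\sigma A))-\log\angle(W^s(A),W^u(A))$, the log of the angle-expansion rate along the orbit. On the one hand, the distortion bound gives the pointwise estimate $\varphi(A)\geq -d(T)$ (the angle between the stable and unstable \emph{directions}, which are $T'$-invariant line fields, changes by at most $d(T)$ in log per step, since a single linear map distorts the angle between two lines by at most a factor $e^{d(T)}$). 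On the other hand, by invariance, $\int\varphi\,d\hmu=0$ \emph{provided} $\log\alpha$ is integrable — and since $\alpha\leq\pi$ it is bounded above, so the only issue is integrability of $(\log\alpha)_-$; I would first treat the case where this holds and handle the general case by a truncation. From $\int\varphi\,d\hmu=0$ and $\varphi\geq -d(T)$ we get $\int\varphi^+\,d\hmu=\int\varphi^-\,d\hmu\leq d(T)$. Now the angle is expanded ($\varphi>0$) exactly when the orbit visits the region where it was previously small — more precisely, I would show that whenever $\alpha(A)\leq\theta_1$ with $\theta_1$ small, the angle must be "recovering", i.e. either $\varphi(A)$ is bounded below by a positive constant depending on $\theta_1$ (because the transversality cannot degenerate to $0$ on a full-measure orbit, by Poincaré recurrence and transversality a.e.), giving a lower bound $\int\varphi^+\,d\hmu\geq c(\theta_1)\cdot\hmu(\{\alpha\leq\theta_1\})$, whence $\hmu(\{\alpha\leq\theta_1\})\leq d(T)/c(\theta_1)$ — but this bounds the small-angle set from \emph{above}, which is the opposite of what we want.

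The correct reading must be the dual one: the orbit spends a definite fraction of time where the angle is \emph{not} small, and it is on this good set that one wants to land a mass bound, and the entropy $h_1$ enters because an orbit carrying entropy cannot afford to have its unstable direction nearly collapsed onto its stable direction too often. Concretely, I would relate the entropy $h(\hT,\hmu)=H_{\hmu}(\hP|\hP^-)$ to the angle: using the semi-conjugacy to symbolic dynamics (Lemma \ref{lem-ext-ent}) together with the fact that the map $T^n$ restricted to a cylinder $[A_{-n}\dots A_n]$ has the stable/unstable directions as eigendirections with $\|{T_A^n}'\|\asymp e^{n\lambda^u}$, the number of distinct such cylinders meeting a small ball is at most the number of $\eps,n$-separated points, which in turn is governed by the expansion $e^{n\lambda^u}$ \emph{times} a factor measuring how the transversal angle $\alpha$ multiplies the available room — a thin parallelogram of aspect $\alpha$ holds fewer separated points by a factor $\sim\alpha$. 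This yields $h_1\leq h(\hT,\hmu)\leq \overline\lambda^u + \int\log(1/\alpha)\,d\hmu$ minus a correction, or more usefully $h_1\leq d(T)\cdot\hmu(\{\alpha>\theta_1\}) + (\text{term}\to 0 \text{ as }\theta_1\to 0)$ from bounding $\lambda^u\leq d(T)$ and isolating the contribution of the small-angle region; rearranging gives $\hmu(\{\alpha>\theta_1\})\geq h_1/d(T) - o(1)$, and choosing $\theta_1$ small enough makes $o(1)$ as small as desired, proving the measure of $\{\alpha>\theta_1\}$ is at least $h_1/d(T)$ (one can absorb the slack, or prove the slightly weaker $\geq h_1/d(T)-\eps$ and note $\eps$ is arbitrary). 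Pushing $\hmu$ down to $\mu=\pi_2\hmu$ via Corollary \ref{coro-conj-ds} preserves this, since $\alpha$ is intrinsic to the orbit.

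The main obstacle is pinning down the exact inequality linking entropy, the unstable exponent, and the integral of $\log(1/\alpha)$ — i.e. showing precisely that a nearly-degenerate transversal angle suppresses the count of separated orbit pieces by the right amount, uniformly, and in particular getting the constant to come out as $h_1/d(T)$ rather than something weaker. I expect this to require an $\eps,n$-ball counting argument inside each cylinder along the lines of Lemma \ref{lem-compos-lin}, carefully tracking the two eigendirections and their angle, and then a Fatou/Birkhoff passage to handle the time-average of $\log(1/\alpha)$ (including the integrability caveat flagged above, resolved by truncating $\alpha$ at a small threshold and letting the threshold go to zero). The geometric bound $|\varphi|$-type estimates and Poincaré recurrence are routine; the entropy-to-angle inequality is the crux.
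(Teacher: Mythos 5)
There is a genuine gap: the step you yourself flag as ``the crux'' --- an inequality of the form $h_1\leq d(T)\cdot\mu(\{\alpha>\theta_1\})+o(1)$ to be obtained by counting separated points in thin parallelograms --- is never established, and it is the entire content of the claim. The paper does not go through any such counting. It applies the Ruelle--Margulis inequality to both $T$ and $T^{-1}$ (legitimate since $T'$ is locally constant and $\mu(\partial P)=0$) to get
$2h(T,\mu)\leq\lambda^u(\mu)-\lambda^s(\mu)=\int_M\log\bigl(\|T'(x)|E^u(x)\|/\|T'(x)|E^s(x)\|\bigr)\,d\mu(x)$,
and then makes the elementary pointwise observation that the integrand is always at most $d(T)$ (by the very definition of $d(T)$) and is at most $h_1$ wherever $\angle(E^u(x),E^s(x))\leq\theta_1$, for $\theta_1=\theta_1(h_1)$ small enough --- because a single linear map applied to two nearly parallel unit vectors produces nearly equal norms. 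Splitting the integral over $\{\alpha>\theta_1\}$ and its complement gives $2h(T,\mu)\leq m\,d(T)+(1-m)h_1$ with $m=\mu(\{\alpha>\theta_1\})$, hence $m\geq(2h(T,\mu)-h_1)/(d(T)-h_1)>h_1/d(T)$. The factor $2$ from using both $T$ and $T^{-1}$, and the exact term $(1-m)h_1$ rather than an $o(1)$, are what make the constant come out as $h_1/d(T)$ with room to spare; your target inequality, even if proved, yields $h_1/d(T)$ only up to an error you would then have to absorb by varying $h_1$.

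Your second attempt (the angle cocycle $\varphi$) actually contains the right pointwise ingredient --- a fixed linear map barely distinguishes two nearly parallel directions --- but you apply it to the evolution of the angle rather than to the integrand of $\lambda^u-\lambda^s$, so the entropy never enters and you rightly abandon that route. The missing idea is not a sharper counting argument but the Pesin--Ruelle inequality itself, which is precisely the device converting entropy into a lower bound on $\lambda^u-\lambda^s$ and hence, via the pointwise bound above, into a lower bound on the time spent where the splitting is transverse. Two smaller points: the assertion $\lambda^u\leq d(T)$ you invoke in passing is false in general ($d(T)$ measures anisotropy, not expansion: a conformal piece with $T'=c\,\mathrm{Id}$ contributes $0$ to $d(T)$ whatever $c$ is); and the claim requires measure at least $h_1/d(T)$ for a \emph{fixed} $\theta_1$ depending only on $h_1$, so ``$\eps$ arbitrary'' does not by itself close the argument unless you also let $h_1$ vary.
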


The Ruelle-Margulis inequality applied to $(T,\mu)$ and $(T^{-1},\mu)$
(which is valid as $T'$ is constant on each element of
$P$ and $\mu(\partial P)=0$) yields:
 \begin{equation}\label{eq-ruelle1}
    h_\top(T) \leq \frac{\lambda^u(\mu)-\lambda^s(\mu)}{2}
     =\frac12 \int_M \log
     \frac{\|T'(x)|E^u(x)\|}{\|T'(x)|E^s(x)\|}\, d\mu(x).
 \end{equation}
By continuity there exists $\theta_1>0$ such that, for all
$u,v\in\RR^2\setminus\{0\}$ with $\angle(u,v)\leq\theta_1$,
 $$
 \forall x\in M\setminus\partial P\;\log\frac{\|T'(x).u\|}{\|T'(x).v\|}\leq h_1.
 $$
Therefore, setting $m:=\mu(\{x\in M:\alpha(x)>\theta_1\})$:
 $$
   2h(T,\mu) \leq m \cdot d(T)+(1-m)\cdot h_1
 $$
so that, assuming $h(T,\mu)> h_1$:
 $$
   m\geq \frac{2h(T,\mu)-h_1}{d(T)-h_1}
    \geq \frac{2h(T,\mu)-h_1}{d(T)} > \frac{h_1}{d(T)}.
 $$
This proves Claim \ref{claim-angle}.


\begin{claim}\label{claim-len}
For any $\mu_3<1$, there exists $\ell_0>0$ such that
 \begin{equation}\label{eq-len1}
   \forall\mu\in\Proberg^{h_3}(T)\;
     \mu\left(\{x\in M: d(x,\partial W^u(x))>\ell_0 \}\right) >
   \mu_3
 \end{equation}
for $h_3=h_\top(T)(1-(1-\mu_3)/2)$.
\end{claim}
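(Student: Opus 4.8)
The plan is to deduce the estimate from a Borel--Cantelli type bound along \emph{backward} orbits: a short unstable manifold at $x$ forces $T^{-j}x$ to lie extremely close to $\partial P$ for some $j\ge1$, much closer than the unstable direction has been contracted over those $j$ steps.

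\emph{Geometric reduction.} Fix $\mu\in\Proberg^{h_3}(T)$; Proposition~\ref{prop-non-sing} applies, so for $\mu$-a.e.\ $x$ we have $\mu(\partial P)=0$, $\lambda^u(x)\ge h(T,\mu)$, and $W^u(x)=\bigcap_{m\ge1}\overline{T^mA_{-m}}$ is a genuine segment along the unstable direction $E^u(x)$ with $x$ in its relative interior (here $A_{-m}$ is the element of $P$ containing $T^{-m}x$). Let $z$ be an endpoint of $W^u(x)$ with $d(x,z)=d(x,\partial W^u(x))=:\rho^u(x)$. By compactness of the polygons $\overline{T^mA_{-m}}$, the endpoint $z$ must saturate one of the defining constraints: there is $j\ge1$ with $T^{-j}z\in\partial A_{-j}$. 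Since $[x,z]\subset\overline{T^mA_{-m}}$ for all $m$, the map $T^{-j}$ is affine on $[x,z]$ and contracts it along $E^u$ by the factor $1/\Phi_j(x)$, where $\Phi_j(x):=\prod_{i=1}^j\|T'(T^{-i}x)|_{E^u}\|$; hence $d(T^{-j}x,\partial P)\le d(T^{-j}x,\partial A_{-j})\le \rho^u(x)/\Phi_j(x)$. Therefore, up to a $\mu$-null set,
\[
B_\ell:=\{x:\rho^u(x)\le\ell\}\ \subseteq\ \bigcup_{j\ge1}\{x:\Phi_j(x)\,d(T^{-j}x,\partial P)\le\ell\},
\]
and, using $T$-invariance together with $\Phi_j(T^jy)=\phi_j(y):=\|(T^j)'(y)|_{E^u(y)}\|$,
\[
\mu(B_\ell)\ \le\ \sum_{j\ge1}\mu\bigl(\{y:\,d(y,\partial P)\le\ell/\phi_j(y)\}\bigr).
\]

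\emph{Quantifying the contraction.} Since $T'$ is constant on each element of $P$ and $\mu(\partial P)=0$, the Ruelle--Margulis inequality (\ref{eq-ruelle1}) gives $\lambda^u(\mu)\ge h(T,\mu)>h_3$. Writing $\psi(y)=\log\|T'(y)|_{E^u(y)}\|$, a bounded function, Birkhoff's theorem yields $\tfrac1j\log\phi_j=\tfrac1j\sum_{k<j}\psi\circ T^k\to\lambda^u(\mu)>h_3$ $\mu$-a.e. Hence, with $h_4:=h_3/2$, off a ``slow set'' $E_\mu$ one has $\phi_j\ge e^{h_4 j}$ for every $j$ (the finitely many small $j$ being absorbed by the trivial bound $\phi_j\ge\|T^{-1}\|^{-j}$ after a harmless reindexing), so
\[
\mu(B_\ell)\ \le\ \mu(E_\mu)\ +\ \sum_{j\ge1}\mu\bigl(N_{\ell e^{-h_4 j}}(\partial P)\bigr),
\]
where $N_\delta(\partial P)$ denotes the $\delta$-neighbourhood of $\partial P$.

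\emph{The uniform estimate and the main obstacle.} It remains to bound $\mu(E_\mu)$ and $\mu\bigl(N_\delta(\partial P)\bigr)$ \emph{uniformly over $\mu\in\Proberg^{h_3}(T)$}, with a summable rate in $\delta$ for the latter, say $\sup_\mu\mu(N_\delta(\partial P))\le K\delta^\beta$; granting this, the last sum is $\le K\ell^\beta/(\beta h_4)$, so $\mu(B_\ell)<1-\mu_3$ for $\ell=\ell_0$ small, which is the claim. This uniform strip estimate is the crux, and it is exactly here that the entropy hypothesis is used in an essential (not merely pointwise) way: for fixed $\mu$, both $\mu(\partial P)=0$ and the Birkhoff convergence are automatic, but neither is uniform over the non-compact family $\Proberg^{h_3}$, whereas the cylinder count of Proposition~\ref{prop-MS} is. The plan is to turn the combinatorial bound $\#\{P^n\text{-cylinders}\}\le e^{n(h_\top(T)+o(1))}$ into an upper bound on the number of Bowen $(\eps,n)$-balls needed to cover $N_\delta(\partial P)$ for $n\asymp h_3^{-1}\log(1/\delta)$, and to compare this with Brin--Katok local-entropy lower bounds $\mu(B_n(x,\eps))\ge e^{-n(h(T,\mu)+o(1))}$ valid (by Egorov) on a set of $x$ of measure close to $1$; the resulting gap then forces $\mu(N_\delta(\partial P))$ to be exponentially small, uniformly, and similarly for $\mu(E_\mu)$. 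The remaining subtlety is a mild circularity -- the Bowen-ball count for $N_\delta(\partial P)$ again involves how the edges of $P$ are expanded, hence the length of unstable manifolds -- which one removes by a bootstrap: first establish a weak form of (\ref{eq-len1}) (with $\mu_3$ near $0$) by a crude version of the above, then feed it back to obtain the full statement. The precise slack in $h_3=h_\top(T)(1-(1-\mu_3)/2)$ is what keeps the constants $K,\beta$ (which degrade as $h(T,\mu)\downarrow 0$) and the bookkeeping uniform; replacing $T$ by a suitable power $T^L$ may be convenient to make the counting clean.
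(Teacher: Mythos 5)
Your geometric reduction (short $W^u(x)$ forces some backward iterate of $x$ exponentially close to $\partial P$) is fine up to a minor technicality about endpoints of the nested intersection being attained at a finite stage, but the argument then rests entirely on a step you do not prove: the uniform bound $\sup_{\mu\in\Proberg^{h_3}}\mu\bigl(N_\delta(\partial P)\bigr)\le K\delta^\beta$ (and the companion uniform control of the Birkhoff ``slow set'' $E_\mu$). You correctly identify this as the crux and then only sketch a plan for it, acknowledging a circularity that you propose to remove by an unexecuted bootstrap. Worse, the plan itself looks quantitatively unworkable: to make the Brin--Katok comparison $\mu(B_n(x,\eps))\ge e^{-n(h(T,\mu)+o(1))}$ yield smallness of $\mu(N_\delta(\partial P))$ you would need to cover $N_\delta(\partial P)$ by $e^{n h'}$ Bowen $(\eps,n)$-balls with $h'$ strictly below $h(T,\mu)$ for $n\asymp\log(1/\delta)$; but $\partial P$ contains segments transverse to the stable direction, whose forward orbits carry entropy up to $h_\top(T)$, so the covering number is of order $e^{n h_\top(T)}$ and the comparison gives nothing. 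The unconditional Bowen-ball count simply cannot distinguish points with short unstable manifolds from the rest.

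The paper's proof takes a different and shorter route that sidesteps any estimate on $\mu$ near $\partial P$. It writes $nh(T,\mu)=H_\mu(P^n\mid P^-)$ and splits according to $X_0:=\{x: d(x,\partial W^u(x))\le\Lambda^{-n}r\}$. The point is that $W^u(x)$ is $P^-$-measurable, and if $W^u(x)$ is that short then its first $n$ forward images stay in a ball of radius $r$, so the number of $P^n$-cylinders it meets is at most $\tfrac12 e^{(h_\mult(T,P)+\eps)n}$, which is subexponential since $h_\mult(T,P)=0$ (Lemma \ref{lem-mult-ent}). Thus $X_0$ contributes essentially nothing to the conditional entropy, the complement contributes at most $(h_\top(T)+\eps)n$, and the hypothesis $h(T,\mu)>h_3$ forces $\mu(X_0)<1-\mu_3$ directly. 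If you want to salvage your backward-orbit approach you would have to inject this same conditioning-on-the-past / multiplicity-entropy mechanism into your covering of $N_\delta(\partial P)$; as written, the proposal has a genuine gap at its central step.
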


To prove (\ref{eq-len1}) let $\eps=(1-\mu_3)h_\top(T)/2>0$,
$\Lambda$ be a Lipschitz constant for $T$, $n\geq\log 2/\eps$ be a
large integer and $r=r(\eps,n)>0$ be a small number such that
 $$
 \max_{x\in M} \#\{A\in
P^n:B(x,r)\cap A\ne\emptyset\}\leq \frac12e^{(h_\mult(T,P)+\eps)n}
 \text{ and }
 \# P^n\leq e^{(h_\top(T)+\eps)n}.
  $$
Let $\mu\in\Proberg^{h_3}(T)$, $X_0:=\{x\in M: d(x,\partial
W^u(x))\leq \Lambda^{-n}r\}$ and denote by $\mu|X_0$ the
normalized restriction of $\mu$ to $X_0$. Using standard facts
about entropy (see Appendix \ref{sec-ent-bound}) we get
 \begin{align*}
    n h(T,\mu)&= H_\mu(P^n|P^-)\leq H_\mu(P^n\vee\{X_0,M\setminus X_0\}|P^-)
      \leq H_\mu(\{X_0,M\setminus X_0\}) \\ 
    & \qquad +\mu(X_0)H_{\mu|X_0}(P^n|P^-)
      +(1-\mu(X_0))H_{\mu|M\setminus X_0}(P^n|P^-) \\
    & \leq \log 2 + \mu(X_0)\sup_{x\in X_0}\log\#\{A\in P^n:A\cap
    X_0\cap W^u(x)\ne\emptyset\}\qquad\qquad \\
    & \qquad+(1-\mu(X_0))\log\#P^n \\
    & \leq \log 2 + \mu(X_0)(h_\mult(T,P)+\eps)n +
    (1-\mu(X_0))(h_\top(T)+\eps)n.
 \end{align*}
Hence
 \begin{multline*}
   h(T,\mu) \leq (1-\mu(X_0))h_\top(T)+\mu(X_0)h_\mult(T,P)+\eps
   +\frac1n\log 2
    = \\
     h_\top(T)+2\eps - \mu(X_0)(h_\top(T)-h_\mult(T,P)).
 \end{multline*}
implying that:
 $$
   \mu(X_0) \leq
   \frac{h_\top(T)-h(T,\mu)-\eps}{h_\top(T)-h_\mult(T,P)}
    \leq\frac{h_\top(T)-h(T,\mu)+\eps}{h_\top(T)}
     < 1-\mu_3
 $$
using $h_\mult(T,P)=0$ and $h(T,\mu)>h_3$. The claim is proved.

\bigbreak

\begin{demof}{Proposition \ref{prop-size-W}}
Claim \ref{claim-angle} gives $\theta_0>0$ such that
(\ref{eq-lwr-angle}) holds on a set of measure at least
$h_\top(T)/2d(T)$ wrt all measures in $\Proberg^{h_\top(T)/2}(T)$.
Claim \ref{claim-len} applied to $T$ and $T^{-1}$ with
$\mu_3=1-h_\top(T)/8d(T)$, shows that for
 $$
 h_0=h_\top(T)\left(1-\frac1{16}\frac{h_\top(T)}{d(T)}\right)\geq h_\top(T)/2,$$
(\ref{eq-lwr-angle}) and (\ref{eq-lwr-len}) hold jointly on a set
of measure at least $h_\top(T)/4d(T)$ \wrt all measures in
$\Proberg^{h_0}(T)$.
\end{demof}

\section{Construction of the Markov
Structure}\label{sec-markov-struct}

Roughly speaking, the estimates of the previous section will allow us to build a
collection of (non-uniform) ``Markov rectangles'' which will
``control enough'' of the dynamics to analyze all measures of
large entropy.

\subsection{Markov Rectangles}\label{sec-build-rect}

\begin{definition}
A \new{(Markov) rectangle}
 is a closed topological disk $R$ contained in an affine chart and bounded by four
line segments, alternatively included in stable and unstable
manifolds, making respectively the unstable boundary, $\partial^u
R=\partial^u_1R \cup\partial^u_2R$, and the stable one,
$\partial^s R=\partial^s_1R \cup\partial^s_2R$. See Fig.
\ref{fig-rectangles}.

A \new{Markov array} is a finite collection of Markov rectangles
with disjoint interiors.
\end{definition}

Not every passage of an orbit inside a rectangle is useful. We
need the following properties.

\begin{definition}
A point $x$ is \new{controlled} by a rectangle $R$ if $x$ is nice,
belongs to $R$ and if $W^s(x)$ and $W^u(x)$ each intersects
$\partial R$ in two points. Note that control depends on the
partition $P$ used to define $W^u(x)$ and $W^s(x)$. If necessary
we speak of \new{control with respect} to $P$.

$x\in R$ is
\new{$10$-controlled} if, moreover, $\rho(x)>10\diam R$ where 
$\rho$ was defined in (\ref{eq-lwr-len}). $x\in R$ is
$s$-controlled if $x$ is nice, $x\in R$ and $W^s(x)$ intersects
$\partial R$ in two points.

The sets of controlled (resp. $10$-controlled, $s$-controlled) points is
denoted by $\kappa(R)$ (resp. $\Kappa(R)$, $\kappa_s(R)$).

A point is controlled by a Markov array $\mathcal R$ if it is
controlled by one of the rectangles of the array. We define
$\kappa(\mathcal R)$, $\Kappa(\mathcal R),\kappa_s(\mathcal R)$ in
the obvious way.
\end{definition}

Using the previous lower bounds on the lengths and angles of
invariant manifolds we shall first prove:

\begin{lemma}\label{lem-big-rec1}
There exist numbers $h_0<h_\top(T)$ and $\mu_0>0$ and a Markov
array $\mathcal R$ such that for all $\mu\in\Proberg^{h_0}(T)$,
 $$
   \mu(\Kappa(\mathcal R))>\mu_0.
 $$
\end{lemma}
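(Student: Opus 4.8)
We may assume $h_\top(T)>0$, so that $\Proberg^{h}(T)\ne\emptyset$ whenever $h<h_\top(T)$. The plan is to deduce Lemma~\ref{lem-big-rec1} from Proposition~\ref{prop-size-W} together with a compactness argument over $M$.

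First I would fix once and for all a positive number $\mu_1<h_\top(T)/d(T)$ (say $\mu_1=h_\top(T)/(4\,d(T))$), apply Proposition~\ref{prop-size-W} with this $\mu_1$ to obtain $h_0<h_\top(T)$, $\theta_0>0$, $\ell_0>0$, and set
$$
  G:=\{x\in M:\ \rho(x)\ge\ell_0\ \text{ and }\ \alpha(x)>\theta_0\}.
$$
This is a \emph{fixed} measurable set, depending only on $T$ and $P$: every $x\in G$ is nice, $W^s(x)$ and $W^u(x)$ are genuine line segments (straight in the affine charts), $x$ lies in their relative interiors at distance at least $\ell_0$ from either endpoint, and they make an angle larger than $\theta_0$ at $x$; and by Proposition~\ref{prop-size-W} one has $\mu(G)\ge\mu_1$ for \emph{every} $\mu\in\Proberg^{h_0}(T)$. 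The reason for passing to $G$ is the elementary remark that if $R$ is a \emph{convex} Markov rectangle with $\diam R<\ell_0/10$, then $G\cap\inter R\subseteq\Kappa(R)$: for $x\in G\cap\inter R$, each of the two half-segments of $W^s(x)$ issuing from $x$ has length $\ge\ell_0>\diam R$, hence leaves $R$, so $W^s(x)$ meets $\partial R$ in two points (exactly two, by convexity), and likewise $W^u(x)$; combined with $\rho(x)\ge\ell_0>10\diam R$ and $x$ nice, this is exactly the definition of $x$ being $10$-controlled by $R$. Hence it suffices to produce a Markov array $\mathcal R$ of convex rectangles of diameter $<\ell_0/10$ whose interiors cover $G$.

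The geometric core — and the step I expect to be the main obstacle — is to construct, around each $x_0\in G$, a convex Markov rectangle $R_{x_0}$ with $x_0\in\inter R_{x_0}$, $\diam R_{x_0}\le\ell_0/20$, and $\inter R_{x_0}\supseteq B(x_0,r_0)$ for a constant $r_0>0$ depending only on $\ell_0$ and $\theta_0$. The form of the construction is dictated by the definition of a rectangle: one looks for two ``auxiliary'' points $p_\pm$ on $W^u(x_0)$, on either side of $x_0$ at distance of order $r_0$, and two auxiliary points $q_\pm$ on $W^s(x_0)$ at distance of order $r_0$; truncated appropriately, the four invariant manifolds $W^s(p_+),W^u(q_+),W^s(p_-),W^u(q_-)$ are to be, in this cyclic order, the four sides of $R_{x_0}$. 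Since at $x_0$ the manifolds $W^s$ and $W^u$ are straight and transverse with angle $>\theta_0$, once the auxiliary points are close enough to $x_0$ and their transverse manifolds are not too skew, these four segments close up into a convex quadrilateral surrounding $x_0$ with inradius bounded below in terms of $\theta_0$ and $r_0$. What requires genuine work is the \emph{existence} of suitable auxiliary points: one must show that at a definite distance (of order $r_0$) from $x_0$, along both $W^u(x_0)$ and $W^s(x_0)$, there are points whose transverse invariant manifolds are line segments of length of order $r_0$ and of controlled direction. I expect this to rest on a second application of the semi-uniform bounds of Section~\ref{sec-pointwise} — now over a small neighbourhood of $x_0$ rather than at $x_0$ alone — together with $\mu(\partial P)=0$ (Proposition~\ref{prop-non-sing}), to force ``good'' points to occur near $x_0$ at the scale $r_0$, i.e. to control the ``holes'' in the local product structure there.

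Finally I would pass to a finite sub-array by compactness: since each $\inter R_{x_0}$ contains the ball $B(x_0,r_0)$ with $r_0$ independent of $x_0$ and $M$ is compact, a maximal $r_0$-separated subset $\{x_1,\dots,x_N\}\subseteq G$ is finite, and by maximality every point of $G$ lies within $r_0$ of some $x_i$, hence in $\inter R_{x_i}$; so $G\subseteq\bigcup_{i=1}^N\inter R_{x_i}$. A standard refinement — cutting the rectangles $R_{x_i}$ along one another's stable and unstable sides, and if necessary replacing $T$ by a power $T^L$ (which merely rescales $h_0$) — turns $\{R_{x_1},\dots,R_{x_N}\}$ into a genuine Markov array $\mathcal R$ of convex rectangles of diameter $<\ell_0/10$ with disjoint interiors, still covering $G$ except for finitely many stable and unstable manifold segments (each of which is $\mu$-negligible, being contained in a single global stable or unstable set, which a measure of positive entropy cannot charge). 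Consequently, for every $\mu\in\Proberg^{h_0}(T)$,
$$
  \mu(\Kappa(\mathcal R))\ \ge\ \mu\Bigl(\bigcup_{R\in\mathcal R}\bigl(G\cap\inter R\bigr)\Bigr)\ =\ \mu(G)\ \ge\ \mu_1\ =:\ \mu_0,
$$
which proves the lemma.
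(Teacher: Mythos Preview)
Your overall strategy --- define the good set $G$, cover it by small Markov rectangles, subdivide to get disjoint interiors --- matches the paper's. The gap is exactly where you flagged it: the existence of auxiliary points $p_\pm\in W^u(x_0)$ and $q_\pm\in W^s(x_0)$ whose transverse invariant manifolds are long and well-directed. Your proposed fix, a ``second application of the semi-uniform bounds'', cannot work as stated. Proposition~\ref{prop-size-W} is a purely measure-theoretic statement, $\mu(G)\ge\mu_1$; it gives no information about the intersection of $G$ with a fixed one-dimensional set such as $W^u(x_0)$, which has $\mu$-measure zero for every non-atomic $\mu$. There is no a~priori reason $G$ should meet $W^u(x_0)$ anywhere except at $x_0$ itself, nor that nearby points on $W^u(x_0)$ carry long stable manifolds. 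The appeal to $\mu(\partial P)=0$ is likewise a statement about measures, not about specific points of $W^u(x_0)$, so it does not help locate your $p_\pm$.

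The paper sidesteps this with a compactness argument (Lemma~\ref{lem-big-rec}). Rather than centering a rectangle on each \emph{point} of $G$, one first partitions $G=K_*$ into finitely many pieces $K_j$ of diameter $\ll\theta_0\ell_0$. All stable manifolds $W^s(x)$, $x\in K_j$, are segments of length $\ge\ell_0$ with nearly parallel directions. One then passes to \emph{Hausdorff limits}: if $x_n\in K_j$ and $W^s(x_n)\to L$, then $L$ is still a segment of length $\ge\ell_0$, and $L\subset W^s(A)$ for some limit itinerary $A$ (Fact~\ref{fact-limit-W}). Taking such limits in the two extremal positions gives a pair of stable boundary lines sandwiching $K_j$ (Fact~\ref{fact-bound-W}); the same construction for $T^{-1}$ supplies unstable boundary lines, and the four bound a rectangle $R^{(j)}\supset K_j$ of diameter $<\ell_0/10$. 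The sides are thus segments of invariant manifolds of \emph{limiting} itineraries, not of points required in advance to lie in $G$ --- this device avoids entirely the search for good points on a prescribed measure-zero set.
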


Our analysis requires the following slightly stronger statement
(i.e., we only tolerate "small holes"):

\begin{lemma}\label{lem-sm-rec0}
There is $\mu_0>0$ such that for any $\eps_0>0$, there exist a
number $h_0<h_\top(T)$ and a Markov array $\mathcal R$ such that
for any $\mu\in\Proberg^{h_0}(T)$,
 \begin{itemize}
   \item $\mu(\Kappa(\mathcal R))>\mu_0$;
   \item
   $\mu(\mathcal R\setminus\Kappa(\mathcal R))<\eps_0\mu_0$.
 \end{itemize}
\end{lemma}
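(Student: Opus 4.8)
The plan is to upgrade Lemma \ref{lem-big-rec1} by a pigeonhole/counting argument: we will build not just one Markov array but a large finite family of them, indexed by a fine grid of possible positions, sizes and slopes of the bounding stable/unstable segments, and then observe that only a bounded number of arrays in this family can simultaneously carry much mass \emph{outside} their controlled sets. More precisely, I would fix the constant $\mu_0>0$ coming from Proposition \ref{prop-size-W} (the semi-uniform lower bound for lengths and angles, which is what makes Lemma \ref{lem-big-rec1} work) and show it is the same $\mu_0$ that appears in the statement. Given $\eps_0>0$, choose an integer $N>2/\eps_0$. I would cover the region where (\ref{eq-lwr-len}) and (\ref{eq-lwr-angle}) hold by a finite collection of small ``candidate rectangles'' whose diameter is much smaller than $\ell_0$ and $\theta_0$, chosen so that any point $x$ satisfying the semi-uniform bounds is $10$-controlled by at least one candidate rectangle centered near $x$ (its local stable and unstable manifolds, being long and transverse relative to the candidate's diameter, cross all four sides). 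This is exactly the mechanism already used to prove Lemma \ref{lem-big-rec1}; the only new ingredient is to keep track of the whole family of candidates rather than extracting a single good array.

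The key step is then a counting argument on measures. Write $\mathcal C$ for the finite set of candidate rectangles. For $\mu\in\Proberg^{h_0}(T)$, the set of points satisfying the bounds of Proposition \ref{prop-size-W} has measure at least $\mu_0$, and each such point lies in $\Kappa(R)$ for some $R\in\mathcal C$; so some single $R=R(\mu)$ has $\mu(\Kappa(R))\geq \mu_0/\#\mathcal C$. That gives the first bullet with a worse constant, which is not yet what we want — so instead I would argue as follows. Partition $\mathcal C$ into a bounded number $K$ of sub-arrays $\mathcal R_1,\dots,\mathcal R_K$, each of which is a genuine Markov array (rectangles with disjoint interiors — this is arrangeable by taking the candidates on $K$ translated sub-grids), such that every point satisfying the Proposition \ref{prop-size-W} bounds is $10$-controlled by \emph{each} $\mathcal R_j$ for which the relevant grid cell is nonempty; refining the grid makes $K$ depend only on $\ell_0,\theta_0$, not on $\eps_0$. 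Then for a fixed large-entropy $\mu$ we have $\mu(\Kappa(\mathcal R_j))\geq\mu_0$ for at least one $j$, say $j_0$; the issue is the \emph{holes}, $\mathcal R_{j_0}\setminus\Kappa(\mathcal R_{j_0})$.

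To control the holes I would use the entropy estimates in the spirit of the proof of Claim \ref{claim-len}. A point of $\mathcal R_j$ that is \emph{not} $10$-controlled either is not nice, or has $\rho(x)\leq 10\diam\mathcal R_j$, or has a local manifold that fails to cross a side; by Proposition \ref{prop-non-sing} non-nice points are $\mu$-null, and by (\ref{eq-lwr-len}) of Proposition \ref{prop-size-W} the set $\{\rho(x)\le 10\diam\mathcal R_j\}$ has $\mu$-measure $<1-\mu_3$, which we can make as small as we like (smaller than $\eps_0\mu_0/K$, say) by taking $h_0$ close enough to $h_\top(T)$ \emph{and} the candidate rectangles small enough — note $\diam\mathcal R_j\to0$ as the grid is refined, so for a fixed small target we first fix how small the rectangles must be, then fix $h_0$. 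The remaining failure mode — a crossing manifold that re-exits a side, producing a hole in the local product structure — is estimated exactly as in Claim \ref{claim-len}: cover the bad set by $\{x: d(x,\partial W^u(x))\le\delta\}\cup\{x: d(x,\partial W^s(x))\le\delta\}$ for small $\delta$ and bound its measure by $h_\top(T)-h(T,\mu)+\eps$ divided by $h_\top(T)$ (using $h_\mult(T,P)=0$ from Lemma \ref{lem-mult-ent}), which is $<\eps_0\mu_0/K$ once $h_0$ is close enough to $h_\top(T)$. Summing the few bad contributions over the at most $K$ relevant rectangles of $\mathcal R_{j_0}$ gives $\mu(\mathcal R_{j_0}\setminus\Kappa(\mathcal R_{j_0}))<\eps_0\mu_0$, and relabeling $\mathcal R:=\mathcal R_{j_0}$ — wait, $\mathcal R$ must be \emph{independent} of $\mu$. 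Here is where the pigeonhole closes the argument: there are only $K$ choices of $j_0$, so one of the $\mathcal R_j$ works for a family of measures of uniformly large entropy; but in fact each $\mathcal R_j$ satisfies the hole bound for \emph{every} $\mu\in\Proberg^{h_0}(T)$ (the hole estimate above used only $h(T,\mu)>h_0$, not the choice of $j$), so it suffices to pick the single $j$ maximizing $\inf_\mu \mu(\Kappa(\mathcal R_j))$; that infimum is $\geq\mu_0/K$, so after replacing $\mu_0$ by $\mu_0/K$ at the very start (which is legitimate since $\mu_0$ in the statement need only be \emph{some} positive constant) we get both bullets with a single array. The main obstacle is precisely this bookkeeping: arranging that the ``size of rectangles'' and ``closeness of $h_0$ to $h_\top(T)$'' are chosen in the right order so that $K$ stays bounded independently of $\eps_0$ while the per-rectangle hole mass is pushed below $\eps_0\mu_0/K$; the geometry (long transverse manifolds cross small rectangles) and the entropy input (Lemma \ref{lem-mult-ent}, Proposition \ref{prop-non-sing}, Proposition \ref{prop-size-W}) are already in hand.
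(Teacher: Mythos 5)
Your core mechanism is the right one, and it is the one the paper uses: shrink the rectangles so that their diameter is much smaller than the length scale $\ell_1$ below which Claim \ref{claim-len} (applied to $T$ and $T^{-1}$ with $\mu_3$ close to $1$) guarantees $\mu(\{x:\rho(x)<\ell_1\})<\eps_0\mu_0$ for all $\mu\in\Proberg^{h_1}(T)$. Then, up to the $\mu$-null sets of Proposition \ref{prop-non-sing}, the only way a point $x$ of a small rectangle $R$ can fail to be $10$-controlled is to have $\rho(x)\leq 10\diam R\leq\ell_1$, and your order of quantifiers ($\ell_1$ and the final $h_0$ depend on $\eps_0$, while $\mu_0$ does not) is correct.

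The gap is in how you produce a single array independent of $\mu$. The final step --- ``pick the single $j$ maximizing $\inf_\mu\mu(\Kappa(\mathcal R_j))$; that infimum is $\geq\mu_0/K$'' --- is an order-of-quantifiers error: from ``for every $\mu$ there is some $j$ with $\mu(\Kappa(\mathcal R_j))\geq\mu_0/K$'' one cannot conclude that one $j$ works for all $\mu$; different measures may select different sub-arrays, and $\max_j\inf_\mu$ can vanish even when $\inf_\mu\max_j\geq\mu_0/K$. Your earlier, stronger claim (every point satisfying the bounds of Proposition \ref{prop-size-W} is $10$-controlled by \emph{each} $\mathcal R_j$) would make the pigeonhole unnecessary, but it requires each sub-array \emph{separately} to cover the good set, which your ``translated sub-grids'' do not obviously achieve --- and the rectangle boundaries must be genuine stable/unstable segments as in Fact \ref{fact-bound-W}, not grid lines, for the array to be usable later. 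The paper sidesteps all of this: it takes the one array $\mathcal R$ already furnished by Lemma \ref{lem-big-rec1} and subdivides each of its rectangles \emph{in place} into sub-rectangles of diameter at most $\ell_1/10$, again using the boundary lines of Fact \ref{fact-bound-W}. Subdivision does not change the union of the array and only shrinks diameters, so $\Kappa(\mathcal R')\supset\Kappa(\mathcal R)$ and the first bullet is inherited with the original $\mu_0$, while the hole estimate above gives the second. I recommend restructuring your argument around this subdivision rather than rebuilding the cover from scratch.
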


This will be obtained by subdividing the rectangles in the Markov
array from Lemma \ref{lem-big-rec1} into sub-rectangles much
smaller than most stable/unstable manifolds.

\medbreak

The final twist is that as we replace the partition $P$ by the
convex partition $P^{\mathcal R}$ generated by $P$ and the Markov
array $\mathcal R$ (see Fig. \ref{fig-Ptilde}), some invariant
manifolds may shrink, say $\tilde W^u(x):=\bigcap_{n\geq1}
\overline{T^nP^{\mathcal R}(T^{-n}x)} \subsetneq W^u(x)$,
diminishing the set of controlled points. Indeed, $\tilde W^u(f(x))
\subsetneq W^u(f(x))$ when $W^u(x)$ crosses the boundary
of a rectangle from $\mathcal R$ before crossing $\partial P(x)$. 
We shall see however that if
these intersections are sufficiently separated in time, then
$\tilde W^u(x)=W^u(x)$ for most points $x\in\mathcal R$ \wrt
large entropy measures. To guarantee that large separation, we use
the following construction:

\begin{definition}\label{def-perext}
If $\mathcal R$ is an array of Markov rectangles contained in an
element of $P$ and $L$ is a positive integer, then the
\new{$(\mathcal R,L)$-extension} of $(M,T,P,\mathcal R)$ is
$(M_\perext,T_\perext,P_\perext,\mathcal R_\perext)$, defined in
the following way:
 \begin{itemize}
  \item $M_\perext=M\times\{0,\dots,L-1\}$;
  \item $T_\perext(x,k)=(Tx,k+1\mod L)$;
  \item $P_\perext$ is the finite partition of $M_\perext$ which
  coincides with a copy of $P$ on each $M\times\{k\}$ for $k\ne 0$
  and coincides on $M\times\{0\}$ with a copy of $P^{\mathcal R}$;
  \item $\mathcal R_\perext=\{R\times\{0\}:R\in \mathcal R\}$.
 \end{itemize}
\end{definition}

\begin{figure}
\centering
\includegraphics[width=7cm]{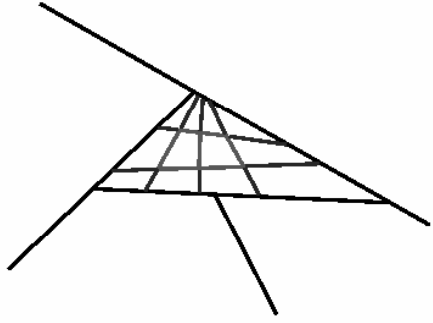}
\caption{The convex partition $\tilde P$ refining both $P$ (outside lines) and $\mathcal R$ (the two
quadrilaterals at the center).}\label{fig-Ptilde}
\end{figure}

The conclusion of this section is:

\begin{proposition}\label{prop-sm-rec}
Let $(M,T,P)$ be a piecewise affine surface homeomorphism with
nonzero entropy. There exist $\mu_0>0$, $h_0<h_\top(T)$ such that
for any $\eps_0>0$, there is a Markov array
$\mathcal R$ and a positive integer $L_0$ with the following
properties. Fix any $L_\perext\geq L_0$ and let
$(M_\perext,T_\perext,P_\perext,\mathcal R_\perext)$ be the
$(\mathcal R,L_\perext)$-extension of $(M,T,P,\mathcal R)$.

For each $\mu\in\Proberg^{h_0}(T)$, there exists an ergodic
invariant probability measure $\mu_\perext$ of $T_\perext$ with
$\pi(\mu_\perext)=\mu$ (where $\pi(x,k)=x$) such that:
 \begin{enumerate}[(i)]
  \item\label{assert-kappa} $L_\perext\cdot \mu_\perext(\Kappa(\mathcal R_\perext))>\mu_0$;
  \item\label{assert-kappa10} $L_\perext\cdot \mu_\perext(\mathcal R_\perext\setminus \Kappa(\mathcal R_\perext))
  <\eps_0\cdot\mu_0$.
 \end{enumerate}
The above controlled sets are defined \wrt to the invariant manifolds
relative to $P_\perext$ (which contains the Markov array $\mathcal R_\perext$).
\end{proposition}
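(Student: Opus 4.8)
The plan is to combine Lemma~\ref{lem-sm-rec0} with a lifting argument for the $(\mathcal R,L_\perext)$-extension, arranging $L_\perext$ large enough that the boundary crossings of invariant manifolds with $\partial\mathcal R$ that occur inside the thickened coordinate $M\times\{0\}$ are so rare that replacing $P$ by $P^{\mathcal R}$ (only in the zeroth slice) damages only a negligible proportion of controlled points. First I would fix $\mu_0>0$ from Lemma~\ref{lem-sm-rec0} and take $\mathcal R$ a Markov array achieving, for a suitably small auxiliary $\eps_0'$ (to be chosen as a fraction of $\eps_0$), the two estimates $\mu(\Kappa(\mathcal R))>\mu_0$ and $\mu(\mathcal R\setminus\Kappa(\mathcal R))<\eps_0'\mu_0$ for all $\mu\in\Proberg^{h_0}(T)$, where the controlled sets here are taken with respect to the \emph{original} partition $P$. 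Here I would also want to keep $h_0<h_\top(T)$ fixed independently of $\eps_0$, as the statement demands; Lemma~\ref{lem-sm-rec0} is phrased to allow this since $\mu_0$ is universal and only $h_0$ and $\mathcal R$ depend on $\eps_0$ — but one must check the $h_0$ can be taken bounded away from $h_\top(T)$ uniformly, or else absorb a harmless shrinking.

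Next, given $\mu\in\Proberg^{h_0}(T)$, I would produce $\mu_\perext$ on $M_\perext$ as follows: consider the $L_\perext$-to-one factor map $\pi:M_\perext\to M$; the natural $T_\perext$-invariant measure projecting to $\mu$ is $\mu_\perext=\frac1{L_\perext}\sum_{k=0}^{L_\perext-1}\mu\times\delta_k$, which is ergodic precisely when $\mu$ is not supported on a set of period dividing a proper divisor of $L_\perext$ in a way that desynchronizes — generically it is ergodic, and when it is not one passes to an ergodic component, all of which project to $\mu$ and are permuted cyclically, so one of them still satisfies the required estimates after relabeling. With this $\mu_\perext$, assertion~(\ref{assert-kappa}) and~(\ref{assert-kappa10}) for the \emph{$P$-controlled} versions are immediate: $L_\perext\cdot\mu_\perext(S\times\{0\})=\mu(S)$ for any $S\subset M$, so the factor $L_\perext$ exactly cancels and reduces to the Lemma~\ref{lem-sm-rec0} estimates. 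The real content is upgrading from $P$-controlled to $P_\perext$-controlled.

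The main obstacle — and the point where $L_\perext$ enters — is controlling the set $D\subset M$ of points $x$ such that $\tilde W^u(x)\subsetneq W^u(x)$ or $\tilde W^s(x)\subsetneq W^s(x)$ (invariant manifolds relative to $P_\perext$), because a $P$-$10$-controlled point in a slice need not be $P_\perext$-$10$-controlled if its shrunken manifold no longer crosses $\partial R$. Now $\tilde W^u(x)\subsetneq W^u(x)$ forces $W^u(T^{-n}x)$ to cross $\partial\mathcal R$ strictly before crossing $\partial P_\perext(T^{-n}x)$ for some $n\geq1$ — and in the extension, $\partial\mathcal R$ is only present in the slice $k=0$, i.e.\ at times $n\equiv -k_0\pmod{L_\perext}$ where $k_0$ is the slice of $x$. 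The key estimate is that the measure of points whose unstable manifold crosses $\partial\mathcal R$ within the $L_\perext$-window before it would otherwise be cut is small: I would use Proposition~\ref{prop-non-sing} (Lyapunov exponents uniformly bounded below by $h(T,\mu)\geq h_0>0$, so manifolds contract/expand at a definite exponential rate) together with the fact that $\partial\mathcal R\subset \bigcup$ stable/unstable segments has $\mu$-measure zero (Corollary~\ref{coro-conj-ds}), hence its $\delta$-neighborhood has measure $o(1)$ as $\delta\to0$; combined with bounded geometry this forces $\mu(D\cap\mathcal R)\to0$ as $L_\perext\to\infty$, uniformly over $\mu\in\Proberg^{h_0}(T)$ (the uniformity coming from the semi-uniform length/angle bounds of Proposition~\ref{prop-size-W} applied on the $\mu_0$-mass set). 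Fixing $L_0$ so that for $L_\perext\geq L_0$ this excess is below $\eps_0'\mu_0$, the $P_\perext$-$10$-controlled set inside $\mathcal R_\perext$ differs from the $P$-version by at most $\mu(D\cap\mathcal R)<\eps_0'\mu_0$ in measure, and choosing $\eps_0'$ a small enough multiple of $\eps_0$ (e.g.\ $\eps_0'=\eps_0/3$) absorbs this into both~(\ref{assert-kappa}) (lose a little from $\Kappa$) and~(\ref{assert-kappa10}) (gain a little into the complement), completing the proof. The delicate part to get right is the \emph{uniformity} of the $L_\perext$-threshold over all large-entropy $\mu$ simultaneously, which is exactly what the semi-uniform machinery of Section~2 is designed to deliver.
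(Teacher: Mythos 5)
Your setup coincides with the paper's: you invoke Lemma~\ref{lem-sm-rec0}, lift $\mu$ to $\mu_\perext=\frac1{L_\perext}\sum_k\mu\times\delta_k$ (passing to an ergodic component and rotating the slice index so that the mass of $\Kappa(\mathcal R)\times\{0\}$ is at least $\mu_0/L_\perext$), and you correctly identify that the whole content is bounding the measure of the set $D$ where $\tilde W^{u/s}(x)\subsetneq W^{u/s}(x)$ after $P$ is refined by $\partial\mathcal R$ in the zeroth slice. But at that crucial step your argument has a genuine gap. You propose to bound $\mu(D)$ by observing that shrinkage at depth $kL_\perext$ forces $T^{-kL_\perext}x$ into a small neighborhood of $\partial\mathcal R$, and that neighborhoods of the null set $\partial\mathcal R$ have small measure. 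This fails for three compounding reasons. First, the relevant scale is the length of $T^{-kL_\perext}(W^u(x))$, which is only \emph{asymptotically} of order $e^{-\lambda^u kL_\perext}$ $\mu$-a.e.; making it uniform requires Pesin-block constants that depend on $\mu$. Second, even for a single $\mu$, $\mu(B(\partial\mathcal R,\delta))\to 0$ as $\delta\to0$ with \emph{no rate}, so the sum over $k\geq1$ of the contributions at scales $e^{-\lambda kL_\perext}$ need not be small (or even finite) for any $L_\perext$; one would need a H\"older-type transverse regularity of $\mu$ near $\partial\mathcal R$ that is not available. Third, and fatally for the statement as quantified, you need one threshold $L_0$ valid for \emph{all} $\mu\in\Proberg^{h_0}(T)$ simultaneously, and there is no obstruction to a sequence of large-entropy measures putting mass $\geq1/2$ on any fixed neighborhood of $\partial\mathcal R$; Proposition~\ref{prop-size-W} bounds lengths and angles of invariant manifolds on a set of definite mass, but says nothing about the measure of neighborhoods of $\partial\mathcal R$.

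The paper's proof replaces this geometric estimate by a counting/entropy argument, which is exactly what delivers the uniformity. The observation is that if $W^u_\perext(x,0)\subsetneq W^u(x)\times\{0\}$, then the cut is produced at some time $-kL_\perext$ by one of \emph{finitely many} new boundary segments $I$ of $P^{\mathcal R}$ (a number depending on $P$ and $\mathcal R$ but not on $L_\perext$), and then the whole itinerary block $P^{kL_\perext}(T^{-kL_\perext}x)$ is determined by the past of $T^{-kL_\perext}x$ together with the choice of $I$. Hence the points of $D$ contribute essentially zero conditional entropy over windows of length at least $L_\perext$, and a standard estimate (as in Claim~\ref{claim-len} or Proposition~\ref{prop-entropy-bound}) gives: if $\mu_\perext(D\cap(M\times\{0\}))\geq\tfrac12\eps_0\mu_0 L_\perext^{-1}$, then $h(T,\mu)\leq(1-\tfrac12\eps_0\mu_0)h_\top(T)+\eps(L_\perext)$ with $\eps(L)\to0$, which is $<h_\top(T)$ for $L_\perext\geq L_0$ and so is excluded for $\mu\in\Proberg^{h_0}(T)$ (after adjusting $h_0$). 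This is uniform in $\mu$ by construction and needs no regularity of the measure. Your side remark about the order of quantifiers on $h_0$ versus $\eps_0$ is a fair observation about the statement, but it is harmless and not the issue; the issue is the missing entropy argument at the core of the proof.
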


Notice that it is enough to prove our results (Theorem \ref{thm-max-ent} and Propositions
\ref{prop:lsc-entropy} and \ref{prop-periodic}) for some periodic extension.

We now prove Lemmas \ref{lem-big-rec1} and \ref{lem-sm-rec0} and
Proposition \ref{prop-sm-rec}. We begin by the following:

\begin{lemma}\label{lem-big-rec}
Given $\ell_0>0$ and $0<\theta_0<2\pi$, there exists a finite
collection of rectangles $R^{(1)},\dots, R^{(Q)}$ such that:
\begin{enumerate}
  \item $\diam(R^{(i)})<\ell_0/10$;
  \item any $x\in M$ with $\rho(x)>\ell_0$ and $\angle(W^u(x),W^s(x))>\theta_0$ belongs to at least one $R^{(i)}$.
\end{enumerate}
\end{lemma}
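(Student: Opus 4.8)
The plan is to build the rectangles $R^{(i)}$ by a local covering argument: around any point $x$ with long, transverse invariant manifolds, one can carve out a genuine Markov rectangle whose diameter is controlled by $\ell_0$, and finitely many such rectangles suffice by compactness together with the fact that the construction is ``robust'' — it works on a whole neighborhood of $x$ in a suitable sense. Since we are only demanding covering of points with $\rho(x)>\ell_0$ and $\angle(W^u(x),W^s(x))>\theta_0$, we do not need the rectangles to depend continuously on $x$; we need them to form a finite list such that each such $x$ falls into one of them.

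First I would fix a point $x_0\in M$ with $\rho(x_0)>\ell_0$ and $\angle(W^u(x_0),W^s(x_0))>\theta_0$, working inside an affine chart containing a ball $B(x_0,\ell_0)$ (shrinking $\ell_0$ at the outset if necessary so that $\ell_0$-balls lie in single charts; this only strengthens the statement). Inside this chart, $W^u(x_0)$ and $W^s(x_0)$ are honest line segments through $x_0$, each extending a distance more than $\ell_0$ on either side of $x_0$ and meeting at angle more than $\theta_0$. I would then choose a small parameter $\delta=\delta(\ell_0,\theta_0)>0$ (of order $\ell_0\sin\theta_0$, up to a constant) and define a candidate rectangle $R_{x_0}$ to be a parallelogram with sides parallel to $W^u(x_0)$ and $W^s(x_0)$, centered near $x_0$, of diameter $<\ell_0/10$. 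The two ``unstable sides'' of $R_{x_0}$ should be taken to lie exactly on pieces of stable manifolds of suitable points, and the two ``stable sides'' on pieces of unstable manifolds — this is the only subtlety, since by definition a Markov rectangle must be bounded by arcs of invariant manifolds, not by arbitrary segments. To achieve this I would slide the candidate parallelogram's boundary lines slightly: a line segment transverse to the unstable direction that is close to $W^u(x_0)$ will, by the product-like structure coming from Proposition~\ref{prop-non-sing} and Proposition~\ref{prop-size-W} (long transverse manifolds near $x_0$), contain points whose stable manifolds are long and nearly parallel; picking one such stable manifold as the actual side gives a genuine $\partial^s$-boundary, and symmetrically for $\partial^u$. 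One then checks that this genuine rectangle still has diameter $<\ell_0/10$ and still contains a definite ball around $x_0$, say $B(x_0,c\delta)$ with $c$ a universal constant.

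Next I would invoke compactness: the open balls $\{B(x_0,c\delta):x_0\ \text{admissible}\}$ cover the (closed) set of admissible points — in fact we may as well cover all of $M$ since $c\delta$ is a fixed radius — so finitely many of them, say centered at $x^{(1)},\dots,x^{(Q)}$, already cover. Setting $R^{(i)}:=R_{x^{(i)}}$ gives the required finite list: property~(1) holds by construction ($\diam R^{(i)}<\ell_0/10$), and property~(2) holds because any admissible $x$ lies in some $B(x^{(j)},c\delta)\subset R^{(j)}$. The rectangles need not have disjoint interiors here — that is arranged later, in Lemmas~\ref{lem-big-rec1} and~\ref{lem-sm-rec0}, by passing to the convex partition they generate.

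The main obstacle is the second paragraph: ensuring that the sides of the rectangle can be taken to lie on \emph{actual} invariant manifolds while keeping the diameter and the inscribed-ball bounds uniform in $x_0$. The worry is that although $W^u(x_0)$ itself is long, a nearby ``parallel'' curve that we would like to use as a stable side might be short, or might not exist through a convenient point. This is exactly where the semi-uniform lower bounds of Proposition~\ref{prop-size-W} are not by themselves enough for a \emph{single} point — but for the purposes of this lemma we only need the bound $\rho(x_0)>\ell_0$, $\angle>\theta_0$ that is hypothesized \emph{at $x_0$}, plus the observation that a transversal to $E^u(x_0)$ of length $\sim\ell_0$ drawn through $x_0$ meets $W^u(x_0)$ in $x_0$ and, being inside $B(x_0,\ell_0)\subset W^s(x_0)\cup(\text{complement})$... more carefully: one uses that the stable manifold $W^s(y)$ varies ``lower-semicontinuously enough'' and that for $y$ on $W^u(x_0)$ near $x_0$, $W^s(y)$ must have length comparable to $\rho(x_0)$ on the relevant scale because it cannot terminate before hitting $\partial P$, which is far. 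I expect the clean way to finish is to first prove an intermediate \emph{local} product-structure statement — near any point with long transverse manifolds there is a full Markov rectangle of definite inscribed radius — and then the covering argument is routine; the paper indeed defers the quantitative globalization to the subsequent lemmas, so here only this local geometric construction needs care.
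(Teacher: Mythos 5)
There is a genuine gap, and it sits exactly where you flagged your own worry. Your construction requires each rectangle $R_{x_0}$ to contain a ball $B(x_0,c\delta)$ of \emph{uniform} radius with all four sides lying on actual invariant manifolds; this forces you to find a long stable manifold passing on \emph{each side} of $x_0$ at distance comparable to $\delta$, and similarly for unstable ones. No such manifolds need exist. The hypothesis $\rho(x_0)>\ell_0$, $\angle>\theta_0$ is a condition at the single point $x_0$; the set of points satisfying it can have empty interior, and $x_0$ could be the only point in $B(x_0,\ell_0/20)$ whose stable manifold is long. Your proposed rescue --- that $W^s(y)$ for $y$ near $x_0$ ``cannot terminate before hitting $\partial P$, which is far'' --- is false: $W^s(y)=\bigcap_{n\geq1}\overline{T^{-n}P(T^ny)}$ terminates as soon as some \emph{forward iterate} of it would cross $\partial P$, which can happen arbitrarily close to $y$ no matter how far $y$ itself is from $\partial P$. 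The paper explicitly warns that the lengths of these segments ``may vary discontinuously,'' and the whole point of the later machinery (the ``holes'' $\mathcal R\setminus\Kappa(\mathcal R)$ in Lemma~\ref{lem-sm-rec0} and Proposition~\ref{prop-sm-rec}) is that a genuine local product structure of definite size around an admissible point is \emph{not} available. So the ``intermediate local product-structure statement'' you want to reduce to is false in general, and the compactness step built on the balls $B(x_0,c\delta)$ collapses with it.

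The fix is to invert the logic: do not inscribe a ball in each rectangle, but circumscribe a rectangle around each small piece of the admissible set. The paper partitions $K_*=\{x:\rho(x)>\ell_0,\ \angle(W^u(x),W^s(x))>\theta_0\}$ into finitely many pieces $K_j$ of diameter less than $\theta_0\ell_0/100$, and for each $K_j$ takes the two \emph{extremal} (outermost) stable manifolds and the two extremal unstable manifolds among those of points of $\overline{K_j}$; these exist by compactness in the Hausdorff metric together with Fact~\ref{fact-limit-W} (Hausdorff limits of stable manifolds are contained in stable manifolds). The rectangle bounded by these four lines contains $K_j$ \emph{by construction}, so property~(2) is automatic and no interior point or inscribed ball is ever needed; property~(1) follows because any two unstable manifolds of points of $K_j$ are long non-crossing segments through nearby points, hence nearly parallel, so the rectangle is the intersection of two thin transversal strips and has diameter at most $5\diam(K_j)/\theta_0<\ell_0/20$. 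Your observation that the transversality angle $\theta_0$ must enter the choice of the small scale is correct and survives in this argument, but the existence of the bounding invariant manifolds has to come from the points of $K_j$ themselves via a limiting argument, not from a hoped-for abundance of long manifolds near a single $x_0$.
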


This easily implies Lemma \ref{lem-big-rec1} using Proposition
\ref{prop-size-W} and observing that the finite collection of
rectangles above can be subdivided by boundary lines like those of
Fact \ref{fact-bound-W} below so that their interiors become
disjoint, defining the required Markov array $\mathcal R$.


\begin{demof}{Lemma \ref{lem-big-rec}}
Let
 $$
  K_*:=\{x\in M:\rho(x)>\ell_0 \text{ and }
    \angle(W^u(x),W^s(x))>\theta_0\}.
 $$
Let $\{K_j\}_{j=1}^{Q}$ be a finite partition of $K_*$ whose
elements have diameter less than $\theta_0\ell_0/100$  and lie
within an affine chart of $M$. We fix $j$.

Recall that the collection of closed subsets $\mathcal K$ of the
compact metric space $M$ is a compact space \wrt the Hausdorff
metric:
 $$
   d(A,B) = \inf\{ \eps>0: A\subset B(B,\eps) \text{ and  }
   B\subset B(A,\eps) \}
 $$
where $B(A,\eps)$ is the $\eps$-neighborhood of the set $A$, i.e.,
$\{x\in M:d(A,x)<\eps\}$.

The easy proofs of the following two facts are left to the reader.

\begin{fact}\label{fact-limit-W}
Let $A^n\in\Sigma(T,P)$ converge to $A_\perext$. By taking a
subsequence,  $W^s(A^n)$ also converges in the Hausdorff
metric, say to $H\subset M$. Then $H\subset W^s(A_\perext)$.
\end{fact}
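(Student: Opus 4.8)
\textbf{Proof plan for Fact \ref{fact-limit-W}.}

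The plan is to unwind the two definitions and exploit the closedness of $\Sigma$ together with the fact that $T$ and its inverse are continuous. Recall that $W^s(A)=\bigcap_{n\geq 1}\overline{T^{-n}A_n}$, and similarly $W^s(A^n)=\bigcap_{m\geq 1}\overline{T^{-m}A^n_m}$. Since $A^n\to A_\perext$ in $P^\ZZ$ and $P$ is finite, for each fixed $m$ there is $N(m)$ such that $A^n_m=(A_\perext)_m$ for all $n\geq N(m)$; convergence in the product topology is exactly stabilization of each coordinate. I would fix an arbitrary point $z\in H$ and show $z\in\overline{T^{-m}(A_\perext)_m}$ for every $m\geq 1$, which gives $z\in W^s(A_\perext)$ and hence $H\subset W^s(A_\perext)$.

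First I would pass to the subsequence along which $W^s(A^n)\to H$ in the Hausdorff metric (this is legitimate because the space of closed subsets of the compact $M$ is Hausdorff-compact, as recalled in the text). Take $z\in H$. By the definition of the Hausdorff metric there exist $z^n\in W^s(A^n)$ with $z^n\to z$. Now fix $m\geq 1$. For $n\geq N(m)$ we have $z^n\in W^s(A^n)\subset\overline{T^{-m}A^n_m}=\overline{T^{-m}(A_\perext)_m}$. This last set is closed and contains all $z^n$ with $n\geq N(m)$, hence contains their limit $z$. Since $m$ was arbitrary, $z\in\bigcap_{m\geq 1}\overline{T^{-m}(A_\perext)_m}=W^s(A_\perext)$, which is what we wanted.

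There is essentially no obstacle here; the only point requiring a moment's care is that one must not expect equality $H=W^s(A_\perext)$ — the stable manifolds can genuinely shrink discontinuously, as emphasized in the surrounding discussion, so only the inclusion $H\subset W^s(A_\perext)$ survives in the limit. The mild subtlety is purely bookkeeping: the index $n$ ranging over the chosen subsequence versus the threshold $N(m)$ coming from coordinate-wise stabilization; these are compatible since for each $m$ all but finitely many terms of the subsequence satisfy $n\geq N(m)$. The analogous statement for $W^u$ follows by replacing $T$ with $T^{-1}$, i.e. $n\geq 1$ with $n\leq -1$ in the intersection.
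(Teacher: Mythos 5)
Your proof is correct, and since the paper explicitly leaves this fact to the reader as an easy exercise, your argument (coordinate-wise stabilization of $A^n\to A_\perext$ over the finite alphabet, picking $z^n\in W^s(A^n)$ converging to a given $z\in H$, and noting $z^n\in\overline{T^{-m}A^n_m}=\overline{T^{-m}(A_\perext)_m}$ for $n$ large, which is closed) is exactly the intended one. The remark that only the inclusion, not equality, survives in the limit is also the right thing to keep in mind for the later applications.
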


\begin{fact}\label{fact-bound-W}
Assume that $K_j$ is as above. Then there exist two points
$x_1,x_2\in\overline{K_j}$, two non-trivial line segments
$L_1,L_2$ and two itineraries $A^1,A^2\in\Sigma(T,P)$ with the
following properties.
 \begin{itemize}
   \item $L_i$ is contained in the boundary of
   $W^s(A^i)$ as a subset of $M$;
   \item in some affine chart, $K_j$ lies between the two lines supporting $L_1$ and
   $L_2$.
 \end{itemize}
We call $L_1,L_2$ a pair of stable boundary lines of $K_j$.
\end{fact}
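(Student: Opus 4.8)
The plan is to manufacture the two stable boundary lines as Hausdorff limits of honest stable manifolds through points of $K_j$, controlling these limits by Fact~\ref{fact-limit-W}, and then to extract a sandwiching pair by compactness together with an extremality argument. The role of Fact~\ref{fact-limit-W} is precisely to attach a stable segment to a point $x\in\overline{K_j}\setminus K_j$, where $\rho$, $W^s$ and the itinerary need not be well behaved.

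First I would record the local picture. For every $x\in K_j$ the inequality $\rho(x)>\ell_0$ (cf. (\ref{eq-lwr-len})) forces $W^s(x)$ to be a non-degenerate line segment containing $x$ in its relative interior at distance $>\ell_0$ from each of its endpoints; likewise $W^u(x)$, and $\angle(W^s(x),W^u(x))>\theta_0$. So through every point of $K_j$ there pass two transverse segments of length $\geq 2\ell_0$, while $\diam\overline{K_j}<\theta_0\ell_0/100$.

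The key step is to show that the stable segments $\{W^s(x):x\in K_j\}$, hence all their Hausdorff limits, point in nearly the same direction: their directions lie in an arc of the space of directions of length less than, say, $\theta_0$. Suppose not; then there are $x_1,x_2\in K_j$ with $\angle(W^s(x_1),W^s(x_2))\geq\theta_0$. Since each of these segments has length $\geq2\ell_0$ and they pass within $\diam\overline{K_j}$ of one another, they cross transversally at a point $p$ lying in the relative interior of both (its distance to either endpoint is at least $\ell_0-\mathrm{O}(\ell_0/100)>0$). Writing $A,B$ for the itineraries of $x_1,x_2$, pushing forward by $T^n$ sends a germ of line through $p$ inside $W^s(x_1)$ into $\overline{A_n}$ and a transverse germ through $p$ inside $W^s(x_2)$ into $\overline{B_n}$; since $W^s$ is the nested intersection $\bigcap_{n\geq1}\overline{T^{-n}A_n}$ and $T^n$ is affine on each branch, comparing these two pictures — while keeping track of which affine branch of $T^n$ acts near $p$ and of the finitely many partition boundaries involved — yields a contradiction. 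I expect this to be the main obstacle: it is the one place where the length and angle hypotheses are genuinely used, and making the contradiction airtight requires the bookkeeping of branches and boundary points.

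Granting near-parallelism, I conclude as follows. Rotate the affine chart containing $K_j$ so that every stable direction through $\overline{K_j}$ is within a small angle of the horizontal. The collection $\mathcal L$ of Hausdorff limits of segments $W^s(x^n)$ with $x^n\in K_j$ is compact, by compactness of the space of closed subsets of $M$ and of $\Sigma$; by Fact~\ref{fact-limit-W} each $L\in\mathcal L$ is a line segment of length $\geq2\ell_0$ with $L\subseteq W^s(A)$ for its limiting itinerary $A$ and with $L\cap\overline{K_j}\neq\emptyset$. Fix the centre $c$ of $K_j$ and let $h(L)$ be the ordinate above $c$ of the line supporting $L$; maximizing and minimizing $h$ over the compact set $\mathcal L$ produces $L_1\subseteq W^s(A^1)$ and $L_2\subseteq W^s(A^2)$ passing through points $x_1,x_2\in\overline{K_j}$. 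Since $K_j$ is covered by the near-horizontal long stable segments through its points, each point of $K_j$ has ordinate lying between those of the supporting lines of $L_2$ and $L_1$ at the corresponding abscissa — the tiny horizontal extent of $K_j$ makes the vertical variation of all these lines negligible — so $K_j$ lies between the supporting lines of $L_1$ and $L_2$. Finally, being a Hausdorff limit of the one-dimensional segments $W^s(x^n)$, each $L_i$ lies in $W^s(A^i)$, which has empty interior in $M$, hence $L_i$ is contained in the boundary of $W^s(A^i)$ as a subset of $M$; so $(x_i,L_i,A^i)$ is the required data, and $L_1,L_2$ form a pair of stable boundary lines of $K_j$.
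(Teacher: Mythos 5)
Your route --- Hausdorff limits of the stable segments $W^s(x^n)$, $x^n\in K_j$, controlled by Fact \ref{fact-limit-W}, near-parallelism, then an extremal selection --- is the intended one; the paper leaves this ``to the reader'' but runs your key step almost verbatim inside the proof of Lemma \ref{lem-big-rec}. The two places you leave soft do close, and you should close them. For the crossing contradiction: if two one-dimensional stable manifolds $W^s(A)$, $W^s(B)$ meet at a point $p$ lying in the relative interior of both, take the first $n$ with $A_n\neq B_n$; then $T^np\in\partial A_n\cap\partial B_n$, and each of $T^nW^s(A)\subset\overline{A_n}$, $T^nW^s(B)\subset\overline{B_n}$ is a non-trivial segment of a closed convex polygon containing the boundary point $T^np$ in its relative interior, hence lies along the edge of its polygon through $T^np$; disjointness of the two open polygons forces these edges to be collinear, so the images overlap in a non-trivial segment and pulling back by the homeomorphism $T^{-n}$ shows $W^s(A)\cap W^s(B)$ contains a non-trivial segment --- no transversal crossing, and no branch bookkeeping beyond this. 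The same argument applies with $W^s(A)$ replaced by the segment $L_1\subset W^s(A^1)$. This dichotomy is also what you need --- rather than ``negligible vertical variation'', which only yields approximate sandwiching --- to put $K_j$ \emph{exactly} between the two supporting lines: if some $y\in K_j$ lay strictly above the line of $L_1$ while $h(W^s(y))\leq h(L_1)$, the two supporting lines would cross over the abscissas of $\overline{K_j}$, hence at a relative interior point of both long near-horizontal segments, forcing the two lines to coincide and $y$ to lie on, not above, the line of $L_1$.

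The genuine gap is your treatment of the first bullet. You justify $L_i\subset\partial W^s(A^i)$ by asserting that $W^s(A^i)$ has empty interior, but nothing gives you that: Fact \ref{fact-limit-W} only yields $L_i\subset W^s(A^i)$, and the stable set of a limit itinerary $A^i$ is a closed convex set which may perfectly well be two-dimensional --- this is precisely the contingency the word ``boundary'' in the statement is there to handle, and it is what makes the rectangle construction in Lemma \ref{lem-big-rec} work. The correct argument is: no $y\in K_j$ can lie in $\INT W^s(A^i)$, for then $T^ny$ would lie both in the open element $P(T^ny)$ and in $\overline{A^i_n}$, forcing $P(T^ny)=A^i_n$ for every $n\geq1$ and hence $W^s(y)=W^s(A^i)$, contradicting that $W^s(y)$ is a line segment. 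Since interiors are open, the limit $x_i$ of points of $K_j$ also avoids $\INT W^s(A^i)$, so $x_i\in\partial W^s(A^i)$; and since $x_i$ sits in the relative interior of the segment $L_i\subset W^s(A^i)$ (being at distance at least $\ell_0$ from its endpoints), convexity of $W^s(A^i)$ forces all of $L_i$ into $\partial W^s(A^i)$: a point of $L_i$ interior to $W^s(A^i)$ would drag $x_i$ into the interior along the segment.
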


We now prove Lemma \ref{lem-big-rec}.
Consider two distinct one-dimensional stable manifolds $W^s(A)$ and
$W^s(B)$ which intersect in a single point $p$. $p$ must the
endpoint of at least one of them: otherwise, if $A_n\ne B_n$, then
$p\in\partial A_n\cap\partial B_n$ and both  $W^s(A)$ and $W^s(B)$
are parallel to the same segment of $\partial A_n\cap\partial
B_n$. Thus their intersection contains a non-trivial line segment. 

Observe that if $x,y\in
K_j$, $W^u(x)$ and $W^u(y)$, which are line segments, must have disjoint relative interiors
or be parallel and overlapping. Thus 
 $$
  \angle (W^u(x),W^u(y))\leq \theta_0/50.
 $$
As $\angle(W^u(z),W^s(z))>\theta_0$ for all $z\in K$, we get:
$$\angle(W^u(x),W^s(y))>\theta_0/2.$$

Consider a pair of "stable boundary lines", resp. "unstable
boundary lines", given by Fact \ref{fact-bound-W} applied to
$(T,K_j)$, resp. applied to $(T^{-1},K_j)$. Let $R^{(j)}$ be the
rectangle bounded by these four line segments. $R^{(j)}$ is
contained in the intersection of two strips with almost parallel
sides of width $\leq \diam K_j$ and making an angle at least
$\theta_0/2$. Hence
 $$
   \diam(R^{(j)})<5\diam(K_j)/\theta_0 < \ell_0/20.
 $$
On the other hand, $R^{(j)}\supset K_j$, hence $\bigcup_j
R^{(j)}\supset K_*$.
\end{demof}

\begin{demof}{Lemma \ref{lem-sm-rec0}}
Apply Lemma \ref{lem-big-rec1} to get $\mathcal R$, $\mu_0>0$ and
$h_0<h_\top(T)$. Recall that $\rho(x)$ is the distance between $x$
and the endpoints of its invariant manifolds (or $0$ if one of
those is not a line segment).

By Claim \ref{claim-len} applied with
$\mu_3:=1-\eps_0\mu_0/2$ to $T$ and $T^{-1}$, there exist
$h_1<h_\top(T)$ and $\ell_1=\ell_1(\eps_0\mu_0)$ such that
 $$
  \forall \mu\in\Proberg^{h_1}(T)\; \mu(\{x\in M:\rho(x)<\ell_1\})<\eps_0\mu_0.
  $$
Let us cut each big rectangle $R$ from $\mathcal R$ into
sub-rectangles $R'$ with diameter at most $\ell_1/10$, obtaining a
new Markov array $\mathcal R'$. Using Fact \ref{fact-bound-W}
again, we can do it by finitely many stable and unstable manifolds
(or line segments bounding those). Observe that $\Kappa(\mathcal
R')\supset\Kappa(\mathcal R)$ and that the points in $\mathcal
R'\setminus\Kappa(\mathcal R')$ which have line segments as
invariant manifolds, are $\ell_1$-close to an endpoint of their
stable/unstable manifold. Hence
 $$
   \mu(\kappa_{10}(\mathcal R'))\mu(\kappa_{10}(\mathcal R))\geq\mu_0 \text{ and }
   \mu(\mathcal R'\setminus \Kappa(\mathcal R')) \leq \eps_0\mu_0.
 $$
for all $\mu\in\Proberg^{h_1}(T)$.
\end{demof}

\begin{demof}{Proposition \ref{prop-sm-rec}} We apply Lemma \ref{lem-sm-rec0}
with $\eps_0/2$ obtaining $\mu_0>0$ (independent of $\eps_0$),
$h_0<h_\top(T)$ and a Markov array $\mathcal R$. Let $P^{\mathcal
R}$ be the convex partition previously defined. We go to the
$({\mathcal R},L_\perext)$-extension
$(M_\perext,T_\perext,P_\perext)$ of $(M,T,P)$ for some large
integer $L_\perext$ to be specified. As we observed, there always
exists an ergodic, $T_\perext$-invariant measure $\mu_\perext$
extending $\mu$. Maybe after replacing it by its image under
$(x,i)\mapsto(T^jx,i+j\mod L_\perext)$ for some constant $j\in\{0,1,\dots,L_\perext-1\}$, we get an
ergodic extension $\mu_\perext$ such that
 $$
   L_\perext\cdot\mu_\perext(\Kappa(\mathcal R)\times\{0\})\geq \mu(\Kappa(\mathcal R))\geq\mu_0.
 $$
As the extension is finite-to-one, $\mu_\perext$ has the same
entropy as $\mu$.

Let $x\in M$. If the unstable manifold for $T_\perext$,
$W_\perext^u(x,0):=\bigcap_{n\geq1}
\overline{T_perext^nP_\perext(T_\perext^{-n}(x,0))}$ is strictly shorter
than $W^u(x)\times\{0\}$, then it is bounded by $T_\perext^{kL_\perext}(y,0)$ with $y$
an intersection point of $W^u(T^{-kL_\perext}x)$ for some
$k\geq1$, with one of the new boundary segments, $I$, of 
$P^{\mathcal R}$. Hence $P^{kL_\perext}(T^{-L}x)$ is determined by
the past of $T^{-kL_\perext}x$ and $I$ picked among finitely many
choices. Note that this number of choices depends only on $P$ and 
$\mathcal R$ \emph{but not on $L_\perext$}.

A standard counting argument shows that if this happened on a
subset of $M\times\{0\}$ with $\mu_\perext$-measure at least
$\frac12\eps_0\mu_0\cdot L_\perext^{-1}$, then
 $$
  h(T,\mu)= h(T_\perext,\mu_\perext) \leq (1-\frac12\eps_0\mu_0)h_\top(T)+\eps(L_\perext)
 $$
where $\eps(L)\to0$ as $L\to\infty$. This is strictly less than
$h_\top(T)$ if $L_\perext$ is large enough (which we ensure by
taking $L_0$ large). So it is excluded for large entropy measures.
The $(\mathcal R,L)$-periodic extension
$(M_\perext,T_\perext,P_\perext,\mathcal R_\perext)$ has the
required properties for all large integers $L_\perext$.
\end{demof}

\subsection{Hyperbolic strips}

$(M,T,P)$ is some piecewise affine surface homeomorphism with
nonzero entropy and $\mathcal R$ is some Markov array with
$\mathcal R\subset P$ (eventually $(M,T,P,\mathcal R)$ will be the
previously built periodic extension
$(M_\perext,T_\perext,P_\perext,\mathcal R_\perext)$). We shall
use the following picture to define finite itineraries that can be freely
concatenated. This is adapted from uniformly hyperbolic
dynamics.

\begin{figure}
\centering
\includegraphics[width=13cm]{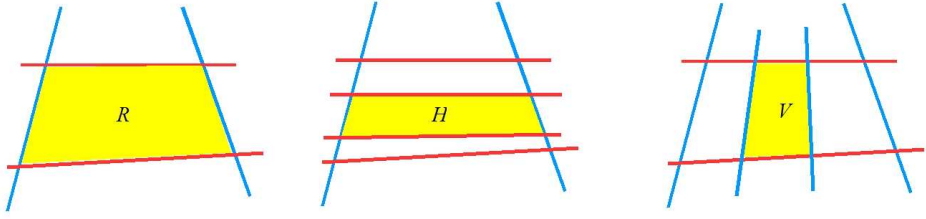}
\caption{From left to right: a rectangle $R$, an $s$-rectangle $H$
and a $u$-rectangle $V$. The (approximately) horizontal,
resp. vertical, line segments are segments of stable, resp.
unstable manifolds.}\label{fig-rectangles}
\end{figure}

\begin{definition}
A quadrilateral $Q$ \new{$u$-crosses} a rectangle $R\in\mathcal R$
if $Q\subset R$ and its boundary is the union of two subsegments
of the stable boundary of $R$ (the stable boundary of $Q$) and
two line segments (the unstable boundary of $Q$), these four segments being
pairwise disjoint, except for their endpoints.
\new{$s$-crossing} is defined similarly.

A \new{$u$-rectangle} is a quadrilateral which
$u$-crosses some rectangle $R\in\mathcal R$ and whose unstable boundary
is made of two segments of unstable manifolds. A
\new{$s$-rectangle} is defined similarly (see Figure
\ref{fig-rectangles}).

For $n\geq1$, a \new{hyperbolic $n$-strip} (or just $n$-strip) is
an $s$-rectangle $S$ such that $\INT T^k(S)$ is included in some
element of $P$ for each $k=0,\dots,n-1$ and $T^n(S)$ is a
$u$-rectangle. A
\new{hyperbolic strip} is an $n$-strip for some $n\geq 1$.
\end{definition}

We write $P_a^b(x)$ for $\bigcap_{k=a}^b
\overline{T^{-(k-a)}P(T^kx)}$ (we assume implicitly that $x$ is
nice --- this fails only on an entropy-negligible set by
Proposition \ref{prop-non-sing}). The following is immediate.

\begin{facts}$ $

1. A hyperbolic $n$-strip is necessarily of the form
$\bP_0^n(x)$ for some $x\in R$.

2. Two hyperbolic strips are either nested or have disjoint
interiors.
\end{facts}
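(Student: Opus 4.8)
The plan is to unwind the definitions: both statements follow once one identifies a hyperbolic $n$-strip with a cylinder $\bP_0^n(x)$, and only this identification (Fact 1) needs any geometry. So first I would take a hyperbolic $n$-strip $S$, let $R\in\mathcal R$ be the rectangle it $s$-crosses and $R'\in\mathcal R$ the one that $T^n(S)$ $u$-crosses. For $0\le k\le n-1$ let $A_k\in P$ be the element with $\INT T^k(S)\subseteq A_k$, and let $A_n\in P$ be the element with $\INT T^n(S)\subseteq\INT R'$ (it exists since $T^n(S)\subseteq R'$). Because $\mathcal R\subseteq P$, i.e.\ $\INT R$ is a single element of $P$ for every $R\in\mathcal R$, we get $\overline{A_0}=R$ and $\overline{A_n}=R'$. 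Since $T$ is a homeomorphism, $\INT S=T^{-k}(\INT T^k S)\subseteq T^{-k}A_k$, so every $x\in\INT S$ has $P$-itinerary $(A_0,\dots,A_n)$ on $\{0,\dots,n\}$; fix such an $x$, which lies in $R$. As $S=\overline{\INT S}$ and $T^k$ is a homeomorphism, $T^k(S)=\overline{\INT T^k(S)}\subseteq\overline{A_k}$, hence $S\subseteq\bigcap_{k=0}^{n}\overline{T^{-k}A_k}=\bP_0^n(x)$. Since conversely $\bP_0^n(x)\subseteq\overline{A_0}\cap\overline{T^{-n}A_n}=R\cap T^{-n}(R')$, it remains to prove the geometric identity $R\cap T^{-n}(R')=S$.

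This is the only real point. The crossing conditions say precisely that $\partial S$ is the union of subsegments $U_1,U_2$, one in each component of $\partial^u R$, and two segments $L_1,L_2$ with $T^n(L_i)$ a subsegment of a component of $\partial^s R'$ (so $L_i\subseteq T^{-n}(\partial^s R')$); equivalently, $S$ meets both components of $\partial^u R$ and both components of $T^{-n}(\partial^s R')$, while $\INT S$ meets neither $\partial^s R$ nor $T^{-n}(\partial^u R')$ (the degenerate cases $S=R$ or $T^n(S)=R'$ make the identity trivial, so assume otherwise). Now $\partial\bigl(R\cap T^{-n}(R')\bigr)\subseteq\partial^s R\cup\partial^u R\cup T^{-n}(\partial^s R')\cup T^{-n}(\partial^u R')$; here $\partial^s R$ and $T^{-n}(\partial^s R')$ are unions of stable-manifold segments and the other two of unstable-manifold segments, and two distinct stable (resp.\ unstable) manifolds meet in at most one point, an endpoint of one of them --- the observation made in the proof of Lemma~\ref{lem-big-rec} (for the unstable case apply it to $T^{-1}$). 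Together with the spanning properties this forces $\partial^s R$ and $T^{-n}(\partial^u R')$ to meet $R\cap T^{-n}(R')$ only in corner points, so $\partial\bigl(R\cap T^{-n}(R')\bigr)$ lies on the four lines supporting $U_1,U_2,L_1,L_2$ and contains $\partial S$. As $R\cap T^{-n}(R')$ is closed, convex, with nonempty interior and contains the convex set $S$ whose boundary it contains, a short convexity argument (if $U\subseteq V$ are convex open sets with $\partial U$ disjoint from $V$, then $U=V$, applied to $\INT S\subseteq\INT(R\cap T^{-n}(R'))$) gives $R\cap T^{-n}(R')=S$. This proves Fact 1 and, since $x\in\INT S$ was arbitrary, shows that for any hyperbolic $m$-strip $S$ and any $y\in\INT S$ one has $S=\bP_0^m(y)$. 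The main obstacle is exactly the step showing that the two ``extra'' pairs of sides, $\partial^s R$ and $T^{-n}(\partial^u R')$, stay out of the intersection; everything else is bookkeeping, which is why the facts can reasonably be called immediate.

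Fact 2 then follows with no further geometry. Given hyperbolic strips $S,S'$ of lengths $n\le n'$ with $\INT S\cap\INT S'\neq\emptyset$, pick $z$ in the common interior. By Fact 1, $S=\bigcap_{k=0}^{n}\overline{T^{-k}A_k}$ and $S'=\bigcap_{k=0}^{n'}\overline{T^{-k}A'_k}$ where $A_k=P(T^k z)$ for $0\le k\le n$ (since $z\in\INT S$) and $A'_k=P(T^k z)$ for $0\le k\le n'$ (since $z\in\INT S'$); in particular $A_k=A'_k$ for $0\le k\le n$. Hence
$$
 S'=\bigcap_{k=0}^{n'}\overline{T^{-k}A'_k}\subseteq\bigcap_{k=0}^{n}\overline{T^{-k}A'_k}=\bigcap_{k=0}^{n}\overline{T^{-k}A_k}=S ,
$$
so $S'\subseteq S$ and the two strips are nested. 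Therefore any two hyperbolic strips are either nested or have disjoint interiors.
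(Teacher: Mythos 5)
The paper offers no proof (it labels these facts ``immediate''), so the only question is whether your argument is sound. Your treatment of Fact~2 and the first half of Fact~1 (that every point of $\INT S$ has itinerary $A_0\dots A_n$, whence $S\subseteq\bP_0^n(x)\subseteq R\cap T^{-n}(R')$) is fine. The gap is in the step you yourself flag as ``the only real point'': the identity $R\cap T^{-n}(R')=S$ is \emph{false} in general, so no argument can establish it. The set $R\cap T^{-n}(R')$ records only the itinerary at times $0$ and $n$; if $R$ contains two disjoint hyperbolic $n$-strips with different intermediate itineraries both ending in $R'$ (already the generic situation for a linear Anosov map with a Markov partition contained in $P$, where $R\cap T^{-n}(R)$ has exponentially many components), then $R\cap T^{-n}(R')$ strictly contains $S$. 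Correspondingly, your assertion that $R\cap T^{-n}(R')$ is ``closed, convex, with nonempty interior'' is unjustified: $T^{-n}$ is only piecewise affine, so $T^{-n}(R')$ need be neither convex nor even such that its intersection with $R$ is connected; likewise $T^{-n}(\partial^s R')\cap R$ is in general much more than the two arcs $L_1,L_2$, which undermines the boundary-tracking argument.

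The repair is to run your final convexity lemma on the right set, namely the cylinder $\bP_0^n(x)$ itself, which \emph{is} convex: inductively, $\bigcap_{k=0}^{m}\overline{T^{-k}A_k}=\overline{A_0}\cap (T|\overline{A_0})^{-1}\bigl(\bigcap_{k=0}^{m-1}\overline{T^{-k}A_{k+1}}\bigr)$ and $T|\overline{A_0}$ is affine. Then the boundary avoidance you need is genuinely immediate: $\INT\bP_0^n(x)\subseteq A_0=\INT R$, so the two sides $U_1,U_2\subseteq\partial^u R\subseteq\partial R$ miss it; and $T^n(\INT\bP_0^n(x))$ is open and contained in $\overline{A_n}=R'$, hence in $\INT R'$, so the two sides $W_1,W_2$ with $T^n(W_i)\subseteq\partial^s R'\subseteq\partial R'$ also miss it. Thus $\partial S\cap\INT\bP_0^n(x)=\emptyset$, and your lemma (``if $U\subseteq V$ are convex open sets with $\partial U\cap V=\emptyset$ then $U=V$'') applied to $\INT S\subseteq\INT\bP_0^n(x)$ gives $S=\bP_0^n(x)$, with the strengthening (needed for your Fact~2 argument, which is otherwise correct) that this holds for \emph{every} $x\in\INT S$. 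Note that the intermediate itinerary constraints are not bookkeeping: they are exactly what restores convexity and kills the extra components of $R\cap T^{-n}(R')$.
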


We now give some tools to build hyperbolic strips.

\begin{lemma}\label{lem-strip-2}
For $0<m<n$, if $\bP_0^m(x)$ and $\bP^n_m(x)$ are both hyperbolic
strips, then so is $\bP^n_0(x)$.
\end{lemma}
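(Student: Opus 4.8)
The plan is to verify that $\bP_0^n(x)$ satisfies the three defining properties of a hyperbolic $n$-strip: it is an $s$-rectangle, its first $n$ iterates stay inside elements of $P$, and its $n$-th iterate is a $u$-rectangle. The middle property is essentially free: the orbit segment condition ``$\INT T^k(\bP_0^n(x))$ lies in an element of $P$ for $0\le k<n$'' follows because $\bP_0^n(x)\subset\bP_0^m(x)$ gives it for $0\le k<m$ (using that $\bP_0^m(x)$ is a strip), while $\bP_0^n(x)\subset T^{-m}\bP_m^n(x)$ gives it for $m\le k<n$ (using that $\bP_m^n(x)$ is a strip, so its iterates up to time $n-m$ are in elements of $P$). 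So the real content is geometric: checking that the stable boundary of $\bP_0^n(x)$ is made of two segments of stable manifolds $s$-crossing a rectangle of $\mathcal R$, and that $T^n$ carries it to a genuine $u$-rectangle.

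\textbf{Key steps.} First I would unwind the definitions: since $\bP_0^m(x)$ is an $s$-rectangle, its stable boundary consists of two arcs of stable manifolds $W^s(\cdot)$ bounding an $s$-crossing of some $R\in\mathcal R$; since $\bP_0^n(x)$ refines it only by intersecting with preimages $T^{-k}P(T^kx)$ for $m\le k<n$, and these cuts are transverse to the stable direction (they affect the unstable boundary, not the stable one — the stable manifold $W^s(x)$ for the partition $P$ is by definition $\bigcap_{k\ge1}\overline{T^{-k}P(T^kx)}$, so intersecting with more forward preimages does not shrink it), the stable boundary of $\bP_0^n(x)$ coincides with that of $\bP_0^m(x)$. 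Hence $\bP_0^n(x)$ is still an $s$-rectangle $s$-crossing the same $R\in\mathcal R$. Second, I would push forward by $T^n = T^{n-m}\circ T^m$. Because $\bP_0^n(x) = \bP_0^m(x)\cap T^{-m}\bP_m^n(x)$, we have $T^m\bP_0^n(x) = T^m\bP_0^m(x)\cap \bP_m^n(x)$; now $T^m\bP_0^m(x)$ is a $u$-rectangle (as $\bP_0^m(x)$ is an $m$-strip) and $\bP_m^n(x)$ (recentered as an $(n-m)$-strip for the point $T^m x$) is an $s$-rectangle. The intersection of a $u$-rectangle with an $s$-rectangle that $s$-crosses the same $R$ is an $s$-rectangle (its unstable boundary from the $u$-rectangle, its stable boundary from the $s$-rectangle — this is the standard Markov-intersection picture of Figure \ref{fig-rectangles}), and applying $T^{n-m}$ to an $(n-m)$-strip turns it into a $u$-rectangle. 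Thus $T^n\bP_0^n(x)$ is a $u$-rectangle, completing the verification.

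\textbf{Main obstacle.} The delicate point is the claim that the stable boundary of $\bP_0^n(x)$ is unchanged by the extra forward cuts, i.e. that $\bP_0^n(x)$ and $\bP_0^m(x)$ $s$-cross the \emph{same} rectangle with the \emph{same} stable sides — and dually that $T^m\bP_0^m(x)$ and $\bP_m^n(x)$ (viewed from $T^mx$) $s$-cross, resp. $u$-cross, a common $R\in\mathcal R$ so that their intersection is a bona fide $s$-rectangle rather than degenerating or having the wrong combinatorial type. One must check that the unstable sides of $\bP_0^m(x)$ (which are segments of unstable manifolds, crossing $\partial^s R$) meet the new stable boundary supplied by $\bP_m^n(x)$, and that nothing collapses; this is where the hypothesis that \emph{both} $\bP_0^m(x)$ and $\bP_m^n(x)$ are full hyperbolic strips — in particular genuine ($s$- or $u$-) rectangles crossing rectangles of $\mathcal R$ — is used. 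Once the combinatorial/geometric configuration is pinned down, the rest is a routine matching of boundary segments under the affine maps $T^m$ and $T^{n-m}$.
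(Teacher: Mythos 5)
Your overall strategy is the same as the paper's (which proves this lemma only by pointing to Figure \ref{fig-composition}): the whole content is the decomposition $T^m\bP_0^n(x)=T^m\bP_0^m(x)\cap\bP_m^n(x)$, the observation that this is the standard Markov intersection of a $u$-rectangle with an $s$-rectangle inside the common $R'\in\mathcal R$ containing $T^mx$, and then pulling back by $T^{-m}$ and pushing forward by $T^{n-m}$. That part of your write-up is right and is exactly panels (a)--(c) of the paper's figure.

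One intermediate claim in your first ``key step'' is false, however, and has the geometry backwards. The cuts coming from $T^{-k}P(T^kx)$, $m\le k<n$, are \emph{not} transverse to the stable direction: points on one stable manifold share their entire forward itinerary, so these cut loci are contained in closures of stable sets and run \emph{along} the stable direction. Consequently they slice the $s$-rectangle $\bP_0^m(x)$ into thinner $s$-crossing strips: the \emph{stable} boundary of $\bP_0^n(x)$ is in general \emph{new} (it is $T^{-m}$ of the stable boundary of $\bP_m^n(x)$, which typically lies in the interior of $R'$, not on $\partial^s R'$), while the \emph{unstable} boundary is what survives as a pair of subsegments of $\partial^u R$. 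So ``the stable boundary of $\bP_0^n(x)$ coincides with that of $\bP_0^m(x)$'' is wrong. This does not sink the proof, because your second step already contains the correct derivation: the new stable sides of $T^m\bP_0^n(x)$ cross the $u$-rectangle $T^m\bP_0^m(x)$ from one of its unstable sides to the other, hence their $T^{-m}$-images cross $\bP_0^m(x)$ from one subsegment of $\partial^u R$ to the other, which is precisely the statement that $\bP_0^n(x)$ $s$-crosses $R$ and is an $s$-rectangle. (A small related slip: the intersection $T^m\bP_0^m(x)\cap\bP_m^n(x)$ is not itself an ``$s$-rectangle'' in the paper's sense, since its unstable sides do not lie on $\partial^u R'$; it is its $T^{-m}$-image, resp.\ $T^{n-m}$-image, that is an $s$-, resp.\ $u$-, rectangle.) With that correction your argument is complete and coincides with the intended one.
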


This is easy to show using Fig. \ref{fig-composition}.
Sufficiently long invariant manifolds allow the construction of
hyperbolic strips from scratch:

\begin{figure}
\centering
\includegraphics[width=13cm]{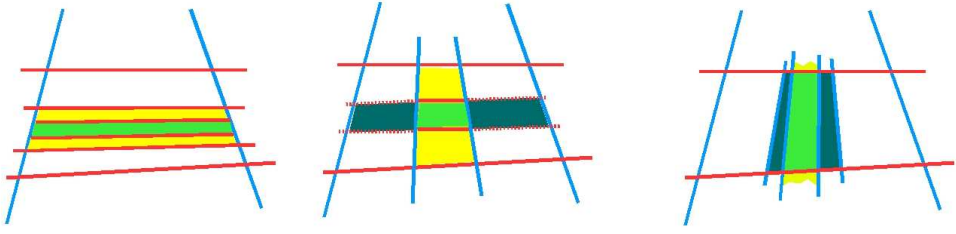}
\caption{Proof of Lemma \ref{lem-strip-2}. From left to right: (a)
$\bP_0^n(x)\subset\bP_0^m(x)\subset R$; (b) the $u$-rectangle
$T^m(\bP_0^m(x))$ crossing the $s$-rectangle $\bP_m^n(x)$ (both
inside $R'$); (c) $T^n\bP_0^n(x)\subset T^m\bP_m^n(x)\subset R''$
($R,R',R''\in\mathcal R$).}\label{fig-composition}
\end{figure}

\begin{lemma}\label{lem-strip-10}
Let $x\in\Kappa(\mathcal R)$ and $n\geq1$ such that
$T^nx\in\Kappa(\mathcal R)$. Then $\bP^{n}_0(x)$ is a hyperbolic
strip.
\end{lemma}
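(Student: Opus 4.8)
The plan is to check, for the set $S:=\bP_0^n(x)$, the three clauses in the definition of a hyperbolic $n$-strip: that $S$ is an $s$-rectangle; that $\INT T^k(S)$ lies in an element of $P$ for $0\le k\le n-1$; and that $T^n(S)$ is a $u$-rectangle. The second clause I would dispose of immediately: since $S=\bigcap_{k=0}^n\overline{T^{-k}P(T^kx)}$, each factor gives $T^k(S)\subseteq\overline{P(T^kx)}$, hence $\INT T^k(S)\subseteq P(T^kx)\in P$ for all $0\le k\le n$. The third clause I would reduce to the first by the involution $T\leftrightarrow T^{-1}$, $x\leftrightarrow y:=T^nx$: a one-line computation gives $T^n(\bP_0^n(x))=\bigcap_{j=0}^n\overline{T^{j}P(T^{-j}y)}$, which is precisely $\bP_0^n(y)$ formed for the homeomorphism $T^{-1}$, while $10$-control is symmetric in the stable and unstable directions (so $y$ and $T^{-n}y=x$ lie in $\Kappa(\mathcal R)$ for $T^{-1}$), and an $s$-rectangle for $T^{-1}$ is by definition a $u$-rectangle for $T$. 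So everything comes down to showing $S=\bP_0^n(x)$ is an $s$-rectangle.

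For that, let $R\in\mathcal R$ be the rectangle $10$-controlling $x$. First I would record the two elementary inclusions. Since the rectangles of $\mathcal R$ are cut out by boundary lines of $P$, one has $S\subseteq\overline{P(x)}\subseteq R$; and since $W^s(x)=\bigcap_{k\ge1}\overline{T^{-k}P(T^kx)}$, the segment $J^s:=W^s(x)\cap R$ is contained in $S$. Because $x\in\INT R$ is controlled by $R$, $J^s$ joins the two unstable sides $\partial^u_1R,\partial^u_2R$ of $R$ --- it cannot cross a stable side of $R$ transversally in an interior point, two stable manifolds having no transverse interior intersection (as in the proof of Lemma \ref{lem-big-rec}). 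Thus $S$ is a polygon contained in $R$ and containing a stable crossing segment of $R$, and what remains is to see that $S$ is a \emph{quadrilateral} whose unstable boundary is two subsegments of $\partial^uR$ and whose stable boundary is two segments of stable manifolds. Here I would use the factor $10$: writing $S=R\cap\bigcap_{k=1}^n\overline{T^{-k}P(T^kx)}$, the extra factor is bounded by preimages $T^{-k}(\partial P(T^kx))$, $1\le k\le n$; every one of these that is transverse to the stable direction meets the line carrying $W^s(x)$ at distance $\ge\rho(x)>10\,\diam R$ from $x$, hence stays out of $R$ altogether, so that inside $R$ the only pieces that can trim $S$ are ``stable-type'' boundary segments --- those lying along stable manifolds (in particular the stable sides of the rectangles entering the construction of $P$) --- together with $\partial^uR$. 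This forces $S$ to be a (possibly extremely thin) $s$-rectangle $s$-crossing $R$; by the symmetry above $T^n(S)$ is then a $u$-rectangle, and with the second clause this shows $\bP_0^n(x)$ is a hyperbolic $n$-strip.

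The step I expect to be the main obstacle is exactly this last one --- making rigorous that $S$ is an $s$-rectangle in the strict sense and not merely a thin quadrilateral $s$-crossing $R$. I would need to exclude the possibility that some preimage $T^{-k}(\partial P(T^kx))$ running nearly parallel to $W^s(x)$ delimits $S$ within $R$ along a segment that fails to be a stable manifold; this needs the structure of $\partial P$ (that the curves which can actually cut $S$ inside $R$ are pieces of stable manifolds), used in tandem with the $10\,\diam R$ safety margin coming from $10$-control --- plain control would not leave room to push all the ``transverse'' cutting curves clear of $R$. The rest --- clause (b), the $T\leftrightarrow T^{-1}$ reduction, and the two inclusions --- is routine.
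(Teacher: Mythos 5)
Your reduction steps are fine (the inclusion $\INT T^k(S)\subset P(T^kx)$, the time-reversal symmetry between the $s$-rectangle and $u$-rectangle clauses, and the inclusions $S\subset R$ and $W^s(x)\cap R\subset S$), but the central step --- that $\bP_0^n(x)$ actually $s$-crosses $R$ with stable sides along stable manifolds --- has a genuine gap, and it is exactly the one you flag. The assertion that a piece of $T^{-k}(\partial P(T^kx))$ transverse to the stable direction ``meets the line carrying $W^s(x)$ at distance $\ge\rho(x)$ from $x$, hence stays out of $R$ altogether'' is a non sequitur: a segment may cross the supporting line of $W^s(x)$ far outside $R$ and still pass through $R$ above or below $W^s(x)$, and a nearly parallel one need not meet that line anywhere near $R$ at all. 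So nothing in your argument prevents the cylinder from being pinched inside $R$ by such preimage edges, in which case its trace on $\partial^u R$ degenerates and its lateral boundary need not consist of stable manifolds. Note that the paper warns, immediately after the statement, that the weaker hypothesis $x\in\kappa(\mathcal R)\cap T^{-n}\kappa(\mathcal R)$ does \emph{not} suffice; the factor $10$ must therefore enter through a quantitative mechanism, not merely by pushing the endpoints of $W^s(x)$ out of $R$.

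The missing idea is to couple the stable data at $x$ with the unstable data at $T^nx$ inside one geometric object. The paper's proof takes $L$ to be the ``diamond'' (convex quadrilateral) generated by $W^s(x)$ and $T^{-n}W^u(T^nx)$; by convexity of the cells, $L$ is contained in the cylinder. Since $T^n$ is affine on the cylinder, it preserves ratios of lengths along each side of $L$. The $10$-control of $T^nx$ forces each side $[uv]$ of $T^nL$ (with $u$ the far endpoint) to exit $R'$ at a point $a$ with $d(v,a)\le\tfrac19\,d(u,v)$; pulling back, the same ratio holds at time $0$, and the $10$-control of $x$ (namely $d(x,T^{-n}v)\ge 10\,\diam R$) then places $T^{-n}a$ outside $R$. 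Hence the quadrilateral cut out of $T^nL$ by $\partial R'$ $u$-crosses $R'$ and its preimage $s$-crosses $R$, and this crossing strip is identified with $\bP_0^n(x)$. Your proposal never uses $W^u(T^nx)$ in the crossing step, so it cannot produce this two-sided ratio estimate; the proof should be reorganized around the diamond and the affine preservation of ratios rather than around a direct analysis of the boundary of the cylinder.
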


Observe that the weaker condition $x\in\kappa(\mathcal R)\cap
T^{-n}\kappa(\mathcal R)$ does not imply that $\bP_0^n(x)$ is a
hyperbolic strip.

\begin{demo}
See Fig. \ref{fig-strip10}. Let $R,R'$ be the elements of
$\mathcal R$ containing $x$ and $T^nx$. Consider the diamond (the quadrilateral) 
$L$ generated by $W^s(x)$ and $T^{-n}W^u(T^nx)$. By convexity, $L$
is contained in $T^{-1}\bP^n_1(x)$. 

Consider one side $[uv]$ of $T^nL$ with $u\notin R'$ and $d(x,u)>10\cdot
\diam(R')$. Let $\{a\}:=[uv]\cap\partial^s R'$. We have $d(u,v)\geq
d(u,T^nx)-\diam(R')\geq 9 \diam(R')$. Hence $d(v,a)\leq\diam(R')\leq (1/9) d(u,v)$.

Let $(abcd)$ be the quadrilateral defined by the points of 
$\partial T^nL\cap\partial R'$. It $u$-crosses $R'$. It
remains to prove that $L$ $s$-crosses $R$ so that
$R\cap T^{-n}Q$ is the desired hyperbolic strip and must be
$\bP_0^n(x)$. 

It is enough to check that $T^{-n}\{a,b,c,d\}$ 
lies outside of $R$. 
The map being piecewise affine, $d(T^{-n}v,T^{-n}a)\leq (1/9) d(T^{-n}u,
T^{-n}v)$. But $d(x,T^{-n}v)\geq 10\cdot\diam(R)$. Hence,
$T^{-n}a$ is outside of $R$. The same holds for the pre-images
of $b,c,d$. Hence $T^{-n}(abcd)$ $s$-crosses $R$. Therefore
\end{demo}

\begin{figure}
\centering
\includegraphics[width=13cm]{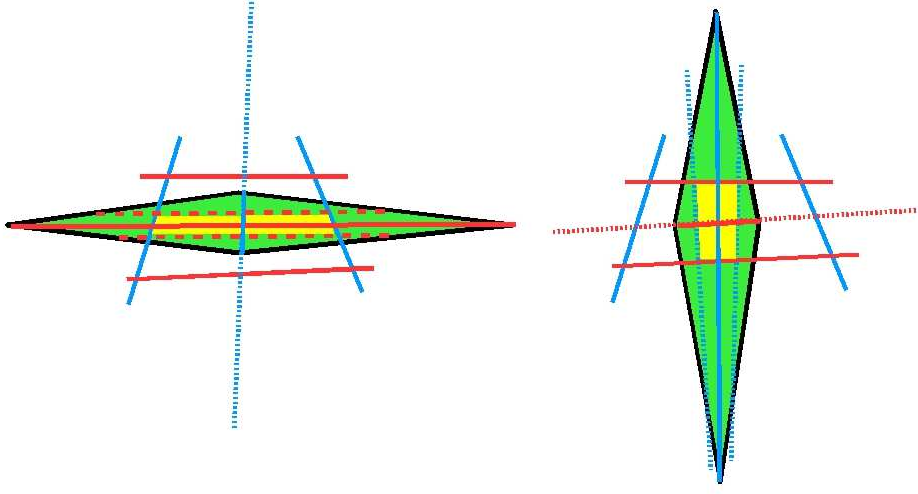}
\caption{Construction of the hyperbolic strip in the proof of
Lemma \ref{lem-strip-10} (left: time $0$ around $R$, right: time
$n$ around $R'$; approximately (vertical) horizontal lines are
segments of (un)stable manifolds; dashed ones are (pre)images of
regular ones. The colored ``diamond'' is $L$. The rectangle
inscribed in $L$ is the hyperbolic strip.}\label{fig-strip10}
\end{figure}

\begin{corollary} \label{cor-strip-3}
Let $n\geq1$ be such that $\bP^n_0(x)$ is a hyperbolic strip and
$T^nx\in\Kappa(\mathcal R)$. If $m>n$ satisfies $T^mx\in\Kappa(\mathcal R)$, then
$\bP_0^m(x)$ is also a hyperbolic strip.
\end{corollary}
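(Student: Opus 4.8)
The plan is to deduce this directly from Lemmas \ref{lem-strip-10} and \ref{lem-strip-2}, with only a small bookkeeping identity in between. First I would apply Lemma \ref{lem-strip-10} to the point $y:=T^nx$ and the time $m-n\geq 1$: by hypothesis $y=T^nx\in\Kappa(\mathcal R)$ and $T^{m-n}y=T^mx\in\Kappa(\mathcal R)$, so Lemma \ref{lem-strip-10} gives that $\bP_0^{m-n}(T^nx)$ is a hyperbolic strip. Unwinding the definition $\bP_a^b(z)=\bigcap_{k=a}^b\overline{T^{-(k-a)}P(T^kz)}$ and substituting $j=n+k$ shows that $\bP_0^{m-n}(T^nx)=\bP_n^m(x)$, so in fact $\bP_n^m(x)$ is a hyperbolic strip (an $s$-rectangle whose iterates stay in $P$-elements and whose $(m-n)$-th image is a $u$-rectangle, which is exactly the hyperbolic-strip property read off at times $n,\dots,m$).

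At this point I would invoke Lemma \ref{lem-strip-2} with cut points $0<n<m$: we now know that $\bP_0^n(x)$ is a hyperbolic strip (this is one of the hypotheses of the corollary) and that $\bP_n^m(x)$ is a hyperbolic strip (by the previous step), hence Lemma \ref{lem-strip-2} yields that $\bP_0^m(x)$ is a hyperbolic strip, which is the assertion.

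There is essentially no serious obstacle; the proof is just the composition "concatenate the given strip at times $0,\dots,n$ with the strip manufactured from scratch at times $n,\dots,m$." The only points needing care are: checking $m-n\geq 1$ so that Lemma \ref{lem-strip-10} is applicable; verifying the identity $\bP_0^{m-n}(T^nx)=\bP_n^m(x)$ (immediate from the index substitution above); and observing that we neither use nor are given $x\in\Kappa(\mathcal R)$ — the input hyperbolic strip $\bP_0^n(x)$ substitutes for $10$-control at time $0$, while genuine $10$-control is only needed, and only used, at times $n$ and $m$, precisely as consumed by Lemma \ref{lem-strip-10}.
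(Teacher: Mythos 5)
Your argument is exactly the paper's proof: Lemma \ref{lem-strip-10} applied at $T^nx$ over the time interval $[n,m]$ shows $\bP_n^m(x)$ is a hyperbolic strip, and Lemma \ref{lem-strip-2} then concatenates it with $\bP_0^n(x)$. The extra bookkeeping you supply (the identity $\bP_0^{m-n}(T^nx)=\bP_n^m(x)$ and the remark that $10$-control is only needed at times $n$ and $m$) is correct and consistent with the paper.
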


\begin{demo}
By Lemma \ref{lem-strip-10}, $\bP_n^m(x)$ is a hyperbolic strip.
Apply Lemma \ref{lem-strip-2} to conclude.
\end{demo}

We need the following technical fact.

\begin{lemma}\label{lem-struts}
Let $\mu$ be an atomless invariant probability measure. For $\mu$-a.e.
$x\in\kappa_s(R)$, for some $R\in\mathcal R$ and all $n\geq1$, the intersection of
$W^s(x)$ with $\partial^u R$ is disjoint from all the vertices
of $P^n$, $n\geq1$. In particular, $\partial^u R\cap
\partial P^n(x)$ is the union of two non-trivial segments.
\end{lemma}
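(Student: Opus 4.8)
The plan is to use a countability/measure argument exploiting that the relevant "bad" configurations form a countable union of sets which are each invariant up to time-shift and of measure zero. First I would set notation: for each $R\in\mathcal R$, the stable boundary $\partial^u R$ is a finite union of line segments lying in unstable manifolds, and the vertices of $P^n$ (for all $n\geq 1$) form a countable set $V:=\bigcup_{n\geq1}\mathrm{vert}(P^n)\subset M$; note $V$ is $T$-invariant up to finitely many choices, i.e. $T^{\pm1}V\subset V\cup(\text{finite set of }\partial P\text{-vertices})$, so enlarging $V$ to its orbit we may assume $TV=V$ and $V$ is still countable. The "bad" set I want to show is null is
$$
B:=\{x\in\kappa_s(R):\exists\,n\geq1,\ W^s(x)\cap\partial^u R\cap T^{-n}(\mathrm{vert}(P^1))\ne\emptyset\}
$$
together with the companion set where $W^s(x)\cap\partial^u R$ meets $V$ directly; since $W^s(x)$ is forward-asymptotically contracted and $W^s(Tx)\supset T(W^s(x))$, a vertex of some $P^n$ sitting on $W^s(x)\cap\partial^u R$ propagates forward, so it suffices to handle finitely/countably many "witness" points and push the problem forward in time.

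The key steps, in order, are: (1) Fix $R$ and a segment $\ell\subset\partial^u R$ contained in an unstable manifold $W^u(A)$. For a point $x\in\kappa_s(R)$ with $W^s(x)$ a line segment meeting $\ell$ transversally (nonzero angle, guaranteed $\mu$-a.e. by Proposition \ref{prop-non-sing}), the intersection $W^s(x)\cap\ell$ is a single point $q(x)$, and $x\mapsto q(x)$ is well-defined on a full-measure subset of $\kappa_s(R)$. (2) Observe that $q(x)\in V$ is a countable condition: $\{x:q(x)=v\}$ for fixed $v\in V$ is contained in the stable manifold $W^s$ through $v$ — more precisely, if $q(x)=v$ then $x$ and $v$ lie on the same stable leaf, so $x\in W^s(v)$, and $W^s(v)$ is a single line segment (or lower-dimensional) carrying zero $\mu$-measure because $\mu$ is atomless and, by Proposition \ref{prop-non-sing}, stable leaves are one-dimensional with the unstable direction transverse, so a single stable leaf is $\mu$-null (this is exactly the kind of leafwise-null statement that follows from $\mu(\partial P)=0$ and the line-segment structure; alternatively invoke that $\mu$ gives zero mass to any fixed line segment since two leaves through $\mu$-a.e. point meet at a positive angle). (3) Sum over the countably many $v\in V$: $\mu(\{x:q(x)\in V\})=0$. (4) Finally, the vertices of $P^n$ for $n\geq1$ that could lie on $\ell$ are exactly points of the form (vertex of $P^1$)$\,\cap$(iterate), which are absorbed into $V$ by its $T$-invariance; so "$W^s(x)\cap\partial^u R$ avoids all vertices of all $P^n$" is precisely "$q(x)\notin V$", holding $\mu$-a.e. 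The last sentence of the lemma is then immediate: $\partial^u R\cap\partial P^n(x)$ consists of the (at most two) points where $W^s(x)$-parallel boundary segments of $P^n(x)$ cut across $R$, and since those boundary segments are segments of stable manifolds meeting $\partial^u R$ transversally away from vertices, each contributes a genuine non-trivial sub-segment.

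The main obstacle I anticipate is step (2): justifying cleanly that a single stable leaf (line segment) is $\mu$-null for an arbitrary atomless invariant $\mu$ — without a nonzero-entropy hypothesis here, I cannot directly invoke Proposition \ref{prop-non-sing}'s transversality. I would handle this by noting that it suffices to prove the lemma for $\mu$ ergodic (decompose), and then split into two cases: if $h(T,\mu)>0$, Proposition \ref{prop-non-sing} gives transverse one-dimensional leaves so a fixed segment is null; if $h(T,\mu)=0$, then $\kappa_s(R)$ requires $W^s(x)$ to be a genuine segment crossing $R$ and one argues that the map $x\mapsto q(x)$ together with the transverse coordinate along $W^u$ locally trivializes a positive-measure piece, forcing (via Poincar\'e recurrence on leaves) periodicity of leaves and hence, again, that each individual leaf is null unless $\mu$ has an atom. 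The bookkeeping to make $V$ genuinely $T$-invariant (rather than invariant up to boundary vertices) is routine but must be stated; I would simply define $V$ as the full grand orbit under $T^{\pm1}$ of $\mathrm{vert}(P^1)$, still countable, and note $\partial P^n(x)$'s vertices lie in $V$ for all $n$.
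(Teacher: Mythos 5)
Your proposal is correct and follows essentially the same route as the paper: reduce by countability to a single fixed vertex $v$, observe that every offending point lies on the single stable leaf $W^s(v)$, and rule out a positive-measure leaf by Poincar\'e recurrence (some iterate maps the leaf into itself, so all its points converge to a periodic orbit, forcing an atom of the ergodic decomposition). The only adjustment is that this recurrence argument---which you reserve for the zero-entropy case---is exactly the paper's argument and works for any atomless invariant measure, so the entropy case-split and the unjustified claim that transversality of the stable and unstable directions alone makes a fixed line segment $\mu$-null can both be dropped.
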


\begin{demo}
We proceed by contradiction assuming that the above fails: on a
subset of $\kappa_s(R_i)$ with positive measure at least one of
these intersection points coincides with a vertex $z$ of the
polygon $\partial P^n(x)$ (so $W^s(z)=W^s(x)$).  Reducing this
subset, we assume the vertex $z$ to be a fixed one, say
$z_\perext$.

By Poincar\'e recurrence, there must exist infinitely many
$n\geq0$ such that $T^nx\in W^s(z_\perext)$. Considering two such
integers $n_1<n_2$, we get that
$T^{n_2-n_1}(W^s(z_\perext))\subset W^s(z_\perext)$. This implies
that all points of $W^s(z_\perext)$ converge to a periodic orbit.
Thus, the ergodic decomposition of $\mu$ has an atom, a
contradiction.
\end{demo}

We show that if $x\in\kappa_s(\mathcal R)$, then subsequent visits to
$\Kappa(\mathcal R)$ either give a hyperbolic strip or a shadowing property
which will lead to an entropy bound.

\begin{lemma}\label{lem-strip}
Let $x\in\kappa_s(\mathcal R)$ and $0\leq m< n$ be such that
$T^mx,T^nx\in\Kappa(\mathcal R)$. Excluding a set of zero measure of points
$x$, if $\bP^{n}_0(x)$ is not a hyperbolic strip then
$\bP_0^{m}(x)$ determines $\bP^{n}_0(x)$ up to a choice of
multiplicity $4$.
\end{lemma}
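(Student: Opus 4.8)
The idea is to contrast two cases at the time $m$. Let $R_m\in\mathcal R$ be the rectangle with $T^mx\in\Kappa(\mathcal R)$. Suppose first that $\bP_0^m(x)$ happens to be a hyperbolic strip; then, since $T^mx\in\Kappa(\mathcal R)$ and $T^nx\in\Kappa(\mathcal R)$ with $n>m$, Corollary \ref{cor-strip-3} immediately gives that $\bP_0^n(x)$ is a hyperbolic strip, contradicting the hypothesis. So we may assume $\bP_0^m(x)$ is \emph{not} a hyperbolic strip. By Lemma \ref{lem-strip-10} applied at time $m$, $\bP_m^n(x)$ \emph{is} a hyperbolic strip (because $T^mx,T^nx\in\Kappa(\mathcal R)$); hence by Lemma \ref{lem-strip-2} the failure of $\bP_0^n(x)$ to be a hyperbolic strip is entirely a failure ``on the interval $[0,m]$'': the $u$-rectangle $T^m\bP_0^m(x)$ does not properly $u$-cross $R_m$, equivalently the stable side $W^s(x)$ of $\bP_0^m(x)$ does not meet $\partial^u R_m$ in the expected way. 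Since $x\in\kappa_s(\mathcal R)$, $W^s(x)$ does cross $\partial^u R$ for the appropriate $R$; the point is that when we track it back from time $m$, the relevant boundary of $\bP_0^m(x)$ is cut not by $W^s(x)$ reaching $\partial R_m$ but by the \emph{unstable} boundary of $\bP_m^n(x)$ pulled back, i.e. by $T^{-m}$ of a segment of an unstable manifold crossing $R_m$.

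\textbf{Key steps.} First I would set up the geometric dichotomy precisely: $\bP_0^n(x)=\bP_0^m(x)\cap T^{-m}\bP_m^n(x)$ as convex sets in the affine charts, and $\bP_0^n(x)$ is a hyperbolic strip iff the $u$-rectangle $U:=T^m\bP_0^m(x)$ is ``wide enough'' inside $R_m$ to $s$-cross the $s$-rectangle $\bP_m^n(x)$ in the sense of Lemma \ref{lem-strip-2} / Fig.~\ref{fig-composition}. If it is not, then one (or both) of the two unstable-boundary segments of $\bP_m^n(x)$ must exit $U$ through one of the two \emph{stable} sides of $U$ (which are subsegments of $W^s(x)$-side and its partner) rather than through $\partial^u R_m$. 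Pulling back by $T^{-m}$: the relevant boundary segment of $\bP_0^m(x)$ (the one replacing the missing part of $\partial^u R$) is a subsegment of $T^{-m}(\text{an unstable-manifold segment crossing }R_m)$, hence is parallel to $E^u$ at the relevant points and is determined, up to finitely many choices, by which unstable manifold segment in $R_m$ is used and on which of the two stable sides of $U$ the crossing occurs. The crossing can occur on either of the two stable sides, and there are two unstable-boundary segments of $\bP_m^n(x)$ involved, so the multiplicity bound is $2\times 2=4$.

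\textbf{The role of Lemma \ref{lem-struts} and the exceptional null set.} To make ``determines $\bP_0^n(x)$'' meaningful I need that, for $\mu$-a.e.\ $x$, the combinatorial data (which side of $U$, which unstable leaf in $R_m$) together with $\bP_0^m(x)$ genuinely pins down $\bP_0^n(x)$: the remaining boundary of $\bP_0^n(x)$ coincides with that of $\bP_0^m(x)$, and the new cut is a clean non-degenerate segment, not tangled with vertices of $P^n$. This is exactly what Lemma \ref{lem-struts} provides, after discarding a zero-measure set: $\partial^u R\cap\partial P^n(x)$ consists of two non-trivial segments and misses all $P^n$-vertices, so the crossing configuration with $\bP_m^n(x)$'s unstable boundary is transverse and stable, giving a well-defined finite list of possible $\bP_0^n(x)$ of length at most $4$, one of which is the actual strip. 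One also discards the (zero-measure by Proposition \ref{prop-non-sing}) non-nice points so that all the itineraries and invariant manifolds above are defined.

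\textbf{Main obstacle.} The genuinely delicate point is \emph{not} the counting but justifying that the crossing data is \emph{finitely valued} uniformly, and in particular that ``which unstable-manifold segment of $R_m$ cuts $\partial\bP_0^m(x)$'' ranges over a finite set independent of $x$ and $n$. Here one uses that such a cutting segment is forced to be $T^{-m}$ of a segment of an \emph{unstable} manifold that $u$-crosses $R_m$ and meets a side of $U$; but $U=T^m\bP_0^m(x)$, so this segment is actually a piece of $\partial^u(T^m\bP_0^m(x))$ or of the unstable boundary of $\bP_m^n(x)$ — in either case a boundary object of a hyperbolic strip through a point of $\Kappa(\mathcal R)$, hence (by the Facts following the definition of hyperbolic strips, and Lemma \ref{lem-strip-10}) of the form $\bP_0^j(\cdot)$ around $R_m$; the choice reduces to a choice among the finitely many rectangles of $\mathcal R$ and the two orientations, which is where the clean bound $4$ comes from. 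Getting this bookkeeping exactly right — i.e.\ confirming the constant is $4$ and not some larger constant depending on $P$ and $\mathcal R$ — is the step I would be most careful about, and it is essentially the content of Fig.~\ref{fig-composition} read ``backwards''.
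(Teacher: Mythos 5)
Your overall strategy is the paper's: you reduce to determining $H:=\bP_m^n(x)$ (hyperbolic by Lemma \ref{lem-strip-10}), you locate the failure of $\bP_0^n(x)$ at the crossing of $U=T^m\bP_0^m(x)$ with $H$, and you correctly identify Lemma \ref{lem-struts} as the source of the exceptional null set. But the determination step --- the heart of the lemma --- has a genuine gap. Your finite data is ``which unstable-boundary segment of $H$ crosses which stable side of $U$'', later reduced to ``a choice among the finitely many rectangles of $\mathcal R$ and the two orientations''. This data lives entirely on $\partial^u R'$ and on $\partial U$; it says nothing about the \emph{stable} boundary of $H$, i.e.\ about which horizontal strip of $R'$ the cylinder $H$ actually is. Infinitely many $s$-rectangles of $R'$ could have their left unstable-boundary segment meet a given stable side of $U$, so this data does not pin down $H$; and your fallback that the cutting segment is ``a boundary object of a hyperbolic strip, hence of the form $\bP_0^j(\cdot)$'' does not help, since the unstable boundary of every candidate $H$ is a subsegment of the same finitely many lines $\partial^u R'$. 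You yourself concede you cannot confirm the constant $4$; that is the symptom of the missing idea. (You also have not shown that the unstable boundary of $H$ meets $\partial U$ at all: the failure of $\bP_0^n(x)$ could just as well come from $U$ being too narrow to reach $\partial^u R'$, or from the stable boundary of $H$ cutting into $U$.)

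The paper's mechanism is different and is the point to internalize: use points \emph{interior} to $H$, not crossing data on $\partial H$. By Lemma \ref{lem-struts}, $\partial \bP_0^m(x)\cap\partial^u R$ consists of two non-trivial segments $[a,b]$, $[c,d]$, so the four endpoints $a,b,c,d$ are determined by $\bP_0^m(x)$. Let $Q'$ be the quadrilateral spanned by $[a',b']=T^m[a,b]$ and $[c',d']=T^m[c,d]$; by convexity $Q'\subset U$ and $Q'$ meets $\INT H$ (it contains $T^mx$). If none of $a',b',c',d'$ lay in $\INT H$, then $Q'$ would go across $H$ and $\bP_0^n(x)$ would be a hyperbolic strip, contrary to hypothesis. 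Hence at least one of these four \emph{specific} points lies in $\INT H$; since distinct cylinders of the same length have disjoint interiors, that interior point together with the known length $n-m$ determines $H$ uniquely, hence determines $\bP_0^n(x)=\bP_0^m(x)\cap T^{-m}H$. The multiplicity $4$ is exactly the choice of which of $a,b,c,d$ it is.
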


\begin{demo}
$W^s(x)$ crosses the rectangle $R\in\mathcal R$ containing $x$. 
Hence Lemma \ref{lem-struts} implies that
$\partial P_0^{m}(x)\cap\partial^uR$ is the union of two
unstable, non-trivial segments: $[a,b]$, $[c,d]$. Let $[a',b']$,
$[c',d']$ be their images by $T^m|\bP_0^{m}(x)$. Let $Q'$ be the
quadrilateral generated by them. By convexity $Q'\subset
T^{m}(\bP^{m}_0(x))$.

$H:=\bP_{m}^{n}(x)$ is a hyperbolic strip by Lemma
\ref{lem-strip-10}. $Q'$ and $H$ intersect. If $\INT
H\cap\{a',b',c',d'\}=\emptyset$, then $Q'$ would go across $H$,
and $P_0^m(x)$ would be a hyperbolic strip, contrary to
assumption.

Thus, at least one of the four vertices $a,b,c,d$ determined by
$P_0^m(x)$ is contained in $\INT H$, this point determines $H$ and
therefore $P_0^n(x)$ as claimed.
\end{demo}

\subsection{Admissible Strips and Good Returns}

In this section, $\mathcal R$ is some Markov array with $\mathcal
R\subset P$. Hyperbolic strips defined above have no uniqueness
property: a point $x\in\kappa_s(\mathcal R)$ sits in an infinite
sequence of nested hyperbolic strips. This motivates the following
notion.

\begin{definition}
For $n\geq1$, the \new{admissible $n$-strips} are defined by
induction on $n$. A $1$-strip is always admissible. For $n>1$, an
admissible $n$-strip $S$ is an $n$-strip such that for all $1\leq
m<n$ such that $S$ is included in an admissible $m$-strip,
$T^m(S)$ meets the interior of no hyperbolic strip. An \new{admissible strip} is
an admissible $n$-strip for some $n\geq1$.
\end{definition}

Figure \ref{fig-strips2} shows a hyperbolic
$n$-strip $S$ (hatched) which is not admissible.

\begin{figure}
\centering
\includegraphics[width=10cm]{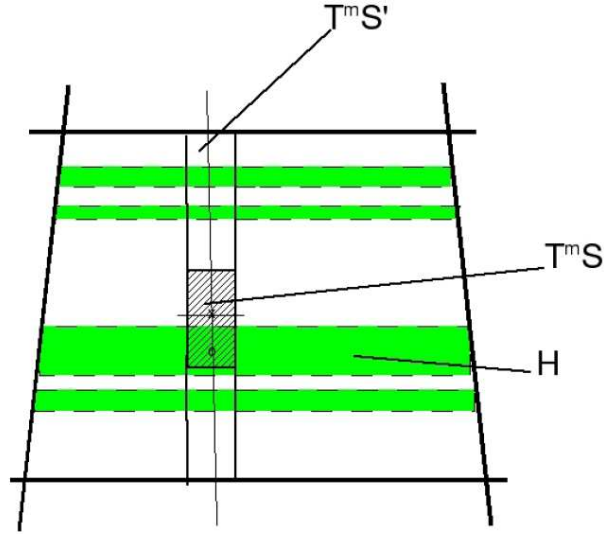}
\caption{An example of a non-admissible $n$-strip $S$. For some $0<m<n$,
$S$ is contained in some $m$-strip $S'$ (hence $T^mS'$ $u$-crosses) and $T^mS$ meets the
interior of some hyperbolic strip $H$. The above represents $T^mS\subset T^mS'$ and
$H$. Point $x$ is at the crossing of the stable and unstable lines and point $o$ is 
a little below, on the same unstable line. The four $s$-crossing rectangles are the
maximum hyperbolic strips. 
In such a situation, it might be possible to then split the itinerary
of $S$ into that of $S'$ followed by that of $H$, yielding a choice 
for representing the itinerary of any point like $o$ above, at each 
of its visit to $S$. Compare with Lemma \ref{lem-adm-uniq}.}\label{fig-strips2}
\end{figure}

\begin{definition}\label{def-good-return-time}
Let $(M,T,P)$ be a piecewise affine surface homeomorphism
with a Markov array contained in $P$.

For a point $x\in M$,  the \new{(good) return time} is
$\tau=\tau(x)$, the minimal integer $\tau\geq1$ such that both
following conditions hold:
 \begin{itemize}
  \item $x$ belongs to an admissible $\tau$-strip;
  \item $T^\tau(x)\in \kappa_s(\mathcal R)$.
 \end{itemize}
These conditions are defined \emph{with respect to} some partition
containing some Markov array.

If there is no such integer $\tau$, then we set $\tau(x)=\infty$.
\end{definition}

\begin{remark}
Note that, at this point, we break the symmetry between the future
and the past.
\end{remark}

We shall use repeatedly the following obvious observation:

\begin{fact}\label{fact-first-hyp-adm}
If $n$ is the smallest integer such that $\bP_0^n(x)$ is a
hyperbolic strip (equivalently: $\bP_0^n(x)$  is an $n$-strip which
is not contained in a $k$-strip for any $k<n$; $\bP_0^n(x)$  does
not meet a $k$-strip distinct from $\bP_0^k(x)$ for any $k<n$; $\bP_0^n(x)$  is a hyperbolic
strip which is maximum \wrt inclusion) then $\bP_0^n(x)$ is an
admissible $n$-strip.
\end{fact}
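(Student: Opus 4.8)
The plan is to unwind the definition of admissible $n$-strip and exploit the minimality of $n$ together with the nesting dichotomy of the two Facts stated just above.

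First I would record one preliminary point. Since $x$ is nice, $x$ lies in the finite (hence open) intersection $\bigcap_{k=0}^{n}T^{-k}P(T^kx)\subseteq\bP_0^n(x)$, so $x\in\operatorname{int}\bP_0^n(x)$, and likewise with $n$ replaced by any $m\le n$. I would use this to show: if $\bP_0^n(x)$ is contained in an $m$-strip $S'$ (necessarily a hyperbolic $m$-strip, hence of the form $\bP_0^m(y)$ by the first Fact above), then $S'=\bP_0^m(x)$. Indeed $\operatorname{int}\bP_0^n(x)$, being open and contained in $S'$, lies in $\operatorname{int}S'$, so $x\in\operatorname{int}S'$; then $T^kx$ lies in the open polygon $P(T^ky)$ for $0\le k\le m$, whence $P(T^kx)=P(T^ky)$ for all such $k$ and $S'=\bP_0^m(x)$.

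With this in hand the main implication is immediate. Write $S:=\bP_0^n(x)$, which by hypothesis is a hyperbolic $n$-strip, in particular an $n$-strip; for $n=1$ admissibility is automatic, so I would assume $n>1$. By the preliminary point, any $m$-strip with $m<n$ that contains $S$ equals $\bP_0^m(x)$, i.e. is a hyperbolic $m$-strip, contradicting the minimality of $n$. Hence $S$ is contained in no $m$-strip with $m<n$, a fortiori in no \emph{admissible} $m$-strip with $m<n$, so the condition defining an admissible $n$-strip holds vacuously and $S$ is admissible.

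Finally I would note that the parenthetical reformulations drop out of the same argument: ``$S$ lies in no $k$-strip for $k<n$'' is precisely what was just shown to be equivalent (through $S'=\bP_0^m(x)$) to the minimality of $n$; maximality of $S$ among hyperbolic strips follows because a hyperbolic strip strictly containing $S$ would again be $\bP_0^m(x)$ with $m<n$, using that $\bP_0^m(x)\subseteq\bP_0^n(x)$ when $m\ge n$; and the formulation via meeting other $k$-strips reduces to the nesting-or-disjoint-interiors alternative of the second Fact above. There is no real obstacle here --- the only slightly delicate point is the identification $\bP_0^m(y)=\bP_0^m(x)$ for $x$ interior, which is the preliminary point --- so the whole argument is just a few lines, consistent with its being billed as an obvious observation.
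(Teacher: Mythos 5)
Your proof is correct, and it is essentially the argument the paper has in mind: the paper states this Fact without proof as an ``obvious observation,'' the point being precisely that the defining condition for admissibility is vacuous because any hyperbolic $m$-strip containing $\bP_0^n(x)$ must (by Fact 1 and the interiority of $x$ in its cylinder) equal $\bP_0^m(x)$, which minimality of $n$ forbids for $m<n$. Your preliminary identification $S'=\bP_0^m(x)$ is the one genuinely nontrivial detail, and it is handled correctly (it uses that the elements of $P$ are regular open polygons, so $\INT\overline{T^{-k}P(T^ky)}=T^{-k}P(T^ky)$).
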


\begin{remark}
One could consider the following changes in the definition of
admissibility:

1) replacing ``$T^m(S)$ meets no hyperbolic strip'' by ``$T^m(S)$
meets no admissible strip'' would not change the notion. Indeed,
suppose that $T^m(S)$ meets a hyperbolic strip $H$. Let $k\geq1$
be the smallest integer such that $H$ is contained in a $k$-strip,
say $H_k$. The minimality of $k$ implies that $H_k$ is admissible
and $H_k\supset H$ so that $T^m(S)$ meets $H_k$.

ii) replacing ``$S$ is included in an admissible $m$-strip'' by
``$S$ is included in a \emph{hyperbolic} $m$-strip'' would exclude
some admissible strips and so would cause a problem in the proof
of the (key) Claim \ref{claim-det} (for the proof that $k=n_i$ in
the notations there).
\end{remark}

Admissibility gives the following uniqueness property. 
Denote the one-sided symbolic
dynamics by: $\Sigma_+(T,P)=\{A_0A_1A_2\dots\in P^\NN:
A\in\Sigma(T,P)\}$.

\begin{lemma}\label{lem-adm-uniq}
A positive itinerary $A\in\Sigma_+(T,P)$ can be decomposed in at
most one way as an infinite concatenation of admissible strips.
\end{lemma}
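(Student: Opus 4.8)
The plan is to argue by contradiction: suppose a positive itinerary $A\in\Sigma_+(T,P)$ admits two distinct decompositions into admissible strips, and locate the first place they disagree. After deleting a common initial segment of admissible strips (which is legitimate since an initial admissible strip is simply a prefix of $A$ cut at a ``good'' time), we may assume $A = S_1 S_2 S_3\cdots = S_1' S_2' S_3'\cdots$ with $S_1$ and $S_1'$ \emph{distinct} admissible strips, both of which are prefixes of $A$. Say $S_1 = \bP_0^{m}(x)$ and $S_1' = \bP_0^{n}(x)$ (for a point $x$ realizing the itinerary $A$, which exists off an entropy-negligible set by Proposition \ref{prop-non-sing}), with $m < n$ without loss of generality; since both are $s$-rectangles/hyperbolic strips on the same point, $S_1\supsetneq S_1'$ as sets, i.e.\ $\bP_0^m(x) \supset \bP_0^n(x)$.

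Now I would exploit the defining clause of admissibility of the \emph{longer} strip $S_1'=\bP_0^n(x)$: since $S_1'$ is an admissible $n$-strip and $S_1'$ is contained in the admissible $m$-strip $S_1$, the definition forces $T^m(S_1')$ to meet the interior of \emph{no} hyperbolic strip. On the other hand, the second decomposition continues past $S_1'$: the concatenation $S_1' S_2'$ being admissible (hence a fortiori hyperbolic) means, via Fact \ref{fact-first-hyp-adm} / Lemma \ref{lem-strip-2}, that $T^n(x)$ lies in a hyperbolic strip built from the next block $S_2'$, and more to the point $T^m(x)$ itself sits at the start of the hyperbolic strip $\bP_m^{\,m+|S_2|}(x)$ coming from the \emph{first decomposition's} next block $S_2$ — here I use that after $S_1$ the first decomposition resumes with an admissible (hence hyperbolic) strip $S_2$, so $\bP_m^{m+|S_2|}(x)$ is a hyperbolic strip, and $T^m(S_1')\subset T^m(S_1)$ is a nonempty piece of $M$ sitting inside the $s$-rectangle region $T^m(\bP_0^m(x))$ which $u$-crosses the rectangle of $\mathcal R$ that $\bP_m^{\cdots}(x)$ $s$-crosses; hence $T^m(S_1')$ meets the interior of that hyperbolic strip. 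This contradicts the admissibility of $S_1'$, closing the argument; Lemma \ref{lem-struts} is invoked to guarantee (off a null set) that all the relevant intersections of $W^s(x)$ with the unstable boundaries are genuine nontrivial segments, so that ``meets the interior'' is not spoiled by a degenerate tangency at a vertex of some $P^k$.

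The main obstacle I anticipate is the bookkeeping at the \emph{seam}: making precise that the existence of a \emph{second} admissible block $S_2$ in the first decomposition, starting exactly at time $m$, really does produce a hyperbolic strip whose interior is met by $T^m(S_1')$, rather than merely touched on its boundary. This is exactly the phenomenon illustrated by Figure \ref{fig-strips2}, and controlling it cleanly requires: (a) $T^m(x)\in\kappa_s(\mathcal R)$ — which holds because $m$ is a good return time for the first decomposition, so $\tau$-type conditions put $T^m(x)$ in $\kappa_s(\mathcal R)$; (b) the convexity/geometry of $s$- and $u$-rectangles so that $T^m(\bP_0^m(x))$ genuinely $u$-crosses and hence its intersection with the hyperbolic strip from $S_2$ has nonempty interior; and (c) Lemma \ref{lem-struts} to rule out the measure-zero coincidences. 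Everything else — peeling off the common prefix, reducing to the first disagreement, and the set-theoretic inclusion $\bP_0^m(x)\supset\bP_0^n(x)$ — is routine once the contradiction mechanism is set up this way.
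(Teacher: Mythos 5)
Your proof is correct and follows essentially the same route as the paper: delete the common initial segment, observe that the longer first block $\bP_0^n(x)$ is contained in the shorter admissible $m$-strip $\bP_0^m(x)$, and contradict the admissibility clause of the longer block using the hyperbolic strip furnished by the next block of the shorter decomposition. Your worry about ``meets the interior'' resolves more simply than you suggest: $T^m x$ itself belongs to $T^m(\bP_0^n(x))$ and lies in the interior of $\bP_m^{m+|S_2|}(x)$ (being a nice point, it sits in the open sets $T^{-(k-m)}P(T^kx)$ for all relevant $k$), so no appeal to Lemma \ref{lem-struts} is needed.
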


\begin{demo}
Consider two distinct decompositions of $A$ into admissible
strips, that is, $n_0=0<n_1<n_2<\dots$ and $m_0=0<m_1<m_2<\dots$,
such that $A_{n_i}\dots A_{n_{i+1}}$ and $A_{m_i}\dots
A_{m_{i+1}}$ are admissible strips for all $i\geq0$. By deleting
the identical initial segments, we can assume that the
decompositions differ from the beginning, say $n_1<m_1$. It
follows that the admissible $m_1$-strip $H:=\overline{[A_0\dots
A_{m_1}]}$ is contained into the $n_1$-admissible strip
$\overline{[A_0\dots A_{n_1}]}$. Thus $T^{n_1}(H)$ meets
$\overline{[A_{n_1}\dots A_{n_2}]}$ which is another admissible
strip, contradicting admissibility.
\end{demo}

\section{Analysis of Large Return Times}\label{sec-good-returns}

In this section $(M,T,P)$ is a piecewise affine homeomorphism with
positive topological entropy. In the first two subsections, we analyze
the implications of a long return time $\tau(x)(x)$ from a geometric
and then a combinatorial point of view. We then apply this to
invariant measures with very large "average" return times to
bound the entropy of these measures.

\subsection{Geometric Analysis}
We analyze geometrically the implications of a large return time.

\begin{proposition}\label{prop-adm-strip} 
Let $(M,T,P)$ be a piecewise affine surface homeomorphism and let
$\mathcal R\subset P$ be a Markov array. Let $x\in\kappa_s(\mathcal R)$
and let $1\leq N\leq\tau(x)$.

Let $0\leq N_1\leq N_2\leq N_0<N$ be defined as follows:
 \begin{itemize}
  \item $0<N_0<N$ be the smallest integer such that
$T^{N_0}x\in\Kappa(\mathcal R)$ and $\bP_0^{N_0}(x)$ is a
hyperbolic strip (we set $N_0:=N$ if there is no such integer);
  \item $0\leq N_1<N$ be the smallest integer such that
$T^{N_1}x\in\Kappa(\mathcal R)$ (we set $N_1:=N_0=N$ if there is
no such integer);
 \item $0\leq N_2<N_0$ be the largest integer such that
$T^{N_2}x\in\Kappa(\mathcal R)$ (we set $N_2:=N_1=N_0$ if there is
no such integer).
 \item$n_1,\dots,n_r$ ($r\geq0$) be  the \emph{admissible times}, that is
 the successive integers in:
 $$
  \{0\leq k<N:\bP_0^{k}(x)\text{ is an admissible
strip }\}
 $$
 with the convention $n_{r+1}=N$;
 \item $m_{i1},\dots,m_{is(i)}$
 ($s(i)\geq0$) be the \emph{hyperbolic times}, that is, for each $i$,
 the successive integers:
 $$
\{n_i<m<n_{i+1}: \bP_0^{m}(x)\text{ is a $m$-strip and
}T^mx\in\kappa_s(\mathcal R)\}.
 $$
  with the convention  $m_{is(i)}:=m_{i1}:=n_{i+1}$ and $s(i)=0$ if
the above set is empty.
 \end{itemize}

Then $P^N(x)$ is determined, up to a choice of multiplicity
$4\cdot 2^r$, by:
 \begin{enumerate}
  \item the integers $N_1,N_2,r$ and $n_i,m_{i1},m_{is(i)}$ for $1\leq
  i\leq r$;
  \item $P(T^kx)$ for $k\in\leftb 0,N_1\rightb\cup\leftb N_2,n_1\leftb$;
  \item $P(T^kx)$ for
  $k\in\bigcup_{i=1}^r\leftb n_i,m_{i1}\rightb\cup\leftb m_{is(i)},n_{i+1}\rightb
  \setminus\leftb0,N_1\rightb$.
 \end{enumerate}
($\leftb a,b \leftb$ denotes the integer interval with $a$
included and $b$ excluded, etc.).
\end{proposition}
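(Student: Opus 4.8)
The plan is to reconstruct the itinerary word $P(x)P(Tx)\cdots P(T^{N-1}x)$ — equivalently the cell $P^N(x)$ — block by block. The admissible times cut $\leftb 0,N\leftb$ into the blocks $\leftb n_i,n_{i+1}\leftb$, $0\le i\le r$ (with $n_0:=0$), and in each block the only symbols not already handed to us by items (2) and (3) lie in a subinterval over which the orbit is trapped in a nested chain of hyperbolic strips, a chain which will constrain the missing word up to a bounded number of choices. I would first note two elementary facts. (a) Whenever the gap between $N_1$ and $N_2$ is non-empty we have $N_2<N_0$, and then $\bP_0^{N_2}(x)$ is not a hyperbolic strip — this is exactly the minimality in the definition of $N_0$. (b) No index strictly inside a block $\leftb n_i,n_{i+1}\leftb$ is an admissible time, by the definition of $n_1<\dots<n_r$ as the successive admissible times in $\leftb 0,N\leftb$.

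First block $\leftb 0,n_1\leftb$. Item (2) supplies the symbols on $\leftb 0,N_1\rightb$ and on $\leftb N_2,n_1\leftb$, so in particular $\bP_0^{N_1}(x)$ is known. Since $x\in\kappa_s(\mathcal R)$, $T^{N_1}x,T^{N_2}x\in\Kappa(\mathcal R)$, and $\bP_0^{N_2}(x)$ is not a hyperbolic strip (fact (a)), Lemma \ref{lem-strip}, applied with $m=N_1$ and $n=N_2$ — the null exceptional set of points being furnished by Lemma \ref{lem-struts} — shows that $\bP_0^{N_1}(x)$ determines $\bP_0^{N_2}(x)$, hence the missing symbols between times $N_1$ and $N_2$, up to multiplicity $4$. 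With the known symbols on $\leftb 0,N_1\rightb\cup\leftb N_2,n_1\leftb$ this fixes the whole first block up to a factor $4$.

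Later blocks $\leftb n_i,n_{i+1}\leftb$, $1\le i\le r$. Argue by induction on $i$, assuming the symbols on $\leftb 0,n_i\leftb$ have been recovered. Item (3) (supplemented by item (2) for indices in $\leftb 0,N_1\rightb$) gives the symbols on $\leftb n_i,m_{i1}\rightb$ and on $\leftb m_{is(i)},n_{i+1}\rightb$, so $\bP_0^{m_{i1}}(x)$ is known and only the inner block $\leftb m_{i1},m_{is(i)}\leftb$ is missing; it is empty if $s(i)=0$. If $s(i)\ge 1$, then $\bP_0^{m_{i1}}(x)$ is a hyperbolic strip whose image is a $u$-rectangle $u$-crossing the rectangle of $\mathcal R$ containing $T^{m_{i1}}x\in\kappa_s(\mathcal R)$; any later hyperbolic time $m_{i,j+1}$ inside the block can only be produced by propagating this $u$-rectangle along the now-known stable manifold of $T^{m_{i1}}x$, and — by the composition Lemma \ref{lem-strip-2} together with the geometric construction in the proof of Lemma \ref{lem-strip-10} (the choice of which side of $\partial^s R'$ to follow) — this leaves exactly a binary ambiguity, resolved once the terminal time $m_{is(i)}$ is known. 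Since no index strictly inside the block is an admissible time (fact (b)), admissibility — via Fact \ref{fact-first-hyp-adm} and Lemma \ref{lem-adm-uniq}, using $N\le\tau(x)$ to ensure the enclosing admissible strip does not terminate in $\kappa_s(\mathcal R)$ before time $N$ — rules out any competing cutting of the itinerary over the block. Hence the inner block is pinned down up to multiplicity $2$, and after the first block and these $r$ inductive steps $P^N(x)$ is determined up to $4\cdot 2^r$ choices.

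I expect the inner-block step to be the main obstacle, and it is presumably isolated as the key claim later in the section: one must show that the word along a maximal run of hyperbolic times $\leftb m_{i1},m_{is(i)}\leftb$ is fixed, up to the binary side-choice, by the initial hyperbolic strip and the terminal time alone — with no dependence on the omitted intermediate times $m_{ij}$ — and that admissibility genuinely excludes alternative decompositions of that segment. A more routine but still necessary task is the bookkeeping: checking that the index sets appearing in (2) and (3), the filled gap between $N_1$ and $N_2$, and the filled inner gaps $\leftb m_{i1},m_{is(i)}\leftb$ together cover $\leftb 0,N\leftb$, so that the reconstruction really produces all of $P^N(x)$.
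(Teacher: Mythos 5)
Your overall architecture is the right one and matches the paper: the first block $\leftb 0,n_1\leftb$ is handled by Lemma \ref{lem-strip} applied to $N_1<N_2<N_0$ (using the minimality of $N_0$ to see that $\bP_0^{N_2}(x)$ is not hyperbolic), contributing the factor $4$, and each block $\leftb n_i,n_{i+1}\leftb$ should contribute a factor $2$ coming from the unknown inner segment $\leftb m_{i1},m_{is(i)}\leftb$. The bookkeeping that these pieces cover $\leftb 0,N\leftb$ is also fine.

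However, the inner-block step --- which you yourself flag as "presumably isolated as the key claim later in the section" --- is exactly the content of the proposition, and your sketch of it does not work. You propose to recover $\bP^{m_{is(i)}}_{m_{i1}}(x)$ by "propagating the $u$-rectangle $T^{m_{i1}}\bP_0^{m_{i1}}(x)$ along the now-known stable manifold of $T^{m_{i1}}x$". This is circular: $W^s(T^{m_{i1}}x)=\bigcap_{n\geq1}\overline{T^{-n}P(T^{m_{i1}+n}x)}$ is determined by the \emph{future} itinerary after time $m_{i1}$, which is precisely the data you are trying to reconstruct; it is not computable from the integers in (1) and the symbols in (2)--(3). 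The actual mechanism (Claim \ref{claim-det} in the paper) is different: one works in the rectangle $R$ containing $T^{n_i}x$, shows that the segment $\ell$ through $T^{n_i}x$ in the stable direction is \emph{not} contained in $Q:=\bP^{m_{i1}}_{n_i}(x)$ (using $\tau(x)>n_i$, so $T^{n_i}x\notin\kappa_s(R)$), so that $\ell$ splits $Q$ into $Q_\pm$, at least one of which lies in a gap of $\kappa_s(R)$ bounded by two extremal crossing stable manifolds $W^s(B_\pm)$ that are determined by $Q$ alone; then the \emph{non-admissibility} of the hyperbolic strip $\bP_0^{m_{is}}(x)$ forces (after checking that the relevant cutting time $k$ equals $n_i$, using that $n_{i+1}$ is the next admissible time) $T^{n_i}\bP_0^{m_{is}}(x)$ to meet an admissible strip beyond the gap, hence to contain $W^s(B_+)\cap R$ or $W^s(B_-)\cap R$; since a cylinder containing a full crossing stable manifold is determined by that manifold, this pins down $\bP^{m_{is}}_{n_i}(x)$ up to the binary choice of sign. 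None of this is present in your argument, and the "side of $\partial^s R'$" choice you invoke from Lemma \ref{lem-strip-10} is not the source of the factor $2$. As written, the proposal therefore has a genuine gap at the central step.
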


\begin{demo}
The inequality $0\leq N_1\leq N_2\leq N_0$ is easily checked.

We can assume that $N_1<N$, as otherwise there is nothing to show.

We first claim that $\bP_0^{N_1}(x)$ determines $\bP_0^{N_2}(x)$ up to a
choice of multiplicity $4$.

If $N_1\geq N_0$, then $N_1=N_2=N_0$ and there is nothing to show.
Otherwise $N_1\leq N_2 < N_0$ and $T^{N_1}x,T^{N_2}x\in\Kappa(\mathcal R)$.
As $\bP_0^{N_2}(x)$ is not hyperbolic this implies, by Lemma \ref{lem-strip},
the above claim.

It remains to prove the following claim.
\end{demo}

\begin{claim} \label{claim-det}
Except for an entropy-negligible subset of points $x\in M$, the following holds. Given some $1\leq
i\leq r$ with $s(i)>0$, $Q:=P^{m_{i1}}_{n_i}(x)$ and integers
$n_{i},m_{i1},m_{is}$, there are only two possibilities for
$P^{m_{is}}_{n_i}(x)$ ($s$ denotes $s(i)$).
\end{claim}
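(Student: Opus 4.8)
The plan is to analyze the situation at time $n_i$, where by construction $\bP_0^{n_i}(x)$ is an admissible strip, and to track how the intermediate hyperbolic times $m_{i1},\dots,m_{is}$ between $n_i$ and $n_{i+1}$ constrain the itinerary. First I would set $H_1 := \bP_{n_i}^{m_{i1}}(x)$. Since $s(i) > 0$, $m_{i1}$ is by definition an $m$ with $n_i < m < n_{i+1}$ such that $\bP_0^m(x)$ is an $m$-strip and $T^m x \in \kappa_s(\mathcal R)$; I want to argue $T^{m_{i1}}x \in \Kappa(\mathcal R)$ or at least apply Lemma \ref{lem-strip-10}/Lemma \ref{lem-strip} to get that $H_1$ is a hyperbolic strip (recall Fact \ref{fact-first-hyp-adm}: the *first* time $\bP_0^m(x)$ is a hyperbolic strip it is automatically admissible, and since $n_i < m_{i1}$ are consecutive admissible/hyperbolic markers the relevant strip over $[n_i, m_{i1}]$ is hyperbolic by the same argument that gave $N_0$ in the Proposition). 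So the first piece is a genuine hyperbolic strip determined by the data already recorded.

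The heart of the argument is an inductive step analogous to Lemma \ref{lem-strip}, carried out along the hyperbolic times $m_{i1} < m_{i2} < \dots < m_{is}$. Having determined $\bP_{n_i}^{m_{ij}}(x)$, I would look at its forward image $Q_j := T^{m_{ij}}(\bP_{n_i}^{m_{ij}}(x))$, which by convexity contains the quadrilateral $Q_j'$ generated by the images of the two unstable boundary segments of $\partial^u R \cap \partial \bP_{n_i}^{m_{ij}}(x)$ (using Lemma \ref{lem-struts} to ensure these are two nontrivial segments, after discarding an entropy-negligible — indeed atom-free — set of $x$). The next hyperbolic strip $H_{j+1} := \bP_{m_{ij}}^{m_{i(j+1)}}(x)$ is hyperbolic by Lemma \ref{lem-strip-10} (using $T^{m_{ij}}x, T^{m_{i(j+1)}}x \in \kappa_s(\mathcal R)$ and the fact that these times lie strictly between consecutive admissible times, so the strip over them does not jump to an admissible status, forcing it to be a plain $m$-strip that must in fact be hyperbolic by the definition of the hyperbolic times). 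Now exactly as in Lemma \ref{lem-strip}: $Q_j'$ and $H_{j+1}$ intersect; if no vertex of $Q_j'$ lies in $\INT H_{j+1}$ then $Q_j'$ $u$-crosses $H_{j+1}$, which would make $\bP_0^{m_{ij}}(x)$ a hyperbolic strip strictly between two consecutive admissible times — contradicting the definition of the admissible times $n_1,\dots,n_r$. Hence one of the (at most $4$) vertices determined by $\bP_{n_i}^{m_{ij}}(x)$ lies in $\INT H_{j+1}$, and that single point determines $H_{j+1}$, hence $\bP_{n_i}^{m_{i(j+1)}}(x)$. The crucial point is that this determination happens \emph{without any new multiplicity being spent}: once $\bP_{n_i}^{m_{i1}}(x)$ and the integers $n_i, m_{i1}, m_{is}$ are known, the vertex-selection for each subsequent hyperbolic time is forced, because the vertex must lie in $\INT H_{j+1}$ and there is a unique hyperbolic strip through a given interior point together with the rectangle $R' \in \mathcal R$ it crosses. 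So the only genuine branching is the choice of $\bP_{n_i}^{m_{i1}}(x)$ from $Q := \bP_{n_i}^{m_{i1}}(x)$ together with... — more precisely, the "two possibilities" in the statement come from reconstructing $\bP_{n_i}^{m_{i1}}(x)$ (equivalently $H_1$) from the combinatorial data, where the factor $2$ is the ambiguity in which of the two sides / which stable boundary segment of the relevant rectangle is used, mirroring the "$d(v,a) \le (1/9)d(u,v)$ for \emph{some} endpoint" choice in the proof of Lemma \ref{lem-strip-10}.

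The main obstacle I anticipate is keeping the bookkeeping of \emph{what exactly is given} versus \emph{what is to be reconstructed} straight: the claim asserts $P^{m_{is}}_{n_i}(x)$ is determined by $Q = P^{m_{i1}}_{n_i}(x)$ \emph{plus} the integers $n_i, m_{i1}, m_{is}$ alone — not the intermediate $m_{ij}$'s — so I must verify that the chain of vertex-selections above telescopes correctly and that the intermediate hyperbolic times, though not recorded, do not introduce ambiguity because at each such time the continuation is forced by the geometry (a $u$-rectangle crossing, or else a contradiction with the \emph{definition} of admissible times). In other words, the subtle part is showing that the number of branch points is $1$ (contributing the multiplicity $2$), not $s(i)$; this is exactly why admissibility was defined to forbid $T^m(S)$ from meeting the interior of a hyperbolic strip — that prohibition is what makes every intermediate hyperbolic time a \emph{deterministic} continuation rather than a new choice. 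I would also need to be careful that the "entropy-negligible subset" excluded is the union of $\partial P^n$-type sets (negligible by Proposition \ref{prop-non-sing}) and the atom-of-ergodic-decomposition exception from Lemma \ref{lem-struts}, and that this suffices uniformly in $n_i, m_{i1}, m_{is}$.
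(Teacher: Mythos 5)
Your plan takes a genuinely different route from the paper --- an induction over the intermediate hyperbolic times $m_{i1}<m_{i2}<\dots<m_{is}$ with a vertex-selection step at each stage --- and it has two gaps that I believe are fatal. First, the contradiction driving each inductive step is vacuous: you argue that if no vertex of $Q_j'$ lies in $\INT H_{j+1}$ then $Q_j'$ crosses $H_{j+1}$, which ``would make $\bP_0^{m_{ij}}(x)$ a hyperbolic strip strictly between two consecutive admissible times, contradicting the definition of the admissible times.'' But by the very definition of the hyperbolic times in Proposition \ref{prop-adm-strip}, $\bP_0^{m_{ij}}(x)$ \emph{is} an $m_{ij}$-strip for every $j$; being a hyperbolic strip between consecutive admissible times contradicts nothing. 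What fails at these times is \emph{admissibility}, and it fails through the clause ``$T^k(S)$ meets the interior of a hyperbolic strip,'' not through mere hyperbolicity, so your dichotomy does not close. Second, and independently, the given data are only $Q$, $n_i$, $m_{i1}$ and $m_{is}$; the intermediate times $m_{i2},\dots,m_{i(s-1)}$ are not recorded. The vertex-in-the-interior mechanism of Lemma \ref{lem-strip} determines a cylinder only because its \emph{length} is known (cylinders of a fixed length are pairwise disjoint, whereas the hyperbolic strips through a fixed interior point form a nested chain), so each of your steps would have to guess $m_{i(j+1)}$ and the multiplicity would grow with the number of such sequences, not stay at $2$; your appeal to ``a unique hyperbolic strip through a given interior point'' is false. (Your opening step is also doubtful: since $\tau(x)>n_i$ forces $T^{n_i}x\notin\kappa_s(\mathcal R)$, the stable manifold $W^s(T^{n_i}x)$ does \emph{not} cross the rectangle containing it, so $\bP_{n_i}^{m_{i1}}(x)$ need not be a hyperbolic strip.)

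The paper's proof bypasses the intermediate times entirely. Precisely because $W^s(T^{n_i}x)$ fails to cross $R$, the stable segment $\ell$ through $T^{n_i}x$ leaves $Q$ and splits it into $Q_+$ and $Q_-$; by convexity no stable manifold crossing $R$ can meet both sides, so $Q$ lies in a ``gap'' of $\kappa_s(R)$ bounded by two crossing stable manifolds $W^s(B_\pm)$ which are \emph{determined by $Q$} as the extremal crossing manifolds on either side. The non-admissibility of $\mathcal S=\bP_0^{m_{is}}(x)$ is then used positively: it supplies a $k$ with $\mathcal S$ inside an admissible $k$-strip and $T^k(\mathcal S)$ meeting an admissible strip, and $k$ is forced to equal $n_i$; that admissible strip crosses $R$, hence lies on one side of the gap, so $\bP_{n_i}^{m_{is}}(x)$ must contain $W^s(B_+)\cap R$ or $W^s(B_-)\cap R$, and a cylinder of the known length $m_{is}-n_i$ containing a known crossing segment is unique. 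The multiplicity $2$ is exactly the choice of side of the gap --- not, as you suggest, an ambiguity in reconstructing $Q$, which is part of the given data.
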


\begin{demo}
Let $R,R'\in\mathcal R$ be the rectangles containing $T^{n_i}x,
T^{m_{i1}}x$ and let $\ell$ be the line segment through
$T^{n_i}x$, directed by $W^s(T^{n_i}x)$ and bounded by $\partial
R$. We first show that $\ell\not\subset Q$.

$T^{n_i}x\notin\kappa_s(R)$ as $\tau(x)>n_i$. Thus
$W^s(T^{n_i}x)$ does not $s$-cross $R$: $\ell\not\subset
W^s(T^{n_i}x)$. There exists $k>n_i$ such that
$T^{k-n_i}\ell$ is not contained in the closure of an element of
$P$. Take $k\geq1$ minimal. If one had $k>m_{i1}$, then $T^{m_{i1}-n_i}\ell\subset
W^s(T^{m_{i1}}x)$ (recall that $T^{m_{i1}}x\in\kappa_s(R')$) so that for all $k\geq m_{i1}$,
$T^{k-n_i}\ell$ would be contained in an element of $P$, implying $\ell\subset W^s(T^{n_i}x)$, a
contradiction. Thus, $k\leq m_{i1}$ and $\ell\not\subset Q$ as
claimed.

Disregarding an entropy-negligible set of points $x$, we can assume
that $\ell$ divides $Q$ into two subsets with non-empty interiors,
say $Q_+,Q_-$. There cannot exist stable manifolds
that $s$-cross $R$ both in $Q_+$ and $Q_-$: by convexity this would
imply that $W^s(T^{n_i}x)$ also $s$-crosses. Thus there is an $s$-rectangle
in $R$ disjoint from $\kappa_s(R)$ which contains at least one of
$Q_+,Q_-$. Let $W^s(B_+)$ ("above") and $W^s(B_-)$ be the stable manifolds
bounding this gap (recall Fact \ref{fact-limit-W}). Also
at least one of $W^s(B_\pm)$ (say $W^s(B_+)$) is not contained in
$Q$ so the interior of $Q$ does not meet $W^s(B_+)$. Thus $Q$
determines $W^s(B_+)$: it is the "lowermost" stable manifold
"above" $Q$ which crosses $R$. Likewise $W^s(B^-)$
is the "uppermost" stable manifold "below" $W^s(B^+)$ which
crosses $R$.

By definition, $\mathcal S:=\bP_0^{m_{is}}(x)$ is hyperbolic. Also
$m_{is}\in\rightb n_i,n_{i+1}\leftb$ is not admissible hence there
exists $0<k<m_{is}<n_{i+1}$ such that $\mathcal
S$ is included in an admissible $k$-strip and $T^{k}(\mathcal S)$
meets an admissible strip. If $k<n_i$, then the same admissible
strip would preclude the admissibility of $\bP_0^{n_i}(x)$ is 
admissible. The contradiction shows that $k\geq n_i$. As $n_{i+1}$ 
is the smallest admissible time after $n_i$, $k=n_i$.

Thus $T^{n_i}(\mathcal S)$ meets an admissible strip which 
must be either "above" $W^s(B_+)$ or
"below" $W^s(B_-)$. This implies that $\bP^{m_{is}}_{n_i}(x)$ meets
and therefore contains $W^s(B_\pm)\cap R$ (for one of the signs $\pm$).
It follows that $Q$ determines $\bP^{m_{is}}_{n_i}(x)$, up 
to a binary choice.
\end{demo}

\subsection{Combinatorial Estimates}

\begin{remark}
In the remainder of this section, the controlled sets and return times are understood 
to be with respect to the partition $P_\perext$ and the
Markov array $\mathcal R_\perext\subset P_\perext$ of some periodic
extension as defined in Definition \ref{def-perext}.
\end{remark}

We extract from Proposition \ref{prop-adm-strip} the following
complexity bound.

\begin{proposition}\label{prop-ent}
Let $(M,T,P)$ be a piecewise affine surface homeomorphism and
let $\mathcal R$ be a Markov array. Let $\eps_*>0$ and let 
$C_*=C_*(\eps_*)<\infty$ be such that
 $$
   \forall n\geq 0\;\;  \#(P^{\mathcal R})^n\leq C_* e^{(h_\top(T)+\eps_*)n}
 $$
For each positive integer $L$, let $(M_\perext,T_\perext,P_\perext)$
be the $(L,\mathcal R)$-extension of $(M,T,P)$.

Let $L,N\geq1$, $M,R,S\geq0$ be some integers. Consider
$\mathcal I=\mathcal I(N,L,M,R,S)$ the set of cylinders $(P_\perext)^N(x)$ for
$x\in\kappa_s(\mathcal R_\perext)$ such that, in the
notations of Proposition \ref{prop-adm-strip} applied to the periodic
$(L,\mathcal R)$-extension:
 \begin{itemize}
  \item $\tau(x)\geq N$;
  \item $r=R$ and $\#\{m_{ij}\geq N_0:1\leq i\leq r,1\leq j\leq s(i)\}=S$;
  \item $N_2-N_1=M$.
 \end{itemize}
Let $\rho>R/N$. Then:
 $$
   \log\#\mathcal  I\leq
       (h_\top(T)+\eps_*)(N-L(S-R)-M+S)+K_*(\rho,N)N+(\rho+3/N)N\log C_*
 $$
where $K_*(\cdot)$ and $K_*(\cdot,\cdot)$ are universal\footnote{$K_*$ does not depend on any
of the data $T:M\to M,N,M,R,S,L,C_*,\eps_*$.} functions satisfying
$K_*(\rho,N)\downarrow K_*(\rho)$ when $N\to\infty$ and
$K_*(\rho)\downarrow 0$ when $\rho\to0$.
\end{proposition}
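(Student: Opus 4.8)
The plan is to read the statement as a pure counting corollary of Proposition~\ref{prop-adm-strip}, applied with parameter $N$ to the periodic extension $(M_\perext,T_\perext,P_\perext)$. Fix the integers $N,L,M,R,S$ and $\rho>R/N$. For each $x\in\kappa_s(\mathcal R_\perext)$ contributing a cylinder to $\mathcal I(N,L,M,R,S)$, Proposition~\ref{prop-adm-strip} produces the integers $N_1,N_2,N_0$, the admissible times $n_1<\dots<n_r$, the hyperbolic times $m_{ij}$, and asserts that $(P_\perext)^N(x)$ is determined, up to a multiplicity $4\cdot 2^{r}$, by (1) those integers and (2)--(3) the labels $P_\perext(T_\perext^k(\cdot))$ for $k$ in the set
\[
 E \ := \ [0,N)\setminus\Bigl([N_1,N_2)\ \cup\ \bigcup_{i=1}^{r}[m_{i1},m_{is(i)})\Bigr).
\]
On $\mathcal I$ we moreover have $r=R$, $N_2-N_1=M$, and $\#\{m_{ij}\ge N_0\}=S$. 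Hence $\#\mathcal I$ is at most $4\cdot 2^{R}$ times the number of admissible choices for the integer skeleton, times the number of realizable $P_\perext$-label sequences on $E$; the three estimates below (plus an upper bound on $|E|$) combine to give the claim.

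First, the skeleton count. With $r=R$ and $N_2=N_1+M$ fixed, what remains free is $N_1\in\{0,\dots,N\}$, the set $\{n_1,\dots,n_R\}\subset\{0,\dots,N-1\}$, and the two increasing $R$-tuples $(m_{i1})_i$ and $(m_{is(i)})_i$; crudely this is at most $(N+1)\binom{N}{R}^{3}$. Since $R<\rho N$ and $\binom{N}{\cdot}$ is increasing below $N/2$, Stirling gives $\binom{N}{R}\le e^{N H(\rho)+O(\log N)}$ with $H$ the binary entropy function and the $O(\log N)$ uniform, so the skeleton count is $\le e^{K_*(\rho,N)N}$ for a universal $K_*(\rho,N)$ of the shape $3H(\rho)+O(\log N/N)$, which decreases to $K_*(\rho)=3H(\rho)$ as $N\to\infty$ and to $0$ as $\rho\to0$. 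The factor $4\cdot 2^{R}$ contributes $\le \log 4+\rho N\log 2$ and is absorbed by enlarging $K_*$ slightly (adding $\log 2$ to the $H(\rho)$ term and $\log 4/N$ to the lower-order term).

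Second, the label count. The set $E$ is a finite disjoint union of integer intervals; after merging adjacent pieces and discarding empty ones, the only genuine gaps are $[N_1,N_2)$ (when $M\ge1$) and the blocks $[m_{i1},m_{is(i)})$ with $s(i)\ge2$, of which there are at most $r=R$, so $E$ is a union of at most $R+3$ intervals. On each interval $I$ the number of realizable $P_\perext$-labelings is at most $\#(P_\perext)^{|I|}$, and since $P^{\mathcal R}$ refines $P$ one checks $\#(P_\perext)^{n}$ is controlled by $\#(P^{\mathcal R})^{n}\le C_* e^{(h_\top(T)+\eps_*)n}$; multiplying over intervals gives a bound $C_*^{\,R+3}\,e^{(h_\top(T)+\eps_*)|E|}\le e^{(\rho+3/N)N\log C_*}\,e^{(h_\top(T)+\eps_*)|E|}$ (using $C_*\ge1$ and $R<\rho N$). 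It remains to bound $|E|$. This is exactly where the periodic extension enters: every point of $\kappa_s(\mathcal R_\perext)$ lies in $\mathcal R_\perext$, hence at level $0$, so consecutive return times to $\kappa_s(\mathcal R_\perext)$ differ by at least $L$; in particular $m_{is(i)}-m_{i1}\ge L\,(s(i)-1)$ within each block. Summing, and using $\sum_i\mathbf 1_{\{s(i)\ge1\}}\le r=R$ together with $\sum_i s(i)\ge S$, we get $\sum_i(m_{is(i)}-m_{i1})\ge L(S-R)$, and therefore (in the generic configuration $N_1\le N_2\le n_1$, where $E^c$ is precisely $[N_1,N_2)\cup\bigcup_i[m_{i1},m_{is(i)})$)
\[
 |E| \ =\ N-M-\sum_{i=1}^{r}(m_{is(i)}-m_{i1})\ \le\ N-M-L(S-R)\ \le\ N-L(S-R)-M+S .
\]
Assembling the three bounds and monotonicity of $t\mapsto e^{(h_\top(T)+\eps_*)t}$ yields the stated inequality.

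The step I expect to be the main obstacle is making the set $E$, and hence the identity $|E|=N-M-\sum_i(m_{is(i)}-m_{i1})$, rigorous in \emph{all} configurations: the integers $N_0,N_1,N_2$ need not interleave cleanly with the admissible times $n_i$ (e.g.\ one may have $N_2>n_1$, so that $[N_2,n_1)$ is empty and a stretch like $[N_1,n_1)$ is dropped from clauses (2)--(3) without lying in the two families of gaps), and one must check that in every such case the clauses of Proposition~\ref{prop-adm-strip} still partition $[0,N)$ into the labeled part and a skipped part of size at least $M+L(S-R)$, with at most $R+3$ labeled intervals. This bookkeeping is presumably the reason the formula carries the otherwise unused $+S$ of slack. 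A secondary, routine point is the precise sense in which the subexponential constant for $P_\perext$ can be replaced by that of $P^{\mathcal R}$ (bounding $\#(P_\perext)^n$ by $\#(P^{\mathcal R})^n$ up to factors absorbed by the universal terms).
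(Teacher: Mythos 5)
Your overall strategy coincides with the paper's: apply Proposition \ref{prop-adm-strip}, count the integer ``skeleton'' by $\binom{N}{2}\binom{N}{R}^3$ and Stirling, absorb the multiplicity $4\cdot 2^R$ into $K_*$, bound the labelled part by $C_*^{O(R)}e^{(h_\top(T)+\eps_*)\,|E|}$, and use the periodic extension to force consecutive hyperbolic times within a block to differ by at least $L$. The one step where you diverge is exactly the one you flag as the expected obstacle, and it is a genuine gap rather than routine bookkeeping. Your identity $|E|=N-M-\sum_i(m_{is(i)}-m_{i1})$ presupposes that $\rightb N_1,N_2\leftb$ is disjoint from the blocks $\rightb m_{i1},m_{is(i)}\leftb$, and this can fail: a hyperbolic time $m_{ij}$ only requires $T^{m_{ij}}x\in\kappa_s(\mathcal R)$, not $T^{m_{ij}}x\in\Kappa(\mathcal R)$, so a block may begin strictly before $N_0$ and hence intersect $\leftb N_1,N_2\rightb\subset\leftb 0,N_0\rightb$. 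If the overlap $O$ is positive, then $|E|=N-M-\sum_i(m_{is(i)}-m_{i1})+O$, and your chain $|E|\le N-M-L(S-R)$ no longer follows: the bound $\sum_i(m_{is(i)}-m_{i1})\ge L(\sum_i s(i)-R)\ge L(S-R)$ controls the total block length, but part of that length may already be spent covering $\rightb N_1,N_2\leftb$ and cannot be counted twice.

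The paper's resolution is precisely the reason $S$ is defined as $\#\{m_{ij}\ge N_0\}$ rather than $\sum_i s(i)$: one counts only the sub-intervals $\rightb m_{ij},m_{ij}+L\leftb$ for pairs with $m_{ij}\ge N_0$ and $j<s(i)$. These are pairwise disjoint (consecutive $m_{ij}$ in a block differ by at least $L$), contained in the blocks, and automatically disjoint from $\rightb N_1,N_2\leftb$ because $N_1\le N_2\le N_0$. There are at least $S-R$ of them (one discards at most one index $j=s(i)$ per block), each with $L-1$ elements, so the unspecified set has size at least $(M-1)+(S-R)(L-1)$, which is what produces the $-L(S-R)-M+S$ in the exponent. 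In other words, the $+S$ is not free slack for configuration bookkeeping: it is exactly what absorbs the loss of one element per interval ($L-1$ versus $L$) and of the discarded block-ends. Your conclusion is correct and the repair is short, but as written the key inequality $|E|\le N-L(S-R)-M+S$ is only established in the non-overlapping configuration.
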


The proof of the above will use:

\begin{lemma}\label{lem-mij}
In the notation of Proposition \ref{prop-adm-strip},
 \begin{enumerate}
 \item $n_1$ is the smallest integer such that
$(\bP_\perext)_0^{n_1}(x)$ is hyperbolic and $n_1\leq N_0$;
  \item $\{n_{i}:1\leq i\leq r\}
    \subset \{0\leq k<N:T_\perext^kx\in \mathcal R_\perext\setminus \kappa(\mathcal R_\perext)\}$;
  \item $\{N_0\leq k<N:T_\perext^kx\in\Kappa(\mathcal R_\perext)\} \subset
   \{m_{ij}:1\leq i\leq r\text{ and }1\leq j\leq
  s(i)\}$.
\end{enumerate}
\end{lemma}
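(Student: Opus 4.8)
The three assertions are all statements about which integers $k \in \leftb 0, N \leftb$ have certain properties relative to the admissible/hyperbolic-time structure set up in Proposition \ref{prop-adm-strip}, so the proof is essentially bookkeeping with the definitions, and I expect the main work to be in item (3). I would treat them in order.

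For (1): by Fact \ref{fact-first-hyp-adm}, the smallest $k$ for which $(\bP_\perext)_0^k(x)$ is hyperbolic is automatically an admissible time, hence equals $n_1$ (since $n_1$ is the first admissible time and $1$-strips are trivially admissible but an admissible $1$-strip need not be hyperbolic --- so I should be a little careful here: the claim is that $n_1$, defined as the \emph{first} admissible time $>0$, coincides with the first hyperbolic time). The point is that before the first hyperbolic time there is no nontrivial admissible strip: an admissible $k$-strip is in particular a $k$-strip, and a $k$-strip which is hyperbolic is exactly what we're calling a hyperbolic time; but an admissible strip that is not a $1$-strip... Actually I would argue: the first admissible time $n_1$ produces an admissible $n_1$-strip, which is an $n_1$-strip; and by Facts following the definition of hyperbolic strips, an $n$-strip that is not contained in a $k$-strip for $k<n$ and is itself hyperbolic is the first hyperbolic time. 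Conversely the first hyperbolic time is admissible by Fact \ref{fact-first-hyp-adm}. One checks these two give the same integer, and $n_1 \le N_0$ because $N_0$ is by definition a hyperbolic time (it is the smallest $k<N$ with $T^k x \in \Kappa(\mathcal R)$ and $\bP_0^{N_0}(x)$ hyperbolic), so the first hyperbolic time is $\le N_0$. (If $N_0 = N$ because no such integer exists, then $n_1 \le N_0 = N$ trivially.)

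For (2): suppose $n_i$ is an admissible time, so $(\bP_\perext)_0^{n_i}(x)$ is an admissible $n_i$-strip, in particular an $n_i$-strip, hence $T_\perext^{n_i}(x)$ lies in some rectangle of $\mathcal R_\perext$, i.e. $T_\perext^{n_i} x \in \mathcal R_\perext$. I must rule out $T_\perext^{n_i} x \in \kappa(\mathcal R_\perext)$ --- better, I should show it is not in $\Kappa(\mathcal R_\perext)$, since (2) as written says $T^k x \in \mathcal R_\perext \setminus \kappa(\mathcal R_\perext)$ (there may be a typo and $\Kappa$ is meant; I'll prove the stronger $\Kappa$-version if I can, else match the statement). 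If $T_\perext^{n_i} x \in \Kappa(\mathcal R_\perext)$ then by Corollary \ref{cor-strip-3}, every later visit $m > n_i$ to $\Kappa(\mathcal R_\perext)$ would already make $\bP_0^m(x)$ hyperbolic, and combined with admissibility one would get that $\bP_0^{n_i}(x)$ together with the next hyperbolic strip forces $T^{n_i}(\bP_0^{n_i}(x))$ to meet a hyperbolic strip, contradicting admissibility of $\bP_0^{n_i}(x)$ (this is exactly the obstruction in the proof of Lemma \ref{lem-adm-uniq}). Hence $T_\perext^{n_i} x \notin \Kappa(\mathcal R_\perext)$, giving the inclusion.

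For (3), which I expect to be the crux: let $k$ with $N_0 \le k < N$ and $T_\perext^k x \in \Kappa(\mathcal R_\perext)$; I must show $k$ is one of the hyperbolic times $m_{ij}$. Since $N_0 \le k$, $\bP_0^{N_0}(x)$ is hyperbolic, $T^{N_0}x \in \Kappa$, and $T^k x \in \Kappa$, Corollary \ref{cor-strip-3} gives that $\bP_0^k(x)$ is a hyperbolic strip, i.e. $\bP_0^k(x)$ is a $k$-strip. Now locate $k$ in the interval structure: there is a unique $i$ with $n_i \le k < n_{i+1}$ (using $n_0 = 0$, $n_{r+1} = N$, and that by (1) $n_1 \le N_0 \le k$, so $i \ge 1$). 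Then $k$ satisfies $n_i < m < n_{i+1}$ (strict lower inequality: if $k = n_i$ then $k$ would be an admissible time, hence $= n_i$, but then it's not in the range $N_0 \le k < N$... actually $k = n_i$ is possible only if $n_i \ge N_0$; I need to check that when $k = n_i$ it is still recorded --- here I should be careful whether the $m_{ij}$-set uses $n_i < m$ strictly; re-reading Proposition \ref{prop-adm-strip}, the hyperbolic times are $\{n_i < m < n_{i+1} : \dots\}$ so the admissible times $n_i$ themselves are excluded from the $m_{ij}$; but for $k = n_i \ge N_0$ we'd want item (3) to still hold --- I'll note that if $k = n_i$ then $k$ is an admissible time and item (3)'s conclusion may need the convention that admissible times $\ge N_0$ are also counted, or the statement tacitly excludes $k$ equal to an $n_i$; I will match whatever the paper needs). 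For $n_i < k < n_{i+1}$ with $\bP_0^k(x)$ a $k$-strip and $T^k x \in \Kappa(\mathcal R_\perext) \subset \kappa_s(\mathcal R_\perext)$, the defining condition of the hyperbolic-time set $\{n_i < m < n_{i+1} : \bP_0^m(x) \text{ is an } m\text{-strip and } T^m x \in \kappa_s(\mathcal R)\}$ is met, so $k \in \{m_{ij}\}$. This is the inclusion. The main obstacle is handling the boundary cases cleanly --- the conventions $N_0 = N$ when no hyperbolic strip exists, $N_1 = N_2 = N_0$, $s(i) = 0$, and the possible coincidence $k = n_i$ --- and making sure the strict/nonstrict inequalities in the definitions line up so that every relevant $k$ is genuinely captured; once those conventions are pinned down, each inclusion is a one-line consequence of Fact \ref{fact-first-hyp-adm}, Corollary \ref{cor-strip-3}, and the definitions of admissible and good return times.
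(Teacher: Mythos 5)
Your part (1) is essentially the paper's argument (first admissible time $=$ first hyperbolic time, via Fact \ref{fact-first-hyp-adm}), but parts (2) and (3) have a genuine gap: you never use the standing hypothesis of Proposition \ref{prop-adm-strip} that $N\leq\tau(x)$, and this hypothesis is exactly what makes both items work. Since $\bP_0^{n_i}(x)$ is an admissible $n_i$-strip with $n_i<N\leq\tau(x)$, the definition of the good return time forces $T_\perext^{n_i}x\notin\kappa_s(\mathcal R_\perext)$ (otherwise $\tau(x)\leq n_i<N$); as $\Kappa(\mathcal R)\subset\kappa(\mathcal R)\subset\kappa_s(\mathcal R)$, this gives (2) in one line. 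Your substitute argument for (2) is doubly off: first, you have the inclusions backwards --- proving $T^{n_i}x\notin\Kappa(\mathcal R_\perext)$ is \emph{weaker} than the required $T^{n_i}x\notin\kappa(\mathcal R_\perext)$, not stronger, so even if your argument worked it would not prove the statement; second, the claimed contradiction with admissibility does not follow from the definition, which only constrains $T^m(S)$ for $1\leq m<n_i$ with $S$ inside an admissible $m$-strip, and says nothing about $T^{n_i}(S)$ meeting hyperbolic strips.

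The same omission is what leaves your case $k=n_i$ in part (3) unresolved. After Corollary \ref{cor-strip-3} gives that $\bP_0^k(x)$ is a hyperbolic strip for $N_0\leq k<N$ with $T_\perext^kx\in\Kappa(\mathcal R_\perext)$, the point is that this strip \emph{cannot be admissible}: since $T_\perext^kx\in\Kappa(\mathcal R_\perext)\subset\kappa_s(\mathcal R_\perext)$, admissibility would again give $\tau(x)\leq k<N$, a contradiction. Hence $k$ is none of the $n_i$, so it falls strictly inside some interval $\rightb n_i,n_{i+1}\leftb$ and meets the defining condition of the $m_{ij}$. You flagged this boundary case but deferred it (``I will match whatever the paper needs''), which is precisely the step that needs the hypothesis $\tau(x)\geq N$; without invoking it, neither (2) nor (3) is proved.
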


\begin{demo}
By definition $n_1$ is the smallest integer such that
$(\bP_\perext)_0^{n_1}(x)$ is an admissible strip so (1) is just Fact
\ref{fact-first-hyp-adm}.

$(\bP_\perext)_0^{n_i}(x)$ being an admissible strip,
$T_\perext^{n_i}x\in\mathcal R_\perext$. $n_i<N$ so
$T_\perext^{n_i}x\notin\kappa(\mathcal R_\perext)$, proving (2).

The $m_{ij}$ are the times $m\in\rightb n_1,N\leftb$ (or,
equivalently, $m\in\leftb 0,N\leftb$ by property (1)) such that
$T_\perext^mx\in\kappa_s(\mathcal R_\perext)$ and
$(\bP_\perext)_0^m(x)$ is a hyperbolic, but not admissible, strip.
As $(\bP_\perext)_0^{N_0}(x)$ is a hyperbolic strip and
$T_\perext^{N_0}x\in\Kappa(\mathcal R_\perext)$, Corollary
\ref{cor-strip-3} gives that $N_0\leq k<N$ and
$T_\perext^kx\in\Kappa(\mathcal R_\perext)$ implies that
$(\bP_\perext)_0^k(x)$ is a hyperbolic strip. This strip cannot be
admissible as $T_\perext^kx\in\kappa_s(\mathcal R_\perext)$ and
$k<N$, hence such $k$ is some $m_{ij}$, proving (3).
\end{demo}

\begin{demof}{Proposition \ref{prop-ent}}
According to Proposition \ref{prop-adm-strip}, given $N$, $M$, $R$
and $S$, to determine an element of $\mathcal I(N,L,M,R,S)$ we need
to specify:
 \begin{enumerate}
  \item the integers $N_1,N_2$, $n_1,\dots,n_R$ and $m_{i1},m_{is(i)}$ for $i=1,\dots,R$;
  \item the itineraries $(P_\perext)^{N_1}(x)$, $(P_\perext)^{n_1-1}_{N_2}(x)$ (if $n_1>N_2$)  and
  $(P_\perext)^{N-1}_{n_r+1}(x)$;
  \item $(P_\perext)^{m_{i1}}_{n_i}(x)$,
  $(P_\perext)^{n_{i+1}}_{m_{is(i)}}(x)$ for each $i=1,\dots,r$;
  \item a choice among $4\cdot 2^R$.
 \end{enumerate}
Observe that $N_1\leq N_2\leq N_0$.
Using property (3) of Lemma \ref{lem-mij} (in particular $N_0$ is some
$m_{ij}$), it follows that:
 \begin{multline*}
   \#\left(\bigcup_{i=1}^r \rightb m_{i1}, m_{is(i)}\leftb
     \;\setminus\; \rightb N_1,N_2 \leftb \right)
     \geq \#\left(\bigcup_{i=1}^r \rightb m_{i1}, m_{is(i)}\leftb
     \;\setminus\; \rightb 0,N_0 \leftb \right) \\
     \geq \#\bigcup_{\tiny\begin{matrix} i=1,\dots,r\\1\leq j<s(i)\\m_{ij}\geq N_0\end{matrix}}
     \rightb
     m_{ij},m_{ij}+L\leftb\;\;
     \geq (S-R)(L-1)\geq (S-R)L-S
 \end{multline*}
recalling the definitions of $L$ and $S$. Hence, the number of
choices for those items is bounded by:
\begin{enumerate}
  \item $\binom{N}{2}\binom{N}{R}^3$ where $\binom{a}{b}=a!/b!(a-b)!$ is the binomial coefficient;
  \item[(2-3)] $C_*^{R+2}\exp \left((h_\top(T)+\eps_*)(N-(S-R)L-M+S))\right)$;
  \item[(4)] $4\cdot 2^R$.
 \end{enumerate}
Recalling that\footnote{$f(t)\sim g(t)$ iff
$\lim_{t\to\infty}f(t)/g(t)=1$ and $H(t)=-t\log
t-(1-t)\log(1-t)$.} $\binom{n}{\alpha n} \sim
\frac{1}{\sqrt{2\pi\alpha(1-\alpha)}} n^{-1/2} e^{H(\alpha)n}$ as
$n\to\infty$, i.e., $\log\binom{n}{\alpha n} \leq
H(\alpha)n+C(\alpha)$ and that $k\in\leftb
0,(n-1)/2\rightb\mapsto\binom{n}{k}$ is increasing, the stated
bound follows with:
 $$
    K_*(\rho,N) = 3H(\rho)+\rho\log 2+3N^{-1}\log N+N^{-1}\log
    4C(\rho).
 $$
\end{demof}

\subsection{Large Average Return Times and Entropy}
We are going to apply the previous estimates linking long return
times either to visits to the holes ($\mathcal
R\setminus\kappa(\mathcal R)$) or to low entropy. We will show
that for a suitable choice of the parameters of our constructions,
large entropy measures have finite average return time.

Recall the good return time $\tau:\kappa_s(R)\to\bar\NN$ (possibly
infinite) of Definition \ref{def-good-return-time}. We define
$\tau_n(x)$, $n\geq1$, inductively by $\tau_1(x)=\tau(x)$ and
$\tau_{n+1}(x)=\tau(T^{\tau_n(x)}(x))$ ($\tau_{n+1}(x)=\infty$ if
$\tau_n(x)=\infty$).

\newcommand\musup{\mu\!\!-\!\!\!\sup}

The essential supremum of a function $f$ over a subset
$X$ with respect to a measure $\mu$ is:
 $$
   \musup_{x\in X} f(x) := \inf_{X'= X [\mu]} \;\;\sup_{x\in X'} f(x)
 $$
where $X'$ ranges over the measurable subsets of $X$ such that
$\mu(X\setminus X')=0$ ($X$ and $f$ are assumed to be measurable). Our key estimate is the following:

\begin{proposition}\label{prop-tau}
There exist $h_2<h_\top(T)$ and $L_2<\infty$ with the following
property. Consider the Markov array $\mathcal R$ defined by
Proposition \ref{prop-sm-rec}. For any integer $L_\perext\geq L_2$, let
$(M_\perext,T_\perext,P_\perext,\mathcal R_\perext)$ be the
$L_\perext$-periodic extension of Definition \ref{def-perext}.
Then, for each $\mu\in\Proberg^{h_2}(T_\perext)$, the good return time
with respect to $P_\perext$ and $\mathcal R_\perext$, satisfies:
 $$
   \tau_*(\mu)= \musup_{x\in\mathcal R_\perext}\tau_*(x)<\infty \text{ where }
     \tau_*(x):=\limsup_{n\to\infty} \frac1n\tau_n(x).
 $$
\end{proposition}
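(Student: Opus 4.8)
The plan is to argue by contradiction: suppose $\tau_*(\mu)=\infty$ for some $\mu\in\Proberg^{h_2}(T_\perext)$. By ergodicity $\tau_*(x)$ is $\mu$-a.e.\ equal to the constant $\tau_*(\mu)$, so along a positive-measure set of $x$ the Birkhoff averages $\frac1n\tau_n(x)$ are large. Fixing a large threshold, I would use this to produce, for every large $N$, a subset of $\mathcal R_\perext$ of $\mu$-measure bounded below where the return time $\tau(x)$ is at least $N$ (more precisely, where the \emph{first} return exceeds a prescribed fraction of $N$, or where within the first $N$ steps only very few good returns have occurred). The point is that a large average return time forces, on a non-negligible set, \emph{long} stretches with no admissible return, i.e.\ $\tau(x)\ge N$ with $N$ as large as we like relative to the other scales. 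This is the Kac-type / ergodic-averaging step.

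Next I would feed these long-return-time points into the combinatorial estimate of Proposition~\ref{prop-ent}. The cylinders $(P_\perext)^N(x)$ over the set $\{\tau(x)\ge N\}\cap\kappa_s(\mathcal R_\perext)$ are partitioned, according to the invariants $(M,R,S)$, into the families $\mathcal I(N,L_\perext,M,R,S)$, and Proposition~\ref{prop-ent} bounds $\log\#\mathcal I$ by $(h_\top(T)+\eps_*)(N-L_\perext(S-R)-M+S)+K_*(\rho,N)N+O(1)$ once $\rho>R/N$. The key observation is the entropy \emph{deficit} in that formula: each ``extra'' hyperbolic time beyond $N_0$ (there are $S-R$ of them) costs essentially $L_\perext$ in the exponent, because property (3) of Lemma~\ref{lem-mij} packs disjoint integer intervals of length $L_\perext-1$ into the complement of the recorded itinerary. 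By choosing $L_\perext$ large and $\eps_*$, $\rho$ small, this deficit beats $h_\top(T)$ per ``wasted'' symbol. I would then sum over the finitely-many-polynomially-many choices of $(M,R,S)$ (the number of such triples is polynomial in $N$, hence subexponential) to get
 \begin{equation*}
   \frac1N\log \#\{(P_\perext)^N(x): x\in\kappa_s(\mathcal R_\perext),\ \tau(x)\ge N\}
   \le h_\top(T)-\delta
 \end{equation*}
for some fixed $\delta>0$, provided $L_\perext\ge L_2$ and the auxiliary constants are tuned. (Here one uses $R\le N$ trivially, and that either $R/N$ is small — in which case $K_*(\rho)$ is small — or $R$ is comparable to $N$, in which case there are $\gtrsim R$ admissible returns in time $N$ and $\tau(x)\ge N$ together with the large-average hypothesis is already contradicted by a direct counting of visits.)

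Finally I would convert this cylinder-count bound into an entropy bound for $\mu$. Using the standard identity $h(T_\perext,\mu)=H_\mu(P_\perext\mid P_\perext^-)=\lim_n \frac1n H_\mu((P_\perext)^n\mid P_\perext^-)$ from Appendix~\ref{sec-ent-bound}, together with the partition of orbit-time into good-return blocks, the entropy of $\mu$ is controlled by the (weighted) growth rate of cylinders carrying at least one long return block; since the average return time is infinite, a definite fraction of the time is spent inside such long blocks, so the sub-exponential count above drags $h(T_\perext,\mu)$ strictly below $h_\top(T_\perext)=h_\top(T)$, contradicting $\mu\in\Proberg^{h_2}(T_\perext)$ once $h_2$ is chosen in $(h_\top(T)-\delta,h_\top(T))$. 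Concretely, one applies the Abramov/Kac formula for the jump transformation: $h(\bar T)=\tau_*(\mu)\,h(T_\perext,\mu)$ would force $h(\bar T)=0$ if $\tau_*(\mu)=\infty$ while $h(T_\perext,\mu)>0$, which is absurd — but to run this cleanly one first needs the finiteness of the relevant quantities, so the self-contained route through Proposition~\ref{prop-ent} is the safer one. I expect the main obstacle to be the bookkeeping in the second step: correctly isolating the entropy deficit $L_\perext(S-R)$, checking that the ``small $\rho$'' and ``large $R$'' regimes are genuinely exhaustive, and making sure the finitely many tuned constants ($\eps_*$, $\rho$, $L_2$, $h_2$) can be chosen \emph{in the right order} so that no circularity arises — in particular that $\mathcal R$ and $\mu_0$ from Proposition~\ref{prop-sm-rec} do not themselves depend on $L_\perext$.
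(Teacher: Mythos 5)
Your skeleton (contradiction, Proposition \ref{prop-ent}, cylinder counting, correctly rejecting the Abramov/Kac shortcut as circular) matches the paper, but two steps as you describe them have genuine gaps. First, the reduction from $\tau_*(\mu)=\infty$ to a counting problem is not the one you state: $\limsup_n\frac1n\tau_n(x)>\tau_{\max}$ does \emph{not} give you a set of measure bounded below on which a \emph{single} return time $\tau(x)\ge N$; it only tells you that the orbit segment $\leftb 0,\tau_n(x)\leftb$ decomposes into at most $n\le \tau_n(x)/\tau_{\max}$ return blocks. The paper therefore does not count cylinders with one long return; it builds a low-entropy \emph{language} $C(\ell)=\bigcup C(\ell_1,\dots,\ell_k)$ of concatenations of $k\le\ell/\tau_{\max}$ blocks (paying only $\binom{\ell}{\ell/\tau_{\max}}$ for the block structure), handles the short blocks ($\ell_k\le\ell_*$) and the blocks with bad ergodic averages as a type-2 exceptional set of small total length, and applies Proposition \ref{prop-ent} blockwise to the rest. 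Your single-$N$ version does not cover the actual situation, and the set $\{\tau\ge N\}$ need not have measure bounded below.

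Second, and more seriously: Proposition \ref{prop-ent} only yields an entropy deficit if the quantity $L_\perext(S-R)+M-S$ is a definite fraction of the block length, and nothing in your argument establishes this. You need a \emph{lower} bound $S\gtrsim\mu_0\ell_m$ on the number of hyperbolic times, which comes from Lemma \ref{lem-mij}(3) plus the Birkhoff ergodic theorem applied to the visit frequency to $\Kappa(\mathcal R_\perext)$ (using conclusion (\ref{assert-kappa}) of Proposition \ref{prop-sm-rec}, i.e.\ $L_\perext\mu_\perext(\Kappa(\mathcal R_\perext))>\mu_0$), and an \emph{upper} bound $R\lesssim\eps_0\mu_0\ell_m$ on the number of admissible times, which comes from Lemma \ref{lem-mij}(2) plus the small-holes condition (\ref{assert-kappa10}). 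Your proposed dichotomy (``either $R/N$ is small or $R$ is comparable to $N$, in which case there are many admissible returns'') does not resolve this: admissible times $n_i$ with $T^{n_i}x\notin\kappa_s(\mathcal R_\perext)$ are not good returns, and their abundance is excluded only by the small-holes property, not by the large-average hypothesis. Without these two frequency bounds — which are exactly where the set $K_1$ of good Birkhoff points and the non-semi-uniform constant $L_1$ enter — the deficit term can vanish and the counting argument gives nothing below $h_\top(T)$. Finally, the conversion of the language bound into $h(T,\mu)\le h_2$ is done in the paper via Proposition \ref{prop-entropy-bound} (Rudolph's backward Vitali lemma), which requires arranging the covering intervals not to escape to infinity; your ``definite fraction of time in long blocks'' is the right intuition but needs that device to become a proof.
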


\begin{remark}

(1) $\tau_*(x)<\infty$  a.e. already ensures that almost every point in
$\kappa(\mathcal R)$ has a good return. But we shall need more.

(2) The proof below does \emph{not} provide a semi-uniform bound on
$\tau_*$ as our estimates below depend on the speed of convergence
of some ergodic averages (see Remark \ref{rem-not-su} below).
\end{remark}

\begin{demof}{Proposition \ref{prop-tau}}
The first step of the proof fixes a Markov array $\mathcal R$ and
a periodic extension of $T$ and finds a candidate upper bound for
$\tau_*(\mu)$. The second step defines a language (a collection of
words of increasing lengths) with small entropy. The final step
shows that large average return times imply that this language is
enough to describe the measure. A large average can therefore
happen only for low entropy measures.

\step{1}{The Markov Array}

We apply Proposition \ref{prop-sm-rec} and obtain first numbers $\mu_0>0$
and $h_0<h_\top(T)$. We let $0<\eps_0<\min(h_\top(T),1)/200$ be small enough so that
in the notations of Proposition \ref{prop-ent}:
 \begin{equation}\label{eq-Kstar}
       K_*(\eps_0\mu_0) < \frac{\mu_0}{100} h_\top(T).
 \end{equation}
We pick $L_*$ to be so large that, for all $\ell\geq L_*$
 \begin{equation}\label{eq-L1-large}
    K_*(\eps_0\mu_0,\ell)<\frac{\mu_0}{100}h_\top(T).
 \end{equation}  
Proposition \ref{prop-sm-rec} now gives an integer
$L_2(T,\eps_0,L_*)\geq L_*$ and a Markov array $\mathcal R$ 
such that the following holds. 

For each $L_\perext\geq L_2$,
any $\mu\in\Proberg^{h_0}(T)$ can be lifted to an ergodic invariant probability
measure $\mu_\perext$ on the periodic 
extension $(M_\perext,T_\perext,P_\perext,\mathcal
R_\perext)$ satisfying:
 $$
   L_\perext \mu_\perext(\Kappa(\mathcal R_\perext))>\mu_0 \text{ and }
   L_\perext \mu_\perext(\mathcal R_\perext\setminus\kappa(\mathcal R_\perext))< \eps_0\mu_0.
 $$
Recall that $P^\mathcal R$
denotes the convex partition of $M$ generated by $P$ and $\mathcal R$.

We fix $\eps_*:=(\mu_0/100)h_\top(T)$ and define $C_*=C_*(P^\mathcal R,\eps_*)<\infty$ as in
Proposition \ref{prop-ent}. Note that $C_*$ does not depend on $L_\perext$. 
Hence, possibly after increasing $L_\perext$, we may assume that
 $$
   L_\perext>\frac{\log C_*} {(\mu_0/100)h_\top(T)}.
 $$
Fix $\ell_*\geq L_*$ so large that $3\log C_*/\ell_*<(\mu_0/100) h_\top(T)$.

\medbreak We omit the sharp subscript in the sequel so that
$M,T,P,\mu,\mu_0$ will denote in fact
$M_\perext,T_\perext,P_\perext,\mu_\perext,\mu_{0\perext}$
(in particular $L_\perext\mu_0$ is the original $\mu_0$). To
refer to the original $\mu$ or $\mu_0$, we shall write $\mu_\nos$
or $\mu_{0\nos}$. It will be a convenient exception to continue to
write $P^\mathcal R$ for $P_\nos^{\mathcal R_\nos}$.

 \medbreak
We let
 $$
   h_2:=\max(h_0,h_\top(T)\left(1-0.9L_\perext\mu_0\right))<h_\top(T)
 $$ 
and fix some $\mu\in\Proberg^{h_2}(T)$ together with $\mu_\perext$
as above. 
According to the Birkhoff ergodic theorem,  one can find $K_1\subset M$ and $L_1<\infty$ 
such that
 \begin{equation}\label{eq-mes-K1}
   \mu(M\setminus
K_1)<\eps_0\mu_0^2/(10^6\log\#P^{\mathcal R})
 \end{equation}
and, for all $x\in K_1$:
 \begin{equation}\label{eq-K1}
 \begin{gathered}
  \forall n\geq L_0:=\frac{\eps_0\mu_0}{1000} L_1\;\;\;
     \left|\frac1n \#\{0\leq k<n:T^kx\in\Kappa(\mathcal R)\} -
     \mu(\Kappa(\mathcal R)) \right| <
        \left(\frac{\mu_0}{1000}\right)^2. \\
  \quad\forall n\geq L_1\qquad\;
  \frac1n \#\{ 0\leq k<n:T^kx\in\mathcal R\setminus\kappa(\mathcal R)\}<
  \eps_0
  \mu_0.
 \end{gathered}
 \end{equation}

\begin{remark}\label{rem-not-su}
The above $L_1$ is the only estimate in the proof of this
proposition which does not seem semi-uniform.
\end{remark}

Increasing $\ell_*$ if necessary, we assume 
  $$
     \ell_*\geq (1000/\mu_0)L_1
  $$
and $\ell\binom{\ell}{\ell/\ell_*}
\leq e^{(L_\perext\mu_0/100)h_\top(T)\cdot \ell}$ for all $\ell\geq \ell_*$.

We set:
 \begin{equation}\label{eq-tauM}
 \tau_{\max}:=\frac{1000\log\#P^\mathcal R}{\mu_0} \ell_*.
 \end{equation}

To prove that $\tau_*(\mu)\leq\tau_{\max}$, we assume by
contradiction that
  \begin{equation}\label{eq-Mstar}
  M_1:=\{x\in\kappa(\mathcal R): \limsup_{n\to\infty}
  \frac1n\tau_n(x)>\tau_{\max}\} \text{ has positive $\mu$-measure}.
  \end{equation}

\step{2}{Low entropy language}

For each integer $\ell\geq1$ we define a set $C(\ell)$ of
$P^{\mathcal R}$-words of length $\ell$ as
 $$
    C(\ell):=\bigcup_{\small\begin{matrix} \ell_1+\dots+\ell_k=\ell\\
     k\leq \ell/\tau_{\max}\end{matrix}}
       C(\ell_1,\dots,\ell_k)
 $$
Here $C(\ell_1,\dots,\ell_k)$ is the set of all concatenations
$\gamma_1\dots\gamma_k$ where each $\gamma_m$ ($1\leq m\leq k$) is
a word (i.e., a finite sequence) of length $|\gamma_m|=\ell_m$
satisfying:
\begin{itemize}
 \item \emph{type 1 requirement:} $\gamma_m$ is an itinerary from $\mathcal I(\ell_m,L_\perext,M,R,S)$ (in the
 notation of Proposition \ref{prop-ent}) with:
  \begin{equation}\label{eq-TSRM0}
    \ell_m\geq \ell_*,\quad
    L_\perext(S-R)+M-S\geq \frac{98}{100}L_\perext\mu_0\ell_m
    \text{ and } R  < \frac{\min(h_\top(T),1)}{200}L_\perext\mu_0 \ell_m;
  \end{equation}
Recall that $L_\perext\mu_0$ is the original $\mu_0$, independent on $L_\perext$.
 \item \emph{type 2 requirement:} the sum of the lengths of these
 segments is less than $\frac{L_\perext\mu_0}{500\log\#P}\ell$.
\end{itemize}
Observe that the union defining $C(\ell)$ has at most $\ell \binom{\ell}{[\ell/\tau_{\max}]}
\leq e^{(L_\perext\mu_0/100)h_\top(T)\ell}$ terms.
It remains to bound $\#C(\ell_1,\dots,\ell_n)$.

By Proposition \ref{prop-ent}, the logarithm of $\#\mathcal
I(\ell_m,L_\perext,M,R,S)$ under condition (\ref{eq-TSRM0}) is bounded by
 \begin{multline}
    \left(h_\top(T)+\eps_*\right) \left(\ell_k-(L_\perext(S-R)+M-S)\right)
           + K(\eps_0\mu_0,\ell_k)\ell_k+(2R+3)\log C_*\\
     \leq h_\top(T)\left(1+\frac{L_\perext\mu_0}{100}\right)
       \left(1-\frac{98}{100}L_\perext\mu_0\right)\ell_k
       + \frac{2L_\perext\mu_0}{100}h_\top(T)\ell_k+3\log C_* \\
     \leq h_\top(T)\left(1 -
     \frac{94}{100}L_\perext\mu_0\right)\ell_k +3\log C_*\\
     \leq h_\top(T)\left(1-0.93L_\perext\mu_0\right)\ell_k,
 \end{multline}
Hence, 
 \begin{multline}\label{eq-C-ell}
   \# C(\ell) \leq \exp\left((L_\perext\mu_0/100)h_\top(T)\ell\right)
     \times \exp \left( h_\top(T)\left(1-0.93L_\perext\mu_0\right)\ell\right)\\
       \times (\#\mathcal P^{\mathcal R})^{( L_\perext\mu_0/
                500\log \#\mathcal P^{\mathcal R})\ell}
    \leq e^{h_2\ell}.
 \end{multline}

\step{3}{Consequence of Large Return Times}

We are going to show that, for all $x\in\kappa(\mathcal R)$, all large
enough integers $n$:
 \begin{equation}\label{claim-ret}
   \tau_n(x)>\tau_{\max} \cdot n \implies P^{\tau_n(x)}(x)\in
   C(\tau_n(x)).
 \end{equation}
Observe that this will imply that, any ergodic and invariant measure $\mu$
such that $\tau_*(\mu)>\tau_{\max}$ has entropy at most $h_2$
using Proposition \ref{prop-entropy-bound} with  (\ref{eq-C-ell})
and:
\begin{itemize}
  \item $M_0:=\{x\in M:\{n\geq0:T^{-n}x\in M_1\}$ is infinite$\}$
  (recall eq. (\ref{eq-Mstar}));
  \item $a_i(x):=\min\{j\geq i:T^{-j}x\in M_1\}$ for all $i\geq1$;
  \item $b_i(x):=\tau_{n}(T^{-a_i(x)}x)$ with $n$ a positive integer such that
  $\tau_{n_i}(x)\geq\max(a_i(x),\tau_{\max}\cdot n_i)$,
\end{itemize}
concluding the proof of Proposition \ref{prop-tau}.
We now prove (\ref{claim-ret}).

\medbreak

Let $x\in \kappa(\mathcal R)$. We consider a large integer $n$ such that 
$\tau_n(x)>\tau_{\max} \cdot n$. (\ref{eq-mes-K1}) 
and the ergodic theorem give:
 $$
 \frac1{\tau_n(x)}\#\{0\leq k<\tau_n(x):T^kx\notin
K_1\}<\eps_0\mu_0^2/(10^6\log\#P^\mathcal R).
 $$
Let $N:=\tau_n(x)$ and, for $k=0,\dots,n-1$, let $I_k$ be the
integer interval $\leftb \tau_k(x),\tau_{k+1}(x)\leftb$ and
$\ell_k:=\# I_k$.

Let $B_1\subset\leftb 0,n\leftb$ be the set of those integers
$0\leq k<n$ such that
 $$
 \#\{m\in I_k:T^mx\notin K_1\}\geq\frac{\eps_0\mu_0}{1000}\ell_k.
 $$
The union of those segments $I_k$ occupies only a small proportion
of $\leftb 0,N\leftb$:
 $$
    \sum_{k\in B_1}  \ell_k \leq
    \frac{1000}{\eps_0\mu_0} \times \frac{\eps_0\mu_0^2}{10^6\log\#P^{\mathcal R}} N
     \leq \frac{\mu_0}{1000\log\#P^{\mathcal R}} \cdot N.
 $$

Let $B_2\subset\leftb 0,n \leftb$ be the set of $k$'s such that
$\ell_k\leq \ell_*$. They also occupy a small
proportion:
 $$
    \sum_{k\in B_2} \ell_k \leq \ell_* n \leq
      \ell_* \frac{N}{\tau_{\max}} \leq \frac{\mu_0}{1000\log\#P^{\mathcal R}} \cdot N,
 $$
by the choice of $\tau_{\max}$.

Therefore the segments $I_k$ for $k\in B_1\cup B_2$ satisfy the
type 2 requirement in the definition of $C(\ell)$. It is enough to
prove that the remaining $I_k$s satisfy the type 1 requirement.

For such segments $I_k$, $p_1:=\min\{p\geq 0:T^{p+\tau_k(x)}x\in
K_1\}$ satisfies: 
 \begin{equation}\label{eq:p1}
    p_1\leq \frac{\eps_0\mu_0}{1000}\ell_k
 \end{equation}
by the definition of $B_1$. By the definition of $B_2$, 
 \begin{equation}\label{eq:lk-L0-L1}
   \ell_k\geq \ell_*\geq \frac{1000}{\mu_0}L_1=\frac{10^6}{\eps_0\mu_0^2} L_0.
 \end{equation}
This fulfills the first requirement of (\ref{eq-TSRM0}).

Hence, in the notations of Proposition \ref{prop-ent}:
 \begin{equation}\label{eq-N0}
    N_1:=\min\{j\geq0: T^{\tau_k(x)+j}x\in\Kappa(\mathcal R)\}
         \leq p_1+L_0 \leq \frac{\eps_0\mu_0}{500} \ell_k.
 \end{equation}
Also, by eq. (\ref{eq-K1}) and $\ell_k-p_1\geq L_1$:
 $$
    \#\{j\in \leftb\tau_k(x)+p_1,\tau_{k+1}(x)\leftb:
            T^jx\in R\setminus\kappa(\mathcal R)\}< {\eps_0\mu_0}
            (\ell_k-p_1).
 $$
Hence, using point (2) of Lemma \ref{lem-mij}:
 \begin{equation}\label{eq-R-bound}
   R \leq r' :=  \#\{j\in I_k:
            T^jx\in \mathcal R\setminus\kappa(\mathcal R)\}< 2\eps_0\mu_0 \ell_k.
 \end{equation}
Note that this implies $R\leq \ell_k\cdot \mu_0\min(h_\top(T),1)/200$, a part of the
type 1 requirement. It remains to show the lower-bond on 
$L_\perext(S-R)+M$.

First, similarly to (\ref{eq-R-bound}):
 $$
    \left|\#\{j\in I_k: T^jx\in\Kappa(\mathcal R)\}
           -\mu(\Kappa(\mathcal R))\ell_k\right| < \frac{\mu_0}{500}
           \ell_k.
 $$
Setting, again as in Proposition \ref{prop-ent}:
 $$
 N_0:=\min\{j\geq0:T^{\tau_k(x)+j}(x)\in\Kappa(\mathcal R)\text{ and }
    P^{j}(T^{\tau_k(x)}x)\text{ is hyperbolic}\}
 $$
(observe that $N_0$ might be large) and $S:=\#\{m_{ij}>N_0:i,j\}$
we get, using point (3) of Lemma \ref{lem-mij},
 $$
   S\geq s':= \#\{j\in \rightb\tau_k(x)+N_0,\tau_{k+1}(x)\leftb:
       T^jx \in \Kappa(\mathcal R)\}.
 $$
Also, $M:=N_2-N_1=[N_0]_\kappa-N_1$ where:
 $$
    [t]_\kappa:=\max\{n\in\leftb 0,t\leftb: T^nx\in\Kappa(R)\}
 $$
(we define $[t]_\kappa:=t$ if there is no such integer). 
To complete our estimate, we consider two cases.

\medbreak\noindent{\bf --- First case:} $N_0< p_1+L_1$. We use the
trivial bound $M\geq0$, (\ref{eq:p1}), (\ref{eq:lk-L0-L1}) and (\ref{eq-K1}) to get:
 $$
   s'\geq \#\{j\in\leftb \tau_k(x)+p_1+L_1,\tau_{k+1}(x)\leftb :
     T^jx\in\Kappa(R)\}
     \geq \frac{499}{500}\mu_0(\ell_k-p_1-L_1)
     \geq \frac{498}{500}\mu_0\ell_k
 $$
Hence,
  \begin{equation}\label{eq-TSM1}
   L_\perext S+M \geq \frac{498}{500}L_\perext\mu_{0}\ell_k
    \geq \frac{99}{100}L_\perext\mu_{0}\ell_k.
 \end{equation}

\medbreak\noindent{\bf --- Second case:} $N_0\geq p_1+L_1$. Using the
definition of $p_1$, $K_1$ and $L_0$:
 $$
  s'\geq \mu(\Kappa(R))\left((1-10^{-3})(\ell_k-p_1)-
  (1+10^{-3})(N_0-p_1)\right).
 $$
>From (\ref{eq:p1}),
 $$
   s' \geq \frac{998}{1000}\mu(\Kappa(\mathcal
   R))\ell_k-\frac{1001}{1000}\mu(\Kappa(\mathcal R))N_0.
 $$
Hence, using $(1001/1000)L_\perext\mu(\Kappa(\mathcal R))\leq1$
and $M=[N_0]_\kappa-N_1$:
 \begin{multline*}
 L_\perext S+M \geq \frac{998}{1000}L_\perext\mu(\Kappa(\mathcal
   R))\ell_k-\frac{1001}{1000}L_\perext\mu(\Kappa(\mathcal
   R))N_0+[N_0]_k-N_1 \\
   \geq \frac{998}{1000} L_\perext\mu_0\ell_k-(N_0-[N_0]_k)-N_1.
 \end{multline*}
In light of (\ref{eq-N0}), to prove that eq. (\ref{eq-TSM1}) also
holds in this second case it is enough to show:

\begin{claim}\label{claim-t}
For any $0\leq t\leq\ell_k$,
 $
     t-[t]_\kappa\leq \frac{\mu_0}{250}\ell_k.
 $
\end{claim}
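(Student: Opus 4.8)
The plan is to observe that Claim~\ref{claim-t} is a statement purely about the density of visits to $\Kappa(\mathcal R)$ \emph{inside the single segment} $I_k:=\leftb\tau_k(x),\tau_{k+1}(x)\leftb$, and to deduce it from the Birkhoff estimate (\ref{eq-K1}). Since the claim is used only for $k\notin B_1\cup B_2$, I may assume $\ell_k\geq\ell_*$ and $\#\{m\in I_k:T^mx\notin K_1\}<\frac{\eps_0\mu_0}{1000}\ell_k$. If $[t]_\kappa=t$ there is nothing to prove, so I set $n_1:=[t]_\kappa<t$; by maximality of $n_1$, no index of $\leftb n_1+1,t\leftb$ sends $x$ into $\Kappa(\mathcal R)$ under $T^{\tau_k(x)+\cdot}$. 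Hence it suffices to bound the length of an arbitrary $\Kappa(\mathcal R)$-free block, which is the following core step.

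\emph{Core step: every block $\leftb a,b\leftb\subseteq\leftb 0,\ell_k\leftb$ with $T^{\tau_k(x)+j}x\notin\Kappa(\mathcal R)$ for all $j\in\leftb a,b\leftb$ satisfies $b-a\leq\frac{\mu_0}{500}\ell_k$.} I would argue by contradiction, assuming $b-a>\frac{\mu_0}{500}\ell_k$. Since $L_0=\frac{\eps_0\mu_0}{1000}L_1$ and $\ell_*\geq\frac{1000}{\mu_0}L_1$ give $L_0\leq\frac{\eps_0\mu_0^2}{10^6}\ell_k$, and $\eps_0<1/200$ is tiny, one checks $b-L_0-a>\frac{\eps_0\mu_0}{1000}\ell_k$, which exceeds the number of non-$K_1$ indices in $I_k$; by pigeonhole there is $c\in\leftb a,b-L_0\leftb$ with $T^{\tau_k(x)+c}x\in K_1$, and then $n:=b-c>L_0$. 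Applying (\ref{eq-K1}) to the point $T^{\tau_k(x)+c}x$ with window length $n$, and using that after the ``omit the sharp subscript'' convention $\mu(\Kappa(\mathcal R))=\mu_\perext(\Kappa(\mathcal R_\perext))>\mu_0$ while $(\mu_0/1000)^2<\mu_0$, one gets $\#\{j\in\leftb c,b\leftb:T^{\tau_k(x)+j}x\in\Kappa(\mathcal R)\}\geq n\bigl(\mu_0-(\mu_0/1000)^2\bigr)>0$, contradicting that $\leftb c,b\leftb\subseteq\leftb a,b\leftb$ is $\Kappa(\mathcal R)$-free.

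To finish, I apply the core step to the block $\leftb n_1+1,t\leftb$: it gives $t-n_1-1\leq\frac{\mu_0}{500}\ell_k$, and since $\ell_k\geq\ell_*\geq\frac{1000}{\mu_0}L_1\geq\frac{500}{\mu_0}$ one has $1\leq\frac{\mu_0}{500}\ell_k$, whence $t-[t]_\kappa=t-n_1\leq\frac{\mu_0}{250}\ell_k$, as claimed. No hypothesis on return times is needed here; the one point to handle carefully — and the main place the argument can go wrong — is the constant bookkeeping, in particular keeping track that under the notational convention in force $\mu(\Kappa(\mathcal R))>\mu_0$, so that (\ref{eq-K1}) really produces a \emph{strictly positive} number of $\Kappa(\mathcal R)$-visits on any window of length at least $L_0$ issued from $K_1$; granting that, the rest is pure pigeonhole.
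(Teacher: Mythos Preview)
Your argument is correct. The route differs from the paper's in a useful way, so a brief comparison is in order.

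The paper anchors the whole estimate at the single point $p_1$ (the first index in $I_k$ landing in $K_1$) and splits into two cases according to whether $[t]_\kappa$ lies before or after $p_1+L_0$. In the second case it compares the two Birkhoff counts over $\leftb p_1,t\leftb$ and $\leftb p_1,[t]_\kappa\rightb$ to get a ratio bound $t-p_1\leq(1+3\mu_0/1000)([t]_\kappa+1-p_1)$; in the first case it bounds $t$ itself, implicitly using that in the application $t=N_0$ is a $\Kappa$-visit time (so that $t$ is the \emph{first} visit after $[t]_\kappa$). Your version is cleaner: rather than working from the fixed anchor $p_1$, you locate a $K_1$-anchor $c$ \emph{inside} any purported long $\Kappa$-free block by pigeonhole on the scarcity of non-$K_1$ indices in $I_k$ (this is where $k\notin B_1$ is used), and then a single application of~(\ref{eq-K1}) from $c$ with window length $b-c>L_0$ forces a $\Kappa$-visit in the block. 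This avoids the case split, proves the claim for \emph{all} $t\in\leftb0,\ell_k\rightb$ (not only $\Kappa$-visit times), and makes the constant bookkeeping transparent: the only inputs are $L_0\leq(\eps_0\mu_0^2/10^6)\ell_k$, $\#\{m\in I_k:T^mx\notin K_1\}<(\eps_0\mu_0/1000)\ell_k$, and $\mu(\Kappa(\mathcal R))>\mu_0>(\mu_0/1000)^2$ in the sharp-dropped notation. The paper's approach, on the other hand, gives the slightly sharper constant $\frac{4}{1000}\mu_0\ell_k$ rather than $\frac{\mu_0}{250}\ell_k$, at the cost of the extra case analysis.
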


\begin{demof}{claim \ref{claim-t}}
We distinguish two cases. First assume that $[t]_\kappa<p_1+L_0$.
Then $t$, the first visit to $\Kappa(R)$ after $[t]_\kappa$, is
bounded by the first visit after $p_1+L_0$, i.e., using (\ref{eq-N0}) and
$\ell_k\geq(1000/\mu_0^2)$ which follows from (\ref{eq:lk-L0-L1}):
 $$
    t-[t]_\kappa\leq t\leq p_1+L_0+\frac{1000}{999}\mu_0^{-1}\leq \frac{4}{1000}\mu_0 \ell_k
 $$
proving the claim in this case. Second we assume that
$[t]_\kappa>p_1+L_0$. Then:
 \begin{multline}
    (1-\mu_0/1000)\mu(\Kappa(\mathcal R))(t-p_1)\\
      \leq \#\{j\in\leftb\tau_k(x)+p_1,\tau_k(x)+t\leftb:T^jx\in\Kappa(\mathcal R)\} \\=
     \#\{j\in\leftb\tau_k(x)+p_1,\tau_k(x)+[t]_\kappa\rightb:T^jx\in\Kappa(\mathcal R)\}
     \\
     \leq (1+\mu_0/1000)\mu(\Kappa(\mathcal R))([t]_\kappa+1-p_1)
 \end{multline}
So $t-p_1\leq (1+3\mu_0/1000)([t]_\kappa+1-p_1)$. Hence,
 $$
   t -[t]_\kappa \leq \frac{3}{1000}\mu_0 [t]_\kappa +2\leq
   \frac{4}{1000} \mu_0\ell_k,
 $$ proving the claim.
\end{demof}

In both cases, eq. (\ref{eq-TSM1}) together with (\ref{eq-R-bound}) implies:
 \begin{equation}\label{eq-TSRM}
    L_\perext(S-R)+M-S  \geq \frac{98}{100} L_\perext\mu_0\ell_k.
 \end{equation}
This establishes the remaining part of the type 1 requirement on
$\bP_{\tau_k(x)}^{\tau_{k+1}(x)}(x)$ for all
$k\in\leftb0,n\leftb\setminus(B_1\cup B_k)$. Hence
$P^{\tau_n(x)}(x)$ belongs to $C(\tau_n(x))$, concluding the
proof of eq. (\ref{claim-ret}) and of Proposition \ref{prop-tau}.
\end{demof}

It also follows from the above proof that:

\begin{corollary}\label{coro-bdd-gap}
$\tau:\kappa_s(\mathcal R)\to\bar\NN^*$ has eventually bounded gaps in the sense of Appendix
\ref{appendix-jump} \wrt any large entropy measure.
\end{corollary}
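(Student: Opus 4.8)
The plan is to recognize the Corollary as a restatement of Proposition \ref{prop-tau} in the terminology of Appendix \ref{appendix-jump}, and to verify the conditions defining ``eventually bounded gaps'' one at a time. Throughout I work, as dictated by the Remark opening this subsection and justified by Proposition \ref{prop-sm-rec}, with the periodic extension $(M_\perext,T_\perext,P_\perext,\mathcal R_\perext)$, and I fix an ergodic $\mu\in\Proberg^{h_2}(T_\perext)$ (any measure of $T$ of entropy $>h_2$ being lifted to such a $\mu$ as in Proposition \ref{prop-sm-rec}); here $h_2<h_\top(T)$ and $\tau_{\max}<\infty$ are the quantities produced in the proof of Proposition \ref{prop-tau}, and $\tau$, $\kappa_s(\mathcal R_\perext)$ refer to $P_\perext,\mathcal R_\perext$.

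First I would note that $\mu$ charges the base: since $\Kappa(R)\subset\kappa_s(R)$ by definition, $\mu(\kappa_s(\mathcal R_\perext))\ge\mu(\Kappa(\mathcal R_\perext))\ge\mu_0/L_\perext>0$, so by Poincar\'e recurrence and ergodicity $\mu$-a.e.\ $x$ enters $\kappa_s(\mathcal R_\perext)$ and returns to it infinitely often; in particular the first entry time to the base is $\mu$-a.e.\ finite, which covers the part of the definition asking the base to be reached. Next, for $\mu$-a.e.\ $x\in\kappa_s(\mathcal R_\perext)$ the good return time $\tau(x)$ and all its iterates $\tau_n(x)$, $n\ge1$, are finite and satisfy $\limsup_n\tau_n(x)/n\le\tau_*(\mu)\le\tau_{\max}$: this is exactly the content of $\tau_*(\mu)=\musup_{x\in\mathcal R_\perext}\tau_*(x)<\infty$ from Proposition \ref{prop-tau} (finiteness of every $\tau_n(x)$ being part of what that essential supremum presupposes on a full-measure set, as already noted right after Proposition \ref{prop-tau}; if one prefers to argue it afresh, the contrapositive of the key estimate (\ref{claim-ret}) shows that a positive-measure set on which $\tau_n(x)/n$ is unbounded would have its itineraries eventually lying in the language $C(\cdot)$, whose subexponential growth (\ref{eq-C-ell}) forces entropy $\le h_2<h(T,\mu)$, contradicting Proposition \ref{prop-entropy-bound} as at the end of the proof of Proposition \ref{prop-tau}). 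Combining these two observations, for $\mu$-a.e.\ $x$ the successive good return times $\tau_1(x)<\tau_2(x)<\cdots$ are all finite and $\tau_n(x)\le(\tau_{\max}+1)n$ for all large $n$, which is precisely the assertion that $\tau$ has eventually bounded gaps with respect to $\mu$, with a bound $\tau_{\max}$ uniform over all large-entropy measures by the construction in Proposition \ref{prop-tau}.

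The only conceptual point requiring care — and the reason the conclusion is stated in this ``Ces\`aro'' form rather than as integrability $\int_{\kappa_s(\mathcal R_\perext)}\tau\,d\mu<\infty$ of the return time — is that the associated jump map $x\mapsto T^{\tau(x)}x$ on $\kappa_s(\mathcal R_\perext)$ is a \emph{good}-return, not a first-return, map, so the normalized restriction of $\mu$ to the base need not be invariant under it and Kac's formula is unavailable (the honest treatment of this mismatch being deferred to Section \ref{sec-lifting}). The statement $\limsup_n\tau_n(x)/n\le\tau_{\max}$ is insensitive to this issue since it concerns individual orbits, and all the real work — ruling out linearly growing return times via the entropy-$h_2$ language $C(\ell)$ — has already been carried out in Proposition \ref{prop-tau}; hence this Corollary is genuinely a repackaging, and I expect no further obstacle beyond matching the definition in Appendix \ref{appendix-jump} to the $\limsup$ bound above.
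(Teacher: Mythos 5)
There is a genuine gap: you treat the Corollary as a repackaging of the \emph{conclusion} of Proposition \ref{prop-tau}, but the ``eventually bounded gaps'' condition of Definition \ref{def-bdd-gap} is strictly stronger than $\tau_*(\mu)<\infty$, and the paper's own Remark (1) after Proposition \ref{prop-tau} (``$\tau_*(x)<\infty$ a.e.\ already ensures\dots\ But we shall need more'') is warning of exactly this. The $t$-gaps condition quantifies over excursion intervals $[n_k,\,n_k+\tau_{m_k}(T^{n_k}x)]$ anchored at \emph{arbitrary} points $T^{n_k}x$ of the two-sided orbit --- in particular with $n_k\to-\infty$ --- subject only to the requirement that these intervals all meet a fixed bounded window around time $0$. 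Your bound $\limsup_n\tau_n(y)/n\le\tau_{\max}$ holds, for each base point $y$, only for $n$ beyond a threshold depending on $y$; there is no uniformity over the anchor points $y=T^{n_k}x$ along the backward orbit. Concretely, the scenario $m_k=1$, $n_k\to-\infty$, $\tau_1(T^{n_k}x)\ge|n_k|$ (a single enormous first good return from each of a sequence of ever-earlier base points, each excursion sweeping past time $0$) is fully consistent with $\tau_*(T^{n_k}x)\le\tau_{\max}$ for every $k$, yet it gives $x$ $t$-gaps for every $t$. So the Corollary does not follow formally from the statement of Proposition \ref{prop-tau}; it requires re-running the machinery of its proof.

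That is what the paper does: arguing by contradiction, on a positive-measure set one obtains sequences $n_k,m_k$ with $\tau_{m_k}(T^{n_k}x)\ge\max(t\cdot m_k,k)$ and all intervals $[n_k,\,n_k+\tau_{m_k}(T^{n_k}x)]$ meeting a fixed window; by (\ref{claim-ret}) the itineraries over these intervals lie in the low-entropy language $C(\cdot)$ estimated in (\ref{eq-C-ell}), and Proposition \ref{prop-entropy-bound} --- whose hypothesis (2) is tailored precisely to covering intervals that may drift to $-\infty$ without escaping a bounded window --- yields $h(T,\mu)\le h_2$, a contradiction. Your treatment of the improper-orbit clause (Poincar\'e recurrence plus a.e.\ finiteness of the $\tau_m$) is fine, and your observation that Kac's formula is unavailable is correct; but the remedy is the backward Vitali entropy bound applied to these anchored excursions, not the Ces\`aro asymptotics of Proposition \ref{prop-tau}.
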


\begin{demo}
Given $\mu$, a large entropy measure, we fix $\tau_{\max}$ as in
(\ref{eq-tauM}) and we proceed by contradiction assuming that for
each large $t>\tau_{\max}$, there is
a set of positive $\mu$-measure $S$ with the following property.
For each $x\in S$,  there exist sequences of integers $n_k\in\ZZ$ and
$m_k\in\NN^*$ such that:
 $$
    \tau_{m_k}(T^n_kx)> t\cdot m_k \text{ and }
    \sup_k \inf\{|i|:i\in \leftb n_k,n_k+\tau_{m_k}(T^n_kx)\}<\infty.
 $$
(the case of improper orbits is similar and easier and left to the reader). It
is now enough to apply Proposition \ref{prop-entropy-bound} using eqs. (\ref{eq-C-ell}-\ref{claim-ret})
to get that $h(f,\mu)\leq h_2$, a contradiction.
\end{demo}

\section{Proof of the Main
Results}\label{sec-lifting}

We finally prove the main results by building a Markov system from
the arbitrary concatenations of admissible strips and relating it
to the dynamics of the piecewise affine homeomorphism. This is done
in the analysis of large entropy measures and then used for the
other claims.

\subsection{Maximal Entropy Measures}\label{sec-main-thm}
We prove Theorem \ref{thm-max-ent} about the finite number of
maximum measures. 

\step{1}{Tower}

Fix a Markov array $\mathcal R$ as in Proposition
\ref{prop-sm-rec}, defining a $(\mathcal R,L_\perext)$-periodic
extension $(M_\perext,T_\perext,P_\perext,\mathcal R_\perext)$ of
$(M,T,P,\mathcal R)$. This may only increase the number of maximum
measures as it is a finite topological extension.
Let $\Sigma:=\Sigma(T_\perext,P_\perext)$ be its symbolic
dynamics (see Definition \ref{def:ds}). Corollary \ref{coro-conj-ds} 
shows that it is enough to prove the results for $\Sigma$.

We now build an invertible tower (see Appendix \ref{appendix-jump})
$\hS$ over $\Sigma$. This is done by defining a return time
$\tau:\Sigma_\tau\to\mathbb N^*$ for some
$\Sigma_\tau\subset\Sigma$.

\begin{definition}\label{def-admi-word}
An \new{extended admissible $P_\perext$-word}  is a word
$w_0\dots w_n$ over $P_\perext$ such that $[w_0\dots w_n]$ is an
admissible strip. $w_0\dots w_{n-1}$ is the associated
\new{admissible $P_\perext$-word}.
\end{definition}

For a sequence $A\in \Sigma$, we define inductively
 $$
   t_1(A) := \sup\{n\geq 1:A_0\dots A_n\text{ is an extended
   admissible word}\}\in\NN^*\cup\{-\infty,\infty\}
 $$
and $t_{n+1}(A)=t_1(\sigma^{t_n(A)}(A))$ (or $t_n(A)$
if it was infinite). Let
 $$
   \Sigma_\tau:=\{A\in\Sigma:
        \forall n\geq1\; t_n(A)\in\NN^*\}.
 $$

\medbreak

We tacitly exclude entropy-negligible subsets of points of $M_\perext$
and of $\Sigma$ (these correspond by Lemma \ref{coro-conj-ds}).

\begin{claim}\label{claim-Sigmak}
$\Sigma_\tau$ coincides with the set of $P_\perext$-itineraries of
the points $x\in\kappa_s(\mathcal R_\perext)$.
\end{claim}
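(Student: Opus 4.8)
The plan is to prove the two inclusions separately. Throughout I suppress the subscript $\perext$, writing $T,P,\mathcal R$ for $T_\perext,P_\perext,\mathcal R_\perext$, and I work modulo entropy-negligible subsets of $M_\perext$ and $\Sigma$; by Proposition~\ref{prop-non-sing} and Corollary~\ref{coro-conj-ds} one may then assume every point $x$ in play is nice, matched with its itinerary, lies in the interior of $P(x)$, and has $W^s(x),W^u(x)$ line segments containing $x$ in their relative interiors. I would also discard a further entropy-negligible set (arguing as in Lemma~\ref{lem-struts}, using that the boundary of a hyperbolic strip lies in $\partial P$ together with two stable-manifold segments) so that each such $x$ lies in the topological interior of every hyperbolic strip containing it.

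First I would show that itineraries of points in $\kappa_s(\mathcal R)$ lie in $\Sigma_\tau$. Let $x\in\kappa_s(\mathcal R)$ have itinerary $A$. By Proposition~\ref{prop-tau} and Corollary~\ref{coro-bdd-gap}, the iterated good return times $\tau_n(x)$ are all finite (modulo an entropy-negligible set); write $0=M_0<M_1<M_2<\cdots$ for the corresponding instants. By Definition~\ref{def-good-return-time}, each $\bP_{M_n}^{M_{n+1}}(x)$ is an admissible strip and $T^{M_{n+1}}x\in\kappa_s(\mathcal R)$, so each $A_{M_n}\cdots A_{M_{n+1}}$ is an extended admissible word and hence $t_1(A)\geq M_1\geq1$. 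I claim $t_1(A)=M_1$: if $A_0\cdots A_N$ were extended admissible for some $N>M_1$, then $\bP_0^N(x)\subset\bP_0^{M_1}(x)$, an admissible $M_1$-strip, so admissibility of the $N$-strip $\bP_0^N(x)$ would force $T^{M_1}\bP_0^N(x)$ to miss the interior of every hyperbolic strip; but $T^{M_1}x\in T^{M_1}\bP_0^N(x)$ lies, by the choice of $M_1$, in the interior of the admissible, hence hyperbolic, strip $\bP_{M_1}^{M_2}(x)$ --- a contradiction. The same argument applied to $\sigma^{M_n}A$, matched with $T^{M_n}x\in\kappa_s(\mathcal R)$ (which again has finite iterated good returns), gives $t_{n+1}(A)=M_{n+1}-M_n\in\NN^*$ for all $n$, i.e.\ $A\in\Sigma_\tau$.

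Next I would prove the reverse inclusion. Let $A\in\Sigma_\tau$ be matched with $x$, and let $0=N_0<N_1<N_2<\cdots$ be the instants defined by the $t_n(A)$. Each $A_{N_n}\cdots A_{N_{n+1}}$ is an extended admissible word, so each $\bP_{N_n}^{N_{n+1}}(x)$ is an admissible, hence hyperbolic, strip; by Lemma~\ref{lem-strip-2} and induction $\bP_0^{N_n}(x)$ is a hyperbolic $N_n$-strip for every $n$. These are closed $s$-rectangles, all containing $x$ and therefore all $s$-crossing the \emph{same} rectangle $R_0\in\mathcal R$, so their stable boundaries are line segments joining the two components $\partial^u_1R_0$ and $\partial^u_2R_0$ of $\partial^uR_0$. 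By compactness of the Hausdorff metric, along a subsequence these boundaries converge to a line segment $H$ still joining $\partial^u_1R_0$ to $\partial^u_2R_0$, hence of length at least $\mathrm{dist}(\partial^u_1R_0,\partial^u_2R_0)>0$; and since the sets $\bP_0^N(x)$ are closed and decrease with $N$, one gets $H\subset\bigcap_N\bP_0^N(x)=R_0\cap W^s(x)$. Thus $W^s(x)$ contains a segment crossing $R_0$ in the stable direction, and being a line segment transverse to $\partial^uR_0$ it meets $\partial R_0$ in exactly two points; therefore $x\in\kappa_s(R_0)\subseteq\kappa_s(\mathcal R)$.

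The main obstacle I anticipate is the second inclusion --- specifically, knowing that the nested hyperbolic strips $\bP_0^{N_n}(x)$ all span the \emph{fixed} rectangle $R_0$ stably (so their stable boundaries cannot collapse to a point) and that Hausdorff limits of those boundaries remain inside $W^s(x)$; this is where one uses the closedness and monotonicity of the sets $\bP_0^N(x)$ and one-dimensionality of $W^s(x)$. A secondary delicate step is the identity $t_1(A)=M_1$ in the first inclusion, which rests on the fact --- valid modulo entropy-negligible sets --- that a point with a finite next good return lies in the topological interior of its next admissible strip.
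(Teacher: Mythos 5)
Your proof is correct and follows essentially the same route as the paper: finiteness of the iterated good returns from Proposition \ref{prop-tau} plus the "a longer extended admissible word would have to meet the interior of the next admissible strip" contradiction for one inclusion, and the nested hyperbolic strips forcing $W^s(x)$ to cross $A_0$ for the other. Your Hausdorff-compactness argument merely spells out the step the paper states tersely as "$W^s(x)$, the intersection of the previous strips, must cross $A_0$", and the interior-point caveat you flag is automatic for nice points (a nice $x$ lies in the interior of every cylinder $\bP_0^n(x)$, hence of every hyperbolic strip containing it).
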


\begin{demof}{the Claim}
$x$ has a finite good return time $m:=\tau(x)$ by Proposition
\ref{prop-tau}. Let $A\in\Sigma$ be its itinerary. 
Observe that $S^0:=A_0\dots A_{m-1}$ is admissible and
$T_\perext^{m}x\in\kappa_s(\mathcal R_\perext)$. By
induction, $A$ splits into a concatenation of admissible words
$S^0S^1S^2\dots$. The cylinders defined by the finite concatenations $S^0S^1\dots
S^kA_{\tau_{k+1}(x)}$, $k\geq0$, are hyperbolic strips.

Clearly $t_1(A)\geq\tau(x)$. Now, if $H:=(\bP_{\perext})_0^n(x)$ is hyperbolic with $n>m$, then
$H$ is contained in $S^0$ (an $m$-admissible strip) and $T^mH$
meets the hyperbolic strip $S^1$. Hence $H$ cannot be admissible,
proving that $t_1(A)=\tau(x)<\infty$. $t_n(A)<\infty$ for all
$n\geq1$ follows from invariance. Hence, $A\in\Sigma_\tau$.

For the converse, let $A\in\Sigma_\tau$ and denote by $x$ the point with
itinerary $A$. $[A_0\dots A_m]$ is an admissible strip for
$m=t_1(A)$ so $A_0\in\mathcal R$. If $k=t_n(A)$,
then $[A_0\dots A_k]$ is a concatenation of admissible strips,
hence a $k$-strip. $k$ being arbitrarily large, it follows that
$W^s(x)$, the intersection of the previous strips,
must cross $A_0$, proving $x\in\kappa_s(\mathcal R)$.
\end{demof}

By construction, for all $A\in\Sigma_\tau$, $\sigma^{t_1(A)}(A)\in\Sigma_\tau$. Hence $t_1$
is a return time and defines an invertible tower $\hat T:\hS\to\hS$ in the
sense of Appendix \ref{appendix-jump}. Moreover, Corollary
\ref{coro-bdd-gap} shows that any large entropy measure $\mu$ on
$\Sigma(T_\perext,P_\perext)$ has eventually bounded gaps in the
sense of Definition \ref{def-bdd-gap}. By Proposition
\ref{prop-lift-abstract}, any such measure can be lifted to $\hS$
and any invariant probability measure of $\hS$ is a finite extension of
one in $\Sigma(T_\perext,P_\perext)$ (in particular both measures have the same
entropy).

It follows also that $h(\hat T):=\sup_\mu h(\sigma|hS,\mu)=
h_\top(\Sigma)$ so that maximum measures of $\Sigma$ lift to
maximum measures of $\hat T$.

To prove the theorem it is therefore enough to show that the tower $\hS$ has
finitely many ergodic measures of maximal entropy.

\step{2}{Markov Structure}

Recall the following definition
(see \cite{GS} for background):

\begin{definition}
A \new{Markov shift} is a space of sequences
 $$
   \Sigma(\mathcal G):=\{ x\in V^\ZZ:\forall n\in\ZZ\;
      x_n\to x_{n+1} \text{ in }\mathcal G\}
 $$
where $\mathcal G$ is an oriented graph with a countable set of
vertices $V$ together with the left shift $\sigma$. 
\end{definition}

We also recall that a graph $\mathcal G$ as above is \new{(strongly) irreducible} if for
every $(A,B)\in V^2$, there is a path from $A$ to $B$ on $\mathcal
G$. An \new{(strongly) irreducible component} is a subgraph with
this property maximum \wrt inclusion.

It will be convenient to say that $w'$ is a \emph{follower} of $w$ if
the two are admissible words $w,w'$ in the sense of Definition \ref{def-admi-word},
and $s$ being the first symbol of $w'$,  the concatenation $ws$ is an
extended admissible word.

\begin{claim}
Let $\mathcal G$ be the oriented graph with vertices $(w,i)$ where $w$ is
any admissible word and $0\leq i<|w|$ ($|\cdot|$ is the length of
the word) and arrows:
 $$
    (w,i)\to(w,i+1) \text{ if }i+1<|w|,\;
    (w,|w|-1)\to(w',0) \text{ if $w'$ is a follower of $w$}.
 $$
The tower $\hS$ is measurably conjugate\footnote{That is, there is a bimeasurable bijection $\psi:\hS\to\Sigma(\mathcal G)$ such that $\psi\circ\hat T=\sigma\circ\psi$.} to the Markov shift $\Sigma(G)$. 
\end{claim}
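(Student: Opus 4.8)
The plan is to exhibit an explicit bimeasurable bijection $\psi:\hS\to\Sigma(\mathcal G)$ intertwining the dynamics, by tracking how a point of the tower decomposes into admissible words together with a position counter. Recall that the tower $\hS$ is built from the return time $t_1$ on $\Sigma_\tau\subset\Sigma(T_\perext,P_\perext)$, and by Claim \ref{claim-Sigmak} the base $\Sigma_\tau$ is exactly the set of itineraries of points of $\kappa_s(\mathcal R_\perext)$. Such an itinerary $A\in\Sigma_\tau$ splits \emph{uniquely} (by Lemma \ref{lem-adm-uniq}) into a bi-infinite concatenation of admissible words $\dots S^{-1}S^0 S^1 \dots$, where $S^k=A_{t_{k}(A)}\dots A_{t_{k+1}(A)-1}$ and, since $S^{k}$ followed by $S^{k+1}$ arises from a genuine point of the tower, $S^{k+1}$ is a follower of $S^k$ in the sense just defined. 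A point of $\hS$ is a pair $(A,j)$ with $A\in\Sigma_\tau$ and $0\leq j<t_1(A)$; I would send it to the sequence $\psi(A,j)\in\Sigma(\mathcal G)$ whose $0$-th coordinate is $(S^0,j)$ and, more generally, whose coordinate at the appropriate ``block time'' records the pair (current admissible word, offset within it). Concretely, list the integers in increasing order so that the block $S^k$ occupies the interval $\leftb b_k,b_{k+1}\leftb$ with $b_0=0$; then $\psi(A,j)_n:=(S^k, n-b_k+j \bmod \text{shifts})$ after re-indexing so that $(A,j)$ itself sits at the vertex $(S^0,j)$. One checks directly that consecutive coordinates are either of the form $(w,i)\to(w,i+1)$ (within a block) or $(w,|w|-1)\to(w',0)$ with $w'$ a follower of $w$ (at a block boundary), which is precisely the edge relation of $\mathcal G$; hence $\psi$ lands in $\Sigma(\mathcal G)$.

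Next I would verify that $\psi$ is a bijection by constructing the inverse. Given $x\in\Sigma(\mathcal G)$, the vertex sequence partitions $\ZZ$ into maximal runs on which the first coordinate is a constant admissible word $w$ and the second coordinate increases from $0$ to $|w|-1$; the concatenation of these words (read off in order) is a bi-infinite sequence over $P_\perext$, which by construction lies in $\Sigma_\tau$ (each word is admissible and each is a follower of the previous, so every finite concatenation is a hyperbolic strip, forcing the limiting stable manifold to cross, exactly as in the proof of Claim \ref{claim-Sigmak}). Together with the offset recorded at coordinate $0$, this recovers a point $(A,j)\in\hS$, and the two assignments are mutually inverse. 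Intertwining with the dynamics is immediate from the definitions: applying $\hat T$ to $(A,j)$ either increments $j$ (when $j+1<t_1(A)$) or resets to $(\sigma^{t_1(A)}A,0)$, and in both cases this matches applying $\sigma$ to $\psi(A,j)$ — within a block the offset goes up by one, and at a block boundary we pass to the next vertex $(w',0)$.

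Finally I would address measurability. Both $\hS$ and $\Sigma(\mathcal G)$ carry natural $\sigma$-algebras: on $\hS$ this is generated by cylinders of $\Sigma$ together with the level function $j$, and on $\Sigma(\mathcal G)$ by cylinders in the vertex alphabet. The map $\psi$ is measurable because the $n$-th coordinate of $\psi(A,j)$ is determined by a finite window of $A$ together with $j$ (the relevant admissible word $S^k$ containing block-time $n$ is cut out by the values of the finitely many $t_i$'s near $n$, each of which is a measurable, indeed locally constant off an entropy-negligible set, function of $A$), and similarly for $\psi^{-1}$; one should be slightly careful that $t_1$ can a priori be $\infty$, but we have restricted to $\Sigma_\tau$ where all $t_n$ are finite, and we have tacitly discarded the entropy-negligible exceptional set as announced in the paragraph preceding Claim \ref{claim-Sigmak}. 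The main obstacle, and the point deserving the most care, is precisely this bookkeeping of the re-indexing between ``time in $\ZZ$ for the itinerary'' and ``time in $\ZZ$ for the vertex sequence'': one must check that the block decomposition is genuinely bi-infinite (no block is infinitely long, which is where finiteness of the $t_n$ is used in both directions) and that the correspondence is equivariant on the nose rather than up to a shift. Everything else is a routine unwinding of the definitions of admissible word, follower, and the tower construction of Appendix \ref{appendix-jump}.
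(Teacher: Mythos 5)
Your overall strategy --- identifying points of the tower with vertex sequences recording (current admissible word, offset within it) --- is the same as the paper's; the paper simply builds the map in the opposite direction, from $\Sigma(\mathcal G)$ to $\hS$, sending $\alpha$ with $\alpha_n=(w_0\dots w_{\ell-1},k)$ to $(A,\omega)$ with $A_n=w_k$ and $\omega_n=1$ iff $k=0$, and checks well-definedness exactly as you do for your inverse (consecutive blocks concatenate into hyperbolic strips, so the argument of Claim \ref{claim-Sigmak} puts $\sigma^mA$ in $\Sigma_\tau$ at every cut point $m$). That direction is cleaner because a point of $\Sigma(\mathcal G)$ visibly carries its entire block decomposition, past and future.

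There is, however, one genuine slip in your write-up. A point of the tower $\hS$ of Appendix \ref{appendix-jump} is a pair $(A,\omega)$ with $\omega\in\{0,1\}^{\ZZ}$ a full marking of return times, not a pair $(A,j)$ with $0\leq j<t_1(A)$: the second component also records a decomposition of the \emph{past}. You justify collapsing $\omega$ to the single integer $j$ by asserting that $A$ splits uniquely, by Lemma \ref{lem-adm-uniq}, into a bi-infinite concatenation of admissible words; but that lemma only gives uniqueness of the decomposition of a \emph{positive} itinerary from a given starting point. It says nothing about uniqueness of the cut points in the past, and indeed the point of the second half of Proposition \ref{prop-lift-abstract} is that $\hS$ may be a nontrivial finite extension of $\Sigma$, i.e., several markings $\omega$ may sit over the same $A$. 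As written, your $\psi$ is therefore not well-defined: the negative-time coordinates of $\psi(A,j)$ depend on a choice of past decomposition that the pair $(A,j)$ does not determine. The repair is immediate --- read the block structure off $\omega$ rather than trying to reconstruct it from $A$ alone --- and with that change your construction (including the equivariance and measurability checks, and your inverse, which coincides with the paper's map $p$) goes through.
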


\begin{demof}{the Claim}
Define $p:\Sigma(\mathcal G)\to\hS$ by $p(\alpha)=(A,\omega)\in\Sigma\times\{0,1\}^\ZZ$ where, if $\alpha_n=:(w_0\dots
w_{\ell-1},k)$, then $A_n=w_k$ and $\omega_n=1$ if and only
if $k=0$. Observe that the sequence $A$ thus obtained is a
concatenation of admissible words so $A\in\Sigma$. Also, whenever
$m$ and $n$ are two successive integers with
$\omega_m=\omega_n=1$, $A_nA_{n+1}\dots A_m$ is an admissible
word. Finally $\omega_m=1$ for infinitely many positive and
negative integers $m$. The proof of Claim \ref{claim-Sigmak} shows that
$\sigma^mA\in\Sigma_\tau$ for all such
$m$, so that $(A,\omega)\in\hS$. Thus $p$ is well-defined and  is
clearly a measurabke conjugacy.
\end{demof}

\step{3}{Conclusion}

\begin{demof}{Theorem \ref{thm-max-ent}}
$\mathcal G$ has at most one
irreducible component for each 
vertex of the type $(w_0,0)$. These corrrespond to the finitely many rectangles in
the Markov array $\mathcal R$. Hence, by a classical result of
Gurevi\v{c} \cite{Gurevic}, $\Sigma(\mathcal G)$ with
$h(\Sigma(\mathcal G))<\infty$ has finitely many maximum measures,
proving the Main Theorem.
\end{demof}

\subsection{Number of Periodic Points}\label{sec-nbr-per}

We prove Proposition \ref{prop-periodic} about the number of
periodic points. We use the construction of the proof of Theorem
\ref{thm-max-ent}. To prove the lower bound
(\ref{eq:mult-bound}). It is enough to prove it for
$T_\perext$,  as $T_\perext$ is a finite extension of $T$. We assume by
contradiction that the number of points fixed under $T_\perext^n$
is such that for any integer $p\geq1$, there is a sequence
$n_k\to\infty$ of multiples of $p$ such that
 \begin{equation}\label{eq-small-NT}
     \lim_{n_k\to\infty} \frac{N_{T_\perext}(n_k)}{e^{n_k h_\top(T)}} = 0
 \end{equation}
In the following we denote
$(M_\perext,T_\perext,P_\perext,\mathcal R_\perext)$ by
$(M,T,P,\mathcal R)$.

The starting point is the following estimate for $\Sigma(\mathcal G)$. 
Consider a maximum measure. It is carried on one irreducible component.
For simplicity, we replace $\mathcal G$ by that irreducible component. By
Gurevi\v{c} \cite{Gurevic}, the existence of a maximum measure for
$\Sigma(\mathcal G)$ implies that $\mathcal G$ is \emph{positive
recurrent} with parameter $R=e^{-h_\top(T)}$. By Vere-Jones \cite{VJ},
this implies that the number $N_{\mathcal G}(n)$ of  loops of
length $n$ based at a given vertex satisfies, for some positive
integer $p$:
 \begin{equation}\label{eq-VJ}
   \lim_{n\to\infty,\; p|n} N_{\mathcal G}(n)e^{-h_\top(T)n} = p
 \end{equation}
Each $n$-periodic sequence $A$ in the symbolic dynamics $\Sigma$
is associated to a closed, convex set
 $$
   \bigcap_{j\geq0} \overline{T_\perext^j[A_{-j}\dots
A_j]}
 $$
invariant under $T_\perext^{n}$.
This set contains at least one point fixed by $T_\perext^n$
which we denote by $\pi(A)$. It remains to show that $\pi$ does
not identify too much points.

\medbreak

Consider $\pi$ from the set $\Sigma(n)$ of $n$-periodic sequences
to the set $M_\perext(n)$ of $n$-periodic $T_\perext$-orbits. Our
assumption (\ref{eq-small-NT}) implies that, for some sequence
$m_n\to\infty$ (where $p|n$) this map is at least $m_n$-to-$1$ on
a subset $\Sigma'(n)$ of $\Sigma(n)$ with cardinality at least
$e^{n h_\top(T)}/3$. We use the following observation:

\begin{lemma}\label{lem-mult2}
Let $A^1,\dots,A^m\in\Sigma$ be such that
$\pi(A^1)=\dots=\pi(A^m)=:x$. If the finite words $A^i_0\dots
A^i_{n-1}$, $i=1,\dots,m$, are pairwise distinct, then:
 \begin{enumerate}
   \item either $T^kx$ is a vertex of $P_\perext$ for some
   $0\leq k<n$;
   \item or there exist $r\geq (m-1)/2$ distinct integers $0\leq n_1<\dots<n_r<n$ such
   that for all $(k,l)$ with $1\leq k<\ell<n$, $T^{n_k}x$ lies on the interior of an edge of $P_\perext$. Moreover,
   if $v_k$ is the direction of the open edge containing $T^{n_k}x$, then
   the image by $T^{n_\ell-n_k}$ of $v_k$ is transverse to
   $v_\ell$.
 \end{enumerate}
\end{lemma}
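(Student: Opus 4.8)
The plan is to track how the $m$ distinct finite words $A^i_0\dots A^i_{n-1}$ can all code a single periodic point $x$. Since $\pi(A^i)=x$ for each $i$, the point $x$ lies in the closed convex set $\bigcap_{j\geq0}\overline{T^j[A^i_{-j}\dots A^i_j]}$; in particular $T^kx\in\overline{A^i_k}$ for every $k$. If $T^kx$ is a vertex of $P_\perext$ for some $0\leq k<n$ we are in case (1), so from now on I would assume no $T^kx$ is a vertex. Then for each $k$, the set of partition elements whose closure contains $T^kx$ consists either of a single element (when $T^kx$ lies in the interior of some $A\in P_\perext$) or of exactly two elements sharing an open edge through $T^kx$ (when $T^kx$ lies on the interior of an edge). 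Let $\mathcal{D}:=\{0\leq k<n: T^kx$ lies on the interior of an edge of $P_\perext\}$ and write $r':=\#\mathcal{D}$. Since for $k\notin\mathcal{D}$ the symbol $A^i_k$ is forced, the $m$ distinct words differ only in coordinates from $\mathcal{D}$, and at each such coordinate there are at most $2$ choices; hence $m\leq 2^{r'}$, i.e. $r'\geq\log_2 m$. This already gives many ``edge times'', but I actually want the stronger transversality statement, which is where the real argument is.

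The key point is that transversality must hold between \emph{most} pairs of these edge times. Suppose $k<\ell$ are both in $\mathcal{D}$, with $v_k,v_\ell$ the directions of the open edges through $T^kx$ and $T^\ell x$ respectively, and suppose $T^{\ell-k}$ maps the line $\RR v_k$ to the line $\RR v_\ell$ (i.e. they are \emph{not} transverse). I claim that then the choice at time $\ell$ is determined by the choice at time $k$, so that $k$ and $\ell$ do not contribute independent factors of $2$. Indeed, the two partition elements meeting at $T^kx$ lie on the two sides of the line through $T^kx$ in direction $v_k$; applying the affine map $T^{\ell-k}$ (which is affine in a neighborhood of the relevant orbit segment since, $x$ being non-vertex, the orbit avoids the singularity set on $\leftb k,\ell\rightb$ — here I would invoke that $T$ is piecewise affine and that $T^jx$ is never a vertex) carries this line to the line through $T^\ell x$ in direction $T^{\ell-k}v_k=v_\ell$ up to scalar, hence the side of the edge at time $k$ occupied by $x$ determines the side at time $\ell$. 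Conversely, if $T^{\ell-k}v_k$ is transverse to $v_\ell$, the two choices are genuinely independent. So, viewing ``non-transversality'' as an equivalence-like relation that at worst chains consecutive edge times into blocks within which all choices are slaved to the first, I would extract a subset $\{n_1<\dots<n_r\}\subset\mathcal{D}$ of one representative per block with the property that the choices at the $n_j$ are mutually independent; then $m\leq 2^{r}$, giving $r\geq\log_2 m\geq(m-1)/2$ for $m\geq1$ — and in fact $\log_2 m\geq (m-1)/2$ holds trivially only for $m\leq 2$, so I would instead argue directly: along the $r$ blocks the number of admissible words is at most the product over blocks of the number of ``patterns'' within a block; a block of consecutive non-transverse edge times of length $s$ still only yields $2$ choices (all slaved), so $m\leq 2^{r}$ is not quite what is wanted either. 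The cleaner route, and the one I would actually write, is: discard from $\mathcal D$, greedily from left to right, any edge time $\ell$ such that $T^{\ell-k}v_k$ is non-transverse to $v_\ell$ for the most recently kept $k$; each discarded $\ell$ is matched to a kept $k<\ell$, but one $k$ can ``absorb'' at most one later $\ell$ before we would re-examine, so at least half of $\mathcal D$ survives, and the surviving times $n_1<\dots<n_r$ with $r\geq r'/2\geq(\log_2 m)/2$ are pairwise transverse in the required sense. To reach the stated bound $r\geq(m-1)/2$ I would sharpen the counting: each kept time $n_j$ with its at-most-$2$ choices, together with the forced coordinates, gives $m\leq 2\cdot 3^{\,(\#\text{discarded})}\cdots$ — rather, I would just observe that if only $r'$ coordinates are free with $\leq2$ values each, $m\leq 2^{r'}$ forces $r'\geq\log_2 m$, and since we also know each non-transverse consecutive pair halves the ``effective'' count, a short induction gives $r\geq (m-1)/2$; I'd fill in this elementary combinatorial estimate carefully in the write-up.

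The main obstacle I anticipate is precisely this last combinatorial bookkeeping: converting ``$m$ distinct codes for one point'' into ``$\geq(m-1)/2$ pairwise-transverse edge-crossing times'' requires being careful that non-transversality genuinely removes a degree of freedom \emph{globally} (not just locally between a single pair), and that the greedy thinning loses at most a factor $2$. I would handle this by formalizing the statement ``the choice of partition element at an edge time $\ell$ is, given the choices at all earlier edge times with a transverse relation to $\ell$, determined as soon as $v_\ell$ is the $T^{\ell-k}$-image (up to scalar) of some earlier $v_k$'', and then counting. The affineness and the non-vertex assumption are what make ``the side of the edge determines everything downstream along a non-transverse chain'' rigorous, via Definition \ref{def-mfd-exp}-style affine compositions $T_A^n$; no new dynamics is needed beyond what is already set up. Everything else — convexity of $\bigcap_j\overline{T^j[A_{-j}\dots A_j]}$, the dichotomy interior-vs-edge for a non-vertex point — is immediate from the piecewise-affine structure.
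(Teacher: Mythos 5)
Your reduction to the edge-crossing times is the right starting point, but the counting at the heart of your argument has a genuine gap, and it is precisely the step you defer to ``elementary combinatorial bookkeeping to be filled in.'' You bound the number of distinct words by treating the (at most two) choices at each of the $r'$ edge times as independent, getting $m\leq 2^{r'}$ and hence only $r'\geq\log_2 m$; since $\log_2 m<(m-1)/2$ as soon as $m\geq 8$, this is strictly weaker than what the lemma asserts, and none of the patches you sketch (greedy thinning losing a factor $2$, ``each non-transverse pair halves the effective count'') can bridge an exponential bound to a linear one. The missing idea is planar: the $m$ elements of $P_\perext^n$ whose closures contain $x$ are, in a small ball around $x$, pairwise disjoint convex sectors separated by the pullbacks to time $0$ of the edges crossed by the orbit, and $r$ concurrent lines in $\RR^2$ bound only $2r$ sectors --- not $2^r$. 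The constraint that all $m$ sign vectors must be simultaneously realized by sectors around the single point $x$ is exactly what collapses the count from multiplicative to additive.

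The paper's proof implements this by tracking the multiplicity $\mult(x,P_\perext^k)$ inductively in $k$: it equals $1$ for $k=0$, stays constant when $T^kx$ is interior to an element of $P_\perext$ or when the new edge's preimage near $x$ does not cut the existing elements (in particular when its pullback direction coincides with one already present --- this is where your correct ``slaving'' observation for non-transverse pairs lives), and increases by at most $2$ when a genuinely new direction appears, since a new line through $x$ adds at most two sectors. This gives $m\leq\mult(x,P_\perext^n)\leq 1+2r$ with the $n_1<\dots<n_r$ being the times of actual increase, so $r\geq(m-1)/2$, and the pairwise transversality of the $v_{n_j}$ under the intermediate affine maps is automatic from having selected only the times contributing a new direction. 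So your identification of the edge times and of the role of non-transversality is sound, but you need to replace the product bound over edge times by this sector count to obtain the stated linear lower bound.
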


\begin{demo}
Let $A^i_0\dots A^i_{n-1}$, $i=1,\dots,m$ be finite words as
in the above statement. We show that the failure of (1) implies
(2).

Each word $A^i_0\dots A^i_{n-1}$ defines an element of
$P_\perext^{n}$ containing $x$ in its closure so
$\mult(x,P_\perext^n)\geq m$. We assume that (1) fails. Observe that
 \begin{enumerate}
  \item[(i)] $\mult(x,P_\perext^{k+1})=\mult(x,P_\perext^k)$ if $T^kx$ is in the
  interior of an element of $P_\perext$ or if for all $A\in P_\perext^k$,
   $$
    P_\perext^kT_\perext^k(A\cap B(x,\eps))\subset B \text{ for some }B\in P_\perext
    \text{ and }\eps>0.
   $$
   \item[(ii)] $\mult(x,P_\perext^{k+1})\leq \mult(x,P_\perext^k)+2$.
 \end{enumerate}
(ii) uses that $T_\perext$ is a piecewise affine surface
homeomorphism so the preimage of an edge may locally divide in at
most two subsets at most two of elements of $P_\perext^k$ touching
$x$.

This implies that $\mult(x,P_\perext^n)\leq 1+2\#\{0\leq k<n:T_\perext^kx$
is on an edge of $P_\perext\}$. The Lemma follows.
\end{demo}

\begin{demof}{Proposition \ref{prop-periodic}}
Only $\text{const}\cdot n$ points of $T_\perext$ can satisfy
assertion (1) in the above Lemma. As
$h_\mult(T_\perext,P_\perext)=0$, their preimages in $\Sigma'(n)$
are in subexponential number. The remainder of $\Sigma'(n)$
corresponds to points $x$ whose orbit stays off the vertices of
$P_\perext$ and which admit distinct sequences $A^i\in\Sigma'(n)$,
$i=1,\dots,m_n$ with $\pi(A^i)=x$.

Given such an $x$, fix $0<n_1<\dots<n_r<n$ as in point (2) of the Lemma. Pick $j$
such that $0\leq n_{j+1}-n_j\leq 2n/(m_n-1)$. Thus $T_\perext^{n_j}x$ is
a vertex of $P^{n_{j+1}-n_j+1}$. The number of such vertices, for
given $n_j$, is bounded by $\text{const}\cdot
\#P_\perext^{2n/(m_n-1)}$. Taking into account the choice of
$n_j$, the number of such $x$s is bounded by:
$e^{(2/m_n)(h_\top(T)+\eps)n}$ for all large $n$. Thus, for 
large multiples $n$ of $p$,
 $$
   \#\Sigma(n) \leq 3\#\Sigma'(n) \leq 3e^{(2/m_n)(h_\top(T)+\eps)n}
    + 3e^{\eps n}.
 $$
As $m_n\to\infty$, this contradicts the Vere-Jones
estimate (\ref{eq-VJ}), proving the lower bound
(\ref{eq:mult-bound}) and Proposition \ref{prop-periodic}.
\end{demof}

{

\subsection{Entropy away from the singularities}\label{sec-horseshoes}
Proposition \ref{prop:lsc-entropy} is a corollary of the proof of
Theorem \ref{thm-max-ent} using the following result:

\begin{proposition}[B.M. Gurevi\v{c} \cite{Gurevic}]
Let $G$ be a countable, oriented graph. Let $G_0\subset G_1\subset\dots$
be a non-decreasing sequence of finite subgraphs exhausting $G$: any vertex and
any arrow of $G$ belong to $G_n$ with $n$ large enough.

Then $\lim_{n\to\infty} h_\top(G_n)=h_\top(G)$.
\end{proposition}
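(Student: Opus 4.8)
The plan is to prove the two inequalities separately. The bound $\lim_n h_\top(G_n)\le h_\top(G)$ is immediate, and the reverse inequality is the real content; it will follow once we show that every real $\beta<h_\top(G)$ is dominated by $h_\top(W)$ for some \emph{finite} subgraph $W\subseteq G$.

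First, since $G_n\subseteq G_{n+1}\subseteq G$, the Markov shift $\Sigma(G_n)$ is a closed shift-invariant subset of $\Sigma(G_{n+1})$, which in turn is a closed shift-invariant subset of $\Sigma(G)$. As the topological entropy of a countable Markov shift is monotone under passing to closed invariant subsets (it is the supremum of the measure-theoretic entropies, and measures on the smaller space are measures on the larger one), the sequence $h_\top(G_n)$ is nondecreasing and bounded above by $h_\top(G)$; hence $L:=\lim_{n\to\infty}h_\top(G_n)$ exists and $L\le h_\top(G)$. Moreover any finite subgraph $W\subseteq G$ has finitely many vertices and arrows, each lying in some $G_n$, so $W\subseteq G_N$ for all large $N$, whence $L\ge h_\top(W)$. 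Thus it suffices to produce, for each $\beta<h_\top(G)$, a finite subgraph $W$ with $h_\top(W)\ge\beta$.

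For this I would invoke Gurevi\v{c}'s combinatorial entropy formula (\cite{Gurevic}; compare \cite{VJ}): writing $N_n(v)$ for the number of loops of length $n$ based at a vertex $v$ (a cardinal in $\{0,1,\dots,\infty\}$), one has $h_\top(G)=\sup_v\limsup_{n\to\infty}\frac1n\log N_n(v)$. Given $\beta<h_\top(G)$, choose a vertex $v$ and an integer $\ell\ge1$ with $N_\ell(v)\ge e^{\beta\ell}$, and fix a finite family $\mathcal F$ of $m:=\lceil e^{\beta\ell}\rceil$ pairwise distinct loops of length $\ell$ at $v$ (possible since $N_\ell(v)\ge m$). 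Let $W$ be the finite subgraph of $G$ whose vertices and arrows are precisely those occurring in the loops of $\mathcal F$; it contains $v$ and all of $\mathcal F$.

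Finally, for each $k\ge1$ the $m^k$ tuples in $\mathcal F^k$, concatenated by glueing successive loops at $v$, yield $m^k$ loops of length $k\ell$ at $v$ inside $W$, and these are pairwise distinct because the factorization into the successive length-$\ell$ blocks can be read directly off the path. Hence $W$ contains at least $m^k\ge e^{\beta k\ell}$ loops of length $k\ell$ at $v$, so $h_\top(W)\ge\limsup_{k\to\infty}\frac1{k\ell}\log m^k=\frac{\log m}{\ell}\ge\beta$. Letting $\beta\uparrow h_\top(G)$ (and $\beta\to\infty$ when $h_\top(G)=\infty$) gives $L\ge h_\top(G)$, hence equality. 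The only substantive ingredient is Gurevi\v{c}'s entropy formula; everything else is elementary graph combinatorics, so I do not expect a real obstacle. The one point to watch is that $N_\ell(v)$ may a priori be infinite — this happens exactly when $h_\top(G)=\infty$, and since the argument only ever uses the finite subfamily $\mathcal F$ it causes no trouble (in the paper's application $h_\top(G)=h_\top(\Sigma)<\infty$, so the issue does not even arise).
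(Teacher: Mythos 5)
Your argument is correct. Note, however, that the paper offers no proof of this proposition at all: it is stated as a known theorem of Gurevi\v{c} and simply cited, so there is nothing internal to compare against. What you have written is essentially the standard proof of Gurevi\v{c}'s approximation theorem: monotonicity of entropy under passing to closed invariant subsets gives $\lim_n h_\top(G_n)\le h_\top(G)$ and $\lim_n h_\top(G_n)\ge h_\top(W)$ for every finite subgraph $W$ (since the exhaustion eventually contains $W$), and the reverse inequality is reduced, via the loop-counting formula $h_\top(G)=\sup_v\limsup_n\frac1n\log N_n(v)$, to exhibiting a finite subgraph built from $m=\lceil e^{\beta\ell}\rceil$ loops of length $\ell$ at a single vertex, whose free concatenations give $m^k$ distinct loops of length $k\ell$ and hence entropy at least $\beta$. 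All steps check out; the one external input is the loop-counting characterization of the Gurevi\v{c} entropy, which is legitimate to invoke here since the proposition itself is attributed to the same source. Two trivial remarks: your parenthetical claim that $N_\ell(v)=\infty$ for some $\ell$ happens ``exactly when'' $h_\top(G)=\infty$ is not quite accurate (the entropy can be infinite with all $N_\ell(v)$ finite but superexponential), though this does not affect the argument, which only ever uses a finite subfamily of loops; and one should also observe, as you implicitly do, that for the finite graph $W$ the quantity $\limsup_k\frac1{k\ell}\log(A_W^{k\ell})_{vv}$ is indeed bounded by $h_\top(W)=\log\rho(A_W)$, which is standard.
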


\begin{proof}[Proof of Proposition \ref{prop:lsc-entropy}]
In the proof of Theorem \ref{thm-max-ent}, one has shown that
there is a countable oriented graph $\mathcal G$ such that the corresponding
countable state Markov shift whose maximum measures have entropy $h_\top(T)$.

Let $\mathcal G_n$ be the finite subgraph defined
by keeping only the vertices and arrows of $\mathcal G$ which are on a loop of length
at most $n$ and based at one of the finitely many symbols representing an
element of the  Markov array. The sequence $\mathcal G_n$ exhausts $\mathcal G$.
The above proposition therefore ensures that, for any $\eps>0$, there is some
$\mathcal G_N$ with topological entropy at least $h_\top(T)-\eps$.

The projection of this subshift is a compact invariant subset $K$ of $M$ which 
contains only points with infinitely many visits to the controlled set 
$\kappa(\mathcal R)$ in the future and in the past. If $K$ met the singularity
lines of $T$, there would be such a point $x$ with the additional property that 
$Tx\in\partial W^u(Tx)$ or $T^{-1}x\in W^s(T^{-1}x)$. But this would prevent
any future or past visit to $\kappa(\mathcal R)$, a contradiction.

Finally, the above construction makes the isomorphism with $\Sigma(\mathcal G_n)$
obvious (it is indeed one-to-one as $K$ does not meet the boundary of $P$), 
but the latter is a subshift of finite type as $\mathcal G_n$ is finite.
 \end{proof}

\appendix

\section{Bounds on Metric Entropy}\label{sec-ent-bound}

We recall some standard notations and facts about entropy and a few consequences.
$(X,\mathcal B,\mu)$ is a probability space.
$H_\mu(P):=-\sum_{A\in P}\mu(A)\log\mu(A)$ denotes the mean entropy of a partition
(we shall leave implicit all measurability assumptions).
For $Y\subset X$,  $(\mu|Y)(\cdot):=
(\mu(Y))^{-1}\mu(\cdot\cap Y)$ (zero if $\mu(Y)=0$). 
For a sub-$\sigma$-algebra $\mathcal A$ of
$\mathcal B$, the \emph{conditional entropy} is:
 $$
   H_\mu(P|\mathcal A) := \int_X -\sum_{A\in P} 1_A\log E(1_A|\mathcal A)\,
d\mu,
 $$
where $E(\cdot|\mathcal A)$ is the conditional expectation with respect to $\mu$.

First, if $P$ is a partition, $Y\subset X$ and $\mathcal A$ is
a sub-$\sigma$-algebra of $\mathcal A$, then,
 \begin{multline}\label{eq:H-joint}
   H_\mu(P\vee\{Y,X\setminus X\}|\mathcal A) \\ \leq H_\mu(\{Y,X\setminus X\}|\mathcal A)
     + \mu(Y) H_{\mu|Y}(P|\mathcal A) + \mu(X\setminus Y) H_{\mu|(X\setminus Y)}(P|\mathcal A).
 \end{multline}

Second, the entropy of a measure can be
computed as the average of the entropies given the past. More
precisely, we have the following statement:

\begin{lemma}\label{lem-HPP}
Let $\mu$ be an invariant probability measure for some
bimeasurable bijection $T:X\to X$. Let $P$ be a finite, measurable
partition. Then:
 \begin{equation}\label{eq-HPP}
    h(T,\mu,P) = \int_X -\sum_{A\in P} 1_A\log E(1_A|P^-) \, \mu(dx)
 \end{equation}
where  with
$P^-$ is the \new{past partition}
generated by $T^nP$, $n\geq1$.

In particular, if $N(n,x,P) = \#\{P_0^{n-1}(y):y\in X$ and
$P_{-\infty}^{-1}(y) = P_{-\infty}^{-1}(x)\}$ where
$P_a^b(x):=(A_n)_{a\leq n\leq b}$ with $T^nx\in A_n$,
then:
\begin{equation}\label{coro-ent-N}
   h(T,\mu,P) \leq \int_X \frac1n\log N(n,x,P)
   \, \mu(dx).
\end{equation}
\end{lemma}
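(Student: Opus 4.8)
The plan is to establish (\ref{eq-HPP}) as the Kolmogorov--Sinai--Rokhlin formula $h(T,\mu,P)=H_\mu(P\mid P^-)$, and then to deduce the bound (\ref{coro-ent-N}) from it by the trivial estimate that a Shannon entropy of a probability vector with at most $N$ nonzero entries is at most $\log N$.

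For (\ref{eq-HPP}): recall that $h(T,\mu,P)=\lim_n\frac1n H_\mu(P^n)$ with $P^n=\bigvee_{k=0}^{n-1}T^{-k}P$. Peeling off the factors $T^{-(n-1)}P,T^{-(n-2)}P,\dots$ one at a time via the chain rule $H_\mu(Q\vee\mathcal Q)=H_\mu(\mathcal Q)+H_\mu(Q\mid\mathcal Q)$ and translating each conditioning $\sigma$-algebra back by the appropriate power of $T$ (using invariance of $\mu$) gives the telescoping identity
$$
  H_\mu(P^n)=\sum_{k=0}^{n-1}H_\mu\Big(P\;\Big|\;\textstyle\bigvee_{j=1}^{k}T^{j}P\Big).
$$
The sequence $k\mapsto H_\mu\big(P\mid\bigvee_{j=1}^{k}T^{j}P\big)$ is non-increasing, and since $\bigvee_{j=1}^{k}T^{j}P\uparrow P^-$ it converges to $H_\mu(P\mid P^-)$ by Doob's martingale convergence theorem (the conditional expectations $E(1_A\mid\cdot)$ converge in $L^1$ and $t\mapsto -t\log t$ is bounded on $[0,1]$). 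A Ces\`aro average then yields $h(T,\mu,P)=H_\mu(P\mid P^-)$, which is (\ref{eq-HPP}) once the definition of $H_\mu(\cdot\mid P^-)$ is unwound. (One may alternatively just cite \cite{WaltersIntro}.)

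For (\ref{coro-ent-N}): first note $H_\mu(P^n\mid P^-)=n\,h(T,\mu,P)$; indeed, peeling $P^n$ apart one factor at a time relative to the \emph{fixed} conditioning $P^-$ and pushing each term forward by the relevant power of $T$, the conditioning $\sigma$-algebra is carried back onto $P^-$ each time, so all $n$ terms equal $H_\mu(P\mid P^-)=h(T,\mu,P)$. Now disintegrate $\mu$ over $P^-$ (legitimate since $P$ is finite and $P^-$ is generated by countably many finite partitions): $\mu=\int\mu_x\,d\mu(x)$ with each $\mu_x$ a probability measure carried by the $P^-$-atom of $x$ and $E(1_C\mid P^-)(x)=\mu_x(C)$. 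Then $H_\mu(P^n\mid P^-)=\int_X\big(-\sum_C\mu_x(C)\log\mu_x(C)\big)\,d\mu(x)$, the sum over atoms $C$ of $P^n$. For fixed $x$ the numbers $(\mu_x(C))_C$ form a probability vector, and $\mu_x(C)>0$ forces $C$ to meet the $P^-$-atom of $x$; by the very definition of $N(n,x,P)$ there are at most $N(n,x,P)$ such $C$, so the inner sum is $\leq\log N(n,x,P)$. Hence $n\,h(T,\mu,P)=H_\mu(P^n\mid P^-)\leq\int_X\log N(n,x,P)\,d\mu(x)$, and dividing by $n$ gives (\ref{coro-ent-N}). (If one prefers to avoid disintegration, the same bound follows by applying $H_{\mu|D}(P^n)\le\log\#\{C\in P^n:C\cap D\neq\emptyset\}$ over the atoms $D$ of the finite partitions $\bigvee_{j=1}^{k}T^{j}P$ and letting $k\to\infty$.)

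None of this is deep: the statement repackages the Rokhlin formula together with $-\sum p_i\log p_i\le\log(\#\{i:p_i>0\})$. The only points needing minor care are the measure-theoretic routine --- existence of the disintegration (or, in the alternative, the monotone passage to the limit) and the martingale convergence of conditional entropies, all harmless because every $\sigma$-algebra in sight is countably generated --- and choosing the order of the chain-rule peeling so that each term is \emph{exactly} $h(T,\mu,P)$ rather than a strictly larger conditional entropy. There is no real obstacle.
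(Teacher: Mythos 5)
Your proof is correct and follows essentially the same route as the paper: identify (\ref{eq-HPP}) with the Rokhlin formula $h(T,\mu,P)=H_\mu(P\mid P^-)$ (which the paper simply cites from Petersen), use the tower property and $H_\mu(P^n\mid P^-)=n\,h(T,\mu,P)$, and bound the fibrewise conditional entropy by the logarithm of the number of $P^n$-atoms meeting the $P^-$-atom, i.e.\ $\log N(n,x,P)$. You merely supply details (the chain-rule/martingale proof of the Rokhlin formula, the disintegration) that the paper delegates to references.
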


\begin{demo} For eq. (\ref{eq-HPP}), see,
e.g., \cite[Ex. 4(b) p.243]{Petersen} for entropy as an average of
conditional {information}.

Observe that
 $$
    E\biggl(1_A\log E(1_A|P^-)\biggm| P^-\biggr)(x) =
       E(1_A|P^-)(x)\log E(1_A|P^-)(x).
 $$
Hence the integrand in (\ref{eq-HPP}) can be replaced by the
above right hand side. 
Eq. (\ref{coro-ent-N}) now follows from the standard bound: 
 $$
  -\sum_{A\in P^N}E(1_A|P^-)(x)\log E(1_A|P^-)(x)  \leq \log\# \{A\in P^N:A\cap
W^u(x)\ne\emptyset\}.
 $$
\end{demo}

In combination with Rudolph's backward Vitali Lemma \cite[Theorem
3.9 p.33]{Rudolph}, this yields the convenient estimate:

\begin{proposition}\label{prop-entropy-bound}
Let $\mu$ be an ergodic, $\sigma$-invariant probability measure on
$\mathcal A^\ZZ$ with finite alphabet $\mathcal A$. Assume that
there exist a measurable family of subsets $W(A^-,\ell)\subset
P^\ell$ (for $A^-\in\mathcal A^{\ZZ_-}$, $\ell\geq1$) with
cardinality bounded by $Ce^{H\ell}$ and a subset $\Sigma_0 \subset
\mathcal A^\ZZ$ of positive measure such that, for all
$A\in\Sigma_0$, there are sequences of integers $a_i=a_i(A),
b_i=b_i(A)$, $i\geq1$, (depending measurably on $A$) satisfying:
 \begin{enumerate}
  \item $\lim_{i\to\infty} b_i-a_i=\infty$ ;
  \item $\sup_i \inf\{|k|:k\in\leftb a_i,b_i\rightb\}<\infty$;
  \item $A_{a_i}A_{a_i+1}\dots A_{b_i-1}\in W(\dots A_{a_i-2}A_{a_i-1},b_i-a_i)$
 \end{enumerate}
 Then
 $$
    h(\sigma,\mu)\leq H.
 $$
\end{proposition}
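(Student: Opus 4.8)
The plan is to reduce the asserted bound to a one-dimensional Vitali covering argument, using the conditional-entropy formalism of Lemma~\ref{lem-HPP}. Write $P=\{[a]:a\in\mathcal A\}$ for the partition of $\mathcal A^\ZZ$ by the zeroth coordinate; it is a bilateral generator, so $h(\sigma,\mu)=h(\sigma,\mu,P)$. Lemma~\ref{lem-HPP} gives $h(\sigma,\mu,P)=H_\mu(P\mid P^-)$, and splitting $P_0^{n-1}$ coordinate by coordinate and shifting each conditional entropy (stationarity) yields $h(\sigma,\mu)=\frac1n H_\mu(P_0^{n-1}\mid P^-)$ for every $n\ge1$, where $P^-=\bigvee_{k\ge1}\sigma^kP$ is the past. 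The whole point will be to bound $H_\mu(P_0^{n-1}\mid P^-)$ by $(H+o(1))n$ and let $n\to\infty$.

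I would first normalize the hypotheses: shrinking $\Sigma_0$ to a subset of still positive measure and discarding a null set, I may assume a single $C_0<\infty$ with $\inf\{|k|:a_i(A)\le k<b_i(A)\}\le C_0$ for all $A\in\Sigma_0$ and $i$; together with condition~(1) this says every $A\in\Sigma_0$ carries, for each prescribed $L$, arbitrarily many integer intervals $I$ with $|I|\ge L$ that contain a point of $\{-C_0,\dots,C_0\}$ and on which $A_I\in W\!\big((A_k)_{k<\min I},|I|\big)$, a set of cardinality $\le Ce^{H|I|}$. Since $\mu$ is ergodic and $\mu(\Sigma_0)>0$, for $\mu$-a.e.\ $A$ the set $\{m\in\ZZ:\sigma^mA\in\Sigma_0\}$ has density $\mu(\Sigma_0)>0$; translating the intervals carried by $\sigma^mA$ back by $m$ produces, for $\mu$-a.e.\ $A$, a family $\mathcal F(A)$ of windows anchored within $C_0$ of a point of that density-positive set, of arbitrarily large prescribed length, and with $A_I\in W\!\big((A_k)_{k<\min I},|I|\big)$. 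I would then invoke Rudolph's backward Vitali lemma \cite[Theorem~3.9]{Rudolph} for $\sigma$ and $\mathcal F$: for every $\eps>0$ and $L<\infty$, and $\mu$-a.e.\ $A$, all large $n$ admit pairwise disjoint $I_1,\dots,I_t\in\mathcal F(A)$ with $I_j\subset\leftb0,n\leftb$, $|I_j|\ge L$, and $\#\big(\leftb0,n\leftb\setminus\bigcup_jI_j\big)<\eps n$ (so $t\le n/L$, $\sum_j|I_j|\le n$), the long windows liable to protrude past $n$ being an $o(n)$ phenomenon left uncovered. Let $G_n$ be the set of $A$ such that $A_0\dots A_{n-1}$ can be covered up to fewer than $\eps n$ positions by disjoint blocks $I\subseteq\leftb0,n\leftb$ with $|I|\ge L$ and $A_I\in W\!\big((A_k)_{k<\min I},|I|\big)$ --- this is manifestly $P^-\vee P_0^{n-1}$-measurable, and $\mu(G_n)\to1$ as $n\to\infty$ by the Vitali conclusion above.

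The count is then elementary. For a past $u\in\mathcal A^{\ZZ_-}$, let $N_n(u)=\#\{w\in\mathcal A^n:u.w\ \text{prefixes an element of}\ G_n\}$. For a \emph{fixed} admissible disjoint block configuration $(I_j)$, reading $w$ from left to right, each of the fewer than $\eps n$ positions outside $\bigcup_jI_j$ contributes at most $\#\mathcal A$ values, while upon reaching $\min I_j$ the past argument of $W$ is already determined, so $w_{I_j}$ is confined to a set of at most $Ce^{H|I_j|}$ words --- a bound \emph{independent of that past}; hence at most $(\#\mathcal A)^{\eps n}C^{n/L}e^{Hn}$ choices. Summing over the at most $e^{(\eta(2/L)+o(1))n}$ configurations ($\eta$ the binary entropy function), $N_n(u)\le e^{(H+\delta_n)n}$ with $\delta_n\to\delta:=\eps\log\#\mathcal A+\tfrac{\log C}{L}+\eta(2/L)$, uniformly in $u$. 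Now inequality~(\ref{eq:H-joint}), applied with sub-$\sigma$-algebra $P^-$ and the two-set partition $\{G_n,M\setminus G_n\}$, together with $H_{\mu|G_n}(P_0^{n-1}\mid P^-)\le(H+\delta_n)n$ (under $\mu|G_n$, conditionally on the past the first $n$ coordinates take at most $N_n(\text{past})$ values) and the trivial bound $H_{\mu|(M\setminus G_n)}(P_0^{n-1}\mid P^-)\le n\log\#\mathcal A$, gives
$$
  H_\mu(P_0^{n-1}\mid P^-)\le\log2+\mu(G_n)(H+\delta_n)n+\mu(M\setminus G_n)\,n\log\#\mathcal A.
$$
Dividing by $n$ and letting $n\to\infty$ yields $h(\sigma,\mu)\le H+\delta$; letting $L\to\infty$ and then $\eps\to0$ makes $\delta\to0$, so $h(\sigma,\mu)\le H$.

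The step I expect to be the main obstacle is the correct deployment of Rudolph's backward Vitali lemma: verifying its measurability and ``return-time'' hypotheses for the family $\mathcal F$ --- which is only \emph{anchored} near, rather than centred at, its reference times --- and controlling the boundary effect of windows protruding past $n$. The other point that must not be glossed is that the per-configuration count relies on the cardinality bound $\#W(\cdot,\ell)\le Ce^{H\ell}$ holding uniformly over pasts, which is exactly what the hypothesis supplies; the remaining entropy bookkeeping is routine.
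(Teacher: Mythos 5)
Your proof is correct and follows essentially the same route as the paper's: reduce to Rudolph's backward Vitali lemma to cover $\leftb 0,n\leftb$ by disjoint windows on which the word lies in $W(\cdot,\ell)$, then bound the conditional count of $n$-words given the past by $e^{(H+o(1))n}$ and conclude via Lemma \ref{lem-HPP}. Your setup for the Vitali lemma (ergodicity plus translating windows from the positive-density set of visits to $\Sigma_0$) and your explicit retention of the $C^{n/L}$ factor are only minor variants of the paper's bookkeeping.
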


Condition (2) above means that the intervals $\leftb a_i,b_i\rightb$
do not escape to infinity: they all intersect some $[-R,R]$ for some $R$ large. 

\begin{demo}
To apply  Rudolph's Backward Vitali Lemma, we need
 \begin{equation}\label{eq-BVL}
a_i(A)\leq 0\leq b_i(A)
 \end{equation}
for all large enough $i$, for all $A\in\Sigma_0$. By passing to
subsequences, depending on $A$, we can assume the existence of the
(possibly infinite) limits $\lim_{i\to\infty} a_i(A)$,
$\lim_{i\to\infty} b_i(A)$ for all $A\in\Sigma_0$. Assume for
instance that $\lim_i a_i(A)=-\infty$ and $\lim_{i\to\infty}
b_i(A)<0$ for a.e. $A\in\Sigma_0$, the other cases being similar
or trivial. By assumption, $\inf_i b_i(A)>-\infty$ for all
$A\in\Sigma_0$. Restricting $\Sigma_0$ we can assume that this
infimum is some fixed number $b\in\ZZ$. Replacing $\Sigma_0$ by
$\sigma^{\min(b,0)}\Sigma_0$ ensures eq. (\ref{eq-BVL}).

Rudolph Lemma implies that for any $\eps>0$, for $\mu$-a.e. $A$,
for all large enough integers $n$, one can find a disjoint cover
of a fraction at least $1-\eps$ of $\leftb 0,n\leftb$ by at most
$\eps n$ intervals $\leftb a_i,b_i\rightb$ such that:
 $
    A_{a_i}\dots A_{b_i} \in W(\dots A_{a_i-2}A_{a_i-1},b_i-a_i)
 $
Applying eq. (\ref{coro-ent-N}) with:
 $$
   N(A,n)\leq \binom{n}{2\eps n} e^{Hn} \times \#\mathcal A^{\eps
   n}.
 $$
gives that $h(\sigma,\mu)\leq H+3\eps \log\eps+\eps \log
\#\mathcal A$. We conclude by letting $\eps\to0^+$.
\end{demo}

\section{Tower Lifts}\label{appendix-jump}

We study towers from a point of view closely related to that of
Zweimuller \cite{Zweimuller}. Let $T$ be an ergodic invertible
transformation of a probability space $(X,\mu)$ and let $B$ be a
measurable subset of $X$. A \emph{return time} is a function
$\tau:B\to\bar\NN^*:=\{1,2,\dots,\infty\}$ which is measurable and
such that $T^{\tau(x)}(x)\in B$ for all $x\in B$ with
$\tau(x)<\infty$ (but $\tau$ is not necessarily the first return
time).

We are interested in lifting $T$-invariant measures to the
following \new{invertible tower}:
 \begin{multline}
   \hat X:=\{(x,\omega)\in X\times\{0,1\}^\ZZ:
   \omega_n=1\implies T^nx\in B \text{ and } \\
   \tau(T^nx)=\min\{k\geq1:\omega_{n+k}=1\}\,\}\setminus\hat X_*
 \end{multline}
with $\hat T(x,\omega)=(Tx,\sigma(x))$ and $\hat X_*$ is the set
of $(x,\omega)$ with only finitely many $1$s either in the future
or in the past of $\omega$. 

Observe that:
 \begin{equation}\label{eq:tower-uniq}
  (x,\omega),(x,\omega')\in \hat X \text{ and }
  \omega_n=\omega'_n=1 \text{ for some $n$ }
  \implies \forall k\geq n \quad \omega_k=\omega'_k.
 \end{equation}

$(\hat X,\hat T)$ is an extension of a
subset of $(X,T)$ through $\hat\pi:\hat X\to X$ defined by
$\hat\pi(x,\omega)=x$.

\begin{remark}
The jump transformation $T^\tau:\{x\in B:\tau(x)<\infty\}\to B$ is
defined by $T^\tau(x):=T^{\tau(x)}(x)$. It is closely related to
$\hat T$. Indeed, $T^\tau$ is isomorphic to the first return map of $\hat T$ on
$[1]:=\{(x,\omega)\in\hat X:\omega_0=1\}$ so any $\hat
T$-invariant probability measure gives by restriction and
normalization a $T^\tau$-invariant probability measure (see
\cite{Zweimuller}).
\end{remark}

Such lifting requires that $\tau$ be "not too large" (see
\cite{Zweimuller} where the classical integrability condition is
studied). Our condition is in terms of the following "iterates" of
$\tau$: the functions $\tau_m:B\to\bar \NN^*$, $m\geq1$, are
defined, as before, by:
 $
   \tau_1:=\tau \text{ and } \tau_{m+1}(x):=\tau(T^{\tau_m(x)}(x))
   \text{ if $\tau_m(x)<\infty$, $\tau_{m+1}(x):=\infty$ otherwise.}
 $

\begin{definition}\label{def-bdd-gap}
$x\in X$ has an \new{improper orbit} if
 \begin{equation}\label{eq-past-recu}
   n(x):=\{n\in\NN:T^{-n}x\in B \text{ and }
   \forall m\geq1 \tau_m(T^{-n}x)<\infty\} \text{ is finite.}
 \end{equation}
$x\in X$ has \new{$t$-gaps} for some $0<t<\infty$ if $x$ has an
improper orbit or if there exist two integer sequences $(n_k)_{k\in\NN}$
and $(m_k)_{k\in\NN}$, $m_k>0$ for all $k\geq0$, such that:
   $$
     \forall k\geq0\quad \tau_{m_k}(T^{n_k}x) \geq \max(t\cdot m_k, k) \text{ and }
     \sup_{k\geq1} \min\{|i|:i\in [n_k, n_k+\tau_{m_k}(T^{n_k}x)]\} < \infty.
   $$
A measure has \new{eventually bounded gaps}, if for some
$t<\infty$, the set of points in $X$ with $t$-gaps has zero
measure.
\end{definition}

Note that $\tau(T^nx)=\infty$, for a single $n$, implies that $x$
has $t$-gaps for any $t<\infty$.

\begin{proposition}\label{prop-lift-abstract}
Let $T:X\to X$ be a self-map with a return time $\tau:B\to\NN^*$. Then:
 \begin{itemize}
  \item every $T$-invariant ergodic probability measure $\mu$ with
  eventually bounded gaps
  can be lifted to a $\hat T$-invariant ergodic
  probability measure on $\hat X$;
  \item any $\hat T$-invariant, ergodic probability measure $\hat\mu$
  is a finite extension of the $T$-invariant
  measure $\hat\pi(\hat\mu)$.
 \end{itemize}
\end{proposition}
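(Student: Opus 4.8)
The plan is to realize the tower $\hat X$ as an explicit inverse limit / coding construction and use ergodic theory to control the combinatorics of the $1$'s in $\omega$. First I would handle the second bullet, which is the easier direction. Given an ergodic $\hat T$-invariant measure $\hat\mu$ on $\hat X$, the projection $\hat\pi(\hat\mu)$ is $T$-invariant and ergodic (continuous image of an ergodic measure). The fibers of $\hat\pi$ over $x$ are the possible sequences $\omega$, and by property \eqref{eq:tower-uniq} a single coordinate $\omega_n=1$ determines $\omega_k$ for all $k\ge n$; since points of $\hat X$ have infinitely many $1$'s in the future (we removed $\hat X_*$), each $x$ in the image has at most countably many preimages, indexed by where the ``last block structure'' sits. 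To get a genuine \emph{finite} extension one argues: the set $[1]=\{\omega_0=1\}$ meets a.e. fiber (Poincar\'e recurrence applied to $\hat T$ and $[1]$, using that by definition of $\hat X$ the coordinate $1$ recurs), and on $[1]$ the fiber is a single point by \eqref{eq:tower-uniq} looking backwards as well (again using no $\hat X_*$). So $\hat\pi$ restricted to $[1]$ is injective onto its image; by the ergodic theorem the return time of $\hat T$ to $[1]$ has constant conditional expectation $1/\hat\mu([1])$, and the number of preimages of $x$ is exactly the number of visits of the $T$-orbit segment to $B$ ``between consecutive $1$'s'', which has finite mean; an ergodic argument (or simply: a countable-to-one factor between ergodic systems with the same entropy — here one checks the fiber entropy is zero) upgrades ``a.e.\ finite'' to ``essentially bounded''. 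Concretely I would show $\hat\mu$-a.e.\ fiber cardinality equals $1/\hat\mu([1])$ — wait, that is the expected return time; rather, the number of $\omega$ over a fixed $x$ equals the number of $n\le 0$ with $T^nx\in B$ and no $1$ forced before $0$, which is bounded by the first backward return time to $[1]$, an $L^1$ function, hence finite a.e., and constancy of its integral plus ergodicity gives the finite extension property in the measure-theoretic sense used here.

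For the first bullet, let $\mu$ be $T$-invariant, ergodic, with eventually bounded $t$-gaps for some $t<\infty$. The idea is to build $\omega$ canonically along $\mu$-a.e.\ orbit. Since $\mu$ has no improper orbits (a.e.), for $\mu$-a.e.\ $x$ there are infinitely many $n\ge 1$ with $T^{-n}x\in B$ and $\tau_m(T^{-n}x)<\infty$ for all $m$; fix such an $n=n(x)$ and define $\omega$ on $[-n(x),\infty)$ by putting $1$'s exactly at the forward $\tau$-iterates of $T^{-n(x)}x$ starting from $-n(x)$. The bounded-gaps hypothesis is what guarantees this extends to a bona fide point of $\hat X$: one must check that letting $n(x)\to\infty$ along the recurrent times produces a \emph{consistent} bi-infinite $\omega$ with infinitely many $1$'s in the past too, i.e.\ that the $1$'s do not ``escape to $-\infty$'' leaving the negative coordinates eventually all $0$. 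Here is where the $t$-gap condition bites: if the construction failed, one would produce, for arbitrarily large $t$, a positive-measure set of $x$ carrying $\tau$-blocks of length $\ge t\cdot m$ (for some $m$) that do not escape to infinity, i.e.\ $x$ would have $t$-gaps for every $t$ — contradicting eventually bounded gaps. So I would phrase the construction as: on the full-measure set of proper orbits, the sequence of ``block patterns'' seen from $T^{-n}x$ for $n\uparrow$ along the recurrence set stabilizes (by \eqref{eq:tower-uniq}, extending a $1$ backwards can only refine, never conflict), and the limit pattern has no final gap precisely because of bounded gaps; this defines a measurable map $\Phi:X_{\mathrm{proper}}\to\hat X$ with $\hat\pi\circ\Phi=\mathrm{id}$ and $\Phi\circ T=\hat T\circ\Phi$. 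Push $\mu$ forward by $\Phi$ to get $\hat\mu$; it is $\hat T$-invariant and ergodic (image of an ergodic measure under an equivariant map), and $\hat\pi(\hat\mu)=\mu$.

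The main obstacle I expect is the consistency-and-no-escape step in the first bullet: turning ``eventually bounded gaps'' into the statement that the canonically constructed $\omega$ genuinely lies in $\hat X$ (infinitely many $1$'s on \emph{both} sides, with the $\tau$ bookkeeping matching at every index). The cleanest route is contrapositive: suppose for $\mu$-a.e.\ $x$ one could \emph{not} build a proper bi-infinite $\omega$; then along a sub-sequence of recurrence times the leftmost $1$ marches off to $-\infty$ while the block it anchors has length $\ge$ (distance it travelled), and by Poincar\'e recurrence / Kac's formula applied to $B$ these long blocks return near the origin infinitely often, which is exactly the definition of having $t$-gaps for that $t$; since $t$ can be taken arbitrarily large, $\mu$ has $t$-gaps for all $t$, contradicting the hypothesis. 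Everything else — measurability of $\Phi$, equivariance, the finite-extension count in the second bullet — is routine bookkeeping with \eqref{eq:tower-uniq} and the ergodic theorem, and I would relegate it to the reader in the style of the rest of the paper.
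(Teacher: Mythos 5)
Your plan for the first bullet --- build an equivariant measurable section $\Phi:X\to\hat X$ and push $\mu$ forward --- cannot work as stated, because such a section need not exist. Take $B=X$ and $\tau\equiv 2$: the fiber over any $x$ consists of exactly the two sequences $\omega$ with $1$'s on the even, respectively odd, integers, and an equivariant section amounts to a measurable $s:X\to\ZZ/2\ZZ$ with $s(Tx)=s(x)+1$; since then $s\circ T^2=s$, total ergodicity of $\mu$ forces $s$ to be a.e.\ constant, and the cocycle equation is contradictory. The ``stabilization'' you invoke also fails in this example: the pattern anchored at $T^{-n}x$ alternates with the parity of $n$ and has no limit. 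Note that (\ref{eq:tower-uniq}) propagates a shared $1$ \emph{forward} only; patterns anchored at different backward times need never share a $1$, so there is no ``refinement''. A lift still exists here ($\mu$ times the uniform measure on the two parities), but it is not the push-forward of $\mu$ under any section. The paper instead follows Zweim\"uller and Keller: it introduces a non-invertible tower $\tilde X$ of which $\hat X$ is a natural extension, pushes the obvious base measure forward, averages ($\tilde\mu_n=\frac1n\sum_{k<n}\tilde T^k\tilde\mu_0$), and extracts a weak-$*$ accumulation point in the unit ball of $L^\infty$ of a dominating $\sigma$-finite measure. The eventually-bounded-gaps hypothesis enters exactly where you expected it to --- to rule out that the limit vanishes, i.e.\ that all the mass climbs to infinity in the tower --- via the implication that spending a vanishing fraction of time in the low levels forces $\tau_{\eps n}(x)\geq n$ and hence $t$-gaps for every $t$. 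Your intuition about the role of the hypothesis is right; the mechanism you build around it is not.

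For the second bullet, the decisive claim ``on $[1]$ the fiber is a single point by (\ref{eq:tower-uniq}) looking backwards as well'' is false: (\ref{eq:tower-uniq}) is explicitly one-sided, and a point $x$ with $\omega_0=\omega'_0=1$ admits in general several admissible pasts (again $\tau\equiv2$, or any $\tau$ with overlapping backward blocks). The subsequent patch --- bounding the fiber cardinality by a backward return time to $[1]$ --- does not bound the number of admissible $\omega$'s over a given $x$, which can a priori be countably infinite. The paper's argument is different and does close the gap: any measurable family $\omega^1,\dots,\omega^K$ of pairwise distinct fiber points defined on a positive-measure set has, by the Birkhoff theorem, asymptotic frequency $\hat\mu([1])$ of $1$'s along the negative coordinates; if $K\hat\mu([1])>1$, two of them must carry a $1$ at a common index $-n_k$ with $n_k\to\infty$, and then (\ref{eq:tower-uniq}) forces them to coincide everywhere. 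Hence $K\leq\hat\mu([1])^{-1}$, which is the finite-extension bound. If you want to keep your fiber-counting viewpoint, this frequency-plus-pigeonhole step is the ingredient you are missing.
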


\begin{demof}{Proposition \ref{prop-lift-abstract}}
We  first prove the existence of a lift for  $\mu$ like above. We
follow the strategy of \cite{Zweimuller} and \cite{KellerLift}
(which was inspired by constructions of Hofbauer) and define the
following \emph{non-invertible tower} to get a convenient topology:
 \begin{align*}
   &\tilde X:=\{(x,k,\tau)\in X\times\NN\times\NN: \exists y\in B\;
      \tau(y)=\tau,\; k<\tau\text{ and } x=T^ky\} \\
   &\tilde T(x,k,\tau):=(T(x),k+1,\tau) \text{ if }k+1<\ell,\;
   (T(x),0,\tau(T(x))) \text{ otherwise}.
 \end{align*}
For any integer $K$, we write $\tilde X_K:=\{(x,k,\tau)\in\tilde X:k=K\}$, $\tilde
X_{\leq K}:=\bigcup_{k\leq K} \tilde X_k$ and define
$\tilde\pi(x,k,\tau)=x$. Observe that $\tilde\pi\circ\tilde
T=T\circ\tilde\pi$ and that $(\hat X,\hat T)$ is a natural
extension of $(\tilde X,\tilde T)$ through
$(x,\omega)\mapsto(x,k,\ell)$ with $k\geq 0$ minimal such that
$\omega_{-k}=1$ and $\ell=\tau(T^{-k}x)$. Hence it is enough to
lift $\mu$ to $\tilde X$.

Fix $t<\infty$ such that the set of  points of $X$ with $t$-gaps has zero
$\mu$-measure. Let $\tilde\mu_0$ be the  probability measure
defined by
 $$
   \tilde\mu_0(\{(x,0,\tau(x)):x\in A\})=\mu(A) \text{ for all Borel sets }A
 $$
(sets disjoint from the above ones have zero
$\tilde\mu_0$-measure). We have $\tilde\pi(\tilde\mu_0)=\mu$ but,
except in trivial cases, $\tilde T_*\tilde\mu_0\ne\tilde\mu_0$ so we
consider:
 $$
   \tilde\mu_n:=\frac1n\sum_{k=0}^{n-1} \tilde T^k\tilde\mu_0
 $$
and try to take some accumulation point $\tilde\mu$. We identify
$\tilde\mu_n$ with its density with respect to $\tilde\mu_\infty$,
the $\sigma$-finite measure defined by
 $$
 \tilde\mu_\infty(\{(x,k,\tau(T^{-k}x)):x\in A\text{ and }k<\tau(T^{-k}x)\})=\mu(A\cap\{\tau>k\})
 $$
for all Borel sets $A\subset B$ and all $k\geq0$. As $\tilde\pi(\tilde\mu_n)=\mu$, we
must have:
 $$
    \frac{d\tilde\mu_n}{d\tilde\mu_\infty} \leq 1.
 $$
Using the Banach-Alaoglu theorem, i.e., the weak star compactness
of the unit ball of $L^\infty(\tilde\mu_\infty)$ as the dual of
$L^1(\tilde\mu_\infty)$, we get an accumulation point of the $\tilde\mu_n$,
i.e., a measure $\tilde\mu$ on $\tilde X$ with 
$d\tilde\mu/d\tilde\mu_\infty\leq 1$ such that, for some
subsequence $n_k\to\infty$,
 \begin{equation}\label{eq-mutilde}
   \forall f\in L^1(\tilde\mu_\infty)\; \lim_{k\to\infty}
     \int f \, d\tilde\mu_{n_k} = \int f\, d\tilde\mu.
 \end{equation}

Observe that $\tilde\mu$ is $\tilde T$-invariant: indeed, eq.
(\ref{eq-mutilde}) together with the $T$-invariance of $\mu$
implies that $d\tilde\mu\circ\tilde T^{-1}/d\tilde\mu \leq 1$
whereas $\tilde\mu\circ\tilde T^{-1}(\tilde X)=\tilde\mu(\tilde
X)$ so the previous inequality must be an equality
$\tilde\mu$-almost everywhere.

This invariance and the ergodicity of $\mu$
implies that $\tilde\pi\tilde\mu=\alpha\mu$ for some
$0\leq\alpha\leq1$. It remains to prove that $\tilde\mu\ne 0$ so
that it can be renormalized into the announced lift of $\mu$.
Assume by contradiction that $\tilde\mu=0$. Hence, for any
$L<\infty$:
 $$
    \int 1_{\tilde X_{\leq L}} \, d\tilde\mu_{n_k}
     = \int \frac1{n_k}\#\{ 0\leq k<n_k: \tilde
     T^k(x,0,\tau(x))\in \tilde X_{\leq L} \} \, d\mu
     \to 0
 $$
So, possibly for a further subsequence:
 \begin{equation}\label{eq-vanish}
    \frac1{n_k}\#\{ 0\leq k<n_k: \tilde
     T^k(x,0,\tau(x))\in \tilde X_{\leq L} \} \to 0
     \text{ $\mu$-a.e.}
 \end{equation}
Now,
 $$
   \#\{ 0\leq k<n: \tilde
     T^k(x,0,\tau(x))\in \tilde X_{0} \} < \eps n
     \implies  \tau_{\eps n}(x) \geq n.
 $$
Hence eq. (\ref{eq-vanish}) implies that $x$ (in fact any of its
preimages in the natural extension) has $t$-gaps for all $t>0$,
contradicting the assumption on $\mu$.

\medbreak

We now show that any $\hat T$-invariant, ergodic probability
measure $\hat\mu$ is a finite extension of
$\mu:=\hat\pi(\hat\mu)$. By definition of $\hat X$,
$\hat\mu([1])>0$ where $[1]=\{(x,\omega)\in\hat X:\omega_0=1\}$.
Assume that there is some positive measure subset $S\subset\hat X$,
and some number $K$ of measurable functions:
 $$
  \omega^1,\dots,\omega^K:S\to\{0,1\}^\ZZ
 $$
such that, for all $x\in S$, $(x,\omega^i(x))\in\hat X$, $\omega^i(x)\ne\omega^j(x)$ for $i\ne j$
and, for all $j=1,\dots,K$:
 $$
   \lim_{n\to\infty} \frac1n\#\{0\leq
   k<n:\omega^j_{-k}(x)=1\}=\hat\mu([1]).
 $$
If $K\cdot \hat\mu([1])>1$, then, for a.e. $x\in S$, there exist two distinct indices 
$j,j'\in\{1,\dots,K\}$ and arbitrarily large integers $n_k\to\infty$ such that
$\omega^j_{-n_k}(x)=\omega^{j'}_{-n_k}(x)$. But this implies
$\omega^j(x)=\omega^{j'}(x)$ by (\ref{eq:tower-uniq}). The
contradiction proves $K\leq \hat\mu([1])^{-1}<\infty$: $\hat\mu$
is a finite extension of $\mu$.
\end{demof}

\section{Examples}\label{sec-examples}

\noindent{\bf Positive Multiplicity Entropy}

\begin{example}[{\bf see \cite{BuzziAffine}}]\label{ex-pw-d2-mult}
There exists a continuous, piecewise affine surface map $(M,T,P)$
with $h_\mult(T,P)>0$ and $h_\top(T)=0$.
\end{example}

\begin{figure}
\centering
\includegraphics[width=5cm]{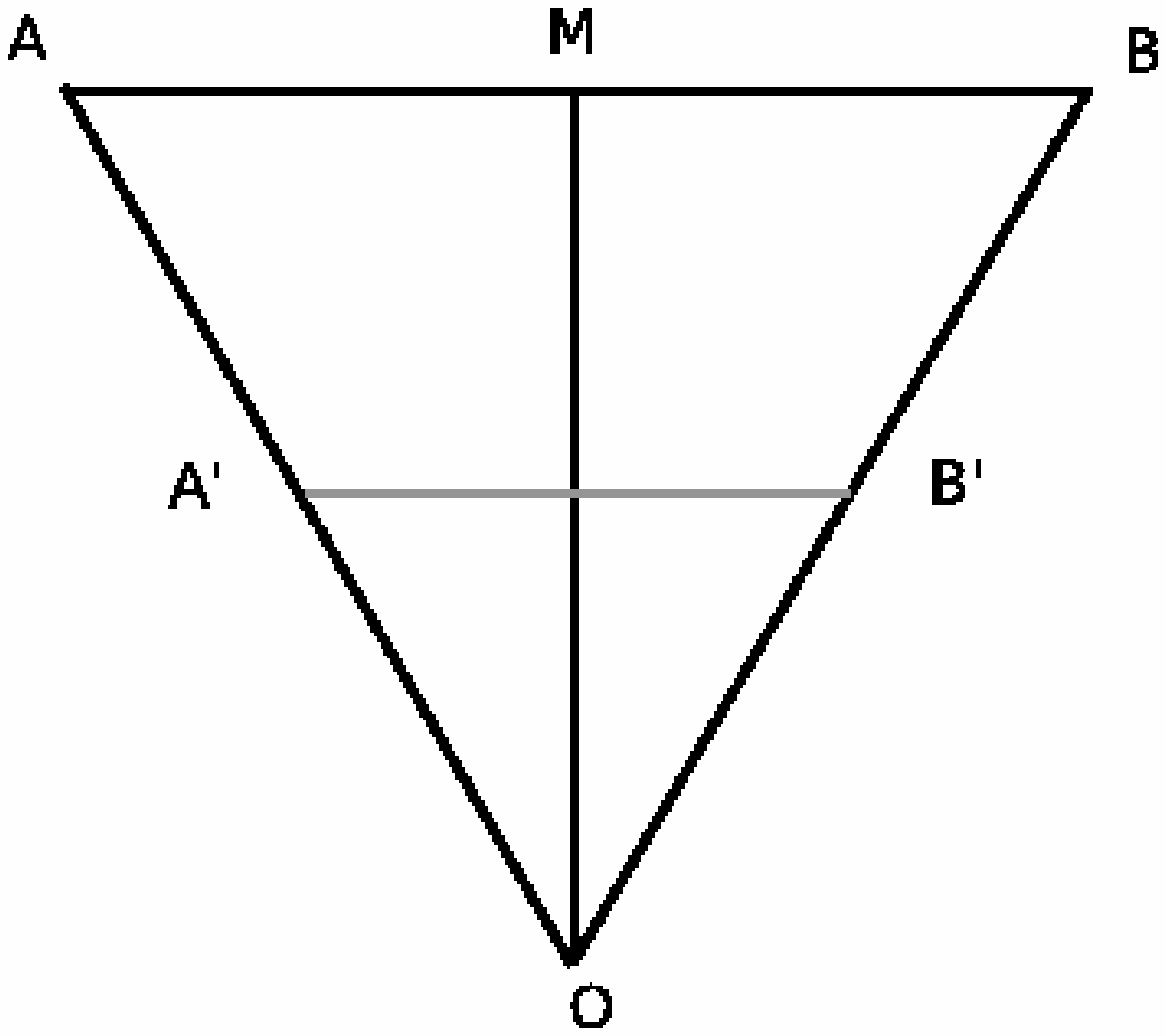}
\includegraphics[width=5cm]{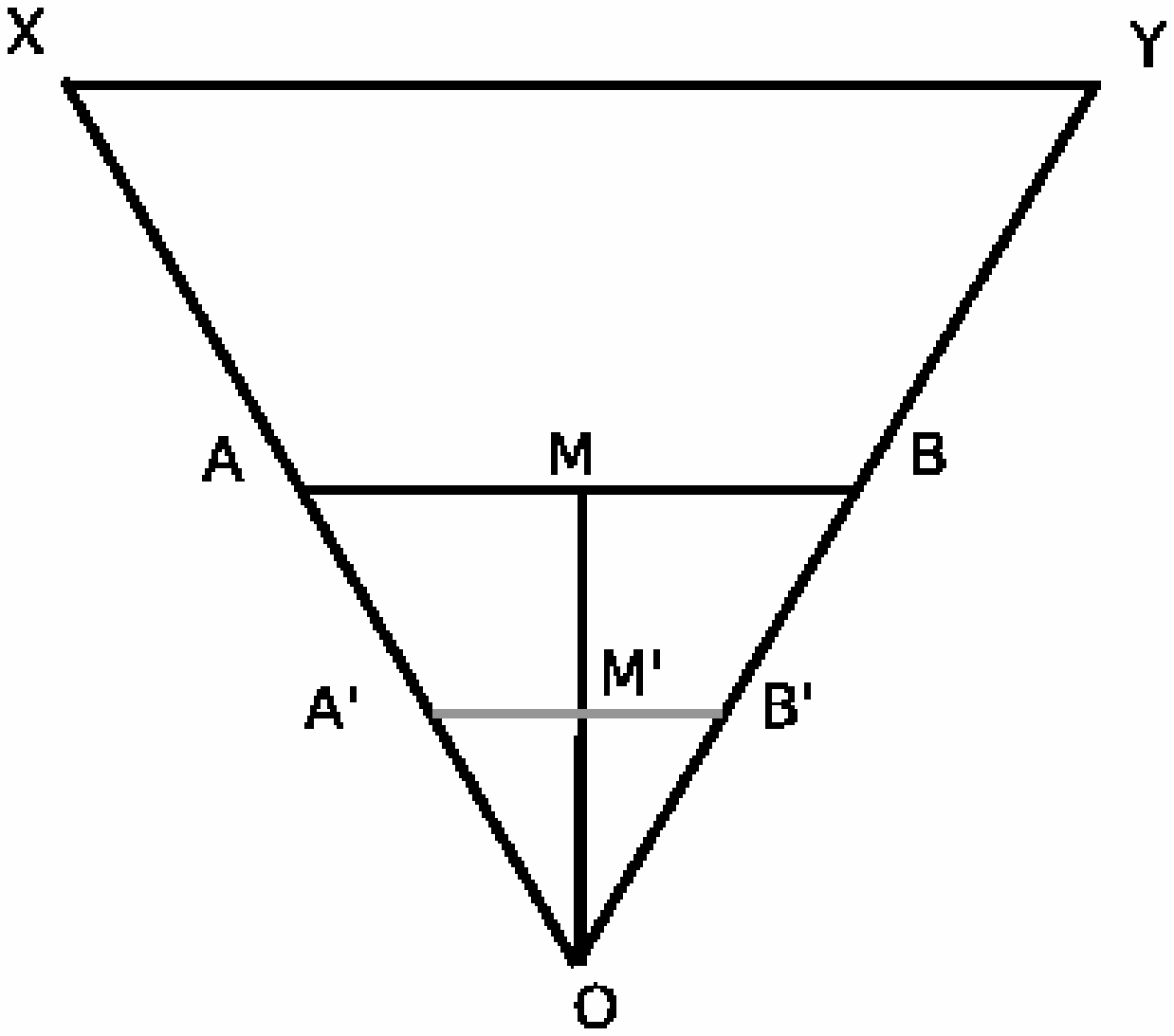}
\caption{\small Geometry of (left) a continuous piecewise affine
map with $h_\mult(T,P)>h_\top(T)=0$; (right) a discontinuous
piecewise affine map with no maximum measure.}\label{fig-ex-d2}
\end{figure}

Consider some triangle $ABO$ in $\RR^2$ with non-empty interior
and let $M,A',B'$ be the middle points of $[AB],[AO],[BO]$ (see
Fig. \ref{fig-ex-d2}, left). Let $T$ be affine in each of the
triangles $\tau_0:=AMO$ and $\tau_1:=BMO$ with $T(O)=O$,
$T(A)=T(B)=A'$, $T(M)=B'$ so that $T:ABO\to ABO$ is conjugate to
$(\theta,r)\mapsto(1-2|\theta|,r/2)$ on $(-1,1)\times(0,1)$. Take
$P=\{\tau_0,\tau_1\}$ as the admissible partition.

$h_\mult(T,P)=\log 2$ (because all words on $\{\tau_0,\tau_1\}$
appear in the symbolic dynamics and the corresponding cylinders contain
$O$ in their closure). On the other hand, the only
invariant probability measure is the Dirac supported by $O$ hence
$h_\top(T)=0$ as claimed.

\begin{example}[{\bf see Kruglikov and Rypdal
\cite{KR}}]\label{ex-pw-d3-multpositive} There exists a piecewise
affine homeomorphism $(M,T,P)$ with $\dim M=3$ and
$h_\mult(T,P)=h_\top(T)>0$.
\end{example}

Let $([0,1]^2,T_2,P_2)$ be a piecewise affine homeomorphism with
nonzero topological entropy. Consider the pyramid
$M:=\widehat{[0,1]^2}$ where $\widehat{A}$ denotes the convex subset of $\RR^3$
generated by $O:=(0,0,0)$ and $A\times\{1\}$. Define $T:M\to M$ as
the piecewise affine map with partition $P:=\{\widehat{A}\setminus\{O\}:A\in
P_2\}$ such that: $T(O)=O$, $T(x,y,1)=(T_2(x,y),1)$ for
each vertex $(x,y)$ of $P_2$. Observe that $h_\top(T)=h_\top(T_2)$
and that $T$ has an obvious measure of maximal entropy carried by
the invariant set $[0,1]^2\times\{1\}$. Finally, considering $T^n$
around $(0,0,0)$ it is easy to see that $h_\mult(T,P)=h_\top(T_2)
=h_\top(T)$.

\begin{example}\label{ex-pw-d3-mult}
There exists a piecewise affine homeomorphism $(M,S,P)$ with $\dim
M=3$, $h_\mult(S,P)>0$ and $h_\top(S)=0$.
\end{example}

Define $S$ from the previous example $T$ by $S(x,y,z):=
T(x,y,z)/2$ on the pyramid $M$ so that $0$ is a sink. To make $S$
onto, add a symmetric pyramid $M^-$ whose summit is a source.

\medbreak\noindent{\bf No Maximal Entropy Measure}

\begin{example}\label{ex-pw-d2-max}
There exists a piecewise affine surface $(M,T,P)$
\emph{discontinuous map} $T$ such that there is no maximum measure. More precisely
there exists a sequence of invariant probability measures $\mu_n$ with
 $$
  \lim_{n\to\infty} h(T,\mu_n)=h_\top(T)>0
 $$
but $\mu_n$ converges weakly to an invariant Dirac measure.
\end{example}

\medbreak\noindent{\bf Remark.} \emph{The above formulation excludes trivial
examples like $T:[0,1]\to[0,1]$ with $T(x)=1/4+x/2$ for $x>1/2$
and $T(x)=x+1/2$ for $x\leq 1/2$ which has no invariant
probability measure.}

\medbreak

Let $T$ be a piecewise affine map defined on the triangle $XYO$
with $O=(0,0)$, $X=(-2,2)$ and $Y=(2,2)$. Let $A=(-1,1)$,
$B=(1,1)$ and $M=(0,1)$, and $A'=A/2$, $B'=B/2$ and $M'=M/2$ (see
Fig. \ref{fig-ex-d2}, right). We require:
 \begin{enumerate}
  \item $T|\overline{XYBA}$ is the identity;
  \item $T:AMO\to A'B'O$ is affine with $A\mapsto A'$, $M\mapsto B'$, $O\mapsto O$;
  \item $T:MBO\to YXO$ is affine with $M\mapsto Y$, $B\mapsto X$, $O\mapsto
  O$.
 \end{enumerate}
It is easy to see that $h_\top(T)=\log2$. We claim that $\sup_\mu h(T,\mu)=\log 2$.
Clearly the supremum is bounded by $h_\top(f)$. Conversely, for any $h<\log
2$, one can find an invariant measure on the full shift $(\sigma,\{0,1\}^\NN)$ such that
$\mu([1^K])=0$ for some $K=K(h)<\infty$ with $h(\sigma,\mu)>h$. It
is then easy to construct an isomorphic $T$-invariant measure
(with support included in $y\leq y_0$ for any given $0<y_0<1$), proving that $\sup_\mu
h(T,\mu)\geq\log 2$. The equality follows from $h_\top(T)=\log2$.
The same observations allow the construction of the sequence $\mu_n$
with the claimed properties.

On the other hand, assume that $\mu$ is an invariant and ergodic
probability measure with $h(T,\mu)=2$. $\mu$ must be supported on
$y <1$. Hence the map $\pi$ that sends a point of $\RR^2$ to the
ray from the origin that contains it, maps $(T,\mu)$ to
$(f,\pi_*\mu)$ where $f:\theta\mapsto 1-2|\theta|$ on
$[-1,1]$. The fibers of $\pi$ are contained in line segments originating
from $O$ on which
$T$ is linear, hence they have zero entropy and $\pi$ is
entropy-preserving \cite{BowenEntropy}. This implies that $\pi_*\mu$ is the
$(1/2,1/2)$-Bernoulli measure. Using, say the Central Limit
Theorem, we get that, for $\mu$-a.e. $(x,y)\in ABO$, there exists
a positive integer $n$ such that
 $$
    \#\{0\leq k<n:f^k(\pi(x,y))<1/2\} < \frac n2-\frac{|\log y|}{2\log
    2}
 $$
so that $T^m(x,y)\in XYBA$ for some $m\leq n$, contradicting the invariance of
the measure: there
is no maximum measure.

\begin{example}\label{ex-C0-q-d2-max}
There exists a continuous, piecewise \emph{quadratic} surface map
$T$ such that for any invariant probability measure $\mu$:
 $$
    h(T,\mu)<\sup_\nu h(T,\nu).
 $$
\end{example}

On the rectangle $[1,2]\times[-1,1]$, consider
$T(x,y):=(x,T_x(y))$ with:
 $$
   T_x(y)=\left\{ \begin{matrix} \frac{x(2-x)}2-x|y| &\text{if
   }|y|<2-x \\
   -\frac{x(2-x)}2\qquad & \text{otherwise} \end{matrix}\right.
 $$
For each $1\leq x<2$,  $[-1,1]$ is mapped into the $T_x$-forward
invariant segment $[-\frac{x(2-x)}2,\frac{x(2-x)}2]$ on which
$T_x$ has constant slope $x$. Hence $h_\top(T_x)=\log x$ for $x\ne
2$. Clearly, $T(2,y)=(2,0)$ so $h_\top(T_2)=0$.

}

\medbreak\noindent{\bf Infinitely Many Maximal Entropy Measures}

\begin{example}
There is a piecewise affine continuous map, resp. homeomorphism, 
of $[0,1]^2$, resp. $[0,1]^3$, with uncountably many ergodic 
invariant probability measures with non-zero, maximal entropy.
\end{example}

Indeed, such examples are trivially obtained from 
piecewise affine maps on $[0,1]$ or homeomorphisms
on $[0,1]^2$ with non-zero 
topological entropy by taking a direct product with 
the identity on the unit interval. It is the \emph{low
dimension} (one for maps, two for homeomorphisms) that
prevents the existence of such indifferent factors and
ensures the finite number of maximum measures  under
the simple condition of non-zero topological entropy.

\end{document}